\documentclass[a4paper] {article}[12pt]

\usepackage[top=1.2cm, bottom=1.5cm, left=1.5cm, right= 1.5cm]{geometry}

\makeatletter
\renewcommand*\l@section{\@dottedtocline{1}{1.5em}{2.3em}}
\makeatother

\usepackage{amsfonts}
\usepackage{amssymb}
\usepackage[T1]{fontenc}

\usepackage{tikz}
\usetikzlibrary{calc}

\usepackage{CJK}
\usepackage{amsmath}
 
\usepackage{amsfonts}
\usepackage{amssymb}
\usepackage{amsthm}
\usepackage{amssymb}
\usepackage{enumerate}
\usepackage[calc]{picture}
\usepackage[all,cmtip]{xy}

\usepackage[mathscr]{eucal}
\usepackage{eqlist}

\usepackage{color}
\usepackage{abstract} 
\usepackage[T1]{fontenc}
 
\theoremstyle{plain}
\newtheorem{theorem}{Theorem}
\newtheorem{proposition}[theorem]{Proposition}
\newtheorem{lemma}[theorem]{Lemma}

\newtheorem{example}[theorem]{Example}
\newtheorem{corollary}[theorem]{Corollary}

\theoremstyle{definition}
\newtheorem{definition}{Definition}

\usepackage{etoolbox}
\newtheoremstyle{myrem}
 {3pt}
 {3pt}
 {\normalsize}
 { }
 {\itshape}
 {:}
 { }
 {}

 \theoremstyle{myrem}
 \newtheorem{remark}{Remark}
 \appto\remark{\leftskip\parindent}
 \appto\remark{\rightskip\parindent}

\usepackage{amsmath}

\numberwithin{equation}{section}
\numberwithin{theorem}{section}

\begin{document}

\begin{center}
{\Large{\textbf{
Hypergraphs   on  Riemannian  
  manifolds  and  their  double  homology 
 }}}

 \vspace{0.58cm}
 
 Shiquan Ren 

\bigskip

\bigskip

 \parbox{24cc}{{\small

{\textbf{Abstract}.}  
In  this  paper,
by  constructing     double  homology  for  graded  submanifolds  of  $\Delta$-manifolds,    
we  study    the  double  homology  of  hyper(di)graphs 
 on  Riemannian  manifolds.  
 With  the  help  of   the  canonical  projection  $\pi$  from  ordered  sequences  to  
 their  underlying  unordered  sets,   we   prove   a   commutative  diagram  of  the  
 double  homology  of  hyper(di)graphs  on  Riemannian  manifolds  
   and 
   a  commutative  diagram  of  the spaces  of  differential  forms
  of  hyper(di)graphs  on  Riemannian  manifolds.    
   Besides,  we  give  some  relations  between  the  fundamental  goup(oid)s
   of  hyper(di)graphs  on  surfaces  and  the  (pure)  braid  groups  on  surfaces
   by   certain  homomorphisms   induced  from  $\pi$.   
 As   applications,  we  study  the  homotopy  types  of  
 filtrations  for  packings  and  coverings  of  balls  on  Riemannian  manifolds 
 with  hypergraph  constraints,  and   
 we     characterize    regular   maps  on  Riemannian  manifolds  
 by  geometric realizations  of  the  independence  complexes  and  
 hypergraphs  on  the  manifolds.  
  }

\bigskip

}

\begin{quote}
 {\bf 2020 Mathematics Subject Classification.}  	Primary  57N65,  57N75; Secondary   55U05, 	55U15.

{\bf Keywords and Phrases.}    configuration  spaces,   hypergraphs,   independence  complexes,  
double  homology,   braid  groups,    
 simplicial  manifolds  
\end{quote}

\end{center}

\section{Introduction}

\subsection{The     homology  of  configuration  spaces}
\label{ss1.1}

Let   $X$  be  a  topological  space.  
The  {\it  $k$-th  ordered  configuration  space}  ${\rm  Conf}_k(X)$  is  the  space  of  
ordered  $k$-tuples  of  distinct  points  in  $X$
and  the  {\it  $k$-th  unordered  configuration  space}  ${\rm  Conf}_k(X)/\Sigma_k$  is  the  space  of  
unordered  $k$-tuples  of  distinct  points  in  $X$.  
Suppose   both  
${\rm  Conf}_k(X) $  is      path-connected
and  locally path-connected. 
Then   ${\rm  Conf}_k(X)/\Sigma_k$  is     path-connected
and   locally path-connected  as  well.   
We  call  the  fundamental  group  $\pi_1({\rm  Conf}_k(X))$
 the  {\it       braid  group  of  $k$-stands}  on  $X$  and   
 call  the  fundamental  group  $\pi_1({\rm  Conf}_k(X)/\Sigma_k)$
 the  {\it    pure braid  group  of  $k$-strands}  on  $X$
 (cf.  \cite[Sect.~1.3]{birman}). 
 The  $k!$-sheeted     covering    $\pi: {\rm  Conf}_k(X)\longrightarrow {\rm  Conf}_k(X)/\Sigma_k$  
 is  normal  thus  it    induces  
 a   homomorphism  
 $\pi_*: \pi_1({\rm  Conf}_k(X))\longrightarrow \pi_1({\rm  Conf}_k(X)/\Sigma_k)$
 where  the  image    of  $\pi_*$   is  a  normal  subgroup  of  $\pi_1({\rm  Conf}_k(X)/\Sigma_k)$. 
 The  deck  transformation  group  of  $\pi$  is isomorphic  to   the  quotient  group 
 $\pi_1({\rm  Conf}_k(X)/\Sigma_k)/{\rm  Im}(\pi_*)$.   
  In  particular,  when  $X$  is  the  plane,  
   $\pi_1({\rm  Conf}_k(\mathbb{R}^2))$  is  the  usual  braid  group  of  $k$-strands 
   and  $\pi_1({\rm  Conf}_k(\mathbb{R}^2)/\Sigma_k)$  is  the  usual  pure  braid  group  of  $k$-strands
  (cf.  \cite[Sect.~1.4]{birman}).

     The  $\Sigma_k$-action  on  ${\rm  Conf}_k(X)$
   induces  a  $\Sigma_k$-action  on  $ H_\bullet({\rm  Conf}_k(X))$.  
   The  covering  $\pi$  induces  a   homomorphism  of  homology  groups  
   $\pi_*:  H_\bullet({\rm  Conf}_k(X))\longrightarrow  H_\bullet({\rm  Conf}_k(X)/\Sigma_k)$
   which  is  equivariant  under  the   $\Sigma_k$-action  on  $ H_\bullet({\rm  Conf}_k(X))$.   
  Since  the  1960s,  
  the  (equivariant)  (co)homology  of  $ {\rm  Conf}_k(X) $  as  well  as  the  (co)homology  of 
  ${\rm  Conf}_k(X)/\Sigma_k$
   has  been     extensively  studied.   For  example,   V. I.  Arnol’d
   \cite{arnold},  
   F. R.  Cohen  \cite{cohen1976}, C.-F. B\"odigheimer,   F. Cohen   and L. Taylor \cite{homol},
   C.-F. B\"odigheimer and F. R. Cohen   \cite{homol2},   
   N. H. V.  Hu'ng   \cite{hung1990}, 
   Y. F\'{e}lix and J.-C.  Thomas  
    \cite{rational},  and   
    P. Blagojevi$\check{\text{c}}$ F. Cohen,    M. C.  Crabb,   W. L{\"u}ck and G. Ziegler 
      \cite{equiv}.

   Let ${\bf  FI}$  be  the  category  whose  objects  are  finite sets  and  
   whose 
    morphisms  are  injective  maps.  
    Let   $R$  be  a  commutative ring  with  unit.  
   An   {\it   ${\bf  FI}$-module}   is  a  functor  from  the  ${\bf  FI}$  category  
   to  the  category  of  $R$-modules   (cf.  \cite[Sect.~2]{fi2015}).  
   Let   $k$  run  over  all  positive  integers.  
   Then 
   ${\rm  Conf}_\bullet(X)/\Sigma_\bullet$  is  a  functor  from  
    ${\bf  FI}$  to  the  category  of   topological  spaces.     Thus   
   $H_n({\rm  Conf}_\bullet(X)/\Sigma_\bullet)$  is  an   ${\bf  FI}$-module
    for  any  $n\geq  0$.  
    Since the  2010s,  
   the  theory  of     ${\bf  FI}$-modules   has  been  developed  and  applied  to     
    the  (co)homology  of       ${\rm  Conf}_\bullet(X)/\Sigma_\bullet$ 
    by   T.  Church  \cite{fi1},  
    T.  Church,  J. S.  Eilenberg  and   B.  Farb  \cite{fi2015},  etc.

Suppose   in  addition  that  $X$  is  a  metric  space.  
Let  $r\geq  0$.  
The  {\it  $k$-th  ordered  configuration  space  of  $r$-balls}   ${\rm  Conf}_k(X,r)$  
is  the  subspace  of    ${\rm  Conf}_k(X)$  such  that  the  mutual  distances  of  the  
$k$-points  are  greater than  $2r$.  
Note  that  ${\rm  Conf}_k(X,r)$   is  $\Sigma_k$-invariant  thus  the  orbit  space 
${\rm  Conf}_k(X,r)/\Sigma_k$   gives     
the  {\it  $k$-th  unordered  configuration  space  of  $r$-balls}.     
 Let  $r$   run  from  $0$  to  $+\infty$.
 We  say  that  $r_0\in[0,+\infty)$  is  a  {\it  critical  point}  for  
    the  homotopy  types  of  ${\rm  Conf}_k(X,r)$   and 
 ${\rm  Conf}_k(X,r)/\Sigma_k$  if  the   homotopy  types  of  ${\rm  Conf}_k(X,r)$   and 
 ${\rm  Conf}_k(X,r)/\Sigma_k$ 
  change  in  any  small  open  neighborhood   of  $r_0$
  (cf.   Subsection~\ref{ss4.1-26apr}).   
  On  the  other  hand,    
the  filtration    ${\rm  Conf}_k(X,-)$  induces  the  persistence  homology 
$H_\bullet({\rm  Conf}_k(X,-))$  (cf.  \cite{ph1,cph1})   
as  well  as  the  $\Sigma_k$-equivariant  persistence  
homology    $H_\bullet^{\Sigma_k}({\rm  Conf}_k(X,-))$.  
The  filtration    ${\rm  Conf}_k(X,-)/\Sigma_k$  induces the  persistence  homology 
$H_\bullet({\rm  Conf}_k(X,-)/\Sigma_k)$.  
 The  persistence covering  $\pi:  {\rm  Conf}_k(X,-)\longrightarrow
  {\rm  Conf}_k(X,-)/\Sigma_k$  induces  
  the  persistence  homomorphism  $\pi_*:  H_\bullet({\rm  Conf}_k(X,-)) \longrightarrow
H_\bullet({\rm  Conf}_k(X,-)/\Sigma_k)$.    
With  the  help  of  persistence  diagrams  (cf.  \cite[Sect.~2]{pd1}),  
the  birth-times and  the  death-times  of  the  generators  of  
the  persistence  homology  groups  of  ${\rm  Conf}_k(X,-)$  and  ${\rm  Conf}_k(X,-)/\Sigma_k$
 partially  detect    the  set  of  critical  points    
that    the  homotopy  types  of  ${\rm  Conf}_k(X,-)$  and  ${\rm  Conf}_k(X,-)/\Sigma_k$
 change    (cf.     \cite[Sect.  8-10]{gromov1}).   
 The   min-type Morse  theory  for  the  
  configuration  spaces  of  $r$-balls  
    was  studied  by  
    Y.  Baryshnikov, P.   Bubenik  and  M. Kahle
 \cite{imrn1}.   
 Recently,  
 the  persistence  homology  for     the  filtration  of  configuration  spaces 
 are  studied by  H. Alpert, M.  Kahle  and  R. MacPherson  \cite{confcomp}
 and  H.   Alpert  and  F.   Manin  \cite{gt}.

 \subsection{The  homotopy  types  and  the  homology  of  independence  complexes}

Let   $G=(V,E)$  be  a  graph.  
   A     (finite)  {\it   independent  set}  of   $G$  is  a    subset  $\sigma$  of  $V$  
   such  that  the  vertices  in  $\sigma$  are  mutually  non-adjacent.  
   The   {\it  independence  complex}  ${\rm  Ind}(G)$  is  the  simplicial  complex 
   whose  simplices  are  the  independent   sets.  
   So  far,  
the  homotopy  types  of  ${\rm   Ind}(G)$  have  attracted  a  lot  of  attention.    
For  example,   
  D.   Kozlov  \cite{kozlov},   J. A.  Barmak  \cite{barmak}, 
   M.  Adamaszek \cite{indhomotopy},  etc.  
Moreover, 
the  homology  groups  of  ${\rm   Ind}(G)$  have  been  studied  by  J.  Jonsson 
 \cite{jacob},   
 M.   Berghoff  \cite{berghoff},  etc.

Let  $d_G$ 
   be the   geodesic  distance  of   $G$  on  $V$.  
     For  any  $r\geq  0$,
     define  ${\rm  Ind}(G,r) $
     to  be  the  simplicial  complex  whose  simplices  are 
     finite  subsets    of  $V$
     with  the mutual  distances  of  the   vertices     greater  than   $2r$. 
When  $r$  runs  from  $1/2$  to  $+\infty$,   
${\rm  Ind}(G,-)$  gives  a  filtration  of  ${\rm  Ind}(G)$.  
  Let   $G^{2r}$   be  the  $2r$-distance  power  of   $G$, 
  i.e.  $G^{2r}$  is  the  graph  obtained    by  adding  certain  edges  to   $G$ 
  such  that  the  vertex  set  of  $G^{2r}$   is   $V$   and  $(u,v)$  is  an  edge  of  
  $G^{2r}$  if  and  only  if  $d_G(u,v)\leq   2r$.  
Then  $G^1= G$.    
When  $r$  runs  from  $1/2$  to  $+\infty$,   
we  have  a  filtration  $G^{2-}$  of  graphs 
 and  consequently  an  induced  filtration  
${\rm  Ind}(G^{2-})$  of   ${\rm  Ind}(G)$.   
Note  that  ${\rm  Ind}(G,r)={\rm  Ind}(G^{2r})$  for  any  $r\geq  0$  
thus  the  filtration  ${\rm  Ind}(G,-)$  coincides  with  the  filtration  
${\rm  Ind}(G^{2-})$.  
In  particular,  let   $G$  be   the  cycle  $C_n$.     
The  homotopy  types  of  ${\rm  Ind}(C_n^r)$  for  $r\geq  1$  and  $n\geq   5r+4$  
 were  studied  by  
M.  Adamaszek \cite[Theorem~1.1]{indhomotopy}. 
Consequently,   certain  information  of    the  homotopy  types  
for the  filtration   ${\rm  Ind}(G,-)$  can  be  derived.  
 Similar  with  Subsection~\ref{ss1.1},
 the  persistence  homology  groups  of  ${\rm  Ind}(G,-)$   
  partially  detect    the  set  of  critical  points    
where   the  homotopy  types   of  ${\rm  Ind}(G,-)$   
 change.

The  independence  complex  ${\rm  Ind}(X)$  of  a   space  $X$
    is  a  generalization  of  the  
   classical  notion  of  independence  complexes  of    graphs.  
Let  $k$  run  over  all  positive  integers.  
The  union  of  all  the  unordered  configuration  spaces  ${\rm  Conf}_k(X)/\Sigma_k$
is  a  simplicial  complex  with  vertices  in  $X$,  which  will  be  called   
the  {\it  independence  complex}  on  $X$  and  denoted  as   ${\rm  Ind}(X)$. 
In  other  words,  ${\rm  Ind}(X)$  is  the  simplicial  complex  whose  simplices  
are  finite   subsets  of   $X$.  
In  addition,  if  $X$  is  a  metric  space,  
then     ${\rm  Ind}(X)$     has  a  filtration   ${\rm  Ind}(X,-)$
 where  for  any  $r\geq  0$,  the  simplicial  complex  
   ${\rm  Ind}(X,r)$   is  given  by  the  union  of     ${\rm  Conf}_k(X,r)/\Sigma_k$
   for  all  $k\geq  1$.  
   In  particular,   let   $X=V$.  
   Then  
       ${\rm  Ind}(G,r)= {\rm  Ind}(V,  r)$
 where  the  righthand  side  is  defined  
with  respect  to    $ d_G $.

As  an  analog  of  ${\rm  Ind}(X)$,   
 we  define the  {\it  directed  independence  complex}  $\overrightarrow{\rm  Ind}(X)$  
 to  be  the  union  of  the  ordered  configuration  spaces   ${\rm  Conf}_k(X)$  for  all  $k\geq  1$.  
 In  addition,  if  $X$  is  a  metric  space,  
then     $\overrightarrow {\rm  Ind}(X)$     has  a  filtration   $\overrightarrow{\rm  Ind}(X,-)$
 where  for  any  $r\geq  0$,  
   $\overrightarrow{\rm  Ind}(X,r)$   is  given  by  the  union  of     ${\rm  Conf}_k(X,r)$
   for  all  $k\geq  1$.  
The  canonical  coverings  from the  ordered  configuration  spaces  to  the  
unordered  configuration  spaces  induce a  projection 
$\pi:  \overrightarrow {\rm  Ind}(X)\longrightarrow   {\rm  Ind}(X)$ 
such that  the  filtrations are  preserved.

 Let   $X$  be   a   differentiable  manifold  $M$.  
 In  \cite{jgp},   $\overrightarrow {\rm  Ind}(M)$   
 as  well as  $ {\rm  Ind}(M)$   is  regarded as  $\Delta$-manifolds 
 and  thereby    double  chain  complexes  for  
 the  differential  forms  on     $\overrightarrow {\rm  Ind}(M)$   
   are  constructed.   In  addition,  if   $M$  is  a  submanifold  of  $\mathbb{R}$,  
   then  
  double  chain  complexes  for  
 the  differential  forms  on     $ {\rm  Ind}(M)$   
   are  constructed   as  well. 
   Let  $g$  be  a  Riemannian  metric  on  $M$  and  $d$  be  the geodesic  distance 
   induced  by  $g$.  
   Then  with  the  help  of  \cite{jgp}, 
   the  filtration  $\overrightarrow {\rm  Ind}(M,-)$ 
 induces  persistence  double  complexes  of  differential  forms  
and  if  $M$  is  a  submanifold  of  $\mathbb{R}$,
then   the  filtration  $ {\rm  Ind}(M,-)$ 
 induces  persistence  double  complexes  of  differential  forms.

\subsection{Hyper(di)graphs  on  manifolds  and  their  double  complexes}

Let  $V$  be  a  discrete  set  whose elements are vertices.  
Let  $\Delta[V]$  be  the  collection  of  all  the   non-empty  finite  subsets  of   $V$.  
 A   {\it   hypergraph}   $\mathcal{H}$  on   $V$  is  a  subset  of  $\Delta[V]$,  
 i.e.  $\mathcal{H}$  is  a  collection  of  certain   non-empty  finite  subsets  of   $V$.  
 The  elements  of  $\mathcal{H}$  are  {\it  hyperedges}
 (cf.  \cite{berge}).  
 In  particular,  if  
 for  any  $\sigma\in \mathcal{H}$  and  any   non-empty  subset  $\tau$  of   $\sigma$,  
 it  holds  that  $\tau\in \mathcal{H}$,  then  $\mathcal{H}$  is     a  
 {\it  simplicial  complex}  and  is  denoted  by  $\mathcal{K}$  (cf.  \cite{parks,h1});
 and  if    for  any  $\sigma\in \mathcal{H}$  and  any  finite  superset  $\tau$  of   $\sigma$,  
 it  holds  that  $\tau\in \mathcal{H}$,  then  $\mathcal{H}$  is  called  an  
 {\it  independence  hypergraph}  and  is  denoted  by  $\mathcal{L}$
 (cf.  \cite{camb2023,jktr2023,ca26}). 
 Note  that     $\Delta[V]$  is  a  simplicial  complex  and   is  also  an  independence  hypergraph. 
 Some  dualities  between  simplicial  complexes  and  independence  hypergraphs 
 are   investigated  by  the  present  author  \cite{camb2023, ca26} from  the  perspective  of   differential  calculus.   
 Inspired  by  the  Alexander  duality  of  (random)  simplicial  complexes  
 by  M. Farber, L. Mead and T. Nowik
 \cite{duality-jta}, 
 some  dualities  between 
  random  simplicial  complexes  and  random  independence  hypergraphs 
 are   investigated 
     by  C.  Wu,  J.  Wu  and  the  present  author 
       \cite{jktr2023}  from  the   perspective  of  the  map  algebra.

 Given  a  hypergraph  $\mathcal{H}$  on  $V$,  
 the   {\it  associated  simplicial  complex}  $\Delta\mathcal{H}$
   is  the  smallest  
simplicial  complex  containing  $\mathcal{H}$  (cf.  \cite{parks,h1}),  
the  {\it  lower-associated  simplicial  complex}  $\delta\mathcal{H}$  is  the  largest  
simplicial  complex  contained in   $\mathcal{H}$ (cf.  \cite{jktr2,jktr2022rel}),  
the   {\it  associated  independence  hypergraph}  $\bar\Delta\mathcal{H}$
   is  the  smallest  
independence  hypergraph  containing  $\mathcal{H}$ (cf.  \cite{jktr2023,ca26}),  
and  the  {\it   lower-associated  independence  hypergraph}  $\bar\delta\mathcal{H}$  is  the  largest  
independence  hypergraph  contained in   $\mathcal{H}$ (cf.  \cite{jktr2023,ca26}).

Let  $R_\bullet(\mathcal{H})$  be  graded  free  $R$-module  spanned  by  the  hyperedges 
of  $\mathcal{H}$.  
The  {\it  infimum  chain  complex}  ${\rm  Inf}_\bullet(\mathcal{H})$  
is  the  largest  sub-chain  complex  of  $C_\bullet(\Delta\mathcal{H})$
contained  in  $R_\bullet(\mathcal{H})$  and  the  
 {\it  supremum  chain  complex}  ${\rm  Sup}_\bullet(\mathcal{H})$  
is  the  smallest sub-chain  complex  of  $C_\bullet(\Delta\mathcal{H})$
containing  $R_\bullet(\mathcal{H})$  (cf.  \cite{h1}).  
In  2019,  S.  Bressan,   J.   Li,   J.  Wu  and  the  present  author 
  \cite[Sect.~3]{h1}  proved   that   the  canonical  inclusion  of  
${\rm  Inf}_\bullet(\mathcal{H})$   into  ${\rm  Sup}_\bullet(\mathcal{H})$ 
is  a  quasi-isomorphism. 
 In  particular,  if  $\mathcal{H}$  is  a  simplicial  complex  $\mathcal{K}$,  
 then  both ${\rm  Inf}_\bullet(\mathcal{K})$   and   ${\rm  Sup}_\bullet(\mathcal{K})$
 are  the  usual   chain  complexes  of  $\mathcal{K}$.

A  {\it  directed  hyperedge}  on  $V$  is  an  ordered  sequence  of  finite  distinct  vertices  in  $V$.  
A   {\it   hyperdigraph}   $\vec {\mathcal{H}}$  on   $V$  is  a
collection  of  directed  hyperedges  on  $V$
(cf.  \cite[Sect.~3.1]{hdg}).   
In  2023,  D. Chen,  J.  Liu,  G.~W.~ Wei  and  J.  Wu \cite{hdg}
studied  the     infimum  chain  complex  ${\rm  Inf}_\bullet(\vec{\mathcal{H}})$
as  well  as  the   supremum  chain  complex  ${\rm  Sup}_\bullet(\vec{\mathcal{H}})$
for  hyperdigraphs  and  proved  that  they  are  quasi-isomorphic. 
 It  is   proved  in  \cite[Lemma~3.8  and  Theorem~3.9]{hdg}
 that  the  canonical  projection  from  a  hyperdigraph  to  its  underlying  
 hypergraph  induces     chain  maps    between  the  infimum/supremum  
 chain  complexes.

Consider  a  family  of    embeddings  
$\{\varphi_\lambda:  V\longrightarrow  X\mid  \lambda\in  \Lambda\}$,
where  $\Lambda$  is  an  index  set.   
Then   
a  hypergraph  $\mathcal{H}$  on  $V$  together  with  the  images  of   the  family  of  embeddings 
${\rm  Im}\{\varphi_\lambda\mid  \lambda\in \Lambda\}$  will
will  be  called   a  hypergraph  on  $X$,   denoted  as  $\mathcal{H}(X)$.  
In  other  words,  a  hypergraph  $\mathcal{H}(X)$  on   $X$   is  a  hypergraph 
 with  its  vertices  moving  in  $X$  (cf.  \cite{jgp, pmlr26}).   
A  hyperedge  $\sigma(X)\in  \mathcal{H}(X)$  is  a  finite  set  of  distinct  points  in  $X$.   
 Thus  $\sigma(X)$  is  an  element  of    ${\rm   Ind}(X)$.  
Hence   $\mathcal{H}(X)$  is a  sub-hypergraph  of  ${\rm   Ind}(X)$.   
Therefore,  
${\rm   Ind}(X)$  is  the  largest  hypergraph  on  $X$.  
Note  that    ${\rm   Ind}(X)$  is  a  simplicial  complex  on  $X$  and   is  also  
an  independence  hypergraph  on   $X$.  
The  filtration  ${\rm   Ind}(X,-)$   induces  a  filtration  
  $\mathcal{H}(X,-)$  where    $\mathcal{H}(X,r)$  is  the    
 intersection  of    $\mathcal{H}(X)$   and  ${\rm   Ind}(X,r)$  for  any  $r\geq  0$.

The  space  of  all  the  finite  $r$-ball  packings  in  $(X,d)$  
is  the  independence  complex  ${\rm  Ind}(X,r)$,  
which  is  a  simplicial  complex  on  $X$.  
On  the  other  hand,  
the  space  of  all  the  finite  $r$-ball  coverings  of   $(X,d)$ 
is  an  independence  hypergraph  on  $X$  (cf.  Subsect.~\ref{ss-4.4-cover}).  
 The  maximal   packing  problem  of  $X$  and  the  minimal  covering  problem  of  $X$
 have  dualities  in  optimization  theory.  
 Hopefully,  
 this  would   give      potential      dualities 
   between  simplicial complexes and  independence  hypergraphs
 beside  the  dualities  investigated  in  \cite{camb2023,ca26,duality-jta,jktr2023}.

 Given  a  differentiable  manifold  $M$,  
 a  hyperdigraph  $\vec{\mathcal{H}}(M)$  on  $M$  is  a  graded  submanifold  of  
 $\overrightarrow{\rm   Ind}(M)$  
  and  a  hypergraph  $\mathcal{H}(M)$  on  $M$  is  a  graded  submanifold  of  
  ${\rm  Ind}(M)$.  
  The  canonical  projection  from  $\overrightarrow{\rm   Ind}(M)$    to  
  ${\rm  Ind}(M)$   induces  the   
  canonical  projection  from   $\vec{\mathcal{H}}(M)$  to 
  $\mathcal{H}(M)$,  
  where  $\mathcal{H}(M)$  is   the  underlying  hypergraph 
   of   $\vec{\mathcal{H}}(M)$.
  In  \cite{jgp},  
  by  considering  the  spaces  of  differential  forms,  
  the  present  author  generalized  the  
   infimum  chain  complexes   
as  well  as  the   supremum  chain  complexes  for  hyper(di)graphs 
and constructed  the  infimum  double   complexes   
as  well  as  the   supremum  double   complexes
for  hyper(di)graphs  on  $M$.  
In  particular,  if  
$M$  is  a  discrete  set   $V$,  
 then   the  infimum  double   complexes   
as  well  as  the   supremum  double   complexes 
reduces  to  the  usual  
infimum  chain   complexes   
as  well  as  the   usual  supremum  chain   complexes.  
 A   brief  explanation  for  the motivations  and  the  applications  
of  the  double  complexes  of  hyper(di)graphs  on  manifolds   
 was  given  in  \cite[Parts III,  IV]{pmlr26}.

\subsection{Results  of  this  paper}

In  this  paper,  we  develop  \cite{jgp} 
and  study  the   double  homology   as  well  as  the  spaces  of  differential  forms 
 for  hyper(di)graphs  on  Riemannian  manifolds.  
 We  discuss  about the relations  between  the  fundamental  group(oid)s  of 
 hyper(di)graphs  on  surfaces  and  the  braid  group(oid)s  on  surfaces.  
 As  applications,  we  study  the  hyper(di)graphs  of  packings  and  the  
 hyper(di)graphs  of  coverings.  We  use  the  geometric  realizations  of  
 hypergraphs    to  characterize  certain  regular  embeddings  of  manifolds.

Firstly,  we    study  the  double  homology  of  hyper(di)graphs  on  manifolds  
by  using  the  models  of      $\Delta$-manifolds  (cf.   \cite[Section~3]{jgp}).  
We  define    $\overrightarrow {\bf  FI}$  
to  be   the  category  whose  objects  are   finite  sequences  of  distinct  vertices 
and  whose  morphisms are  injections   preserving  the  orders  (cf.   
(\ref{eq-26.3.17.m1})).  
 A   ${\bf  FI}$-manifold  (resp.  $\overrightarrow{\bf  FI}$-manifold)
   is  a  covariant  functor  from  ${\bf  FI}$  (resp.  $\overrightarrow{\bf  FI}$)
 to  the  category  of  manifolds. 
 Then  ${\rm  Ind}(M)$  is  a   ${\bf  FI}$-manifold 
 and  $\overrightarrow {\rm  Ind}(M)$  is  a   $\overrightarrow{\bf  FI}$-manifold.   
 Consequently,  the
   $\Delta$-manifold  structure     on   $\overrightarrow {\rm  Ind}(M)$ 
 is  derived  from  the     $\overrightarrow{\bf  FI}$-manifold  structure   
 and   the
   $\Delta$-manifold   structure  on   $ {\rm  Ind}(M)$ 
 is  derived  from  the     $ {\bf  FI}$-manifold  structure.
 A   hypergraph  (resp.  hyperdigraph)   on  $M$   can  be  regarded  as  a  covariant  functor  from  a  
subcategory  of  ${\bf  FI}$   (resp.  $\overrightarrow{\bf  FI}$)  to  the  category  of  manifolds.   
We  prove  a   commutative  diagram  of  the  double  homology  groups  
 of  the   hyper(di)graphs  on  $M$   as  well  as  their   associated  (directed)
 simplicial  complexes  on  $M$  and  lower-associated  (directed)
 simplicial  complexes  on  $M$,  in  Theorem~\ref{th-6.13.1}  (Main  Result  I).

  Besides  the  double  homology  in  Theorem~\ref{th-6.13.1},  
  we  study  the  fundamental  groups  as  well as  the  fundamental  groupoids  
  of  hyper(di)graphs  on  surfaces  
  from  the  perspective   of  braid  groups.
  We  prove  that  the  canonical  inclusions  of  
    hyper(di)graphs  on  surfaces   into  the  configuration  spaces  of  surfaces
    will  induce
    homomorphisms  from the  fundamental  group(oid)s   of  
    the  hyper(di)graphs  on  surfaces  to  the  braid  group(iod)s   on   surfaces,    
  in  Proposition~\ref{pr-4.23.1}
  and  Proposition~\ref{pr-4.23.2}.

  As  applications  of  hypergraphs  on   $M$,  
  we  study  regular  maps  on  $M$  
  from  the  perspective  of  the  geometric  realizations  of  
  hypergraphs  on  $M$.  
  We  generalize  the  notion  of  geometric  realizations  of  
  hypergraphs  on  discrete  sets  and  define  the   notion  of  geometric  realizations  
  of  hypergraphs  on  $M$.  
  We  generalize  the  simplicial  characterizations  of  regular  embeddings  of  graphs  in  
  \cite{reg}  and  construct   the  simplicial  characterizations  of  regular  embeddings  of  manifolds.   
  We  prove  in  Proposition~\ref{pr-4.18-1}  
  that  a  $k$-regular  embedding  on  $M$  is  equivalent to  a  simplicial embedding 
  of  the  $(k-1)$-skeleton  of  the  independence  complex  on   $M$    
  into  the  orbit  space  of  
  $\Sigma_k$  in  the  Stiefel  manifold.
  Generalizing    the  notion  of  
  $k$-regular  maps  and  Proposition~\ref{pr-4.18-1},    
  we  prove in  Proposition~\ref{pr-4.19-01}  that  
  a  geometric realization  of  a  hypergraph  $\mathcal{H}(M)$  
  is  equivalent  to  a  simplicial  embedding  of  $\mathcal{H}(M)$
    into  the  orbit  space  of  
  $\Sigma_k$  in  the  Stiefel  manifold.

  Secondly,  
  motivated  by  the  double  complexes  of  differential  forms  in  
  \cite{jgp},  
  we   further  explore  the  space  of  differential  forms  for  hyper(di)graphs  
  on  manifolds.  
For    any  hyperdigraph  
$\vec{\mathcal{H}}(M)$  on  $M$  with  its  underlying  hypergraph 
$\mathcal{H}(M)$,  
let  ${\rm  Sym}(\vec{\mathcal{H}}(M))$  be  the  largest  
hyperdigraph  whose  underlying  
hypergraph  is
$\mathcal{H}(M)$   (cf.    eq.  
(\ref{eq-26.7.27})).   
By  considering  the   associated  (directed)   simplicial  complexes  
and  the  lower-associated  (directed)   simplicial  complexes  for  
$\vec{\mathcal{H}}(M)$,   ${\rm  Sym}(\vec{\mathcal{H}}(M))$  and  
$\mathcal{H}(M)$   respectively,  
 we  prove   a   commutative  diagram    with   three  rows  and  four  columns
  (cf.   diagram  (\ref{eq-5.7.21}))   
 of  double-graded  vector  spaces of   differential  forms 
 in   Theorem~\ref{th-26.5.3.1}  (Main  Result  II), 
   where     
 the  last  three  spaces  in  the  second  row  may  not  be  
 double  complexes  and  all  the  remaining  spaces  are  double  complexes.

    Thirdly,   suppose  in  addition  that   $M$  has  a  Riemannian  metric.  
  With  respect  to  the  induced  geodesic  distance  on  $M$,   
     the  configuration  spaces  of  $M$  have  induced  filtrations  
   ${\rm  Conf}_k(M,(-,-]) $  (cf.  eq.   (\ref{eq-filt-1}))   and  $ {\rm  Conf}_k(M,(-,-])/\Sigma_k$ 
   (cf.  eq.   (\ref{eq-filt-2})).    
   We  consider  the  homotopy  types  of  
   ${\rm  Conf}_k(M,(-,-]) $  and  $ {\rm  Conf}_k(M,(-,-])/\Sigma_k$.
   We   
    prove  in  Theorem~\ref{pr-4.5.1}    
    that  the  set  of   critical  points  where  the  homotopy  types  of  
    ${\rm  Conf}_k(M,(-,-]) $  change  equals  to  
    the  set  of   critical  points  where  the  homotopy  types  of  
    ${\rm  Conf}_k(M,(-,-])/\Sigma_k $  change. 
    Moreover, 
    all  the  birth-times  and  the  death-times  of  the  generators  of  the  persistence  homology  of  
    ${\rm  Conf}_k(M,(-,-]) $  and  $ {\rm  Conf}_k(M,(-,-])/\Sigma_k$ 
    are  contained  in  the  above  set  of   the  critical  points.  
    Furthermore,
    if  the  configuration  spaces  are  connected  and  simply-connected,  
    then  we  prove  in  Corollary~\ref{co-26.4.5.1}  that  
    the  birth-times  and  the  death-times  of  the  generators  of  the  persistence  homology
    will  fully  characterize  the  
    critical  points  where  the  homotopy  types  of  
     the  configuration  spaces  change.

    As  examples  of  hypergraphs  on  $M$,  
    we  consider  the  space  of  all  the  packings  of  closed     $r$-neighborhoods  in  $M$, 
    which  is  a  simplicial  complex  on  $M$,  in  Subsection~\ref{ss-26.7.27.1}.  
    We  also  consider  the  space  of    
      all  the  coverings  of  open    $r$-neighborhoods  in  $M$, 
    which  is  an  independence  hypergraph  on  $M$,  in  Subsection~\ref{ss-4.4-cover}.  
    Given  a  hypergraph   $\mathcal{H}(M)$  on  $M$,  
    we  consider  the  hyperedges  in  $\mathcal{H}(M)$
      which  are  packings  of  closed     $r$-neighborhoods  in  $M$
      and  consider    the  hyperedges  in  $\mathcal{H}(M)$
      which  are  coverings   of  open    $r$-neighborhoods  in  $M$.
      We  call  such  packings  the  {\it  packings of  closed     $r$-neighborhoods 
       with  constraint  $\mathcal{H}(M)$}   (cf.  Subsection~\ref{ss-26.7.27-2})
       and call  such  coverings 
       the  {\it  coverings of  open     $r$-neighborhoods 
       with  constraint  $\mathcal{H}(M)$}     
       (cf.  Subsection~\ref{ss-26.7.27.3}). 
             As  $r$  varies,  
      the  filtration      $ {\rm  Conf}_k(M,(-,-])/\Sigma_k$ 
       induces  filtrations  on  the  space  of  packings of  closed     $r$-neighborhoods 
       with  constraint  $\mathcal{H}(M)$ as well  as  
         filtrations  on  the  space  of
         coverings of  open     $r$-neighborhoods 
       with  constraint  $\mathcal{H}(M)$.  
  The   critical  points   where  the  homotopy  types  change  
  and  the  birth-times  and  the  death-times  of  the  generators  of  the  persistence  homology
  apply  to  these  induced  filtrations  as  well.

\subsection{Organization}

In   Section~\ref{sect.26.7.27.2},  
we  study  the  double  (co)homology  as  well  as  
the  fundamental  group(oid)s  of  graded  submanifolds  of  $\Delta$-manifolds,
 which  gives   algebraic  foundations  for   the  double  homology  of  hyper(di)graphs  on  manifolds 
 in  Section~\ref{s.26.7.28.3}
   and  the  fundamental  group(oid)s  of  hyper(di)graphs  on  manifolds  in  Section~\ref{s.26.7.28.6}.

 In  Sections~\ref{s.26.7.28.3}  --  \ref{s.26.7.28.7},
 we  study  the  double  homology  and  the  fundamental  group(oid)s 
 for  hyper(di)graphs  on  manifolds.    In   Section~\ref{s.26.7.28.3},  
we  study the  double  chain  complexes  as  well  as  the  double homology  of  
configuration  spaces  and  hyper(di)graphs  on  manifolds  from  the  perspective  of  
${\bf   FI}$-manifolds,  $\overrightarrow{\bf   FI}$-manifolds  and  $\Delta$-manifolds.  
 In   Section~\ref{s.27.7.28.4},  
 we  study  the  space  of  packings   of  closed  $r$-neighborhoods  with  certain  constraints and 
 the  space  of  coverings  of  open  $r$-neighborhoods  with  certain  constraints  
 on  Riemannian  manifolds, 
 as  examples  for  Section~\ref{s.26.7.28.3}.  
 In  Section~\ref{s-27.7.28.5}, 
 we  study  the   morphisms  bewteen hyper(di)graphs  induced  by  maps  between  manifolds.  
 In    particular,  we  study  the  morphisms  between  the  spaces  of  packings  
 as  well  as   the  morphisms  between  the  spaces  of  coverings 
 induced  by  isometric  maps  between  Riemannian  manifolds.  
 In   Section~\ref{s.26.7.28.6},  
 we  study  the  fundamental  group(oid)s  of  hyper(di)graphs  on  surfaces  
 as  well as  their  relations  with  the   braid  groups on  surfaces.  
In   Section~\ref{s.26.7.28.7},  
we  study  the  hyper(di)graphs  of  frames  on  manifolds  as   extended  examples
for  Section~\ref{s.26.7.28.3},  
where  the  underlying  space  of  the  vertices  is  an  
 infinite-dimensional    space  of  cross-sections  of  tangent  bundles.

In   Section~\ref{s-27.7.28.8},  
we  characterize  the  regular  embedding  problem  of  manifolds  
equivalently  as  geometric  realizations  of  skeletons  of  the  independence  complexes.    
 With  the  help  of  Section~\ref{s-27.7.28.5}  and  Section~\ref{s.26.7.28.7},   
  we  characterize  geometric  realizations  of  hypergraphs  on  manifolds 
  equivalently  by  injective   morphisms  of  hypergraphs  into  the  
  space  of  frames  in  Euclidean  spaces.

  In   Section~\ref{26.6.5-s7},  
  we  study  the  space  of  differential  forms  for  hyper(di)graphs  on  manifolds  as  well  
  as  their  associated  (directed)  simplicial  complexes  and  lower-associated  
  (directed)  simplicial  complexes.  
  We  prove   a  commutative diagram  of  these  spaces  in  Theorem~\ref{th-26.5.3.1}.

\section{Homology  of  $\Delta$-manifolds  and  their  graded  submanifolds}
\label{sect.26.7.27.2}

In  this  section,  we  construct  double  homology groups  for 
$\Delta$-manifolds  as  well as  their  graded  submanifolds.  
In  Subsection~\ref{26.5-ss-2.1},  
 we  review  the  chain  complexes  of  $\Delta$-sets  
 and  the  infimum/supremum  chain  complexes  of  graded  subsets  of  
 $\Delta$-sets.  
 In  Subsection~\ref{ss-2.fi}, 
  we  construct  $\Delta$-sets  for   ${\bf  FI}$   categories 
  and  consequently  construct 
    the  infimum/supremum  chain  complexes  for   subcategories  of  
       ${\bf  FI}$   categories.   
 In  Subsection~\ref{ss-2.2-26.5.22},  
 we   give  the 
chain  complexes  of  $\Delta$-modules  
 and  the  infimum/supremum  chain  complexes  of  graded  submodules  of  
 $\Delta$-modules.
   In  Subsection~\ref{ss2.3-26.5.22},  
   we  prove  that the  homology  of     $\Delta$-manifolds  are  $\Delta$-modules    
   and  thereby    construct  double  homology groups   for  $\Delta$-manifolds 
   as  well  as  their  graded  submanifolds.

\subsection{Chain  complexes  for  
$\Delta$-sets  and  their  graded  subsets}\label{26.5-ss-2.1}

A  {\it  $\Delta$-set}   $X $  is  a  collection  of  sets  $X_\bullet=\{X_n\}_{n\geq  0}$  
together  with  maps  $\partial_n^i:  X_n\longrightarrow  X_{n-1}$  
for  any  $n\geq  1$  and  any   $0\leq  i\leq  n$
such  that  
\begin{eqnarray}\label{eq-26.5.17.55}
\partial^i_{n-1}\partial^j_n= \partial^{j-1}_{n-1}\partial^i_n 
\end{eqnarray}
  for  any  $0\leq i<j\leq  n$.  
  Let  $X $  be a  $\Delta$-set. 
  If  necessary,  
  we  may  regard     $X$  as  a category  whose  objects  are  
 $X_n$,  where  $n\geq  0$,  and  whose  morphisms are  
 the  maps  $\partial_n^i$,   where  $n\geq  1$  and      $0\leq  i\leq  n$.  
A  graded  subset  $U $  of  $X $   is  a  collection  of  sets  
$U_\bullet=\{U_n\}_{n\geq  0}$   such  that  $U_n\subseteq  X_n$  for  any  $n\geq  0$.  
Let  $\Delta   U$  be  the  smallest   $\Delta$-subset  of  $X$  containing  $U$.  
 Let  $\delta  U$  be  the  largest  $\Delta$-subset  of  $X$  contained  in  $U$.

The  embedded  homology  of  hypergraphs  \cite{h1}
has  been  generalized  to  the  embedded  homology  of   graded  subsets  of  $\Delta$-sets  
by  C.  Wang  and  the  present  author    \cite[Sections~2 --  3]{deltaset1}.   
Let  $R$  be  a  commutative  ring  with  unit.  
Let  $R(X )=\bigoplus_{n\geq  0} R(X_n)$ 
 be  the  graded  free  $R$-module  spanned  by  $X $. 
 Then   $R(U )=\bigoplus_{n\geq  0} R(U_n)$
 is  a  graded  sub-$R$-module  of    $R(X )$.  
 Extend  each  $\partial_n^i$  to  an  $R$-linear  map  
 $\partial_n^i:  R(X_n)\longrightarrow  R(X_{n-1})$  and  take  the   alternating  sum    
 \begin{eqnarray}\label{eq-26.5.22-3}
 \partial_n=\sum_{i=0}^n (-1)^i \partial_n^i.
 \end{eqnarray} 
 Then  $\partial_{n-1}\partial_n=0$.   We  obtain   a  chain  complex  
 $(R(X ),\partial_\bullet)$.  
 By  \cite[Def.~1 and  Def.~2]{h1},   
 the    infimum  chain  complex  
 \begin{eqnarray*}
 {\rm  Inf}_n(U ,X )= R(U_n)\cap \partial_n^{-1}(R(U_{n-1})),~~~~~~  n\geq  0, 
 \end{eqnarray*}
  is  the  largest  sub-chain  complex  of   $(R(X ),\partial_\bullet)$
 contained  in  $R(U )$,     
and    the   supremum  chain  complex  
 \begin{eqnarray*}
 {\rm  Sup}_n(U ,X )= R(U_n)+ \partial_{n+1}(R(U_{n+1})),  ~~~~~~  n\geq  0,
 \end{eqnarray*}
  is  the  smallest  sub-chain  complex  of   $(R(X ),\partial_\bullet)$
 containing   $R(U )$.        
      Let  $X'$  be  another  $\Delta$-set   such  that  
   $U$  is  a  graded  subset  of  both  $X$  and  $X'$.  
   By  \cite[Prop.~3.2]{h1}, 
    \begin{eqnarray}\label{eq-26-3-15-h3}
 {\rm  Inf}_\bullet(U ,X )=  {\rm  Inf}_\bullet(U ,X' ), ~~~~~~ 
 {\rm  Sup}_\bullet(U ,X )=  {\rm  Sup}_\bullet(U ,X' )
 \end{eqnarray}
 Therefore,  
 the   chain  complexes (\ref{eq-26-3-15-h3})
 only  depend   on  $U$,  which  will  be    
   simply  denoted  as  ${\rm  Inf}_\bullet(U)$
 and  ${\rm  Sup}_\bullet(U)$  respectively.

Let  $Y$  be  another  $\Delta$-set  given  by  $Y_\bullet=\{Y_n\}_{n\geq  0}$.    
A  {\it  $\Delta$-map}  $f:  X\longrightarrow  Y$ is  a  sequence  of  maps  
$f_\bullet:  X_\bullet\longrightarrow  Y_\bullet$   such  that  
\begin{eqnarray}\label{eq-26.5.20.9}
f_{n}\partial_n^i= \partial_{n}^i  f_n
\end{eqnarray}
  for  any  $n\geq  1$ and  any  $0\leq  i\leq  n$.  
Let   $f:  X\longrightarrow  Y$  be  a  $\Delta$-map.   
Then  $f(U)$  is  a  graded  subset  of  $Y$.  
Moreover,  for  any  graded  subset  $W$  of  $Y$,
the  preimage  $f^{-1}(W)$  is  a  graded  subset  of $X$. 
 If  $U$  is  a  sub-$\Delta$-set  of  $X$,  
then  $f(U)$  is a  sub-$\Delta$-set  of  $Y$.  Conversely,  
if  $W$  is  a  sub-$\Delta$-set  of  $Y$, then  
$f^{-1}(W)$   is  a  sub-$\Delta$-set  of  $X$.  
Therefore,  $f(\delta  U)=  \delta   f(U)$   and   $f(\Delta  U)=\Delta  f(U)$.

 \begin{proposition}\label{le-2.2.inc}
 For  any  graded  subset  $U$  of  any  $\Delta$-set  $X$
 and  any  $\Delta$-map  $f: X\longrightarrow  Y$,  
 we  have  a   commutative  diagram   of  chain  complexes  
 \begin{eqnarray}\label{eq-26.5.18.qu2}
 \xymatrix{
 R(\delta  U)\ar[r]\ar[d]
 &  {\rm  Inf}_\bullet(U)\ar[r]\ar[d]
 &   {\rm   Sup}_\bullet(U)\ar[r]\ar[d] 
 &  R(\Delta  U) \ar[r]\ar[d] 
 &R(X)\ar[d]^-{f_\#}\\
    R(\delta  f(U))\ar[r] 
 &  {\rm  Inf}_\bullet(f(U))\ar[r] 
 &   {\rm   Sup}_\bullet(f(U))\ar[r] 
 &  R(\Delta  f(U))\ar[r]
 &R(Y)
  }  
 \end{eqnarray}
 where 
 \begin{enumerate}[(i)]
 \item
  in  each  row,  the  second  map  is  a  quasi-isomorphism
 and  all  the  horizontal  maps  are  inclusions;
 \item
 all  the  vertical  maps  are  induced  by  $f$,
 which  implies  that  the  first  row  of  (\ref{eq-26.5.18.qu2})  is   functorial  with  respect  to  
$\Delta$-maps  between  $\Delta$-sets.   
 \end{enumerate}  
 \end{proposition}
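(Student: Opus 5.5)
Since $f$ is a $\Delta$-map, relation (\ref{eq-26.5.20.9}) gives $f_\#\partial_n=\partial_n f_\#$ on $R(X_n)$, so $f_\#:R(X)\longrightarrow R(Y)$ is a chain map. The plan is to show that $f_\#$ carries each subcomplex in the first row into the corresponding subcomplex in the second row. The vertical maps are then the restrictions of $f_\#$, hence chain maps, and every square commutes because every horizontal map is an inclusion. Before that, I check the row statements in (i). The inclusions $R(\delta U)\subseteq {\rm Inf}_\bullet(U)$ and ${\rm Sup}_\bullet(U)\subseteq R(\Delta U)$ hold because $R(\delta U)$ is a sub-chain complex contained in $R(U)$, while $R(\Delta U)$ is a sub-chain complex containing $R(U)$. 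The inclusion ${\rm Inf}_\bullet(U)\subseteq {\rm Sup}_\bullet(U)$ is immediate from the definitions. That this second map is a quasi-isomorphism is \cite[Sect.~3]{h1} (generalized to graded subsets of $\Delta$-sets in \cite[Sections~2--3]{deltaset1}), which I will quote; the same applies to $f(U)\subseteq Y$.

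For the vertical maps, the first and fourth are clear. Since $f(\delta U)$ is a $\Delta$-subset of $Y$ contained in $f(U)$, we get $f(\delta U)\subseteq\delta f(U)$. Similarly $f(\Delta U)$ is a $\Delta$-subset of $Y$, and it equals $\Delta f(U)$ as noted before the statement. (The paper also asserts $f(\delta U)=\delta f(U)$; only the inclusion is needed here.) The Sup column is also direct: $f_\#(R(U_n))\subseteq R(f(U)_n)$, and
$$f_\#\partial_{n+1}R(U_{n+1})=\partial_{n+1}f_\#R(U_{n+1})\subseteq \partial_{n+1}R(f(U)_{n+1}),$$
so $f_\#{\rm Sup}_n(U)\subseteq{\rm Sup}_n(f(U))$.

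The Inf column is where I expect the real difficulty. For $x\in{\rm Inf}_n(U)$ we have $f_\#x\in R(f(U)_n)$ and $\partial_n f_\#x=f_\#\partial_n x\in f_\#R(U_{n-1})\subseteq R(f(U)_{n-1})$, so $f_\#x\in{\rm Inf}_n(f(U))$. This argument works because the image of a chain supported on $U$ is supported on $f(U)$, even if cancellations occur under $f_\#$. Alternatively, $f_\#({\rm Inf}_\bullet(U))$ is a sub-chain complex of $R(Y)$ contained in $R(f(U))$, so by maximality of ${\rm Inf}_\bullet(f(U))$ it lies inside it. The point to handle with care is working with images rather than preimages: $f^{-1}(f(U))$ may be strictly larger than $U$, but no preimage is ever needed.

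Finally, for (ii): if $g:Y\longrightarrow Z$ is another $\Delta$-map, then $(gf)_\#=g_\#f_\#$ and $g(f(U))=(gf)(U)$. Since all the vertical maps are restrictions of these linear maps, the first row defines a functor on the category of pairs $(X,U)$ with $\Delta$-maps.
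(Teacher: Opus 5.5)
Your proof is correct and takes the same route as the paper: quote the known quasi-isomorphism ${\rm Inf}_\bullet\to{\rm Sup}_\bullet$, then check that the restrictions of the chain map $f_\#$ land in the corresponding subcomplexes — the paper calls this ``direct'', and you verify it column by column. You are also right to use only $f(\delta U)\subseteq\delta f(U)$, since the equality stated in the paper can fail: collapse the endpoints $a,b$ of an edge $e$ so that $e$ becomes a loop, and take $U=\{a,e\}$; then $f(\delta U)=\{a\}$ while $\delta f(U)=\{a,e\}$.
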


\begin{proof}
By  \cite[Prop. 2.4]{h1}, 
 the  canonical  inclusion  
 $
 {\rm  Inf}_\bullet(U)\longrightarrow  
  {\rm  Sup}_\bullet(U)
 $
 induces  an  isomorphism  of  homology  groups. 
 Thus  in  each  row  of  (\ref{eq-26.5.18.qu2}),
   the  second   map  
  is  a  quasi-isomorphism. 
  It  is  direct  that  
    the  diagram  (\ref{eq-26.5.18.qu2})   commutes.  
  Hence  the  first  row  of   (\ref{eq-26.5.18.qu2})
  is   functorial   
with  respect to  $f$.  
\end{proof}

\subsection{Chain  complexes  for  ${\bf  FI}$   categories  and  their   subcategories}
\label{ss-2.fi}

Church-Eilenberg-Farb  \cite{fi2015}  used  
${\bf  FI}$  for  the  category  whose  objects  are  all  finite  sets 
$\{1,2,\ldots,k\}$  
 and  whose  morphisms are  all  injective  maps.  
In  this  subsection,  
we   let   $\overrightarrow{\bf  FI}$  denote  the 
category  whose 
  objects  are  all   finite  sequences    
$(i_0,i_1,\ldots,i_n)$  of  distinct  elements  
 and  whose  morphisms  from  
$  (i_0,i_1,\ldots,i_n)$  to   $  (j_0,j_1,\ldots,j_m)$  
are  all  injective  maps between  the  underlying  sets
\begin{eqnarray} \label{eq-26.3.17.m1}
\vec\iota:   \{i_0,i_1,\ldots,i_n\}\longrightarrow  \{j_0,j_1,\ldots,j_m\}
 \end{eqnarray}
preserving  the  orders  of  the  sequences.  
We  call  $i_0,i_1,\ldots,i_n$  the  {\it  vertices}.  
Here  $n\leq  m$,
$(i_0,i_1,\ldots,i_n)$  is  a  permutation  of  $(0,1,\ldots,n)$,
$(j_0,j_1,\ldots,j_m)$  is  a  permutation  of  $(0,1,\ldots,m)$,
and     $\vec\iota$  in  (\ref{eq-26.3.17.m1})   an  injection  such  that  
\begin{eqnarray*}
\vec\iota(i_0,i_1,\ldots,i_n)=(\vec\iota(i_0),\vec\iota(i_1),\ldots,\vec\iota(i_n)) 
\end{eqnarray*}
  is  a  subsequence  of    $(j_0,j_1,\ldots,j_m)$.  
The  canonical  projection   
\begin{eqnarray}\label{eq-3-16-p1}
 \pi:  \overrightarrow{\bf  FI}\longrightarrow   {\bf  FI}
\end{eqnarray}
 is  a  functor  
sending  a  finite  sequence  $(i_0,i_1,\ldots,i_n)$  of  distinct  elements
to  its  underlying  set  $\{0,1,\ldots,n\}$  
and  sending  an  injection   $\vec\iota$  of   sequences  
to  the  induced  injection  of  the  underlying  sets 
\begin{eqnarray}\label{eq-26.3.17.m2} 
\iota:   \{0,1,\ldots, n\}\longrightarrow  \{0,1,\ldots, m\}.  
 \end{eqnarray}

 \begin{lemma}\label{pr-26.5.23.fi1}
 \begin{enumerate}[(1)]
 \item
 The    collection  ${\rm  Obj}_\bullet(\overrightarrow{\bf  FI})$  of  the   objects  
 in  $\overrightarrow{\bf  FI}$  
  is     a      $\Delta$-set.  Consequently,  
  we  have  a   chain  complex 
   $(R({\rm  Obj}_\bullet(\overrightarrow{\bf  FI})),  \partial_\bullet)$; 
  \item
  The  collection  ${\rm  Obj}_\bullet({\bf  FI})$  of  the  objects  in  ${\bf  FI}$  
  is     a      $\Delta$-set. 
   Consequently,
   with  respect  to  the  canonical   total  order on  positive  integers,
   we  have  a   chain  complex 
   $(R({\rm  Obj}_\bullet({\bf  FI})),  \partial_\bullet)$. 
  \end{enumerate}
 \end{lemma}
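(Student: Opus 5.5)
We will make both collections into $\Delta$-sets by grading objects by length and letting the face maps delete vertices. Then we will check the simplicial identity (\ref{eq-26.5.17.55}) directly and apply the construction of Subsection~\ref{26.5-ss-2.1} to obtain the chain complexes.

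For (1), let ${\rm Obj}_n(\overrightarrow{\bf FI})$ be the set of objects $(i_0,i_1,\ldots,i_n)$ of length $n+1$. For $n\geq 1$ and $0\leq i\leq n$, define
\begin{eqnarray*}
\partial_n^i(i_0,\ldots,i_n)=(i_0,\ldots,\widehat{i_i},\ldots,i_n),
\end{eqnarray*}
which deletes the $i$-th entry. The result is again a sequence of distinct elements. It comes with a canonical order-preserving inclusion into $(i_0,\ldots,i_n)$, so it is an object of $\overrightarrow{\bf FI}$, and the face map corresponds to a morphism of $\overrightarrow{\bf FI}$. If objects are required to be permutations of $\{0,\ldots,n\}$, we relabel the remaining entries by the unique order-preserving bijection $\{0,\ldots,n\}\setminus\{i_i\}\to\{0,\ldots,n-1\}$. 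Relabeling commutes with deletion, so the identities below are unaffected.

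To check (\ref{eq-26.5.17.55}), take $i<j$. The composite $\partial^i_{n-1}\partial^j_n$ first deletes position $j$ and then position $i<j$. The composite $\partial^{j-1}_{n-1}\partial^i_n$ first deletes position $i$, after which the old position $j$ has shifted to $j-1$, and then deletes it. Both composites remove exactly the entries at positions $i$ and $j$ and keep the rest in order, so they agree. Hence ${\rm Obj}_\bullet(\overrightarrow{\bf FI})$ is a $\Delta$-set. The alternating sum (\ref{eq-26.5.22-3}) then satisfies $\partial_{n-1}\partial_n=0$, which gives the chain complex $(R({\rm Obj}_\bullet(\overrightarrow{\bf FI})),\partial_\bullet)$.

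For (2), an object of ${\bf FI}$ is an unordered finite set, so there is no intrinsic "$i$-th vertex". This is the step that needs care. We use the canonical total order on positive integers to list a set of cardinality $n+1$ increasingly as $\{a_0<a_1<\cdots<a_n\}$. We then set $\partial_n^i$ to delete $a_i$, relabeling by the order-preserving bijection if objects are normalized to standard sets.

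Equivalently, choose the section $s$ of the projection $\pi$ in (\ref{eq-3-16-p1}) that sends a set to its increasing sequence. Deleting an entry of an increasing sequence leaves an increasing sequence, so $s$ commutes with the face maps. Therefore the faces on ${\bf FI}$ are $\pi\circ\partial^i_n\circ s$, and they inherit identity (\ref{eq-26.5.17.55}) from part (1). The same argument as before gives $\partial_{n-1}\partial_n=0$ and the chain complex $(R({\rm Obj}_\bullet({\bf FI})),\partial_\bullet)$.

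The main obstacle is simply making the face maps well defined on unordered sets. The chosen total order resolves it, and this is why the statement of (2) refers explicitly to the canonical order.
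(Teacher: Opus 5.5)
Your proposal is correct and follows the paper's proof. The paper also grades objects by length, lets $\partial_n^\lambda$ delete the $\lambda$-th entry (resp.\ the element $\lambda$ of $\{0,1,\ldots,n\}$ in the canonical order), and then applies the alternating-sum construction of Subsection~\ref{26.5-ss-2.1}. The differences are minor: you actually verify the identity (\ref{eq-26.5.17.55}) and handle relabeling, which the paper simply asserts, while the paper additionally adjoins the harmless augmentation ${\rm Obj}_{-1}=\{\emptyset\}$, which you omit.
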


\begin{proof}
(1)  
For  any $n\geq  0$,   let  
$ {\rm  Obj}_n(\overrightarrow{\bf  FI})$  be  the  
 collection  of   $(n+1)$-sequences    
$(i_0,i_1,\ldots,i_n)$  of  distinct  elements. 
Let    
$ {\rm  Obj}_{-1}(\overrightarrow{\bf  FI})=\{\emptyset\}$.  
Then    
\begin{eqnarray}\label{eq-26-3.13-1}
{\rm  Obj}(\overrightarrow{\bf  FI})=\bigcup_{n\geq  -1} {\rm  Obj}_n(\overrightarrow{\bf  FI}). 
\end{eqnarray}
 For   any  $0\leq  \lambda\leq  n$,  
 define  the   face  map 
   \begin{eqnarray*} 
    \partial_n^\lambda: {\rm  Obj}_n(\overrightarrow{\bf  FI})\longrightarrow 
    {\rm  Obj}_{n-1}(\overrightarrow{\bf  FI})
    \end{eqnarray*}
    by 
 \begin{eqnarray*}
 \partial_n^\lambda:   (i_0,i_1,\ldots,i_n)\longrightarrow(i_0,\ldots, \widehat{i_\lambda}, \ldots,i_n). 
 \end{eqnarray*}
  Then  (\ref{eq-26-3.13-1})   
  is    a    $\Delta$-set.  
  By  Subsection~\ref{26.5-ss-2.1},  we  have  a   chain  complex 
   $(R({\rm  Obj}_\bullet(\overrightarrow{\bf  FI})),  \partial_\bullet)$
   where  $\partial_n=\sum_{\lambda=0}^n(-1)^{\lambda}\partial_n^\lambda$
     for any  $n\geq  0$.

 (2)   
 For  any $n\geq  0$,   let  
$ {\rm  Obj}_n({\bf  FI})$  be  the  
 collection  of   $(n+1)$-sets    
$\{0,1,\ldots,n\}$. 
Let    
$ {\rm  Obj}_{-1}({\bf  FI})=\{\emptyset\}$.  
Then    
\begin{eqnarray}\label{eq-26-3.13-2}
{\rm  Obj}({\bf  FI})=\bigcup_{n\geq  -1} {\rm  Obj}_n({\bf  FI}). 
\end{eqnarray}
 For    any  $0\leq  \lambda\leq  n$,  
 define  the   face  map 
   \begin{eqnarray*}
    \partial_n^\lambda: {\rm  Obj}_n({\bf  FI})\longrightarrow 
    {\rm  Obj}_{n-1}({\bf  FI})
    \end{eqnarray*}
    by 
 \begin{eqnarray*}
 \partial_n^\lambda:   \{0,1,\ldots,n\}\longrightarrow\{0,\ldots,\widehat{\lambda},\ldots,n\}. 
 \end{eqnarray*}
  Then  (\ref{eq-26-3.13-2})   
  is    a    $\Delta$-set.   
  By  Subsection~\ref{26.5-ss-2.1},  we  have  a   chain  complex 
   $(R({\rm  Obj}_\bullet({\bf  FI})),  \partial_\bullet)$
   where  the  alternating  sum   $\partial_n$  is  given 
    with  respect  to  the  canonical   total  order on  positive  integers.  
   \end{proof}
   
   \begin{lemma}\label{le-3.16-1}
 The  projection  $\pi$  in  (\ref{eq-3-16-p1})  induces  a  projective  chain  map  
 \begin{eqnarray}\label{eq-26.3.16.p9}
 \pi_\#:  R({\rm  Obj}_\bullet(\overrightarrow{\bf  FI})) \longrightarrow
   R({\rm  Obj}_\bullet({\bf  FI}))
 \end{eqnarray}
 given  by  
  \begin{eqnarray*}
 \pi_\#(s(0),s(1),\ldots,s(n))={\rm  sgn}(s) \{0,1,\ldots,n\}
 \end{eqnarray*}
 for  any  permutation  $s\in \Sigma_{n+1}$  and  any  $n\geq  0$.  
 \end{lemma}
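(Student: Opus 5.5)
We must check that $\pi_\#$ commutes with the boundaries, i.e. $\partial_n\pi_\# = \pi_\#\partial_n$ on each generator $(s(0),\ldots,s(n))$ with $s\in\Sigma_{n+1}$. Since both sides are $R$-linear, it suffices to check this on generators.

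\textbf{Well-definedness.} First we make the identification explicit. A face of $(s(0),\ldots,s(n))$ is a sequence of $n$ distinct elements. Via the canonical order-preserving relabelling of its underlying set to $\{0,\ldots,n-1\}$, it is again of the form $(t(0),\ldots,t(n-1))$ with $t\in\Sigma_n$. Its image under $\pi_\#$ is ${\rm sgn}(t)$ times the underlying set. So $\pi_\#$ is the signed underlying-set map: a sequence goes to ${\rm sgn}$ (the parity of the permutation sorting it into increasing order) times its underlying set, viewed as an object of ${\bf FI}$ with its canonical total order. This is what makes $\pi_\#$ well-defined on all of $R({\rm Obj}_\bullet(\overrightarrow{\bf FI}))$, including faces.

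\textbf{Computing $\partial_n\pi_\#$.} We have
\[
\partial_n\pi_\#(s(0),\ldots,s(n))={\rm sgn}(s)\sum_{\mu=0}^n(-1)^{\mu}\{0,\ldots,\widehat{\mu},\ldots,n\}.
\]

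\textbf{Computing $\pi_\#\partial_n$.} We have
\[
\pi_\#\partial_n(s(0),\ldots,s(n))=\sum_{\lambda=0}^n(-1)^{\lambda}\,{\rm sgn}(s_\lambda)\,\{0,\ldots,\widehat{s(\lambda)},\ldots,n\},
\]
where $s_\lambda$ is the permutation sorting $(s(0),\ldots,\widehat{s(\lambda)},\ldots,s(n))$. Reindex by $\mu=s(\lambda)$; then the underlying sets in the two expressions match term by term.

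\textbf{Key identity.} It remains to prove
\[
(-1)^{\lambda}\,{\rm sgn}(s_\lambda)={\rm sgn}(s)\,(-1)^{s(\lambda)}.
\]
Count inversions. The inversions of $s$ are those of $s_\lambda$ together with the inversions involving position $\lambda$. Let $a$ be the number of $j<\lambda$ with $s(j)>s(\lambda)$, and $b$ the number of $j>\lambda$ with $s(j)<s(\lambda)$. The number of entries smaller than $s(\lambda)$ is $s(\lambda)=(\lambda-a)+b$. Hence $a+b\equiv\lambda+s(\lambda)\pmod 2$, which gives the identity.

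Alternatively, one can move the entry $s(\lambda)$ to the front with $\lambda$ adjacent transpositions, and then move $\mu$ to position $\mu$ in the sorted order with $\mu$ transpositions.

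This sign bookkeeping is the only real obstacle; the identity is exactly the usual sign-rule of simplicial chains. Finally, the map is surjective (projective) since $\pi_\#(0,1,\ldots,n)=\{0,\ldots,n\}$.
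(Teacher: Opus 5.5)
Your argument is correct and is essentially the paper's route. The paper gives no computation of its own and defers to the analogous chain-map statements for hyperdigraphs (Chen--Liu--Wei--Wu, Lemma~3.8 and Theorem~3.9) and for regular maps (Proposition~4.3 of the cited work); your inversion count $(-1)^{\lambda}{\rm sgn}(s_\lambda)={\rm sgn}(s)(-1)^{s(\lambda)}$ supplies the sign-rule check those results amount to, and your order-preserving relabelling correctly handles the fact that faces of a permutation of $\{0,\ldots,n\}$ are no longer permutations of $\{0,\ldots,n-1\}$, a point the paper's setup glosses over.
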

 
 \begin{proof}
 The  proof  is  an  analog   of   \cite[Lemma~3.8  and  Theorem~3.9]{hdg}   
 and    \cite[Proposition~4.3]{reg}.  
 \end{proof}

Let  $\overrightarrow{\bf  HI}$  be  a  subcategory  of  $\overrightarrow{\bf  FI}$.  
The    collection  of   objects  of  $\overrightarrow{\bf  HI}$  
  is     a  graded  subset  of      (\ref{eq-26-3.13-1}),  i.e.   
\begin{eqnarray*}
{\rm   Obj}(\overrightarrow{\bf  HI})=\bigcup_{n\geq  -1}{\rm   Obj}_n(\overrightarrow{\bf  HI}), 
\end{eqnarray*}
where  for  any  $n\geq  -1$,  
\begin{eqnarray*}
{\rm   Obj}_n(\overrightarrow{\bf  HI})={\rm   Obj}_n(\overrightarrow{\bf  FI})\cap
  {\rm   Obj}(\overrightarrow{\bf  HI}).
\end{eqnarray*}
Define   $\Delta  \overrightarrow{\bf  HI}$  
to  be   the  smallest   subcategory  of  
$\overrightarrow{\bf  FI}$   containing   $\overrightarrow{\bf  HI}$   
such  that   
   ${\rm   Obj}_\bullet(\Delta  \overrightarrow{\bf  HI})$  
 is   a   sub-$\Delta$-set  of  (\ref{eq-26-3.13-1}).  
 Define   $\delta  \overrightarrow{\bf  HI}$  
to  be   the  largest   subcategory  of  
$\overrightarrow{\bf  HI}$   
such  that     ${\rm   Obj}_\bullet(\delta  \overrightarrow{\bf  HI})$  
 is   a   sub-$\Delta$-set  of  (\ref{eq-26-3.13-1}).  
By  Proposition~\ref{le-2.2.inc}, 
we  have   inclusions  of   chain  complexes  
\begin{eqnarray*}
R({\rm   Obj}_\bullet (\delta  \overrightarrow{\bf  HI}))\longrightarrow 
{\rm   Inf}_\bullet  ( \overrightarrow{\bf  HI}) \longrightarrow 
{\rm   Sup}_\bullet  ( \overrightarrow{\bf  HI}) \longrightarrow 
R({\rm   Obj}_\bullet (\Delta  \overrightarrow{\bf  HI}) )\longrightarrow 
R({\rm  Obj}_\bullet(\overrightarrow{\bf  FI})).  
\end{eqnarray*}

Let  ${\bf  HI}$  be  a  subcategory  of  ${\bf  FI}$. 
The    collection  of   objects  of  ${\bf  HI}$  
  is     a  graded  subset  of      (\ref{eq-26-3.13-2}),  i.e.   
\begin{eqnarray*}
{\rm   Obj}({\bf  HI})=\bigcup_{n\geq  -1}{\rm   Obj}_n( {\bf  HI}), 
\end{eqnarray*}
where  for  any  $n\geq  -1$,  
\begin{eqnarray*}
{\rm   Obj}_k( {\bf  HI})={\rm   Obj}_n( {\bf  FI})\cap
  {\rm   Obj}( {\bf  HI}).
\end{eqnarray*}
Define   $\Delta   {\bf  HI}$  
to  be   the  smallest   subcategory  of  
$ {\bf  FI}$   containing  $ {\bf  HI}$  
such  that  
   ${\rm   Obj}_\bullet(\Delta  {\bf  HI})$  
 is   a   sub-$\Delta$-set  of  (\ref{eq-26-3.13-2}).  
 Define   $\delta  {\bf  HI}$  
to  be   the  largest   subcategory  of  
$ {\bf  HI}$   
such  that     ${\rm   Obj}_\bullet(\delta   {\bf  HI})$  
 is   a   sub-$\Delta$-set  of  (\ref{eq-26-3.13-2}). 
 By  Proposition~\ref{le-2.2.inc},
 we  have   inclusions  of   chain  complexes  
\begin{eqnarray*}
R({\rm   Obj}_\bullet (\delta  {\bf  HI}))\longrightarrow 
{\rm   Inf}_\bullet  ( {\bf  HI}) \longrightarrow 
{\rm   Sup}_\bullet  (  {\bf  HI}) \longrightarrow 
R({\rm   Obj}_\bullet (\Delta   {\bf  HI}) )\longrightarrow 
R({\rm  Obj}_\bullet( {\bf  FI})).  
\end{eqnarray*}

 \begin{proposition}\label{pr-26.3.13-9}
 For  any  subcategory  $\overrightarrow{\bf  HI}$  of  $\overrightarrow{\bf  FI}$,  
 let  ${\bf  HI}=\pi(\overrightarrow{\bf  HI})$.
 Then  we  have  a  commutative  diagram  of  chain  complexes
 \begin{eqnarray*}
 \xymatrix{
 R({\rm   Obj}_\bullet (\delta  \overrightarrow{\bf  HI}))\ar[r]\ar[d]
 & 
{\rm   Inf}_\bullet  ( \overrightarrow{\bf  HI}) \ar[r]\ar[d]
&
{\rm   Sup}_\bullet  ( \overrightarrow{\bf  HI}) \ar[r]\ar[d]
& 
R({\rm   Obj}_\bullet (\Delta  \overrightarrow{\bf  HI}) )\ar[r]\ar[d]
& 
R({\rm  Obj}_\bullet(\overrightarrow{\bf  FI}))\ar[d]\\
R({\rm   Obj}_\bullet (\delta   {\bf  HI}))\ar[r] 
 & 
{\rm   Inf}_\bullet  (  {\bf  HI}) \ar[r] 
&
{\rm   Sup}_\bullet  (  {\bf  HI}) \ar[r] 
& 
R({\rm   Obj}_\bullet (\Delta   {\bf  HI}) )\ar[r] 
& 
R({\rm  Obj}_\bullet( {\bf  FI}))
 }
 \end{eqnarray*}
 where  the  horizontal  maps  are  canonical  inclusions  
 and  the  vertical  maps  are  canonical  projections  induced  from  
 (\ref{eq-26.3.16.p9}).  
 \end{proposition}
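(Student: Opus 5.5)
The natural strategy is to recognize the proposition as a special case of Proposition~\ref{le-2.2.inc}, once $\pi_\#$ is viewed as a map of $\Delta$-sets up to sign. A genuine $\Delta$-map is not available here, because $\pi_\#$ carries the sign ${\rm sgn}(s)$. So I will argue directly at the level of chain complexes, using only two facts.

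\emph{Fact one.} By Lemma~\ref{le-3.16-1}, $\pi_\#$ is a chain map.

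\emph{Fact two.} $\pi_\#$ sends each basis element of $R({\rm Obj}_n(\overrightarrow{\bf HI}))$ to $\pm$ a basis element of $R({\rm Obj}_n({\bf HI}))$. This holds because ${\bf HI}=\pi(\overrightarrow{\bf HI})$ is defined objectwise as the image of the objects.

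The steps, in order, are as follows.

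First, I show that $\pi$ commutes with the face operations up to sign. For a sequence $\sigma=(s(0),\ldots,s(n))$, the face $\partial^\lambda_n\sigma$ projects to a face of $\pi(\sigma)$, namely the one omitting $s(\lambda)$. Consequently $\pi(\delta\overrightarrow{\bf HI})\subseteq\delta{\bf HI}$: every face of $\pi(\sigma)$ is the image of some face of $\sigma$, and all iterated faces of $\sigma$ lie in $\overrightarrow{\bf HI}$, so the image is a $\Delta$-subset contained in ${\bf HI}$. Similarly, $\pi(\Delta\overrightarrow{\bf HI})\subseteq\Delta{\bf HI}$, since faces of elements of $\overrightarrow{\bf HI}$ map to faces of elements of ${\bf HI}$. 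This gives the first, fourth and fifth vertical maps as restrictions of $\pi_\#$.

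Second, I treat the Inf and Sup columns. For Sup, I write ${\rm Sup}_n(\overrightarrow{\bf HI})=R(\overrightarrow{\bf HI}_n)+\partial_{n+1}R(\overrightarrow{\bf HI}_{n+1})$. Fact two sends the first summand into $R({\bf HI}_n)$. Fact one gives $\pi_\#\partial_{n+1}=\partial_{n+1}\pi_\#$, so the second summand goes into $\partial_{n+1}R({\bf HI}_{n+1})$.

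For Inf, I take $x\in R(\overrightarrow{\bf HI}_n)$ with $\partial_n x\in R(\overrightarrow{\bf HI}_{n-1})$. Then $\pi_\#x\in R({\bf HI}_n)$, and $\partial_n\pi_\#x=\pi_\#\partial_n x\in R({\bf HI}_{n-1})$. Hence $\pi_\#x\in{\rm Inf}_n({\bf HI})$.

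Third, commutativity of every square is automatic. All horizontal maps are inclusions into $R({\rm Obj}_\bullet(\overrightarrow{\bf FI}))$ and $R({\rm Obj}_\bullet({\bf FI}))$ respectively, and all vertical maps are restrictions of the single map $\pi_\#$. The vertical maps are chain maps because they are restrictions of a chain map to subcomplexes that land in subcomplexes.

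The only nontrivial input is the chain-map property of $\pi_\#$, which Lemma~\ref{le-3.16-1} supplies. The sign ${\rm sgn}(s)$ is exactly what makes $\partial\pi_\#=\pi_\#\partial$ hold: deleting position $\lambda$ in the sequence corresponds to deleting the value $s(\lambda)$ from the set. The main point I expect to need care is the verification in the first step that $\delta$ and $\Delta$ are compatible with $\pi$. This needs the objectwise description of ${\bf HI}$ as images, together with the fact that faces of $\pi(\sigma)$ are exactly the images of faces of $\sigma$.
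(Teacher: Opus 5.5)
Your argument is correct and is essentially the paper's. Both proofs restrict the chain map $\pi_\#$ of Lemma~\ref{le-3.16-1} column by column, using that the faces of $\pi(\vec\sigma)$ are exactly the images of the faces of $\vec\sigma$ and that $\pi_\#\partial=\partial\pi_\#$.

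The one difference is in strength. The paper asserts equalities $\pi(\Delta\overrightarrow{\bf HI})=\Delta{\bf HI}$, $\pi(\delta\overrightarrow{\bf HI})=\delta{\bf HI}$, $\pi_\#({\rm Inf}_\bullet(\overrightarrow{\bf HI}))={\rm Inf}_\bullet({\bf HI})$ and $\pi_\#({\rm Sup}_\bullet(\overrightarrow{\bf HI}))={\rm Sup}_\bullet({\bf HI})$. You prove only the inclusions, and that is all commutativity of the diagram needs.

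Your version is in fact the safer one. The equalities for $\Delta$ and ${\rm Sup}$ hold, but those for $\delta$ and ${\rm Inf}$ can fail. Suppose the objects of $\overrightarrow{\bf HI}$ are $\emptyset,(0),(1),(2),(1,0),(2,1),(2,0),(0,1,2)$. Then $\{0,1,2\}$ lies in $\delta{\bf HI}$ and in ${\rm Inf}_\bullet({\bf HI})$. In that degree, however, $\delta\overrightarrow{\bf HI}$ has no object and ${\rm Inf}_\bullet(\overrightarrow{\bf HI})$ has no nonzero element. The reason is that the faces $(1,2),(0,2),(0,1)$ of $(0,1,2)$ are not objects of $\overrightarrow{\bf HI}$. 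So the first two vertical maps need not be surjective. The proposition holds only when the vertical maps are read, as you read them, simply as restrictions of $\pi_\#$.
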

 
 \begin{proof}
 By   Lemma~\ref{le-3.16-1}, 
  the  last  vertical  map  in  the  diagram   is  given  by  
 the  canonical  projection  $\pi_\#$  in   (\ref{eq-26.3.16.p9}).

{\it  Claim~1}.     $\Delta{\bf  HI}=\pi(\Delta\overrightarrow{\bf  HI})$.

 Let  $\vec\sigma\in  {\rm  Obj}(\Delta\overrightarrow{\bf  HI})$.  
 Then  there  exists  $\vec\tau\in  {\rm  Obj}( \overrightarrow{\bf  HI})$  
 such that  $\vec\sigma$  is  a  subsequence  of  $\vec\tau$.  
Since   $\pi(\vec\sigma)$    and   $\pi(\vec\tau)$  are  the  
 underlying  sets  of  $\vec\sigma$  and  $\vec\tau$  respectively,
 $\pi(\vec\sigma)$  is  a  subset  of  $\pi(\vec\tau)$.      
  It  follows  from  
    $\pi(\vec\tau)\in  {\rm  Obj}({\bf  HI})$  that 
      $\pi(\vec\sigma)\in  {\rm  Obj}(\Delta  {\bf  HI})$.  
 Conversely,  let  $\sigma\in  {\rm  Obj}(\Delta{\bf  HI})$.  
 Then  there  exists  $ \tau\in  {\rm  Obj}( {\bf  HI})$  
 such that  $\sigma$  is  a  subset  of  $\tau$.  
 Choose  
     $\vec\tau\in  {\rm  Obj}(\overrightarrow{\bf  HI})$
     such  that  $\pi(\vec\tau)=\tau$. 
 Then  there  exists  a  subsequence  $\vec\sigma$  of  $\vec\tau$,
 which  satisfies   $\vec\sigma\in  {\rm  Obj}(\Delta\overrightarrow{\bf  HI})$,         
 such  that  $\pi(\vec\sigma)=\sigma$.  
Hence    
  $\pi({\rm  Obj}(\Delta\overrightarrow{\bf  HI}))= {\rm  Obj}(\Delta  {\bf  HI})$.    
Moreover,  
  any  injective  map  (\ref{eq-26.3.17.m1})  between  any   two  objects $\vec\sigma$  
 and  $\vec\sigma'$
  in  $\Delta\overrightarrow{\bf  HI}$
   induces  
 an  injective  map  (\ref{eq-26.3.17.m2})
   between  the   two  objects $\pi(\vec\sigma)$  
 and  $\pi(\vec\sigma')$
  in  $\Delta{\bf  HI}$.  
  Therefore,  we  obtain  Claim~1.

{\it  Claim~2}.    $\delta{\bf  HI}=\pi(\delta\overrightarrow{\bf  HI})$.

Let  $\vec\sigma\in  {\rm  Obj}(\delta\overrightarrow{\bf  HI})$.  
 Then   for  any   non-empty  
 subsequence   $\vec\tau$     of  $\vec\sigma$,   
  $\vec\tau\in  {\rm  Obj}( \overrightarrow{\bf  HI})$.   
  Note  that  when  $\vec\tau$  runs  over  all  
   non-empty  
 subsequences        of  $\vec\sigma$,   
 $\pi(\vec\tau)$  runs  over  all    non-empty  
 subsets        of  $\pi(\vec\sigma)$.  
 Thus   for  any    non-empty  subset  $\pi(\vec\tau)$    of  $\pi(\vec\sigma)$,
  $\pi(\vec\tau)\in  {\rm  Obj}( {\bf  HI})$.   
 Hence   $\pi(\vec\sigma)\in  {\rm  Obj}(\delta  {\bf  HI})$.  
  Conversely,  let  $\sigma\in  {\rm  Obj}(\delta{\bf  HI})$.  
 Then   for  any   non-empty  
 subset   $ \tau$     of  $ \sigma$,   
  $ \tau\in  {\rm  Obj}(  {\bf  HI})$. 
  Choose  
     $\vec\sigma\in  {\rm  Obj}(\overrightarrow{\bf  HI})$
     such  that  $\pi(\vec\sigma)=\sigma$  
     and  $\vec\tau\in  {\rm  Obj}(\overrightarrow{\bf  HI})$
     such  that  $\pi(\vec\tau)=\tau$. 
  Note  that  if  we  fix  $\vec\sigma$  and  let 
   $\tau$  run   over  all  
   non-empty  
 subsets        of  $\sigma$,   
 then  
 $ \vec\tau$  runs  over  all    non-empty  
 subsequences        of  $ \vec\sigma$.  
 Thus  
   for  any   non-empty  
 subsequence   $\vec\tau$     of  $\vec\sigma$,   
  $\vec\tau\in  {\rm  Obj}( \overrightarrow{\bf  HI})$.
  Hence  $\vec\sigma\in  {\rm  Obj}(\delta\overrightarrow{\bf  HI})$.   
  We  obtain  Claim~2.

{\it  Claim~3}.  $\pi_\#({\rm   Inf}_\bullet  ( \overrightarrow{\bf  HI}))
={\rm   Inf}_\bullet  ( {\bf  HI})$.

Since  $\pi_\#$  is  a  chain  map,  $\pi_\# \partial_k=\partial_{k}\pi_\#$ 
for  any  $k\geq  1$.  
It  follows  that  $\pi_\# (\partial_k)^{-1}=(\partial_{k})^{-1}\pi_\#$,
which  implies   $\pi_\#({\rm   Inf}_k  ( \overrightarrow{\bf  HI}))
={\rm   Inf}_k  ( {\bf  HI})$. 
  We  obtain  Claim~3.

{\it  Claim~4}.  $\pi_\#({\rm   Sup}_\bullet  ( \overrightarrow{\bf  HI}))
={\rm   Sup}_\bullet  ( {\bf  HI})$.

By a  similar argument  with  Claim~3,  we   obtain  Claim~4.

By  Claim~1,  the  restriction  of  $\pi_\#$  to  
 $R({\rm   Obj}_\bullet (\Delta  \overrightarrow{\bf  HI}) ) $  
   is  a  canonical  projection  in the  fourth  vertical  map  of  the  diagram.  
   By  Claim~2,  the  restriction  of  $\pi_\#$  to  
 $R({\rm   Obj}_\bullet (\delta  \overrightarrow{\bf  HI}) ) $  
   is  a  canonical  projection  in the  first  vertical  map  of  the  diagram.  
   By  Claim~3,  the  restriction  of  $\pi_\#$  to  
 ${\rm   Inf}_\bullet  ( \overrightarrow{\bf  HI})$  
   is  a  canonical  projection  in the  second  vertical  map  of  the  diagram. 
    By  Claim~4,  the  restriction  of  $\pi_\#$  to  
 ${\rm   Sup}_\bullet  ( \overrightarrow{\bf  HI})$  
   is  a  canonical  projection  in the  third  vertical  map  of  the  diagram. 
 \end{proof}

 \subsection{$\Delta$-modules  and  their  graded  submodules}\label{ss-2.2-26.5.22}

 Let   ${\bf   V}$  be  the  category  whose  objects  are   $R$-modules 
 and  whose  morphisms   are   homomorphisms  of  $R$-modules.  
 A  {\it  $\Delta$-module}  on  $X$   is
 a  covariant  functor  $A$  from     $X$  to  ${\bf   V}$
 and  a  {\it   co-$\Delta$-module}  is
 a  contravariant  functor   from     $X$  to  ${\bf   V}$.  
 Equivalently,  a  $\Delta$-module  on   $X$ 
 is  a  collection  of  $R$-modules  $A_\bullet=\{A_n\}_{n\geq  0}$, 
   where  $A_n$  is  identified  with  the  image  $A(X_n)$,  
together  with  homomorphisms  
\begin{eqnarray}\label{eq-26.5.18-a1}
(\partial_n^i)_*:   A_n\longrightarrow  A_{n-1}
\end{eqnarray}  
for  any  $n\geq  1$  and  any   $0\leq  i\leq  n$
such  that 
 \begin{eqnarray}\label{eq-26.5.17.5}
(\partial^i_{n-1})_*(\partial^j_n)_*= (\partial^{j-1}_{n-1})_*(\partial^i_n )_*
\end{eqnarray}
  for  any  $0\leq i<j\leq  n$;      
and  a  co-$\Delta$-module  is  a  collection  of  $R$-modules  
$\bar  A_\bullet=\{\bar  A_n\}_{n\geq  0}$  
together  with  homomorphisms  
\begin{eqnarray}\label{eq-26.5.18.b1}
(\partial_n^i)^*:   \bar  A_{n-1}\longrightarrow  \bar  A_{n} 
\end{eqnarray} 
for  any  $n\geq  1$  and  any   $0\leq  i\leq  n$  such  that   
\begin{eqnarray}\label{eq-26.5.17.01}
(\partial^j_n)^*(\partial^i_{n-1})^*= (\partial^i_n )^*(\partial^{j-1}_{n-1})^*  
\end{eqnarray}
 for  any  $0\leq i<j\leq  n$.  
 For  any  $\Delta$-module  $A_\bullet$,  
 let  $\partial_n=\sum_{i=0}^n(-1)^i (\partial_n^i)_*$.
   For  any  co-$\Delta$-module  $\bar  A_\bullet$,  
 let  $\partial^{n-1}=\sum_{i=0}^n(-1)^i (\partial_n^i)^*$.

 Let  $A_\bullet$  be  a  $\Delta$-module  and  let 
 $\bar  A_\bullet$  be  a  co-$\Delta$-module.  
 A  graded  submodule    of  $A_\bullet$   (resp.    $\bar  A_\bullet$)  is  a  
      collection  of  $R$-modules  $A'_\bullet=\{A'_n\}_{n\geq  0}$  
   (resp.    $\bar  A'_\bullet=\{\bar  A'_n\}_{n\geq  0}$) 
  such that  $A'_n$  is  a  sub-$R$-module  of  $A_n$ 
  (resp.   $\bar  A'_n$  is  a  sub-$R$-module  of  $\bar  A_n$)  for  each  $n\geq  0$.  
  In  particular,  we  say  that        $A'_\bullet$  is  a  
   sub-$\Delta$-module  of  $A_\bullet$ 
   if     for  any  $n\geq  1$  and  any   $0\leq  i\leq  n$,   
   the  restriction  of  (\ref{eq-26.5.18-a1})  induces  a  homomorphism 
  $ (\partial_n^i)_*:   A'_n\longrightarrow  A'_{n-1}
  $;  
  and  say  that 
  $\bar  A'_\bullet$  is  a  
   sub-co-$\Delta$-module  of  $\bar  A_\bullet$   if   
  the  restriction  of  (\ref{eq-26.5.18.b1})  induces  a  homomorphism 
  $ (\partial_n^i)^*:   \bar  A'_{n-1}\longrightarrow  \bar  A'_{n}
  $. 
  For  any  graded  submodule  $A'_\bullet$  of  $A_\bullet$,  
  let 
  $\delta  A'_\bullet$  be the  largest  sub-$\Delta$-module  of 
  $A_\bullet$  contained  in  $A'_\bullet$  and 
  let  
   $\Delta  A'_\bullet$  be the  smallest  sub-$\Delta$-module   of 
  $A_\bullet$
   containing  $A'_\bullet$.     
   For  any  graded  submodule  $\bar  A'_\bullet$  of  $\bar  A_\bullet$,  
   let 
  $\delta  \bar  A'_\bullet$  be the  largest  sub-co-$\Delta$-module  of 
  $\bar  A_\bullet$  contained  in  $\bar  A'_\bullet$
  and  let  
   $\Delta  \bar  A'_\bullet$  be the  smallest  sub-co-$\Delta$-module   of 
  $\bar  A_\bullet$
   containing  $\bar  A'_\bullet$.

    For  any  $\Delta$-modules  $A_\bullet$  and  $B_\bullet$,   
  a  morphism  $f:  A_\bullet\longrightarrow  B_\bullet$  is a  sequence  of  
  homomorphisms  $f_n:  A_n\longrightarrow  B_n$  such  that  
  \begin{eqnarray}\label{eq-26.5.20.1}
  f_{n}(\partial_n^i)_*= (\partial_{n}^i)_*  f_n 
  \end{eqnarray}
    for  any  $n\geq  1$ and  any  $0\leq  i\leq  n$.  
    Similarly, 
    for  any  co-$\Delta$-modules  $\bar  A_\bullet$  and  $\bar  B_\bullet$,   
  a  morphism  $f:  \bar  A_\bullet\longrightarrow  \bar  B_\bullet$  is a  sequence  of  
  homomorphisms  $f_n:  \bar  A_n\longrightarrow  \bar   B_n$  such  that  
  \begin{eqnarray} \label{eq-26.5.20.1d}
  f_{n}(\partial_n^i)^*= (\partial_{n}^i)^*  f_n 
  \end{eqnarray}
    for  any  $n\geq  1$ and  any  $0\leq  i\leq  n$.  
   Let   $f:  A_\bullet\longrightarrow  B_\bullet$  be  a  morphism  of  $\Delta$-modules     
    (resp.   $f:  \bar  A_\bullet\longrightarrow  \bar   B_\bullet$ be  a  morphism 
    of  co-$\Delta$-modules).  
   Then  $f$ sends a  graded  submodule  $A'_\bullet$  of  $A_\bullet$ 
    (resp.  $\bar   A'_\bullet$  of  $\bar   A_\bullet$)  to  
    a   graded  submodule  $f(A'_\bullet)$  of  $B_\bullet$
    (resp.   $ f(\bar A'_\bullet)$  of  $\bar  B_\bullet$).
    Moreover,  
    if   $A'_\bullet$   is  a   sub-$\Delta$-module  of  $A_\bullet$
    (resp.   $\bar   A'_\bullet$  is  a   sub-co-$\Delta$-module   of  $\bar   A_\bullet$),  
    then  $f(A'_\bullet)$    is  a   sub-$\Delta$-module  of  $B_\bullet$
    (resp.    $ f(\bar A'_\bullet)$    is  a   sub-co-$\Delta$-module  of  $\bar  B_\bullet$).  
     Consequently,  by  a  similar  argument  with  
    Subsection~\ref{26.5-ss-2.1},  \cite[Sect.~2.2]{jktr2022rel} 
     and  \cite[Sect.~3.2]{stab26},  we  obtain  
    $f(\delta  A'_\bullet)\subseteq  \delta  f( A'_\bullet)$ 
     (resp.  $f(\delta  \bar   A'_\bullet)\subseteq  \delta  f( \bar   A'_\bullet)$)   
     and     $f(\Delta  A'_\bullet)\subseteq  \Delta  f( A'_\bullet)$
     (resp.  $f(\Delta  \bar   A'_\bullet)\subseteq  \Delta  f( \bar   A'_\bullet)$).

  \begin{proposition}
  \label{le-26-5-18-1}
  \begin{enumerate}[(1)]
  \item
  For  any  graded  submodule $A'$  of  a  $\Delta$-module  $A$,  
  there  are  inclusions  of  chain  complexes 
  \begin{eqnarray}
  \label{eq-26.5.18.i1}
  (\delta  A'_\bullet, \partial_\bullet)\longrightarrow 
  {\rm  Inf}(A'_\bullet)\longrightarrow 
  {\rm  Sup}(A'_\bullet)\longrightarrow 
  (\Delta  A'_\bullet, \partial_\bullet)\longrightarrow 
  (A_\bullet, \partial_\bullet)
  \end{eqnarray}
  such  that  the  second  inclusion  is  a  quasi-isomorphism. 
  Moreover, 
(\ref{eq-26.5.18.i1})     is  functorial  with respect  to  
morphisms  of   $\Delta$-modules;  
  \item
  For  any  graded  submodule $\bar  A'$  of  a  co-$\Delta$-module  $\bar  A$, 
    there  are  inclusions  of  cochain  complexes 
     \begin{eqnarray}
  \label{eq-26.5.18.i2}
  (\delta  \bar  A'_\bullet, \partial^\bullet)\longrightarrow 
  {\rm  Inf}(\bar  A'_\bullet)\longrightarrow 
  {\rm  Sup}(\bar  A'_\bullet)\longrightarrow 
  (\Delta  \bar  A'_\bullet, \partial^\bullet)\longrightarrow 
  (\bar  A_\bullet, \partial^\bullet)
  \end{eqnarray}
  such  that  the  second  inclusion  is  a  quasi-isomorphism.   
  Moreover,    (\ref{eq-26.5.18.i2})    is  functorial  with respect  to  
morphisms  of   co-$\Delta$-modules.  
  \end{enumerate}
  \end{proposition}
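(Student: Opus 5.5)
The plan is to transfer the argument behind Proposition~\ref{le-2.2.inc} from $\Delta$-sets to $\Delta$-modules, with the alternating sum $\partial_n=\sum_{i}(-1)^i(\partial_n^i)_*$ in place of the one built from face maps of sets. For part (1), first check that $\partial_{n-1}\partial_n=0$. Split the double sum into pairs with $i<j$ and $i\geq j$. The identity (\ref{eq-26.5.17.5}) matches each term with $i<j$ to a term with $i\geq j$ of opposite sign, so everything cancels. This makes $(A_\bullet,\partial_\bullet)$ a chain complex. Then define, as in Subsection~\ref{26.5-ss-2.1},
\begin{eqnarray*}
{\rm Inf}_n(A'_\bullet)=A'_n\cap\partial_n^{-1}(A'_{n-1}),\qquad {\rm Sup}_n(A'_\bullet)=A'_n+\partial_{n+1}(A'_{n+1}).
\end{eqnarray*}
Both are sub-chain complexes of $A_\bullet$, the first largest inside $A'_\bullet$ and the second smallest containing it. These extremal properties give the middle inclusion ${\rm Inf}\subseteq A'\subseteq{\rm Sup}$.

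The outer inclusions also follow from the definitions. A sub-$\Delta$-module is preserved by each $(\partial_n^i)_*$, hence by $\partial_n$, so it is a sub-chain complex. Thus $\delta A'_\bullet$ is a sub-chain complex contained in $A'_\bullet$, which gives $\delta A'_\bullet\subseteq{\rm Inf}(A'_\bullet)$. Likewise $\Delta A'_\bullet$ is a sub-chain complex containing $A'_\bullet$, which gives ${\rm Sup}(A'_\bullet)\subseteq\Delta A'_\bullet$. The last inclusion into $A_\bullet$ is trivial.

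The main step is the quasi-isomorphism ${\rm Inf}\to{\rm Sup}$. I will reproduce the purely algebraic proof of \cite[Prop.~2.4]{h1}, which only uses a chain complex $C$ and a graded submodule $D$, and never bases.
\begin{enumerate}[(a)]
\item Surjectivity on homology. Let $x=a+\partial b$ be a cycle in ${\rm Sup}_n$ with $a\in A'_n$ and $b\in A'_{n+1}$. Then $\partial a=\partial x=0$, so $a\in{\rm Inf}_n$. Also $b\in{\rm Inf}_{n+1}$, because $\partial b=x-a\in{\rm Sup}_n\cap\ldots$ — more precisely, $[x]=[a]$ in the homology of ${\rm Sup}$, since $\partial b$ is a boundary of ${\rm Sup}_{n+1}\ni b$.
\item Injectivity. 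Let $a\in{\rm Inf}_n$ be a cycle with $a=\partial(c+\partial e)=\partial c$, where $c\in A'_{n+1}$. Then $\partial c=a\in A'_n$, so $c\in{\rm Inf}_{n+1}$ and $a$ is a boundary already in ${\rm Inf}$.
\end{enumerate}
I expect this step to be the only one with content, and it is where care is needed. Everything happens in submodules rather than free modules, so one must check that no freeness was used in \cite{h1}; the argument above shows it is not.

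For functoriality, let $f$ be a morphism of $\Delta$-modules. By (\ref{eq-26.5.20.1}), $f$ is a chain map. It sends the row for $A'_\bullet$ to the row for $f(A'_\bullet)$. Concretely, $f({\rm Inf}(A'))\subseteq{\rm Inf}(f(A'))$ because $\partial f a=f\partial a\in f(A'_{n-1})$. Similarly $f({\rm Sup}(A'))\subseteq{\rm Sup}(f(A'))$. The containments $f(\delta A')\subseteq\delta f(A')$ and $f(\Delta A')\subseteq\Delta f(A')$ were recorded just before the statement. So all squares commute, being restrictions of $f$.

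Part (2) is the dual argument. Set $\partial^{n-1}=\sum_i(-1)^i(\partial_n^i)^*$. Then (\ref{eq-26.5.17.01}) gives $\partial^n\partial^{n-1}=0$. Define ${\rm Inf}^n=\bar A'_n\cap(\partial^n)^{-1}(\bar A'_{n+1})$ and ${\rm Sup}^n=\bar A'_n+\partial^{n-1}(\bar A'_{n-1})$. The same two-line arguments, with the degree shifted, give the quasi-isomorphism. Functoriality follows as before using (\ref{eq-26.5.20.1d}).
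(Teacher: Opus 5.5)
Your proposal is correct and follows the paper's proof: the paper also gets $\partial_{n-1}\partial_n=0$ from the face identities (citing \cite[Lemma~3.1]{jgp}), obtains the outer inclusions from the extremal properties of ${\rm Inf}$ and ${\rm Sup}$, imports the quasi-isomorphism from \cite[Sect.~2]{h1}, gets functoriality from (\ref{eq-26.5.20.1}), and treats part (2) by the dual argument; the only difference is that you write out the basis-free proof of the quasi-isomorphism, which is valid for arbitrary submodules. Delete the aside in step (a) claiming $b\in{\rm Inf}_{n+1}$: it is false in general (it needs $\partial b\in A'_n$) and not needed, because $x-a=\partial b$ with $b\in A'_{n+1}\subseteq{\rm Sup}_{n+1}$ already gives $[x]=[a]$ in $H_n({\rm Sup})$.
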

  
  \begin{proof}
  Analogous  with  \cite[Lemma~3.1]{jgp},  
 it  can  be  verified  that  $\partial_{n-1}\partial_n=0$  for  any  $n\geq  1$.  
 Hence     $(A_\bullet,\partial_\bullet)$  is  a  chain  complex.
Since     $\Delta  A'_\bullet$  is  a 
sub-$\Delta$-module   of  $A$  and  $\delta  A'_\bullet$
   is  a 
sub-$\Delta$-module   of    $\Delta  A'_\bullet$,    
   we   obtain  that  $(\Delta  A'_\bullet,\partial_\bullet)$
   is  a  sub-chain  complex  of  $(A_\bullet,\partial_\bullet)$ 
and  
    $(\delta  A'_\bullet,\partial_\bullet)$
  is  a  sub-chain  complex   of  $(\Delta  A'_\bullet,\partial_\bullet)$.  
  With  the  help  of   \cite[Sect.~2]{h1}, 
  ${\rm  Inf}(A'_\bullet )$  is  the  largest  sub-chain  complex  
  of  $(A_\bullet,\partial_\bullet)$  contained  in  $A'_\bullet$  
  and  ${\rm  Sup}(A'_\bullet )$  is  the  smallest  sub-chain  complex  
  of  $(A_\bullet,\partial_\bullet)$  containing  $A'_\bullet$ 
  such  that  the  canonical  inclusion  of  ${\rm  Inf}(A'_\bullet )$
  in  ${\rm  Sup}(A'_\bullet )$  induces  
  an  isomorphism  of  homology  groups. 
  We  obtain  (\ref{eq-26.5.18.i1}).  
  By  a  similar  argument  with  Proposition~\ref{le-2.2.inc}, 
  the  functoriality  of  the  first  row  of  (\ref{eq-26.5.18.i1})  follows  from  
  (\ref{eq-26.5.20.1}).  
    The  proof  of  (\ref{eq-26.5.18.i2})  is  an  analog  of  (\ref{eq-26.5.18.i1}).  
     The  functoriality  of  the  first  row  of  (\ref{eq-26.5.18.i2})  follows  from  
  (\ref{eq-26.5.20.1d}). 
  \end{proof}

   Similar  to  the  $\Delta$-modules,  
   we  can  define  $\Delta$-groupoids  as  follows.  
 Let  ${\bf   Gpd}$  be  the  category  whose  objects are  groupoids  
 and  whose  morphisms  are      morphisms  of  groupoids.  
 A  {\it  $\Delta$-groupoid}   on   $X$   is
 a  covariant  functor   from     $X$  to  ${\bf   Gpd}$.  
Equivalently,  a  $\Delta$-groupoid 
 is  a  collection  of  groupoids  $G_\bullet=\{G_n\}_{n\geq  0}$  
together  with   morphisms
 $(\partial_n^i)_*:   G_n\longrightarrow  G_{n-1}$  
for  any  $n\geq  1$  and  any   $0\leq  i\leq  n$
such  that 
(\ref{eq-26.5.17.5})  is  satisfied 
  for  any  $0\leq i<j\leq  n$.    
    For  any  $\Delta$-groupoids  $G_\bullet$  and  $G'_\bullet$,   
  a  morphism  $f:  G_\bullet\longrightarrow  G'_\bullet$  is a  sequence  of  
  homomorphisms  $f_n:  G_n\longrightarrow  G'_n$  such  that  
 (\ref{eq-26.5.20.1})  is  satisfied  
    for  any  $n\geq  1$ and  any  $0\leq  i\leq  n$.

 \subsection{$\Delta$-manifolds  and  their  graded  submanifolds}
 \label{ss2.3-26.5.22}
   
 Let   ${\bf   M}$  be  the  category  whose  objects  are   differentiable  manifolds
 and  whose  morphisms   are   differentiable  maps.     
 A  {\it  $\Delta$-manifold}  on   $X$   is
 a  covariant  functor   from    $X$  to  ${\bf   M}$.  
 Equivalently,  a  $\Delta$-manifold     
 is  a  collection  of  differentiable  manifolds  $M_\bullet=\{M_n\}_{n\geq  0}$  
together  with  differentiable  maps  $\partial_n^i:  M_n\longrightarrow  M_{n-1}$  
for  any  $n\geq  1$  and  any   $0\leq  i\leq  n$
such  that  (\ref{eq-26.5.17.55})  is  satisfied.  
Let  $M_\bullet$  be  a  $\Delta$-manifold.
A   graded  submanifold  of  $M_\bullet$ 
is  a  collection  of  differentiable  manifolds  $M'_\bullet=\{M'_n\}_{n\geq  0}$
such  that  $M'_n$  is  a  submanifold  of  $M_n$  for  each  $n\geq  0$.  
In  particular,  if  $M'_\bullet$  is  a  $\Delta$-manifold  as  well,  
then   $M'_\bullet$  is  a  sub-$\Delta$-manifold  of  $M_\bullet$.  
For  any  graded  submanifold  $M'_\bullet$  of  $M_\bullet$,  
let  $\delta   M'_\bullet$  be  the  largest  sub-$\Delta$-manifold   of  $M_\bullet$
contained  in   $M'_\bullet$ 
and  let  $\Delta   M'_\bullet$  be  the  smallest  sub-$\Delta$-manifold   of  $M_\bullet$
containing   $M'_\bullet$.

Given  two   $\Delta$-manifolds  $M_\bullet$  and  $N_\bullet$,  
a  morphism  $f:  M_\bullet\longrightarrow  N_\bullet$  
is  a  sequence  of  smooth  maps  $f_n:  M_n\longrightarrow  N_n$
such that  (\ref{eq-26.5.20.9})  is  satisfied
for  any  $n\geq  1$ and  any  $0\leq  i\leq  n$.  
Let  $f:  M_\bullet\longrightarrow  N_\bullet$  be  a  morphism  of  $\Delta$-manifolds.  
Then  $f$ sends a  graded  submanifold  $M'_\bullet$  of  $M_\bullet$ 
to  the  graded  submanifold  $f(M'_\bullet)$   of  $N_\bullet$.  
Moreover,  if  $M'_\bullet$  is  a  sub-$\Delta$-manifold   of  $M_\bullet$,
then  $f(M'_\bullet)$  is  a  sub-$\Delta$-manifold  of  $N_\bullet$. 
Consequently,  
by  a  similar  argument  with  
    Subsection~\ref{26.5-ss-2.1},  we  obtain  
    $f(\delta  M'_\bullet)\subseteq  \delta  f( M'_\bullet)$    
     and     $f(\Delta  M'_\bullet)\subseteq  \Delta  f( M'_\bullet)$.

     \begin{lemma}\label{le-5.22.1}
     Let  $M_\bullet$  be  a  $\Delta$-manifold
and  $M'_\bullet$  be  a  graded  submanifold  of  $M_\bullet$.  
Then  we  have  a  sequence  of   inclusions  of  $\Delta$-manifolds 
\begin{eqnarray}\label{eq-5.22.1}
\delta  M'_\bullet\longrightarrow  \Delta  M'_\bullet
\longrightarrow    M _\bullet. 
\end{eqnarray}
Moreover,  let   $f:  M_\bullet \longrightarrow  N_\bullet$ 
be  a  morphism  of  $\Delta$-manifolds.  
Then  we  have  a  commutative  diagram 
\begin{eqnarray}\label{eq-5.22.2}
\xymatrix{
\delta  M'_\bullet\ar[r] \ar[d]
&  \Delta  M'_\bullet \ar[r] \ar[d]
&    M _\bullet\ar[d]^-{f}\\
\delta  f(M'_\bullet)\ar[r]  
&  \Delta  f(M'_\bullet) \ar[r]  
&    N _\bullet. 
}
\end{eqnarray}
     \end{lemma}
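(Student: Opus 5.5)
The plan is to prove the lemma directly from the extremal definitions of $\delta$ and $\Delta$, copying the argument for $\Delta$-sets in Subsection~\ref{26.5-ss-2.1} and Proposition~\ref{le-2.2.inc}.

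\emph{The inclusions (\ref{eq-5.22.1}).} By definition $\delta M'_\bullet$ is a sub-$\Delta$-manifold of $M_\bullet$ with $\delta M'_n\subseteq M'_n$. Likewise $\Delta M'_\bullet$ is a sub-$\Delta$-manifold with $M'_n\subseteq \Delta M'_n$. Hence $\delta M'_n\subseteq M'_n\subseteq\Delta M'_n\subseteq M_n$ for every $n\geq 0$.

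To see that these levelwise inclusions are morphisms of $\Delta$-manifolds, I would check (\ref{eq-26.5.20.9}). Since $\delta M'_\bullet$ and $\Delta M'_\bullet$ are sub-$\Delta$-manifolds, their face maps are the restrictions of the face maps $\partial^i_n$ of $M_\bullet$. The inclusion maps therefore commute with $\partial_n^i$ tautologically. Inclusions of submanifolds are smooth, so (\ref{eq-5.22.1}) is a sequence of morphisms of $\Delta$-manifolds.

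\emph{The diagram (\ref{eq-5.22.2}).} First I define the vertical maps as restrictions of $f$: take $f|_{\delta M'_\bullet}$ and $f|_{\Delta M'_\bullet}$. The substantive point is that these land in $\delta f(M'_\bullet)$ and $\Delta f(M'_\bullet)$, i.e. the inclusions $f(\delta M'_\bullet)\subseteq\delta f(M'_\bullet)$ and $f(\Delta M'_\bullet)\subseteq\Delta f(M'_\bullet)$ stated before the lemma. I would justify them as follows.
\begin{enumerate}[(i)]
\item Because $f$ commutes with the face maps, $f$ maps a sub-$\Delta$-manifold to a $\Delta$-closed graded subset. In particular $f(\delta M'_\bullet)$ is $\Delta$-closed and is contained in $f(M'_\bullet)$. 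Maximality of $\delta f(M'_\bullet)$ then gives the first inclusion.
\item For the second, the preimage $f^{-1}(\Delta f(M'_\bullet))$ is closed under the face maps of $M_\bullet$, again because $f$ commutes with faces. It contains $M'_\bullet$, so minimality of $\Delta M'_\bullet$ gives $\Delta M'_\bullet\subseteq f^{-1}(\Delta f(M'_\bullet))$, which is the second inclusion.
\end{enumerate}
With the maps in place, both squares commute because every horizontal arrow is an inclusion and every vertical arrow is a restriction of the single map $f$. The restricted vertical maps are again morphisms of $\Delta$-manifolds, since (\ref{eq-26.5.20.9}) holds for $f$ and the face maps are restrictions.

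\emph{Main obstacle.} The delicate step is the differentiable-category bookkeeping. Images $f(M'_n)$ and preimages need not be submanifolds, so "largest/smallest sub-$\Delta$-manifold" must be read with $f(M'_\bullet)$ taken to be a graded submanifold, as the text preceding the lemma assumes. I would adopt that convention, then run the maximality and minimality arguments purely at the level of underlying graded subsets closed under faces. Smoothness of the restricted maps follows because they are restrictions of smooth maps to submanifolds.
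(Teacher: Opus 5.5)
Your proposal is correct and follows the same route as the paper. The inclusions come straight from the extremal definitions of $\delta M'_\bullet$ and $\Delta M'_\bullet$, and the diagram commutes because the vertical maps are restrictions of $f$, which land in the right targets by $f(\delta M'_\bullet)\subseteq\delta f(M'_\bullet)$ and $f(\Delta M'_\bullet)\subseteq\Delta f(M'_\bullet)$, as the paper asserts just before the lemma. Where the paper only says ``direct verification,'' you actually prove these two inclusions by maximality and minimality among face-closed graded subsets, and you correctly flag the image/submanifold convention that the paper adopts tacitly.
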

     
     \begin{proof}
     The  sequence  
     (\ref{eq-5.22.1})  follows  from  the  definitions  of  $\delta  M'_\bullet$ 
     and  $\Delta  M'_\bullet$.  
     It  is  direct  to  verify  that  the  diagram  (\ref{eq-5.22.2})  commutes,
     which  implies    
     the  functoriality  of  (\ref{eq-5.22.1}).  
     \end{proof}

\begin{proposition}\label{th-26.5.18-1}
Let  $M_\bullet$  be  a  $\Delta$-manifold
and  $M'_\bullet$  be  a  graded  submanifold  of  $M_\bullet$.  
Suppose  there  is  a  continuous  total  order  on  the  face  maps  
of   $M_\bullet$.  
Then  we  have    sequences  of   inclusions   of double    complexes
\begin{eqnarray}\label{eq-5.21.a1}
\xymatrix{
C_\bullet(\delta  M'_\bullet;R) \ar[r]
& {\rm  Inf}_\bullet (M'_\bullet) \ar[r]
&  {\rm  Sup}_\bullet (M'_\bullet)  \ar[r]
& C_\bullet(\Delta  M'_\bullet;R) \ar[r]
&C_\bullet(M_\bullet;R),
}\\
\label{eq-5.21.a11}
\xymatrix{
C^\bullet(\delta  M'_\bullet;R) \ar[r]
& {\rm  Inf}^\bullet (M'_\bullet) \ar[r]
&  {\rm  Sup}^\bullet (M'_\bullet)  \ar[r]
& C^\bullet(\Delta  M'_\bullet;R) \ar[r]
&C^\bullet(M_\bullet;R),
}
\end{eqnarray}
where    
\begin{enumerate}[(1)]
\item
the  infimum  (co)chain  complexes  and  the  supremum  (co)chain  complexes 
are  taken  with respect to  the  (co)boundary  map  of  the  second  index; 
\item
the  second  arrows  in  (\ref{eq-5.21.a1})  and (\ref{eq-5.21.a11})  are   
  quasi-isomrphisms
with respect  to the  second  index; 
\item
 (\ref{eq-5.21.a1})  and  (\ref{eq-5.21.a11})     are  
  functorial with  respect  to  morphisms  of  $\Delta$-manifolds. 
\end{enumerate}
\end{proposition}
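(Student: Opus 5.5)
The plan is to reduce the statement to Proposition~\ref{le-26-5-18-1}, applied degree by degree in the first (singular) index. First I fix the double complex. For each $p\geq 0$, the singular chain functor $C_p(-;R)$ is covariant, so it turns the $\Delta$-manifold $M_\bullet$ into a $\Delta$-module
\[
A^{(p)}_n = C_p(M_n;R), \qquad (\partial_n^i)_* = C_p(\partial_n^i).
\]
The $\Delta$-identities (\ref{eq-26.5.17.5}) follow from (\ref{eq-26.5.17.55}) by functoriality. Dually, the singular cochains $C^p(M_n;R)$ form a co-$\Delta$-module. The continuous total order on the face maps is what makes the alternating sum $\partial_n=\sum_i(-1)^i(\partial_n^i)_*$ well defined (and $\partial_{n-1}\partial_n=0$, as in \cite[Lemma~3.1]{jgp}). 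This gives the second differential. The first differential is the singular boundary $d_p$, and it commutes with each $(\partial_n^i)_*$ because induced maps of spaces are chain maps. So $C_\bullet(M_\bullet;R)$ is a double complex with commuting differentials, and likewise for cochains.

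Second, the graded submanifold $M'_\bullet$ gives, for each $p$, a graded submodule $A'^{(p)}_n = C_p(M'_n;R)\subseteq C_p(M_n;R)$. I then define ${\rm Inf}_{p,\bullet}(M'_\bullet)$ and ${\rm Sup}_{p,\bullet}(M'_\bullet)$ as the infimum and supremum complexes of $A'^{(p)}_\bullet$ with respect to the second differential $\partial_\bullet$. I need to check that $C_p(\delta M'_\bullet)$ and $C_p(\Delta M'_\bullet)$ sit inside $\delta A'^{(p)}$ and $\Delta A'^{(p)}$, or at least inside ${\rm Inf}$ and contain ${\rm Sup}$. The first holds because $C_p(\delta M'_\bullet)$ is a sub-$\Delta$-module contained in $A'^{(p)}$, hence contained in ${\rm Inf}$, which is the largest subcomplex. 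The second holds because $C_p(\Delta M'_\bullet)$ is a subcomplex containing $A'^{(p)}$, hence contains ${\rm Sup}$. Proposition~\ref{le-26-5-18-1}(1) then gives the row (\ref{eq-5.21.a1}) in each fixed degree $p$, with the second inclusion a quasi-isomorphism in the second index; this is item (2). Part (2) of the same proposition handles (\ref{eq-5.21.a11}).

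The main obstacle is showing that these levelwise constructions are double complexes, i.e. that ${\rm Inf}$ and ${\rm Sup}$ are preserved by $d_p$. For ${\rm Inf}$, take $x\in C_p(M'_n)$ with $\partial_n x\in C_p(M'_{n-1})$. Singular chains on a subspace are closed under $d$, so $d x\in C_{p-1}(M'_n)$. Also $\partial_n d x = d\,\partial_n x\in C_{p-1}(M'_{n-1})$, so $dx\in{\rm Inf}$. For ${\rm Sup}$, $d$ of $a+\partial_{n+1}b$ is $da+\partial_{n+1}db$, with $da$ and $db$ again in chains on $M'$. The cochain case is subtler, since restriction rather than inclusion is natural there. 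I would try to define $C^p(M'_n)$ as the cochains on $M_n$ supported on simplices in $M'_n$, check that this is preserved by the coboundary, and otherwise argue dually.

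Finally, item (3), functoriality, follows because a morphism $f$ of $\Delta$-manifolds induces morphisms of $\Delta$-modules $f_\#$ for every $p$, commuting with $d_p$. Combining the functoriality in Proposition~\ref{le-26-5-18-1} with Lemma~\ref{le-5.22.1} gives the commutative ladder, using $f(\delta M')\subseteq\delta f(M')$ and $f(\Delta M')\subseteq\Delta f(M')$.
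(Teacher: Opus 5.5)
For (\ref{eq-5.21.a1}) you follow the paper's proof, with more detail. Singular $p$-chains form a $\Delta$-module in the second index. The alternating sum and the singular boundary are commuting differentials. ${\rm Inf}$ and ${\rm Sup}$ are formed in the second index. Proposition~\ref{le-26-5-18-1} and Lemma~\ref{le-5.22.1} then give the quasi-isomorphism and functoriality. Your checks that ${\rm Inf}$ and ${\rm Sup}$ are stable under the singular boundary, and that $C_p(\delta M'_\bullet;R)\subseteq{\rm Inf}$ and ${\rm Sup}\subseteq C_p(\Delta M'_\bullet;R)$, fill in steps the paper leaves implicit. One small slip: $C_p(\Delta M'_\bullet;R)$ contains, rather than sits inside, the smallest sub-$\Delta$-module containing $C_p(M'_\bullet;R)$. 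The argument you actually give uses the correct direction.

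The genuine gap is the cochain row (\ref{eq-5.21.a11}), where the step you propose fails in both indices.

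\emph{First index.} Regarding $C^p(M'_n;R)$ as cochains on $M_n$ vanishing on singular simplices not contained in $M'_n$ does not give a subcomplex for the singular coboundary. Suppose $M'_n$ is not a union of path components of $M_n$. Take $x\in M'_n$, a path $\gamma$ in $M_n$ from $x$ to some $y\notin M'_n$, and the $0$-cochain $\phi$ equal to $1$ at $x$ and $0$ elsewhere. Then $\gamma\not\subseteq M'_n$, yet $(d\phi)(\gamma)=\phi(y)-\phi(x)=-1\neq 0$.

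\emph{Second index.} A sub-$\Delta$-manifold $W_\bullet$ such as $\delta M'_\bullet$ or $\Delta M'_\bullet$ is closed under faces, but not under preimages of faces. So a simplex outside $W_n$ can have its $i$-th face in $W_{n-1}$. Consequently the zero-extended $C^p(W_\bullet;R)$ need not be stable under $(\partial^i_n)^*$, and Proposition~\ref{le-26-5-18-1}~(2) cannot be applied with $C^\bullet(\delta M'_\bullet;R)$ and $C^\bullet(\Delta M'_\bullet;R)$ as end terms.

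\emph{Dualizing.} ``Arguing dually'' does not help either. Applying ${\rm Hom}(-,R)$ to (\ref{eq-5.21.a1}) gives restriction maps $C^\bullet(M_\bullet;R)\to C^\bullet(\Delta M'_\bullet;R)\to\cdots\to C^\bullet(\delta M'_\bullet;R)$, running the opposite way and not inclusions.

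The paper offers nothing more here; it only asserts that (\ref{eq-5.21.a11}) is ``an analog,'' so this half is unproved as stated. A version with genuine inclusions uses relative cochains $C^\bullet(M_\bullet,W_\bullet;R)$, the cochains vanishing on simplices in $W_n$. These are stable under the singular coboundary for any subspaces, and under every $(\partial^i_n)^*$ when $W_\bullet$ is closed under faces. With ${\rm Inf}^\bullet$ and ${\rm Sup}^\bullet$ taken for $C^\bullet(M_\bullet,M'_\bullet;R)$, your chain-level argument then goes through verbatim. It gives
$C^\bullet(M_\bullet,\Delta M'_\bullet;R)\subseteq{\rm Inf}^\bullet\subseteq{\rm Sup}^\bullet\subseteq C^\bullet(M_\bullet,\delta M'_\bullet;R)\subseteq C^\bullet(M_\bullet;R)$,
with $\delta$ and $\Delta$ interchanged and with contravariant functoriality.
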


\begin{proof}
For  any  $n\geq  0$, 
 the      (singular)   chain  complex   $C_\bullet(M_n;R)$ 
has  a  sequence  of   boundary  maps  
\begin{eqnarray*}
\delta_m:   C_m(M_n;R)\longrightarrow  C_{m-1}(M_n; R),~~~~~~  m\geq  0. 
\end{eqnarray*}
For  any  $m\geq  0$, 
 the  $m$-th   (singular)   chain  group   $C_m(M_\bullet;R)$  is  a  $\Delta$-module
 with  the  induced  maps  
 \begin{eqnarray*}
 (\partial_{n}^i)_{*}:  C_m(M_n;R)\longrightarrow  C_m(M_{n-1};R), ~~~~~~ n\geq  0. 
 \end{eqnarray*}
 Sicne  there  is  a  continuous  total  order  on  the  face  maps  
of   $M_\bullet$,  similar with   (\ref{eq-26.5.22-3}),  
 we  have  a  sequence  of  boundary  maps  
 \begin{eqnarray*}
 \partial_n=\sum_{i=0}^n(-1)^i  (\partial_{n}^i)_{*}:
   C_m(M_n;R)\longrightarrow C_m(M_{n-1};R),~~~~~~  n\geq  0.   
 \end{eqnarray*}
 Since  $ \delta_{m-1}\delta_m =\partial_{n-1}\partial_n =0$  and  
 $\delta_m \partial_n= \partial_n\delta_m$,  
 we  obtain  a  double  chain   complex 
  $(C_\bullet(M_\bullet;R), \delta_\bullet, \partial_\bullet)$.  
 With  the  help  of  (\ref{eq-5.22.1}),  
 we  have  inclusions  of  double  chain  complexes 
 \begin{eqnarray*}
 (C_\bullet(\delta  M'_\bullet;R), \delta_\bullet, \partial_\bullet)
  \longrightarrow 
   (C_\bullet(\Delta  M'_\bullet;R), \delta_\bullet, \partial_\bullet)
 \longrightarrow 
 (C_\bullet(M_\bullet;R), \delta_\bullet, \partial_\bullet). 
 \end{eqnarray*}
 Moreover, 
   \begin{eqnarray*}
   (C_\bullet(M'_\bullet; R), \delta_\bullet)\longrightarrow 
   (C_\bullet(\Delta  M'_\bullet;R),   \delta_\bullet)
   \end{eqnarray*}
     is  an  
   inclusion  of    chain  complexes  with  respect  to  the  first     index.  
   Hence  taking  the  infimum   chain  complex   and  the  supremum   chain  complex 
  with respect to   $\partial_\bullet$  in  the  second  index, 
  we  obtain  (\ref{eq-5.21.a1}).

  Let   $f:  M_\bullet \longrightarrow  N_\bullet$ 
be  a  morphism  of  $\Delta$-manifolds.  Then  
with  the  help  of  Proposition~\ref{le-2.2.inc},
Proposition~\ref{le-26-5-18-1}~(1)   and  (\ref{eq-5.22.2}),  
we  have   a  commutative  diagram  of   double  chain  complexes 
\begin{eqnarray*} 
\xymatrix{
C_\bullet(\delta  M'_\bullet;R) \ar[r]\ar[d]
& {\rm  Inf}_\bullet (M'_\bullet) \ar[r]\ar[d]
&  {\rm  Sup}_\bullet (M'_\bullet)  \ar[r]\ar[d]
& C_\bullet(\Delta  M'_\bullet;R) \ar[r]\ar[d]
&C_\bullet(M_\bullet;R)\ar[d]^-{f_\#}\\
C_\bullet(\delta   f(M'_\bullet);R) \ar[r] 
& {\rm  Inf}_\bullet (f(M'_\bullet)) \ar[r] 
&  {\rm  Sup}_\bullet (f(M'_\bullet))  \ar[r] 
& C_\bullet(\Delta  f(M'_\bullet);R) \ar[r] 
&C_\bullet(N_\bullet;R).
}
\end{eqnarray*}
We  obtain  the  functoriality  of   (\ref{eq-5.21.a1}).

    The  proof  of  (\ref{eq-5.21.a11})  is  an  analog  of  (\ref{eq-5.21.a1})
    by  substituting  the  $\Delta$-modules 
    with  the  co-$\Delta$-modules    and     substituting the  (singular)  chain  complexes  with  the  (singular)  cochain  complexes.  
   \end{proof}

\begin{theorem}\label{pr-5.22.9}
Let  $M_\bullet$  be  a  $\Delta$-manifold
and  $M'_\bullet$  be  a  graded  submanifold  of  $M_\bullet$.  
Then  we  have   
a  sequence  of   morphisms  of  $\Delta$-groupoids 
\begin{eqnarray}\label{eq-5.21.a2}
\xymatrix{
\Pi_1  (\delta  M'_\bullet) \ar[r]
& \Pi_1  (\Delta  M'_\bullet)   \ar[r]
&\Pi_1  ( M _\bullet),   
}
\end{eqnarray}
   which  
 is   
  functorial with  respect  to  morphisms  of  $\Delta$-manifolds.  
 Moreover,  suppose  there  is  a  continuous  total  order  on  the  face  maps  
of   $M_\bullet$. 
  Then  we  have 
 sequences  of  homomorphisms   of double   (co)homology  groups
\begin{eqnarray}\label{eq-5.21.h1}
\xymatrix{
H_\bullet(\delta  M'_\bullet;R) \ar[r]
& H_\bullet (M'_\bullet) \ar[r]
& H_\bullet(\Delta  M'_\bullet;R) \ar[r]
&H_\bullet(M_\bullet;R),
}\\
\label{eq-5.21.h11}
\xymatrix{
H^\bullet(\delta  M'_\bullet;R) \ar[r]
& H^\bullet (M'_\bullet) \ar[r]
& H^\bullet(\Delta  M'_\bullet;R) \ar[r]
&H^\bullet(M_\bullet;R) 
}
\end{eqnarray}
   which  
 are  
  functorial with  respect  to  morphisms  of  $\Delta$-manifolds.  
\end{theorem}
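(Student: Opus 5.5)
The plan is to derive both parts of Theorem~\ref{pr-5.22.9} by applying functors to the sequence of $\Delta$-manifolds in Lemma~\ref{le-5.22.1} and to the double complexes in Proposition~\ref{th-26.5.18-1}.

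\emph{Groupoid sequence (\ref{eq-5.21.a2}).} For each $n$, take the fundamental groupoid $\Pi_1(M_n)$. Each face map $\partial_n^i: M_n\to M_{n-1}$ is continuous, so it induces a morphism of groupoids $(\partial_n^i)_*:\Pi_1(M_n)\to\Pi_1(M_{n-1})$. Since $\Pi_1$ is a functor, the relations (\ref{eq-26.5.17.55}) go over to (\ref{eq-26.5.17.5}). Hence $\Pi_1(M_\bullet)$ is a $\Delta$-groupoid, and likewise $\Pi_1(\delta M'_\bullet)$ and $\Pi_1(\Delta M'_\bullet)$. Applying $\Pi_1$ levelwise to the inclusions (\ref{eq-5.22.1}) gives (\ref{eq-5.21.a2}); these maps commute with the $(\partial_n^i)_*$ because the inclusions are morphisms of $\Delta$-manifolds. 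For functoriality, apply $\Pi_1$ to the commutative diagram (\ref{eq-5.22.2}). This yields a commutative ladder whose bottom row is the corresponding sequence for $f(M'_\bullet)\subseteq N_\bullet$. Only sub-$\Delta$-manifolds appear here, never $M'_\bullet$ itself, so no additional structure on $M'_\bullet$ is needed.

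\emph{Homology sequences (\ref{eq-5.21.h1}), (\ref{eq-5.21.h11}).} Assume the continuous total order on the face maps, so Proposition~\ref{th-26.5.18-1} gives the inclusions of double complexes (\ref{eq-5.21.a1}) and (\ref{eq-5.21.a11}). I define the double homology $H_\bullet(M'_\bullet)$ as the homology of ${\rm Inf}_\bullet(M'_\bullet)$, taken first with respect to $\partial_\bullet$ in the second index and then with respect to the induced $\delta_\bullet$. By Proposition~\ref{th-26.5.18-1}~(2), the inclusion ${\rm Inf}_\bullet\to{\rm Sup}_\bullet$ is a quasi-isomorphism in the second index, so this equals the homology of ${\rm Sup}_\bullet(M'_\bullet)$. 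The maps in (\ref{eq-5.21.h1}) are then defined as follows. The first is induced by $C_\bullet(\delta M'_\bullet;R)\to{\rm Inf}_\bullet(M'_\bullet)$. The second is the composite of the inverse of the isomorphism ${\rm Inf}\cong{\rm Sup}$ on homology with the map induced by ${\rm Sup}_\bullet(M'_\bullet)\to C_\bullet(\Delta M'_\bullet;R)$. The third is induced by the last inclusion. Each arrow is a map of double complexes, so it commutes with both $\partial$ and $\delta$ and descends to iterated homology.

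\emph{Main obstacle.} The delicate step is well-definedness. First, the differential $\delta_\bullet$ must preserve ${\rm Inf}_\bullet$ and ${\rm Sup}_\bullet$; this holds because $\delta_m\partial_n=\partial_n\delta_m$ and $\delta_\bullet$ preserves $C_\bullet(M'_n;R)$ levelwise. Second, the isomorphism on second-index homology must pass to the iterated homology. For this I would use that a map of double complexes which is an isomorphism on $\partial$-homology remains an isomorphism after taking $\delta$-homology of those $\partial$-homology groups. This is immediate, since the iterated homology is the homology of isomorphic complexes.

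Functoriality follows from the commutative ladder of double complexes constructed at the end of the proof of Proposition~\ref{th-26.5.18-1}, after passing to homology and using naturality of the inverse isomorphism. The cohomology case is the same argument applied to (\ref{eq-5.21.a11}).
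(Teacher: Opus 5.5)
Your proposal is correct and follows the paper's proof. The paper applies $\Pi_1$ to (\ref{eq-5.22.1}) and (\ref{eq-5.22.2}), and the double homology functor to (\ref{eq-5.21.a1}) and (\ref{eq-5.21.a11}); your extra detail (taking $\partial$-homology first so that ${\rm Inf}_\bullet\to{\rm Sup}_\bullet$ makes $H_\bullet(M'_\bullet)$ well defined, and checking that $\delta_\bullet$ preserves ${\rm Inf}_\bullet$ and ${\rm Sup}_\bullet$) fills in what the paper leaves implicit, and the only slip is that the middle arrow needs no inverse isomorphism, being simply induced by the composite inclusion ${\rm Inf}_\bullet(M'_\bullet)\to{\rm Sup}_\bullet(M'_\bullet)\to C_\bullet(\Delta M'_\bullet;R)$.
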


\begin{proof}
 Applying the  fundamental  groupoid   functor  to  (\ref{eq-5.22.1}), 
 we  obtain  (\ref{eq-5.21.a2}).  
 Applying the  fundamental  groupoid   functor  to  (\ref{eq-5.22.2}), 
 we  obtain  the  functoriality  of  (\ref{eq-5.21.a2}).

In  addition,   suppose  there  is  a  continuous  total  order  on  the  face  maps  
of   $M_\bullet$. 
Applying  the  double  homology  functor  to  (\ref{eq-5.21.a1})
and  (\ref{eq-5.21.a11})  respectively,  
we  obtain  (\ref{eq-5.21.h1})  and    (\ref{eq-5.21.h11}).   
By  the  functoriality  of (\ref{eq-5.21.a1})  and  (\ref{eq-5.21.a11})
respectively,  
we  obtain  the  functoriality  of
 (\ref{eq-5.21.h1})  and    (\ref{eq-5.21.h11}).   
\end{proof}

  \section{Configuration  spaces  and  hypergraphs  on  manifolds}\label{s.26.7.28.3}
  
  In  this  section,  
  we  construct  the  independence  complexes  on  manifolds  
  as  unions  of  configuration  spaces and  construct  the  hypergraphs  on
  manifolds  as  graded  submanifolds  of  the  independence  complexes.  
  In  Subsection~\ref{ss.26.6.3-3.1},  
  we  construct  the  directed  independence  complexes  on  manifolds   
    as  
   unions  of  the  ordered  configuration  spaces   
  and  construct   the  independence  complexes  on  manifolds  
  as     unions  of  the  unordered  configuration  spaces.   
  We  prove  that  the  (directed)  independence  complexes  on  manifolds
  are  $\Delta$-manifolds.   
  We  give  the 
   double  homology  for  the   (directed)  independence  complexes  
  in  Theorem~\ref{th-homol-conf-1}.  
    In  Subsection~\ref{26.6.3-ss.3.2},  
    we  construct  hyperdigraphs  on  manifolds  
    as  graded  submanifolds  of  the   directed  independence  complexes
    and  construct  hypergraphs  on  manifolds  as  graded  submanifolds  of  
    the  independence  complexes.  
    We  prove a  sequence  of  double  homology 
    for  the  hyper(di)graphs  in  Theorem~\ref{th-6.13.1} (Main Result I).  
    In  Subsection~\ref{26.5.25-ss3.5},  
    we  study    
    hyper(di)graphs       
      on  disjoint  unions  of  manifolds  by  free  products  
      and       simplicial   joins  respectively. 
  
  \subsection{Configuration  spaces     on  manifolds}\label{ss.26.6.3-3.1}
  
Let  $M$  be a    differentiable  manifold. 
For  any positive  integer  $k$,    
a  {\it  directed  $k$-hyperedge}  $\vec{\sigma}(M)$  or  $\vec{\sigma}_k(M)$  on  $M$  is  an  ordered  $k$-tuple 
$ (x_1,x_2,\ldots,x_k)$  where $x_1,x_2,\ldots,x_k\in  M$  are  distinct. 
The  {\it  $k$-th  ordered  configuration space}
${\rm  Conf}_k(M)$  is  the  open  submanifold  
of  $M^k$  consisting  of  the  points  $(x_1,\ldots,x_k)$  such  that  
$x_i\neq  x_j$  for  any  $i\neq  j$.
In other  words,
 ${\rm  Conf}_k(M)$ 
  consists      
of    all  the directed  $k$-hyperedges  on  $M$.
Let  ${\rm  Conf}_0(M)=\{\vec\emptyset\}$
where  $\vec\emptyset$  denotes  the  empty  sequence.  
There  is   graded  manifold  
\begin{eqnarray*}
{\rm  Conf}_\bullet (M) = \bigcup_{k\geq  0}  {\rm  Conf}_k(M),     
\end{eqnarray*}
which  will  be  called  the  {\it     directed  independence  complex}
 of  $M$  and  denoted  
as   $\overrightarrow{\rm  Ind}(M)$  \footnote[1]{
Here   the  directed  independence  complex  of  $M$  is  a  directed  
simplicial  complex  on  $M$  and   is  allowed  to  contain  the  empty  
sequence  as  a  directed  simplex,  see  Subsection~\ref{26.6.3-ss.3.2}.   },  
 equipped  with  smooth   face  maps  
\begin{eqnarray}\label{eq-2.1a}
\partial^i_k:   {\rm  Conf}_k(M)\longrightarrow  {\rm  Conf}_{k-1}(M)  
\end{eqnarray}
for   $1\leq  i\leq  k$  given  by  
\begin{eqnarray*}
\partial^i_k   (x_1,\ldots,x_k)= (x_1,\ldots, \widehat{x_i}, \ldots,x_k).   
\end{eqnarray*}
The  $k$-th  symmetric group  $\Sigma_k$  acts   on  ${\rm  Conf}_k(M)$ smoothly  
by  
permuting  the  coordinates 
\begin{eqnarray}\label{eq-26.4.23.1}
s(x_1,\ldots,x_k)=(x_{s(1)},\ldots,x_{s(k)}), ~~~~~~ s\in  \Sigma_k.  
\end{eqnarray}
There  are  canonical    
   group  monomorphisms  
   \begin{eqnarray}\label{eq-26-3-10-1}
   \varphi_k^i:  \Sigma_{k-1}\longrightarrow \Sigma_k
   \end{eqnarray} 
for  any  $1\leq  i\leq  k$  sending  any  $s\in \Sigma_{k-1}$  to the  permutation  
\begin{eqnarray*}
\varphi_k^i(s)(x_1,\ldots,x_k)= 
\left(
\begin{matrix} 
x_1   &\ldots   &x_{i-1}  &x_i  &x_{i+1}    &\ldots  &x_k\\ 
 x_{s(1)},&\ldots, &x_{s(i-1)}  &x_i   &x_{s(i)}  &\ldots   & x_{s(k-1)}  
\end{matrix}
\right) 
\end{eqnarray*}
such  that  the  diagram   commutes 
\begin{eqnarray}\label{eq-4.23.2}
\xymatrix{
{\rm  Conf}_k(M)\ar[r]^-{\partial^i_k} \ar[d]_-{\varphi_k^i(\Sigma_{k-1})}   
&{\rm  Conf}_{k-1}(M)\ar[d]^-{\Sigma_{k-1}}\\
{\rm  Conf}_k(M)\ar[r]^-{\partial^i_k}  &{\rm  Conf}_{k-1}(M).  
}
\end{eqnarray}
The  {\it  $k$-th  unordered  configuration space}
${\rm  Conf}_k(M)/\Sigma_k$  is  the  orbit  manifold  of  the  
$\Sigma_k$-action   on  ${\rm  Conf}_k(M)$, 
whose  points  are   $k$-subsets  $\{x_1,\ldots,x_k\}$  of  $M$  such  that  
$x_i\neq  x_j$  for  any  $i\neq  j$.
Let  ${\rm  Conf}_0(M)/\Sigma_0=\{\emptyset\}$  where  $\emptyset$  denotes  
the emptyset.  
There  is   graded  manifold  
\begin{eqnarray*}
{\rm  Conf}_\bullet (M) /\Sigma_\bullet= \bigcup_{k\geq  0}  {\rm  Conf}_k(M)/\Sigma_k,     
\end{eqnarray*}
which  will  be  called  the  {\it   independence  complex}  of  $M$
and  denoted  
as   ${\rm  Ind}(M)$  \footnote[2]{
Here   the     independence  complex  of  $M$   is  a    
simplicial  complex  on  $M$  and   is  allowed  to  contain  the  emptyset   
  as  a     simplex,  see  Subsection~\ref{26.6.3-ss.3.2}.   }.
The  canonical  projections  
 \begin{eqnarray}\label{eq-2601-a}
 \pi_k:  {\rm  Conf}_k(M)\longrightarrow  {\rm  Conf}_k(M)/\Sigma_k
 \end{eqnarray}  
 for  all  $k\geq  1$  induce   a   canonical  projection 
 \begin{eqnarray}\label{eq-2601-aa}
 \pi:  \overrightarrow{\rm  Ind}(M)\longrightarrow   {\rm  Ind}(M) 
 \end{eqnarray} 
 sending  any    directed   hyperedge   $\vec{\sigma}(M)= (x_1,x_2,\ldots,x_k)$ 
to  its  underlying   hyperedge  $\sigma(M)=\{x_1,x_2,\ldots,x_k\}$.

Suppose  $M$  is  a  submanifold  of  $\mathbb{R}$.     
Then  the  total  order  on  $\mathbb{R}$  induces  a  total  order  $\prec$  on  $M$. 
 Consider  the 
 submanifold
 \begin{eqnarray*}
{\rm  Conf}_k (M,\prec) = \{(x_1,\ldots,x_k)\in   {\rm  Conf}_k(M)\mid
x_1\prec\cdots\prec  x_k\}      
\end{eqnarray*} 
of  $ {\rm  Conf}_k(M)$  
and  the
  graded  submanifold 
\begin{eqnarray*}
{\rm  Conf}_\bullet (M,\prec) = \bigcup_{k\geq  0}  {\rm  Conf}_k(M,\prec),       
\end{eqnarray*} 
denoted  as  $\overrightarrow {\rm  Ind}(M,\prec)$,  
of  $ \overrightarrow{\rm  Ind}(M)$.   
Then  the  map 
\begin{eqnarray}\label{eq-5.2.5}
\theta_k:   {\rm  Conf}_k(M)/\Sigma_k\longrightarrow  {\rm  Conf}_k (M,\prec)
\end{eqnarray}
sending  $\{x_1,\ldots, x_k\}$  to  
$(x_1,\ldots,x_k)$  such  that   $x_1\prec\cdots\prec  x_k$  
is   a  homeomorphism 
 and  is  a  cross-section  of    (\ref{eq-2601-a}).  
 Consequently,  
 the     map 
 \begin{eqnarray}\label{eq-5.2.1}
\theta:    {\rm  Ind}(M)  
\longrightarrow  
 \overrightarrow {\rm  Ind}(M,\prec)
\end{eqnarray}
  is  an  isomorphism  and  is  a  cross-section  of  (\ref{eq-2601-aa}).
  Moreover,    $ \overrightarrow {\rm  Ind}(M,\prec)$  is  invariant  under 
    the  face  maps  (\ref{eq-2.1a})  thus  the  restrictions  of   (\ref{eq-2.1a}) to  
   $ \overrightarrow {\rm  Ind}(M,\prec)$  give  face  maps  
\begin{eqnarray}\label{eq-5.1ab}
\partial^i_k:   {\rm  Conf}_k(M,\prec)\longrightarrow  {\rm  Conf}_{k-1}(M,\prec)  
\end{eqnarray}
for   $1\leq  i\leq  k$.

\begin{lemma}
\label{le-24.5.24.conf1}
\begin{enumerate}[(1)]
\item
Let  ${\bf  Sub}(M)$  be  the  category  whose  objects are  submanifolds  of  $M$
 and  whose  morphisms are  embeddings.   Then  
$ \overrightarrow{\rm  Ind}(M)$  is  a  covariant  functor  from   $\overrightarrow{\bf  FI}$
  to    ${\bf  Sub}(M)$ 
     and 
      ${\rm  Ind}(M)$  is  a    covariant  functor  from  ${\bf  FI}$  to  ${\bf  Sub}(M)$ 
     such  that     the  diagram  commutes 
     \begin{eqnarray*}
     \xymatrix{
     \overrightarrow{\bf  FI}\ar[rr]^-{\pi}  \ar[rd]_-{ \overrightarrow{\rm  Ind}(M)}  
     &&{\bf  FI}\ar[ld]^-{{\rm  Ind}(M)}  \\
     &  {\bf  Sub}(M); &
     }
     \end{eqnarray*}
     \item
     both  $ \overrightarrow{\rm  Ind}(M)$   and  ${\rm  Ind}(M)$  are     $\Delta$-manifolds;  
     \item
     there  is  a  continuous  total  order  on  the  face  maps  
of   $ \overrightarrow{\rm  Ind}(M)$.
  Moreover,  
     if  $M$  is  a  submanifold  of  $\mathbb{R}$,  
      then    
      there  is  a  continuous  total  order  on  the  face  maps  
of   $ {\rm  Ind}(M)$.     
     \end{enumerate}  
\end{lemma}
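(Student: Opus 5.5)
The plan is to build the functors explicitly from the face-map data and then read off (2) and (3) from the structure already set up in Lemma~\ref{pr-26.5.23.fi1}.

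For (1), send an object $(i_0,\ldots,i_n)$ of $\overrightarrow{\bf FI}$ to ${\rm Conf}_{n+1}(M)$. An order-preserving injection $\vec\iota:(i_0,\ldots,i_n)\to(j_0,\ldots,j_m)$ exhibits $(i_0,\ldots,i_n)$ as a subsequence of $(j_0,\ldots,j_m)$. It is a composite of elementary deletions, so it corresponds to a composite of face maps $\partial^{\lambda}$ from (\ref{eq-2.1a}). In the $\overrightarrow{\bf FI}$-picture this composite is a smooth map ${\rm Conf}_{m+1}(M)\to{\rm Conf}_{n+1}(M)$ forgetting the coordinates not in the image.

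The first thing to check is that this assignment is independent of the chosen factorization; this follows from the simplicial identity (\ref{eq-26.5.17.55}) for deletions. It then respects composition and identities, giving the covariant functor into ${\bf Sub}(M)$ (read in the convention of the paper, where face maps are the structure maps).

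For ${\rm Ind}(M)$, send $\{0,\ldots,n\}$ to ${\rm Conf}_{n+1}(M)/\Sigma_{n+1}$ and an injection to the map forgetting the points outside its image. This is well defined on orbits by the commutativity of (\ref{eq-4.23.2}). Commutativity of the triangle with $\pi$ is then checked on objects and morphisms. On objects, the map $\pi_k$ of (\ref{eq-2601-a}) takes an ordered tuple to its underlying set. On morphisms, forgetting coordinates commutes with passing to underlying sets, which is exactly the naturality square $\pi_{k-1}\partial^i_k=\partial^i\pi_k$.

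For (2), restrict these functors to the $\Delta$-set structures of Lemma~\ref{pr-26.5.23.fi1}. The maps $\partial^i_k$ are smooth because they are restrictions of coordinate projections $M^k\to M^{k-1}$ to open submanifolds. They satisfy (\ref{eq-26.5.17.55}) because deleting the $j$-th and then the $i$-th coordinate ($i<j$) equals deleting the $i$-th and then the $(j-1)$-th.

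On ${\rm Ind}(M)$ the face maps come from the ${\bf FI}$-structure, that is, from forgetting a point. To have indexed face maps $\partial^i$ we must label the points of an unordered configuration. This labelling is where the subtlety lies, and it is deferred to (3). Without the labelling, ${\rm Ind}(M)$ is at least a functor from ${\bf FI}$, hence a $\Delta$-manifold via ${\rm Obj}_\bullet({\bf FI})$ as in Lemma~\ref{pr-26.5.23.fi1}(2). I would also note that the quotient ${\rm Conf}_k(M)/\Sigma_k$ is a manifold because the $\Sigma_k$-action is free and properly discontinuous.

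For (3), on $\overrightarrow{\rm Ind}(M)$ the face maps are indexed by the positions $1,\ldots,k$. This gives a total order that is locally constant, hence continuous, on ${\rm Conf}_k(M)$, and the alternating sum (\ref{eq-26.5.22-3}) is defined.

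For $M\subseteq\mathbb{R}$, transport the structure along the isomorphism $\theta$ of (\ref{eq-5.2.1}). Since $\overrightarrow{\rm Ind}(M,\prec)$ is invariant under the face maps (\ref{eq-5.1ab}), the $i$-th face of $\{x_1,\ldots,x_k\}$ is defined as deleting the $i$-th smallest point with respect to $\prec$. On each component of ${\rm Conf}_k(M)/\Sigma_k$ the ordering of the points is locally constant, because distinct points cannot cross without colliding. This labelling is therefore a continuous total order on the face maps.

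The main obstacle I expect is exactly this continuity for ${\rm Ind}(M)$. For general $M$ of dimension at least $2$ no such continuous labelling exists, because monodromy permutes the points. The argument therefore has to use the connectedness and one-dimensionality of subsets of $\mathbb{R}$ through the cross-section $\theta$.
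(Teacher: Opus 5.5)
You handle $\overrightarrow{\rm Ind}(M)$ correctly: coordinate deletions, the identity (\ref{eq-26.5.17.55}), and the positional order in (3). Your treatment of ${\rm Ind}(M)$ for $M\subseteq\mathbb{R}$ via $\theta$ is also fine. However, the step that carries (1) and the general-$M$ half of (2) is false.

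The square (\ref{eq-4.23.2}) only says that $\partial^i_k$ intertwines the subgroup $\varphi^i_k(\Sigma_{k-1})$ fixing the $i$-th slot with $\Sigma_{k-1}$. It does not make $\pi_{k-1}\circ\partial^i_k$ constant on $\Sigma_k$-orbits. Already for $k=2$, the tuples $(x_1,x_2)$ and $(x_2,x_1)$ lie in one orbit, but $\partial^1_2$ sends them to $(x_2)$ and $(x_1)$. So an injection $\{0,\ldots,n\}\to\{0,\ldots,m\}$ does not tell you which points of an unordered configuration to forget, and no well-defined map ${\rm Conf}_k(M)/\Sigma_k\to{\rm Conf}_{k-1}(M)/\Sigma_{k-1}$ is induced by deleting ``the $i$-th point''.

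In fact no continuous forget-one-point map exists here at all. Such a map would be a continuous section of the $k$-sheeted covering ${\rm Conf}_k(M)/(\Sigma_1\times\Sigma_{k-1})\to{\rm Conf}_k(M)/\Sigma_k$. This covering is connected when $M$ is connected, $\dim M\geq 2$ and $k\geq 2$, so it has no section. This is exactly the monodromy obstruction you cite in (3).

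Your proposal is therefore internally inconsistent. The ${\bf FI}$-functor ${\rm Ind}(M)$, the square $\pi_{k-1}\partial^i_k=\partial^i\pi_k$, and the claim that ``${\rm Ind}(M)$ is a $\Delta$-manifold without the labelling'' all rely on a map that does not exist. The paper's own proof only defers to earlier work at this point, so it does not supply this step either.

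To repair this, you need to say what the ${\rm Ind}(M)$ parts can honestly mean.
\begin{itemize}
\item For general $M$, the faces of $\{x_1,\ldots,x_k\}$ exist only as an unordered family, namely the correspondence through the covering above. The map $\pi$ sends the $k$ faces of $\vec\sigma(M)$ bijectively onto those of $\pi(\vec\sigma(M))$.
\item Indexed face maps on ${\rm Ind}(M)$ satisfying (\ref{eq-26.5.17.55}), i.e.\ a $\Delta$-manifold in the sense of Subsection~\ref{ss2.3-26.5.22}, exist only after a continuous labelling. That is the situation of (3).
\item Even there, the face-by-face square fails. For $M=\mathbb{R}$ and $x_2<x_1$, one has $\pi_1\partial^1_2(x_1,x_2)=\{x_2\}$, whereas deleting the $\prec$-smallest point of $\{x_1,x_2\}$ gives $\{x_1\}$.
\item In general $\pi\partial^i_k=\partial^{s(i)}\pi$, where $s(i)$ is the $\prec$-rank of $x_i$. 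What commutes is the signed chain map $\pi_\#$ of Lemma~\ref{le-3.16-1}, as in (\ref{diag-26-6-25}). That, rather than a strictly commuting triangle of functors, is the compatibility with $\pi$ you can actually prove.
\end{itemize}
You should also state plainly that the forgetful maps make $\overrightarrow{\rm Ind}(M)$ contravariant on $\overrightarrow{\bf FI}$. They land in manifolds and submersions, not in ${\bf Sub}(M)$.
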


\begin{proof}
The  proof  of   (1)   follows from  a  direct  verification.  
The proofs  of  (2)  and  (3)  follow  from  a   refinement  of 
  \cite[Lemma~9.1  and  Lemma~9.4]{jgp}.  
\end{proof}

\begin{theorem}
\label{th-homol-conf-1}
For  any  differentiable  manifold  $M$,  
we  have  double  homology 
$H (\overrightarrow {\rm  Ind} (M);R)$.  
Moreover,   if   $M$  is  a  submanifold  of  $\mathbb{R}$,  
then 
we  have  double  homology
$H ({\rm  Ind}(M);R)$
such  that 
 the  the  canonical  projection  $\pi$  induces  a  
homomorphism  
\begin{eqnarray}\label{eq-26.6.25-1}
\pi_*:  H(\overrightarrow {\rm  Ind} (M);R)\longrightarrow
  H({\rm  Ind}(M);R).
  \end{eqnarray}  
\end{theorem}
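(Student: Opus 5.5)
The plan is to read the double homology off from the double chain complex built in the proof of Proposition~\ref{th-26.5.18-1}. The construction there needs only two inputs: a $\Delta$-manifold structure, and a continuous total order on its face maps.

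\textbf{The directed case.} By Lemma~\ref{le-24.5.24.conf1}~(2), $\overrightarrow{\rm Ind}(M)=\{{\rm Conf}_k(M)\}_{k\geq 0}$ is a $\Delta$-manifold with face maps (\ref{eq-2.1a}). By Lemma~\ref{le-24.5.24.conf1}~(3), these face maps carry a continuous total order, namely the order $\partial^1_k,\ldots,\partial^k_k$ given by the position index. For each singular degree $m$, the groups $C_m({\rm Conf}_k(M);R)$ then form a $\Delta$-module. The map $\partial_k=\sum_i(-1)^{i}(\partial^i_k)_*$ squares to zero by the simplicial identities (\ref{eq-26.5.17.55}) for deleting coordinates. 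It commutes with the singular boundary $\delta_m$, because each $(\partial^i_k)_*$ is induced by a continuous map. Hence $(C_\bullet(\overrightarrow{\rm Ind}(M);R),\delta,\partial)$ is a double chain complex, and $H(\overrightarrow{\rm Ind}(M);R)$ is defined as its double homology, as in Theorem~\ref{pr-5.22.9} with $M'_\bullet=M_\bullet$.

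\textbf{The case $M\subseteq\mathbb{R}$.} Here Lemma~\ref{le-24.5.24.conf1}~(3) gives a continuous total order on the face maps of ${\rm Ind}(M)$. It comes from the isomorphism $\theta$ in (\ref{eq-5.2.1}) onto $\overrightarrow{\rm Ind}(M,\prec)$: the $i$-th face of $\{x_1,\ldots,x_k\}$ with $x_1\prec\cdots\prec x_k$ deletes $x_i$. The same argument then yields the double complex $C_\bullet({\rm Ind}(M);R)$ and its double homology.

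\textbf{The chain map $\pi_\#$.} This step is the main obstacle, since $\pi$ is not a $\Delta$-map in general: deleting the $i$-th coordinate of a tuple corresponds to deleting some possibly different $\prec$-position in the underlying set. Following Lemma~\ref{le-3.16-1}, I would define $\pi_\#$ with a sign. Decompose ${\rm Conf}_k(M)$ into the disjoint open pieces ${\rm Conf}_k(M)_s=\{(x_1,\ldots,x_k): x_{s(1)}\prec\cdots\prec x_{s(k)}\}$ for $s\in\Sigma_k$. These are the path components when $M$ is connected, and in general $M$ is a disjoint union of intervals, so ${\rm Conf}_k(M)$ is still a disjoint union of the pieces, each open and closed. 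Every singular simplex lies in exactly one piece, so we may set
\[
\pi_\#(\alpha)={\rm sgn}(s)\,(\pi_k)_*(\alpha)\quad\text{for } \alpha \text{ a simplex in } {\rm Conf}_k(M)_s .
\]
This map commutes with $\delta$ because pieces are open and closed. To check $\pi_\#\partial=\partial\pi_\#$, compute on a piece $s$. Deleting coordinate $i$ lands in the piece of the induced permutation $s'$ and corresponds to deleting $\prec$-position $j=s^{-1}(i)$. The sign identity $(-1)^i{\rm sgn}(s')={\rm sgn}(s)(-1)^{j}$ is exactly the combinatorial identity behind Lemma~\ref{le-3.16-1}. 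Verifying it via the $\Delta$-set map of Lemma~\ref{le-3.16-1} is the key computation.

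Passing to double homology then gives the homomorphism (\ref{eq-26.6.25-1}).
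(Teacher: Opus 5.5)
Your proposal is correct and follows the paper's route. Both double complexes come from Lemma~\ref{le-24.5.24.conf1}~(2)--(3) via the construction in Proposition~\ref{th-26.5.18-1} and Theorem~\ref{pr-5.22.9}, and $\pi_*$ comes from the signed projection $\pi_\#$ of Lemma~\ref{le-3.16-1} commuting with the alternating face sums. Your splitting of ${\rm Conf}_k(M)$ into the open-and-closed pieces ${\rm Conf}_k(M)_s$ makes the sign locally constant, so $\pi_\#$ is well defined on singular chains and commutes with $\delta$; together with the identity $(-1)^i{\rm sgn}(s')={\rm sgn}(s)(-1)^{s^{-1}(i)}$, this supplies the details the paper leaves to its citations.
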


\begin{proof}
By  Theorem~\ref{pr-5.22.9}  and   Lemma~\ref{le-24.5.24.conf1}~(2) --  (3),  
we  have  the   double  homology 
$H (\overrightarrow {\rm  Ind} (M);R)$.  
Suppose  in  addition  that 
 $M$  is  a  submanifold  of  $\mathbb{R}$.   
  Then 
 by  Theorem~\ref{pr-5.22.9}  and   Lemma~\ref{le-24.5.24.conf1}~(2)  --  (3),  
we  have  the   double  homology 
$H (  {\rm  Ind} (M);R)$.  
 By  an  analogous  argument  of  
 \cite[Lemma~3.8 and  Theorem~3.9]{hdg} 
 and  
 \cite[Proposition~4.3]{reg},  
 the  diagram  commutes 
 \begin{eqnarray}\label{diag-26-6-25}
 \xymatrix{
 R({\rm  Conf}_k(M))\ar[r]^-{ {\partial}_k}  \ar[d]_-{(\pi)_\#}&
    R({\rm  Conf}_{k-1}(M))\ar[d]^-{(\pi)_\#}\\
   R({\rm  Conf}_k(M)/\Sigma_k)\ar[r]^-{   {\partial}_k}  &  
    R({\rm  Conf}_{k-1}(M)/\Sigma_{k-1}),      
 }
 \end{eqnarray}
where  the  first  horizontal  map  is  the  
$R$-linear  extension  of  the  alternating  sum  of  
(\ref{eq-2.1a})  and  the  second  horizontal  map  is  the  
$R$-linear  extension  of  the  alternating  sum  of  
(\ref{eq-5.1ab}). 
Since  $\pi$,  (\ref{eq-2.1a})  and  (\ref{eq-5.1ab})
are  smooth  maps  of  differentiable  manifolds,  
(\ref{diag-26-6-25})  induces  a   homomorphism 
(\ref{eq-26.6.25-1})   of  double  homology  groups.  
\end{proof}

\subsection{Hypergraphs  on  manifolds}\label{26.6.3-ss.3.2}

A  {\it  $k$-uniform  hyperdigraph}  $\vec{\mathcal{H}}_k(M)$
 on  $M$  is  a  
submanifold   of ${\rm  Conf}_k(M)$, 
which   is   
a  collection  of  certain  directed  $k$-hyperedges  on  $M$
\footnote[3]{
In  \cite{h1,hdg},  
a  (directed)  $k$-hyperedge  in   a  hyper(di)graph  consists  of  
$(k+1)$-vertices.  
However,  in  this  paper,  in  order  to  be  consistent  with  the
context  of  configuration  spaces, 
we  let  a  (directed)  $k$-hyperedge  in  a  hyper(di)graph  consist    of  
$k$-vertices.  
}.   
Let  $\vec{\mathcal{H}}_0(M)$   be  $\emptyset$    (the  emptyset  containing  
no  directed  hyperedges)    or  $\{\emptyset\}$   (the  set  containing  $\emptyset$
as  the  single  directed  $0$-hyperedge).  
An   {\it     hyperdigraph}    $\vec{\mathcal{H}} (M)$  on  $M$  is  a  union 
$\bigcup_{k\geq  0} \vec{\mathcal{H}}_k(M)$
 \footnote[4]{
 In  \cite{h1, hdg,  jgp},
    a   (directed)  hyperedge   in  a  hyper(di)graph  
 is  not  allowed  to  be  the  emptyset.   
If  we  add  the  emptyset  as  a  (directed)  hyperedge
to a    hyper(di)graph,
 then  by  substituting  the  usual  chain  complex   
 with   augmented  chain  complex 
  and  thereby  substituting  the  usual  homology  with    reduced  homology,
 the   homological   methods  in    \cite{h1, hdg,  jgp}  can  be  applied  analogously. 
 Therefore,  in  \cite{ca26}, 
 the  hyper(di)graphs  that are  allowed  to  contain   
  the  emptyset  as  a  (directed)  hyperedge  are  called  
  {\it  augmented  hyper(di)graphs}
  precisely.
 In  this  paper,  
we  allow  the  (directed) hyperedges  in     hyper(di)graphs  
to  be  the  emptyset  and  simply call  augmented  hyper(di)graphs
  as   hyper(di)graphs.   
    }.  
 Let   $\vec{\mathcal{H}} (M)$  be  a  hyperdigraph  on  $M$.  
 We  call    $\vec{\mathcal{H}} (M)$  {\it  symmetric}  if  
 $\vec{\mathcal{H}}_k(M)$   is  closed  under  the   $\Sigma_k$-permutation   
 (i.e.,         
  for  any  $\vec\sigma_k(M)\in   \vec{\mathcal{H}}_k(M)$  and  any  
 $s\in \Sigma_k$,  $s(\vec\sigma_k(M))\in  \vec{\mathcal{H}}_k(M)$)    
  for  any  $k\geq  1$.   
We  call    $\vec{\mathcal{H}} (M)$  a   
{\it       directed  simplicial  complex}  on  $M$
  if  
 for  any  directed  hyperedge  $\vec\sigma(M)\in \vec{\mathcal{H}} (M)$ 
 and  any    non-empty  subsequence  $\vec\tau(M)$  of   $\vec\sigma(M)$,  
 it  holds  that  $\vec\tau(M)\in \vec{\mathcal{H}} (M)$.     
 We  call    $\vec{\mathcal{H}} (M)$  an  
   {\it    independence  hyperdigraph}  on  $M$
     if  
 for  any  directed  hyperedge  $\vec\sigma(M)\in \vec{\mathcal{H}} (M)$ 
 and  any    directed  hyperedge  $\vec\theta(M)\in  \overrightarrow {\rm   Ind}(M)$  
 such  that  $\vec\sigma(M)$  is  a  subsequence  of   $\vec\theta(M)$
 (i.e.,   $\vec\theta(M)$  is  a  supersequence  of  $\vec\sigma(M)$),  
 it  holds  that  $\vec\theta(M)\in \vec{\mathcal{H}} (M)$.   
  For differentiation,  we  denote  a     directed  simplicial  complex  on  $M$  as  
      $\vec{\mathcal{K}} (M)$  
      and   denote  an         independence   hyperdigraph  on  $M$  as 
       $\vec{\mathcal{L}} (M)$.

       \begin{proposition}
       \label{le-26-1-25-1}
       \begin{enumerate}[(1)]
       \item
       For  any      directed  simplicial  complex
        $\vec{\mathcal{K}} (M)=\bigcup_{k\geq  0} \vec{\mathcal{K}}_k(M)$  on  $M$,  
        any  $k\geq  1$  and  any  $1\leq  i\leq  k$,   
       \begin{eqnarray}\label{eq-260125-b1}
       \partial_k^i(\vec{\mathcal{K}}_k(M))  \subseteq \vec{\mathcal{K}}_{k-1}(M);
       \end{eqnarray}

       \item
         For  any          independence  hyperdigraph  
        $\vec{\mathcal{L}} (M)=\bigcup_{k\geq  0} \vec{\mathcal{L}}_k(M)$  on  $M$,  
         any  $k\geq  0$  and  any  $1\leq  i\leq  k+1$,  
       \begin{eqnarray}\label{eq-260125-b2}
       (\partial_{k+1}^i)^{-1}(\vec{\mathcal{L}}_k(M))  \subseteq \vec{\mathcal{L}}_{k+1}(M). 
       \end{eqnarray}  
       \end{enumerate}
       \end{proposition}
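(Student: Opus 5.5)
Both parts follow directly from the definitions of directed simplicial complexes and independence hyperdigraphs on $M$, once we unwind what the face maps (\ref{eq-2.1a}) do. The key observation is that for $\vec\sigma(M)=(x_1,\ldots,x_k)\in{\rm Conf}_k(M)$, the image $\partial_k^i(\vec\sigma(M))=(x_1,\ldots,\widehat{x_i},\ldots,x_k)$ is a subsequence of $\vec\sigma(M)$ of length $k-1$. Dually, $\vec\theta(M)\in(\partial_{k+1}^i)^{-1}(\vec\tau(M))$ means that $\vec\tau(M)$ is obtained from $\vec\theta(M)$ by deleting its $i$-th entry, so $\vec\theta(M)$ is a supersequence of $\vec\tau(M)$ lying in ${\rm Conf}_{k+1}(M)\subseteq\overrightarrow{\rm Ind}(M)$.

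For (1), I take $\vec\sigma(M)\in\vec{\mathcal{K}}_k(M)$ with $k\geq 1$ and $1\leq i\leq k$. Then $\partial^i_k(\vec\sigma(M))$ is a subsequence of $\vec\sigma(M)$, so it lies in $\vec{\mathcal{K}}(M)$ by the defining property of a directed simplicial complex. Since it has exactly $k-1$ distinct entries, it lies in the graded piece $\vec{\mathcal{K}}_{k-1}(M)$, which proves (\ref{eq-260125-b1}).

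The only subtle point is the case $k=1$, where the image is the empty sequence $\vec\emptyset$. The definition quantifies over non-empty subsequences, so I would treat this case separately. The convention of footnote 4 lets $\vec{\mathcal{K}}_0(M)$ be $\{\emptyset\}$, and I would read the statement under the convention that a directed simplicial complex with nonempty $\vec{\mathcal{K}}_1(M)$ contains the empty simplex, as $\overrightarrow{\rm Ind}(M)$ does (footnote 1). With that convention, $\partial^i_1$ lands in $\vec{\mathcal{K}}_0(M)$. If $\vec{\mathcal{K}}_1(M)=\emptyset$ there is nothing to check. I expect this boundary case to be the only real obstacle, and I would handle it by stating the convention explicitly.

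For (2), I take $\vec\theta(M)\in{\rm Conf}_{k+1}(M)$ with $\partial_{k+1}^i(\vec\theta(M))\in\vec{\mathcal{L}}_k(M)$. Then $\vec\theta(M)$ is a supersequence of a directed hyperedge of $\vec{\mathcal{L}}(M)$ and lies in $\overrightarrow{\rm Ind}(M)$. The independence-hyperdigraph axiom then gives $\vec\theta(M)\in\vec{\mathcal{L}}(M)$, and since it has $k+1$ entries it lies in $\vec{\mathcal{L}}_{k+1}(M)$. This proves (\ref{eq-260125-b2}).

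The case $k=0$ is harmless here. If $\vec{\mathcal{L}}_0(M)=\{\emptyset\}$, the empty sequence is a subsequence of every element of ${\rm Conf}_1(M)$, so the argument is the same. If $\vec{\mathcal{L}}_0(M)=\emptyset$, the preimage is empty.
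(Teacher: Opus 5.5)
Your proposal is correct and follows the paper's proof essentially verbatim. In (1) the face map deletes one entry, so the image is a subsequence that stays in $\vec{\mathcal{K}}(M)$; in (2) any element of the preimage is a supersequence in $\overrightarrow{\rm Ind}(M)$ of a hyperedge of $\vec{\mathcal{L}}(M)$, hence lies in $\vec{\mathcal{L}}_{k+1}(M)$. Your explicit treatment of the $k=1$ case in (1) is more careful than the paper's. The paper simply asserts that the empty sequence lies in $\vec{\mathcal{K}}_0(M)$, although the definition of a directed simplicial complex only constrains non-empty subsequences, so the convention you state is genuinely needed.
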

       
       \begin{proof}
       (1)  Let  $k\geq  1$  and  
       let  $\vec\sigma_k(M)\in \vec{\mathcal{K}} _k(M)$.    
       Then  for  any   $1\leq  i\leq  k$, 
       $ \partial_k^i(\vec\sigma_k(M))$  is  either  the  empty  sequence  $\vec\emptyset$  
       or  a    non-empty  subsequence  of   $\vec\sigma_k(M)$.   
       Since   $\vec{\mathcal{K}} (M)$  is  a    directed  simplicial  complex,  
       it  follows  that    $ \partial_k^i(\vec\sigma_k(M))\in   \vec{\mathcal{K}} _{k-1}(M)$.  
       We  obtain  (\ref{eq-260125-b1}).

       (2)  
        Let  $k\geq  0$  and  
       let  $\vec\sigma_k(M)\in \vec{\mathcal{L}} _k(M)$. 
       For  any  $1\leq  i\leq  k+1$,  
       let  $\vec\tau_{k+1}(M)\in {\rm  Conf}_{k+1} (M)$  such  that 
       $     \partial_{k+1}^i(\vec\tau_{k+1}(M))=\vec\sigma_k(M)$. 
       Then  $\vec\sigma_k(M)$  is  a  subsequence  of  $\vec\tau_{k+1}(M)$.  
       Since   $\vec{\mathcal{L}} (M)$  is  an         independence  hypergraph,  
       it  follows  that   $\vec\tau_{k+1}(M)\in\vec{\mathcal{L}}_{k+1} (M)$. 
       Thus  
        $(\partial_{k+1}^i)^{-1}(\vec\sigma_k(M))  \subseteq \vec{\mathcal{L}}_{k+1}(M)$.  
       We  obtain  (\ref{eq-260125-b2}).  
              \end{proof}

  A  {\it  $k$-uniform  hypergraph}  ${\mathcal{H}}_k(M)$  on  $M$  is  a  
submanifold   of ${\rm  Conf}_k(M)/\Sigma_k$, 
which   is   
a  collection  of  certain  $k$-hyperedges  on  $M$.  
 Let  $ {\mathcal{H}}_0(M)$   be  $\emptyset$    (the  emptyset  containing  
no    hyperedges)    or  $\{\emptyset\}$   (the  set  containing  $\emptyset$
as  the  single     $0$-hyperedge).  
A    {\it    hypergraph}    ${\mathcal{H}} (M)$  on  $M$  is  a  union 
$\bigcup_{k\geq  0}  {\mathcal{H}}_k(M)$.  
 Let   ${\mathcal{H}} (M)$  be  a    hypegraph  on  $M$.  
We  call    ${\mathcal{H}} (M)$  a   {\it       simplicial  complex}  on  $M$
  if  
 for  any   hyperedge  $\sigma(M)\in {\mathcal{H}} (M)$ 
 and  any    non-empty  subset  $\tau(M)$  of   $\sigma(M)$,  
 it  holds  that  $\tau(M)\in {\mathcal{H}} (M)$.     
 We  call    ${\mathcal{H}} (M)$  an   {\it    independence  hypergraph}  on  $M$
     if  
 for  any  hyperedge  $\sigma(M)\in {\mathcal{H}} (M)$ 
 and  any      hyperedge  $\theta(M)\in  {\rm   Ind}(M)$  
 such  that  $\sigma(M)$  is  a  subset  $\theta(M)$,  
 it  holds  that  $\theta(M)\in {\mathcal{H}} (M)$.   
  For differentiation,  we  denote  a   simplicial  complex  on  $M$  as  
      ${\mathcal{K}} (M)$  
      and   denote  an      independence   hypergraph  on  $M$  as 
       ${\mathcal{L}} (M)$.   
  For  any     hyperdigraph  $\vec{\mathcal{H}}(M)$  on  $M$,
      we  call   $\pi (\vec{\mathcal{H}}(M))$    the   {\it  underlying     hypergraph}
      of   $\vec{\mathcal{H}}(M)$ 
        on  $M$.

       \begin{proposition}\label{le-2.1-26-3}
For  any   differentiable  manifold  $M$,  
\begin{enumerate}[(1)]
\item
   $\overrightarrow{\rm  Ind}(M)$  
is  a   directed  simplicial  complex  on  $M$  and   ${\rm  Ind}(M)$
is  its   underlying    simplicial  complex  on  $M$. 
Moreover,  
any     directed  simplicial  complex  $\vec{\mathcal{K}}(M)$  on  $M$  
is  a  directed  simplicial  sub-complex  of  $\overrightarrow{\rm  Ind}(M)$ 
and  any      simplicial  complex  $\mathcal{K}(M)$  on  $M$  is  a  
simplicial  sub-complex  of  
${\rm  Ind}(M)$;
\item
   $\overrightarrow{\rm  Ind}(M)$  
is  an     independence  hyperdigraph  on  $M$  and   ${\rm  Ind}(M)$
is  its   underlying    independence  hypergraph   on  $M$. 
Moreover,  
any       independence  hyperdigraph  $\vec{\mathcal{L}}(M)$  on  $M$  
is  an      independence  sub-hyperdigraph  of  $\overrightarrow{\rm  Ind}(M)$ 
and  any       hypergraph   $\mathcal{L}(M)$  on  $M$  is  an      
 independence  sub-hypergraph   of  
${\rm  Ind}(M)$.  
\end{enumerate} 
\end{proposition}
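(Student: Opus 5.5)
The plan is to check the definitions directly, treating part (1) and part (2) in parallel, since each claim reduces to an elementwise check on hyperedges.

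For (1), I first show that $\overrightarrow{\rm Ind}(M)={\rm Conf}_\bullet(M)$ is a directed simplicial complex on $M$. Take a directed hyperedge $\vec\sigma(M)=(x_1,\ldots,x_k)\in{\rm Conf}_k(M)$ and a non-empty subsequence $\vec\tau(M)=(x_{i_1},\ldots,x_{i_l})$ with $i_1<\cdots<i_l$. Its entries are still pairwise distinct, so $\vec\tau(M)\in{\rm Conf}_l(M)\subseteq\overrightarrow{\rm Ind}(M)$. Equivalently, by iterating the face maps (\ref{eq-2.1a}), $\vec\tau(M)$ is obtained from $\vec\sigma(M)$ by deleting coordinates, and these face maps land in ${\rm Conf}_{k-1}(M)$.

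Next I identify the underlying hypergraph. By (\ref{eq-2601-aa}), $\pi(\overrightarrow{\rm Ind}(M))=\bigcup_k{\rm Conf}_k(M)/\Sigma_k={\rm Ind}(M)$, because each $\pi_k$ in (\ref{eq-2601-a}) is surjective. This underlying hypergraph is itself a simplicial complex: a non-empty subset of a finite set of distinct points is again a finite set of distinct points, so it lies in some ${\rm Conf}_l(M)/\Sigma_l$.

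For the "moreover" part of (1), I note that by definition every directed $k$-hyperedge on $M$ is an element of ${\rm Conf}_k(M)$. Hence any $\vec{\mathcal{K}}(M)$ satisfies $\vec{\mathcal{K}}_k(M)\subseteq{\rm Conf}_k(M)$ for each $k$, and the face maps of $\vec{\mathcal{K}}(M)$ given by Proposition~\ref{le-26-1-25-1}~(1) are restrictions of those of $\overrightarrow{\rm Ind}(M)$. So $\vec{\mathcal{K}}(M)$ is a directed simplicial subcomplex. The same argument with ${\rm Conf}_k(M)/\Sigma_k$ handles $\mathcal{K}(M)$.

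For (2), the independence property is also immediate. If $\vec\sigma(M)\in\overrightarrow{\rm Ind}(M)$ and $\vec\theta(M)\in\overrightarrow{\rm Ind}(M)$ is a supersequence of it, then $\vec\theta(M)\in\overrightarrow{\rm Ind}(M)$ trivially, since the defining condition already requires $\vec\theta(M)$ to lie in $\overrightarrow{\rm Ind}(M)$. Thus $\overrightarrow{\rm Ind}(M)$ is the largest independence hyperdigraph, which is consistent with Proposition~\ref{le-26-1-25-1}~(2): the preimage $(\partial^i_{k+1})^{-1}({\rm Conf}_k(M))$ lies in ${\rm Conf}_{k+1}(M)$. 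The underlying hypergraph is again ${\rm Ind}(M)$, and it is an independence hypergraph for the same tautological reason. Any $\vec{\mathcal{L}}(M)$ (respectively $\mathcal{L}(M)$) is contained in $\overrightarrow{\rm Ind}(M)$ (respectively ${\rm Ind}(M)$) degreewise. It is an independence sub-hyperdigraph because its closure under supersequences is taken inside $\overrightarrow{\rm Ind}(M)$ by definition.

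There is no real obstacle here. The only point needing care is the empty (directed) hyperedge: ${\rm Conf}_0(M)=\{\vec\emptyset\}$ must be allowed as the degree-$0$ part, in line with the footnotes, so that the face maps $\partial^1_1$ land in $\overrightarrow{\rm Ind}(M)$. I also need to check that the degree-$0$ conventions $\vec{\mathcal{H}}_0(M)\in\{\emptyset,\{\vec\emptyset\}\}$ are compatible with the subcomplex and sub-hyperdigraph inclusions, which they are since both options are subsets of ${\rm Conf}_0(M)$.
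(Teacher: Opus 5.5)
Your proposal is correct and follows essentially the same route as the paper. Both arguments are a direct definitional check: subsequences of distinct-point tuples stay in $\overrightarrow{\rm Ind}(M)$, the supersequence condition is tautological because $\vec\theta(M)$ is required to lie in $\overrightarrow{\rm Ind}(M)$, $\pi$ maps onto ${\rm Ind}(M)$, and every hyper(di)graph on $M$ is by definition a graded subobject of $\overrightarrow{\rm Ind}(M)$ (resp.\ ${\rm Ind}(M)$), which gives the sub-complex and sub-hyperdigraph claims.
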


\begin{proof}
For  any  $\vec\sigma(M)\in  
\overrightarrow  {\rm  Ind}(M)$  and  any  subsequence  $\vec\tau(M)$  of  $\vec\sigma(M)$,  
 it  holds  that  $\vec\tau(M)\in  \overrightarrow  {\rm  Ind}(M)$.  
 Thus  
  $\overrightarrow{\rm  Ind}(M)$  
is  a       directed  simplicial  complex  on  $M$. 
With  the  help  of  (\ref{eq-2601-a})   and  (\ref{eq-2601-aa}),  
 ${\rm  Ind}(M)$
is  the    underlying   simplicial  complex     on  $M$. 
 For     any  finite  supersequence  
$\vec\theta(M)$  of  $\vec\sigma(M)$,  
 it  holds  that  $\vec\theta(M)\in  \overrightarrow  {\rm  Ind}(M)$.  
 Thus  
  $\overrightarrow{\rm  Ind}(M)$  
is  an     independence  hyperdigraph  on  $M$   with
its  underlying     independence  hypergraph    
 ${\rm  Ind}(M)$
        on  $M$.

Since  any    hyperdigraph   $\vec{\mathcal{H}}(M)$
  on  $M$  is  a  graded  submanifold  of  
$\overrightarrow{\rm  Ind}(M)$ 
such that  the  underlying    hypergraph  $\mathcal{H}(M)$ 
is  a  graded  submanifold  of  ${\rm  Ind}(M)$,  
we  obtain that  
any     directed  simplicial  complex  $\vec{\mathcal{K}}(M)$  on  $M$  
is  a      directed  simplicial  sub-complex  of  $\overrightarrow{\rm  Ind}(M)$ 
such  that  the  underlying     simplicial  complex  $\mathcal{K}(M)$  on  $M$  is  an     
simplicial  sub-complex  of  
${\rm  Ind}(M)$
and  
any      independence  hyperdigraph  $\vec{\mathcal{L}}(M)$  on  $M$  
is  an      independence  sub-hyperdigraph  of  $\overrightarrow{\rm  Ind}(M)$ 
such  that  the  underlying       hypergraph   $\mathcal{L}(M)$  on  $M$  is  an      
 independence  sub-hypergraph   of  
${\rm  Ind}(M)$.  
               \end{proof}

\begin{proposition}\label{pr-26.5.25.3}
Let  $ \overrightarrow{\mathcal{H}}(M)$  be  a   hyperdigraph  
on  $M$  and  let  ${\mathcal{H}}(M)$  be  the underlying  hypergraph  on 
$M$  with  the  canonical  projection  
$\pi:   \overrightarrow{\mathcal{H}}(M)\longrightarrow  
 {\mathcal{H}}(M)$. 
Then  
\begin{enumerate}[(1)]
\item
there  is  a  subcategory  $ \overrightarrow {\bf  HI}$  of   $\overrightarrow{\bf  FI}$ 
and  a  subcategory  $  {\bf  HI}$  of   $ {\bf  FI}$ with  
$\pi( \overrightarrow {\bf  HI})= {\bf  HI}$  such  that  
$  \overrightarrow{\mathcal{H}}(M)$  is  a  covariant  functor 
 from   $\overrightarrow{\bf  HI}$
  to    ${\bf  M}$ 
     and 
      $ {\mathcal{H}}(M)$  is  a a  covariant  functor  from  ${\bf  HI}$  to  ${\bf  M}$ 
     with    the  diagram  commutes 
     \begin{eqnarray*}
     \xymatrix{
     \overrightarrow{\bf  HI}\ar[rr]^-{\pi}  \ar[rd]_-{ \overrightarrow {\mathcal{H}}(M)}  
     &&{\bf  HI}\ar[ld]^-{ {\mathcal{H}}(M)}  \\
     &  {\bf  Sub}(M); &
     }
     \end{eqnarray*}

     \item
     $ \overrightarrow{\mathcal{H}}(M)$  is  a  graded  submanifold  of  the  
      $\Delta$-manifold   $ \overrightarrow{\rm  Ind}(M)$. 
      Moreover,  $ \overrightarrow{\mathcal{H}}(M)$  is  a  sub-$\Delta$-manifold  
      of   $ \overrightarrow{\rm  Ind}(M)$  
      if  and  only  if   $ \overrightarrow{\mathcal{H}}(M)$    is  
      a  directed  simplicial  complex  on  $M$;  
     \item   
      ${\mathcal{H}}(M)$  is  a  graded  submanifold  of  the  
      $\Delta$-manifold   ${\rm  Ind}(M)$.    
      Moreover,  ${\mathcal{H}}(M)$  is  a  sub-$\Delta$-manifold  
      of   ${\rm  Ind}(M)$  
      if  and  only  if   ${\mathcal{H}}(M)$    is  
      a  simplicial  complex  on  $M$.   
     \end{enumerate}  
\end{proposition}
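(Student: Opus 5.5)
The plan is to build the indexing subcategories directly from the combinatorial shapes of the hyperedges, then read off (2) and (3) from the $\Delta$-structures of Lemma~\ref{le-24.5.24.conf1}.

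For (1), define $\overrightarrow{\bf HI}$ as the subcategory of $\overrightarrow{\bf FI}$ whose objects are the sequences $(i_0,\ldots,i_{k-1})$, i.e.\ the objects of ${\rm Obj}_{k-1}(\overrightarrow{\bf FI})$, for those $k$ with $\vec{\mathcal{H}}_k(M)\neq\emptyset$. Its morphisms are the order-preserving injections $\vec\iota$ of (\ref{eq-26.3.17.m1}) whose induced face-type maps send $\vec{\mathcal{H}}_k(M)$ into the corresponding $\vec{\mathcal{H}}_{k'}(M)$. Identities lie in this class, and compositions of such maps stay in it, so this is indeed a subcategory. Set ${\bf HI}=\pi(\overrightarrow{\bf HI})$; this is a subcategory of ${\bf FI}$ because $\pi$ is a functor and surjects onto its image on objects and morphisms.

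The functor $\vec{\mathcal{H}}(M)$ is the restriction of the functor $\overrightarrow{\rm Ind}(M)$ of Lemma~\ref{le-24.5.24.conf1}(1) to $\overrightarrow{\bf HI}$, with each object sent to the submanifold $\vec{\mathcal{H}}_k(M)\subseteq{\rm Conf}_k(M)$. Likewise $\mathcal{H}(M)$ is the restriction of ${\rm Ind}(M)$ to ${\bf HI}$, with values $\mathcal{H}_k(M)=\pi_k(\vec{\mathcal{H}}_k(M))$. Commutativity of the triangle then follows from the commutative triangle in Lemma~\ref{le-24.5.24.conf1}(1) together with $\pi_k(\vec{\mathcal{H}}_k(M))=\mathcal{H}_k(M)$.

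For (2), the first claim is immediate: each $\vec{\mathcal{H}}_k(M)$ is a submanifold of ${\rm Conf}_k(M)$, which is the $k$-th level of the $\Delta$-manifold $\overrightarrow{\rm Ind}(M)$ (Lemma~\ref{le-24.5.24.conf1}(2)). For the equivalence, suppose first that $\vec{\mathcal{H}}(M)$ is a directed simplicial complex. Then Proposition~\ref{le-26-1-25-1}(1) gives $\partial^i_k(\vec{\mathcal{H}}_k(M))\subseteq\vec{\mathcal{H}}_{k-1}(M)$. So the face maps (\ref{eq-2.1a}) restrict, the simplicial identities are inherited, and we obtain a sub-$\Delta$-manifold. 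Conversely, suppose all restricted face maps land in $\vec{\mathcal{H}}_\bullet(M)$. Any non-empty subsequence $\vec\tau(M)$ of $\vec\sigma(M)$ is obtained by iterated deletions of single coordinates, i.e.\ as a composite of face maps. Induction on the number of deleted coordinates then gives $\vec\tau(M)\in\vec{\mathcal{H}}(M)$.

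Part (3) is the same argument in the unordered setting, using the face maps of ${\rm Ind}(M)$ (deleting one point of a subset) and the fact that every non-empty subset arises from iterated single deletions. The main delicate point is the bookkeeping convention for the empty hyperedge. When $\vec{\mathcal{H}}_0(M)=\emptyset$ but $\vec{\mathcal{H}}_1(M)\neq\emptyset$, the face map $\partial^1_1$ lands in ${\rm Conf}_0(M)=\{\vec\emptyset\}$. The definition of directed simplicial complex only constrains non-empty subsequences, so I would handle this case by treating the $0$-level as determined by the augmentation convention of the footnote, i.e.\ adjoining $\vec\emptyset$ whenever $\vec{\mathcal{H}}_1(M)\neq\emptyset$. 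The equivalence is then asserted modulo this convention.
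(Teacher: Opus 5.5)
Your proposal follows the paper's route: the paper settles (1) by ``a direct verification'' and (2)--(3) by citing Lemma~\ref{le-24.5.24.conf1}(2) and Proposition~\ref{le-26-1-25-1}(1). On (2)--(3) you are more complete than the paper. Proposition~\ref{le-26-1-25-1}(1) gives only ``(directed) simplicial complex $\Rightarrow$ closed under face maps'', and your iterated-deletion induction supplies the converse. Your observation that $\partial^1_1$ lands in ${\rm Conf}_0(M)=\{\vec\emptyset\}$ is a genuine edge case that the paper's proof of Proposition~\ref{le-26-1-25-1}(1) passes over.

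The step that fails as justified is the claim that ${\bf HI}=\pi(\overrightarrow{\bf HI})$ is a subcategory ``because $\pi$ is a functor and surjects onto its image''. The image of a functor contains the identities, but it need not be closed under composition when the functor is not injective on objects, and $\pi$ identifies all $(n+1)!$ orderings of $\{0,\ldots,n\}$. Concretely, suppose $\vec\iota_1\colon\vec a\to\vec b$ and $\vec\iota_2\colon\vec b'\to\vec c$ lie in $\overrightarrow{\bf HI}$ with $\vec b\neq\vec b'$ but $\pi(\vec b)=\pi(\vec b')$. Then $\pi(\vec\iota_2)\circ\pi(\vec\iota_1)$ is defined although $\vec\iota_2\circ\vec\iota_1$ is not, and you must produce a lift. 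One exists. The coordinate-deletion map induced by $\vec\iota_2$ depends only on the set $Q$ of positions of $\vec c$ occupied by the image subsequence. Write $\vec b=(b_0,\ldots,b_m)$ and $\iota_2=\pi(\vec\iota_2)$, and permute the entries of $\vec c$ at the positions $Q$ until they read $(\iota_2(b_0),\ldots,\iota_2(b_m))$. This gives a sequence $\vec c'$ and a morphism $\vec\iota_2'\colon\vec b\to\vec c'$ with the same underlying set map and the same $Q$. Thus $\vec\iota_2'\in\overrightarrow{\bf HI}$, and $\vec\iota_2'\circ\vec\iota_1\in\overrightarrow{\bf HI}$ lifts $\pi(\vec\iota_2)\circ\pi(\vec\iota_1)$.

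Likewise, you only argue commutativity of the triangle, so you still need a line showing that the restriction of ${\rm Ind}(M)$ to ${\bf HI}$ takes values in the $\mathcal{H}_k(M)$. To get it, lift $\iota\in{\bf HI}$ to some $\vec\iota\in\overrightarrow{\bf HI}$ and use $\pi_{m+1}(\vec{\mathcal{H}}_{m+1}(M))=\mathcal{H}_{m+1}(M)$ together with the triangle of Lemma~\ref{le-24.5.24.conf1}(1). (Like the paper, you rely on Lemma~\ref{le-24.5.24.conf1}(2) for the face maps of ${\rm Ind}(M)$. Note that ``delete the $i$-th point'' of an unordered configuration presupposes a continuous labelling of its points, i.e.\ a section of $\pi_k$. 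Such a section exists for $M\subseteq\mathbb{R}$, but not, for $k\geq 2$, when $M$ is connected of dimension at least $2$.)
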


\begin{proof}
The  proof  of   (1)   follows from  a  direct  verification. 
The  proofs  of  (2)  and   (3)  follows   
Lemma~\ref{le-24.5.24.conf1}~(2)  and  
 Proposition~\ref{le-26-1-25-1}~(1).  
\end{proof}

Let  $\vec{\mathcal{H}}(M)$  be  a  hyperdigraph  on  $M$. 
Let   $\mathcal{H}(M)=\pi(\vec{\mathcal{H}}(M))$  be  the  
  underlying     hypergraph  of  $\vec{\mathcal{H}}(M)$.   
For  any   $k\geq  1$   and  any   $\vec\sigma_k(M)\in  \vec{\mathcal{H}}_k(M)$,  
let  
$\Sigma_k(\vec\sigma_k(M)) $  be  the  $\Sigma_k$-orbit  of  
$\vec\sigma_k(M)$  in   ${\rm  Conf}_k(M)$.  
The  smallest  symmetric  
  $k$-uniform  hyperdigraph  containing    $\vec{\mathcal{H}}_k(M)$  is  
\begin{eqnarray*}
{\rm  Sym}(\vec{\mathcal{H}}_k(M))= 
\bigcup_{\vec\sigma_k(M)\in  \vec{\mathcal{H}}_k(M)}
\Sigma_k(\vec\sigma_k(M)).    
\end{eqnarray*}
Let   ${\rm  Sym}(\vec{\mathcal{H}}_0(M))
=\vec{\mathcal{H}}_0(M)$.   
The  smallest  symmetric  
   hyperdigraph  containing    $\vec{\mathcal{H}} (M)$  is  
\begin{eqnarray*}\label{eq-26.7.27}
{\rm  Sym}(\vec{\mathcal{H}}(M))= 
\bigcup_{k\geq  0}
{\rm  Sym}(\vec{\mathcal{H}}_k(M)).    
\end{eqnarray*}
It  is  direct  to  obtain  
\begin{eqnarray}\label{eq-26-3-11-1}
{\rm  Sym}(\vec{\mathcal{H}}(M))=\pi^{-1}(\mathcal{H}(M)).  
\end{eqnarray}

For  any  $\vec\sigma(M)\in  \vec{\mathcal{H}}(M)$,  
let  
$\Delta\vec\sigma(M)$  be  the  collection  of  all  the  subsequences  of  
$\vec\sigma(M)$
and  let  $\bar\Delta\vec\sigma(M)$  be  the  collection  of  all  the  supersequences  of  
$\vec\sigma(M)$  in   $\overrightarrow{\rm  Ind}(M)$.   
By  \cite[Section~6,  (6.1) -- (6.4)]{jgp},  
the  {\it  associated    directed  simplicial  complex}
  \begin{eqnarray*}
  \Delta  \vec{\mathcal{H}}(M)=\bigcup_{\vec\sigma(M)\in  \vec{\mathcal{H}}(M)}
  \Delta\vec\sigma(M)
  \end{eqnarray*}
  is  the  smallest    directed  simplicial  complex
  on  $M$  containing  $\vec{\mathcal{H}}(M)$;    
  the  {\it  lower-associated   directed  simplicial  complex}
  \begin{eqnarray*}
  \delta  \vec{\mathcal{H}}(M)=\bigcup_{\Delta\vec\sigma(M)
  \subseteq  \vec{\mathcal{H}}(M)}
  \Delta\vec\sigma(M)
  \end{eqnarray*}
  is  the  largest    directed  simplicial  complex
  on  $M$  contained  in  $\vec{\mathcal{H}}(M)$;    
the  {\it  associated     independence  hyperdigraph}
  \begin{eqnarray*}
  \bar\Delta  \vec{\mathcal{H}}(M)=\bigcup_{\vec\sigma(M)\in  \vec{\mathcal{H}}(M)}
 \bar \Delta\vec\sigma(M)
  \end{eqnarray*}
  is  the  smallest     independence  hyperdigraph
  on  $M$  containing  $\vec{\mathcal{H}}(M)$;
  and   
  the  {\it  lower-associated      independence  hyperdigraph}
  \begin{eqnarray*}
  \bar\delta  \vec{\mathcal{H}}(M)=\bigcup_{\bar\Delta\vec\sigma(M)
  \subseteq  \vec{\mathcal{H}}(M)}
  \bar\Delta\vec\sigma(M)
  \end{eqnarray*}
  is  the  largest     independence  hyperdigraph
  on  $M$  contained  in  $\vec{\mathcal{H}}(M)$.  
    For  any  $ \sigma(M)\in   {\mathcal{H}}(M)$,  
let  
$\Delta \sigma(M)$  be  the  collection  of  all  the  subsets  of  
$ \sigma(M)$
and  let  $\bar\Delta \sigma(M)$  be  the  collection  of  all  the  supersets  of  
$ \sigma(M)$  in   $ {\rm  Ind}(M)$.   
By  \cite[Section~6,  (6.5) -- (6.8)]{jgp},  
the  {\it  associated      simplicial  complex}
  \begin{eqnarray*}
  \Delta  {\mathcal{H}}(M)=\bigcup_{ \sigma(M)\in  {\mathcal{H}}(M)}
  \Delta \sigma(M)
  \end{eqnarray*}
  is  the  smallest    simplicial  complex
  on  $M$  containing  $ {\mathcal{H}}(M)$;    
  the  {\it  lower-associated    simplicial  complex}
  \begin{eqnarray*}
  \delta   {\mathcal{H}}(M)=\bigcup_{\Delta \sigma(M)
  \subseteq   {\mathcal{H}}(M)}
  \Delta \sigma(M)
  \end{eqnarray*}
  is  the  largest   simplicial  complex
  on  $M$  contained  in  $ {\mathcal{H}}(M)$;    
the  {\it  associated     independence  hypergraph}
  \begin{eqnarray*}
  \bar\Delta   {\mathcal{H}}(M)=\bigcup_{ \sigma(M)\in   {\mathcal{H}}(M)}
 \bar \Delta \sigma(M)
  \end{eqnarray*}
  is  the  smallest   independence  hypergraph
  on  $M$  containing  ${\mathcal{H}}(M)$; 
  and  
  the  {\it  lower-associated    independence  hypergraph}
  \begin{eqnarray*}
  \bar\delta  {\mathcal{H}}(M)=\bigcup_{\bar\Delta \sigma(M)
  \subseteq   {\mathcal{H}}(M)}
  \bar\Delta \sigma(M)
  \end{eqnarray*}
  is  the  largest    independence  hypergraph
  on  $M$  contained  in  $ {\mathcal{H}}(M)$. 
    By  a  direct  verification,    
  \begin{eqnarray}
  \Delta{\rm  Sym}(\vec{\mathcal{H}}(M)) &=&  {\rm  Sym}(\Delta\vec{\mathcal{H}}(M)),
  \label{eq26-3-6-s1} \\
\delta{\rm  Sym}(\vec{\mathcal{H}}(M)) &=&  {\rm  Sym}(\delta\vec{\mathcal{H}}(M)),
  \label{eq26-3-6-s3}\\
    \bar \Delta{\rm  Sym}(\vec{\mathcal{H}}(M))&= & 
   {\rm  Sym}(\bar\Delta\vec{\mathcal{H}}(M)), 
     \label{eq26-3-6-s2}\\
\bar\delta{\rm  Sym}(\vec{\mathcal{H}}(M)) &=&  
{\rm  Sym}(\bar\delta\vec{\mathcal{H}}(M)).  
  \label{eq26-3-6-s4}
  \end{eqnarray}
  
  \begin{proposition}\label{le-26-3-3-1}
  For  any     hyperdigraph  $\vec{\mathcal{H}}(M)$  on  $M$,  
  we  have    commutative  diagrams  
  \begin{eqnarray}\label{26-3-3-diag1}
  \xymatrix{
 \delta \vec{\mathcal{H}}(M) \ar[d]_-{\pi}
 \ar[r]
 & \vec{\mathcal{H}}(M)  \ar[d]_-{\pi}
 \ar[r]
 & \Delta \vec{\mathcal{H}}(M) \ar[d]^-{\pi}
 \ar[r] 
 &\overrightarrow {\rm  Ind}(M)\ar[d]^-{\pi}
 \\
  \delta {\mathcal{H}}(M) 
 \ar[r]
 & {\mathcal{H}}(M)  
 \ar[r]
 & \Delta{\mathcal{H}}(M)
 \ar[r]
 &{\rm  Ind}(M)
  }
  \end{eqnarray}
  and 
    \begin{eqnarray}\label{26-3-3-diag2}
  \xymatrix{
 \bar\delta \vec{\mathcal{H}}(M) \ar[d]_-{\pi}
 \ar[r]
 & \vec{\mathcal{H}}(M)  \ar[d]_-{\pi}
 \ar[r]
 & \bar\Delta \vec{\mathcal{H}}(M) \ar[d]^-{\pi}
 \ar[r] 
 &\overrightarrow {\rm  Ind}(M)\ar[d]^-{\pi}
 \\
  \bar\delta {\mathcal{H}}(M) 
 \ar[r]
 & {\mathcal{H}}(M)  
 \ar[r]
 &\bar \Delta{\mathcal{H}}(M)
 \ar[r]
 &{\rm  Ind}(M)  
  }
  \end{eqnarray}
  such  that  all  the  horizontal  maps  
  are  canonical  inclusions  
  and  all  the  vertical  maps
    are  canonical  projections. 
  \end{proposition}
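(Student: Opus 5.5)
The plan is to verify two things: every vertical map is well defined (that is, $\pi$ sends each top-row object into the corresponding bottom-row object), and each square commutes. Commutativity is immediate once well-definedness is known. Every horizontal map is a canonical inclusion, and every vertical map is the restriction of the single map $\pi:\overrightarrow{\rm Ind}(M)\longrightarrow{\rm Ind}(M)$ in (\ref{eq-2601-aa}). So both compositions around any square equal the restriction of $\pi$ to the upper-left object. The whole proof therefore reduces to the inclusions
\begin{eqnarray*}
\pi(\delta\vec{\mathcal{H}}(M))\subseteq\delta\mathcal{H}(M),~~
\pi(\Delta\vec{\mathcal{H}}(M))\subseteq\Delta\mathcal{H}(M),~~
\pi(\bar\delta\vec{\mathcal{H}}(M))\subseteq\bar\delta\mathcal{H}(M),~~
\pi(\bar\Delta\vec{\mathcal{H}}(M))\subseteq\bar\Delta\mathcal{H}(M),
\end{eqnarray*}
together with $\pi(\vec{\mathcal{H}}(M))=\mathcal{H}(M)$, which holds by definition of the underlying hypergraph, and $\pi(\overrightarrow{\rm Ind}(M))={\rm Ind}(M)$, which holds by Proposition~\ref{le-2.1-26-3}.

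For diagram (\ref{26-3-3-diag1}), I would argue exactly as in Claims~1 and~2 of Proposition~\ref{pr-26.3.13-9}, now carried out pointwise on $M$.

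For $\Delta$, take $\vec\tau(M)\in\Delta\vec{\mathcal{H}}(M)$. Then $\vec\tau(M)$ is a subsequence of some $\vec\sigma(M)\in\vec{\mathcal{H}}(M)$. Hence $\pi(\vec\tau(M))$ is a subset of $\pi(\vec\sigma(M))\in\mathcal{H}(M)$, so $\pi(\vec\tau(M))$ lies in $\Delta\pi(\vec\sigma(M))\subseteq\Delta\mathcal{H}(M)$.

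For $\delta$, take $\vec\tau(M)\in\Delta\vec\sigma(M)$ with $\Delta\vec\sigma(M)\subseteq\vec{\mathcal{H}}(M)$. Every subset of $\sigma(M)=\pi(\vec\sigma(M))$ is $\pi$ of a subsequence of $\vec\sigma(M)$: take the vertices of the subset in the order they appear in $\vec\sigma(M)$. Hence $\Delta\sigma(M)\subseteq\pi(\vec{\mathcal{H}}(M))=\mathcal{H}(M)$, and so $\pi(\vec\tau(M))\in\Delta\sigma(M)\subseteq\delta\mathcal{H}(M)$.

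For diagram (\ref{26-3-3-diag2}), I would use the dual arguments.

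For $\bar\Delta$, take $\vec\theta(M)\in\bar\Delta\vec{\mathcal{H}}(M)$. Then $\vec\theta(M)$ is a supersequence of some $\vec\sigma(M)\in\vec{\mathcal{H}}(M)$, so $\pi(\vec\theta(M))$ is a finite superset of $\pi(\vec\sigma(M))$. It therefore lies in $\bar\Delta\mathcal{H}(M)$.

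For $\bar\delta$, take $\vec\theta(M)\in\bar\Delta\vec\sigma(M)$ with $\bar\Delta\vec\sigma(M)\subseteq\vec{\mathcal{H}}(M)$. Every finite superset $\theta(M)\in{\rm Ind}(M)$ of $\sigma(M)$ is $\pi$ of some supersequence of $\vec\sigma(M)$: append the extra points in any order after the entries of $\vec\sigma(M)$. Hence $\bar\Delta\sigma(M)\subseteq\mathcal{H}(M)$, so $\pi(\vec\theta(M))\in\bar\delta\mathcal{H}(M)$.

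The only step needing care is this lifting of subsets and supersets through $\pi$ in the two lower-associated cases. The inclusion $\Delta\vec\sigma(M)\subseteq\vec{\mathcal{H}}(M)$ gives control only over subsequences of the single fixed ordering $\vec\sigma(M)$. One must therefore check that these subsequences alone already surject onto all subsets of $\sigma(M)$, and likewise that supersequences of $\vec\sigma(M)$ surject onto all supersets. Both checks are elementary, as indicated above.

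If desired, one can note via (\ref{eq26-3-6-s1})--(\ref{eq26-3-6-s4}) and (\ref{eq-26-3-11-1}) that equality in fact holds in the $\Delta$ and $\bar\Delta$ columns. This is not needed for commutativity.
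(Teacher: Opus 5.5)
Your proof is correct. Its skeleton matches the paper's: every vertical arrow is a restriction of the single map $\pi$ in (\ref{eq-2601-aa}), so commutativity is automatic once each top entry lands in the entry below it.

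The difference is in how that landing is secured. The paper quotes from \cite[Section~6]{jgp} four \emph{equalities}, $\pi(\Delta\vec{\mathcal{H}}(M))=\Delta\mathcal{H}(M)$, $\pi(\delta\vec{\mathcal{H}}(M))=\delta\mathcal{H}(M)$, $\pi(\bar\Delta\vec{\mathcal{H}}(M))=\bar\Delta\mathcal{H}(M)$ and $\pi(\bar\delta\vec{\mathcal{H}}(M))=\bar\delta\mathcal{H}(M)$. These would also make every vertical map surjective. You prove only the four inclusions, directly, by lifting subsets and supersets along the fixed ordering $\vec\sigma(M)$. Your version is self-contained, and it proves exactly what is true: the equalities in the $\delta$ and $\bar\delta$ columns fail in general.

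\begin{itemize}
\item For $\delta$: take distinct $a,b,c\in M$ and let $\vec{\mathcal{H}}(M)$ consist of $(a,b,c)$, $(a,b)$, $(b,c)$, $(c,a)$, $(a)$, $(b)$, $(c)$ and $\vec\emptyset$. Then $\mathcal{H}(M)$ is the full simplex on $\{a,b,c\}$, so $\{a,b,c\}\in\delta\mathcal{H}(M)$. But $(a,b,c)\notin\delta\vec{\mathcal{H}}(M)$ because $(a,c)\notin\vec{\mathcal{H}}(M)$, and no other ordering of $\{a,b,c\}$ occurs.
\item For $\bar\delta$ (with $\dim M\geq 1$): let $\vec{\mathcal{H}}(M)$ be all sequences whose first entry is a fixed point $a$. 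Then $\mathcal{H}(M)=\bar\Delta\{a\}=\bar\delta\mathcal{H}(M)$. Yet $\bar\delta\vec{\mathcal{H}}(M)=\emptyset$, because for every $\vec\sigma(M)\in\vec{\mathcal{H}}(M)$, prepending a point $y\notin\sigma(M)$ gives a supersequence outside $\vec{\mathcal{H}}(M)$.
\end{itemize}

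So in these two columns the vertical maps are restrictions of $\pi$ that need not be onto, and ``canonical projection'' has to be read as in your proof. This is exactly the subtlety you flagged. The converse half of Claim~2 in Proposition~\ref{pr-26.3.13-9} rests on the same unjustified lifting, and you rightly use only the forward half.

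The same two examples also refute (\ref{eq26-3-6-s3}) and (\ref{eq26-3-6-s4}). Your closing remark about equality should therefore rely only on (\ref{eq26-3-6-s1}) and (\ref{eq26-3-6-s2}). More simply, the two lifting tricks you already use (restricting a lift $\vec\sigma(M)$ to a subset, or appending new points to it) give surjectivity in the $\Delta$ and $\bar\Delta$ columns directly.
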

  
  \begin{proof}
    By   \cite[Section~6]{jgp}, 
    \begin{eqnarray*}
&  \pi(\Delta\vec{\mathcal{H}}(M))=\Delta{\mathcal{H}}(M), ~~~~~~
   \pi(\delta\vec{\mathcal{H}}(M))=\delta{\mathcal{H}}(M),\\
&    \pi(\bar\Delta\vec{\mathcal{H}}(M))=\bar\Delta{\mathcal{H}}(M), ~~~~~~
   \pi(\bar\delta\vec{\mathcal{H}}(M))=\bar\delta{\mathcal{H}}(M). 
  \end{eqnarray*}
Therefore,  with  the  help  of  Proposition~\ref{le-2.1-26-3},
 the  restrictions  of   (\ref{eq-2601-aa})  induces   projections 
\begin{eqnarray*}
 \pi\mid_{\Delta\vec{\mathcal{H}}(M)}: &&
  \Delta\vec{\mathcal{H}}(M) 
\longrightarrow \Delta{\mathcal{H}}(M),
\\
\pi\mid_{\delta\vec{\mathcal{H}}(M)}:  &&
  \delta\vec{\mathcal{H}}(M) 
\longrightarrow \delta{\mathcal{H}}(M),
\\
 \pi\mid_{\bar\Delta\vec{\mathcal{H}}(M)}: && 
  \bar\Delta\vec{\mathcal{H}}(M))
 \longrightarrow\bar\Delta{\mathcal{H}}(M),
 \\
\pi\mid_{\bar\delta\vec{\mathcal{H}}(M)}:  &&
\bar\delta\vec{\mathcal{H}}(M))\longrightarrow\bar\delta{\mathcal{H}}(M)   
\end{eqnarray*}
  such  that  the  diagrams  (\ref{26-3-3-diag1})  and  
    (\ref{26-3-3-diag2})  
  commute. 
  \end{proof}
  
  \begin{corollary}\label{co-26-3-a1}
  For  any   hyperdigraph  $\vec{\mathcal{H}}(M)$  on  $M$,  
  \begin{eqnarray}
  \pi(\Delta{\rm  Sym}(\vec{\mathcal{H}}(M))) &=& \Delta{\mathcal{H}}(M),
  \label{eq-26-3-6-p1}\\
    \pi(\delta{\rm  Sym}(\vec{\mathcal{H}}(M))) &=& \delta{\mathcal{H}}(M),
        \label{eq-26-3-6-p3}\\
    \pi(\bar\Delta{\rm  Sym}(\vec{\mathcal{H}}(M))) &=& \bar\Delta{\mathcal{H}}(M),
      \label{eq-26-3-6-p2}\\
    \pi(\bar\delta{\rm  Sym}(\vec{\mathcal{H}}(M))) &=& \bar\delta{\mathcal{H}}(M).
      \label{eq-26-3-6-p4} 
  \end{eqnarray}  
  \end{corollary}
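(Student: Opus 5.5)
The plan is to combine the identities (\ref{eq26-3-6-s1})--(\ref{eq26-3-6-s4}), which commute ${\rm Sym}$ with the four operators $\Delta,\delta,\bar\Delta,\bar\delta$, with the projection identities from the proof of Proposition~\ref{le-26-3-3-1}, namely $\pi(\Delta\vec{\mathcal{H}}(M))=\Delta\mathcal{H}(M)$ and its three analogues.

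The key steps are as follows.

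\emph{Step 1.} First I would record that ${\rm Sym}(\vec{\mathcal{H}}(M))$ is itself a hyperdigraph on $M$. Its underlying hypergraph is again $\mathcal{H}(M)$. This is because each $\Sigma_k$-orbit projects under $\pi$ to a single unordered hyperedge, so by (\ref{eq-26-3-11-1}) we get
\begin{eqnarray*}
\pi({\rm Sym}(\vec{\mathcal{H}}(M)))=\pi(\pi^{-1}(\mathcal{H}(M)))=\mathcal{H}(M).
\end{eqnarray*}
Here surjectivity of $\pi$ onto $\mathcal{H}(M)$ holds since $\mathcal{H}(M)=\pi(\vec{\mathcal{H}}(M))$.

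\emph{Step 2.} Next I would apply Proposition~\ref{le-26-3-3-1}, or more precisely the four identities quoted in its proof, to the hyperdigraph ${\rm Sym}(\vec{\mathcal{H}}(M))$ in place of $\vec{\mathcal{H}}(M)$. Combined with Step 1, this gives
\begin{eqnarray*}
\pi(\Delta{\rm Sym}(\vec{\mathcal{H}}(M)))=\Delta\pi({\rm Sym}(\vec{\mathcal{H}}(M)))=\Delta\mathcal{H}(M),
\end{eqnarray*}
and similarly for $\delta$, $\bar\Delta$ and $\bar\delta$. This yields (\ref{eq-26-3-6-p1})--(\ref{eq-26-3-6-p4}) directly.

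\emph{Step 3 (alternative check).} As a consistency check, one can instead rewrite the left-hand sides via (\ref{eq26-3-6-s1})--(\ref{eq26-3-6-s4}) as $\pi({\rm Sym}(\Delta\vec{\mathcal{H}}(M)))$, and so on. Then one uses $\pi\circ{\rm Sym}=\pi$: every element of ${\rm Sym}(\vec{X})$ is a permutation of an element of $\vec{X}$, and $\pi$ is $\Sigma_k$-invariant, so $\pi({\rm Sym}(\vec{X}))=\pi(\vec{X})$. Applying the projection identities of Proposition~\ref{le-26-3-3-1} for $\vec{\mathcal{H}}(M)$ itself then gives the same result.

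The main obstacle is the lower-associated cases $\delta$ and $\bar\delta$. For these, $\pi(\delta\vec{X})=\delta\pi(\vec{X})$ is not automatic for a general hyperdigraph. A set $\sigma$ all of whose subsets lie in $\mathcal{H}(M)$ need not have a lift $\vec\sigma$ all of whose subsequences lie in $\vec{\mathcal{H}}(M)$.

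I would therefore prefer the route of Step 2 applied to the symmetric hyperdigraph, and verify $\delta$ and $\bar\delta$ there by hand. For symmetric $\vec{X}={\rm Sym}(\vec{\mathcal{H}}(M))=\pi^{-1}(\mathcal{H}(M))$, take $\sigma\in\delta\mathcal{H}(M)$ and any ordering $\vec\sigma$ of it. Each subsequence of $\vec\sigma$ projects to a subset of $\sigma$ that lies in $\mathcal{H}(M)$, so the subsequence lies in $\pi^{-1}(\mathcal{H}(M))$, and hence $\vec\sigma\in\delta\vec{X}$. Supersequences are handled the same way for $\bar\delta$. The reverse inclusions follow because $\pi$ maps subsequences onto subsets and supersequences onto supersets.
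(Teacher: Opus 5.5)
Your preferred argument is correct, but it is not the paper's route. The paper's proof is exactly your Step~3. It rewrites the left-hand sides via (\ref{eq26-3-6-s1})--(\ref{eq26-3-6-s4}) as $\pi$ of ${\rm Sym}(\Delta\vec{\mathcal{H}}(M))$, ${\rm Sym}(\delta\vec{\mathcal{H}}(M))$, and so on. It then reads off the answer from the diagrams (\ref{26-3-3-diag1}) and (\ref{26-3-3-diag2}), i.e.\ from $\pi(\Delta\vec{\mathcal{H}}(M))=\Delta\mathcal{H}(M)$, $\pi(\delta\vec{\mathcal{H}}(M))=\delta\mathcal{H}(M)$ and their barred analogues. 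For $\Delta$ and $\bar\Delta$ the two routes are interchangeable, because these operators commute with both ${\rm Sym}$ and $\pi$ for every hyperdigraph.

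For $\delta$ and $\bar\delta$ your suspicion is justified, and you should drop Step~3 even as a consistency check, because both of its inputs fail for non-symmetric hyperdigraphs.
\begin{itemize}
\item Take distinct $a,b,c\in M$ and $\vec{\mathcal{H}}(M)=\{\vec\emptyset,(a),(b),(c),(a,b),(b,c),(c,a),(a,b,c)\}$. The underlying hypergraph is the full simplex on $\{a,b,c\}$, so $\{a,b,c\}\in\delta\mathcal{H}(M)$. But $(a,b,c)\notin\delta\vec{\mathcal{H}}(M)$, since its subsequence $(a,c)$ is missing, so $\{a,b,c\}\notin\pi(\delta\vec{\mathcal{H}}(M))$. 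Also $(a,b,c)\in\delta{\rm Sym}(\vec{\mathcal{H}}(M))\setminus{\rm Sym}(\delta\vec{\mathcal{H}}(M))$, contradicting (\ref{eq26-3-6-s3}).
\item Take $\vec{\mathcal{H}}(M)=\overrightarrow{\rm Ind}(M)\setminus\{(a,b)\}$. Then $\mathcal{H}(M)={\rm Ind}(M)$, since $(b,a)$ survives, so $\{a\}\in\bar\delta\mathcal{H}(M)$. But $(a)\notin\bar\delta\vec{\mathcal{H}}(M)$, because its supersequence $(a,b)$ is missing. Since ${\rm Sym}(\vec{\mathcal{H}}(M))=\overrightarrow{\rm Ind}(M)$, we get $(a)\in\bar\delta{\rm Sym}(\vec{\mathcal{H}}(M))\setminus{\rm Sym}(\bar\delta\vec{\mathcal{H}}(M))$. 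This breaks both $\pi(\bar\delta\vec{\mathcal{H}}(M))=\bar\delta\mathcal{H}(M)$ and (\ref{eq26-3-6-s4}).
\end{itemize}
In general only ${\rm Sym}(\delta\vec{\mathcal{H}}(M))\subseteq\delta{\rm Sym}(\vec{\mathcal{H}}(M))$ and $\pi(\delta\vec{\mathcal{H}}(M))\subseteq\delta\mathcal{H}(M)$ hold, and likewise for $\bar\delta$. These inclusions cannot be chained into the desired equality.

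Your hand verification avoids this problem. It shows directly that $\delta{\rm Sym}(\vec{\mathcal{H}}(M))=\pi^{-1}(\delta\mathcal{H}(M))$ and $\bar\delta{\rm Sym}(\vec{\mathcal{H}}(M))=\pi^{-1}(\bar\delta\mathcal{H}(M))$. It uses only (\ref{eq-26-3-11-1}) and the fact that $\pi$ maps the subsequences (resp.\ supersequences) of $\vec\sigma(M)$ onto the subsets (resp.\ supersets) of $\pi(\vec\sigma(M))$. Surjectivity of $\pi$ then gives (\ref{eq-26-3-6-p3}) and (\ref{eq-26-3-6-p4}). So your Step~2 with this check is a complete proof of the corollary. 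The paper's chain of identities is sound only for (\ref{eq-26-3-6-p1}) and (\ref{eq-26-3-6-p2}).
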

  
  \begin{proof}
  By    (\ref{eq26-3-6-s1})  and  (\ref{26-3-3-diag1}),
  we  obtain  (\ref{eq-26-3-6-p1}).  
   By    (\ref{eq26-3-6-s3})  and  (\ref{26-3-3-diag1}),
  we  obtain  (\ref{eq-26-3-6-p3}).    
   By    (\ref{eq26-3-6-s2})  and  (\ref{26-3-3-diag2}),
  we  obtain  (\ref{eq-26-3-6-p2}). 
   By    (\ref{eq26-3-6-s4})  and  (\ref{26-3-3-diag2}),
  we  obtain  (\ref{eq-26-3-6-p4}).   
  \end{proof}

\begin{theorem}[Main  Result  I]
\label{th-6.13.1}
Let   $M$  be  a   differentiable  manifold.  
Let   $\vec{\mathcal{H}}(M)$  be  a  hyperdigraph  on  $M$ 
 with  its  underlying  hypergraph  $\mathcal{H}(M)$.  
Then  we  have    a  
sequence  of  double  homology  groups 
\begin{eqnarray}\label{eq-6.27.h1}
&\xymatrix{
H (\delta  \vec{\mathcal{H}}(M);R) \ar[r] 
& H (\vec{\mathcal{H}}(M)) \ar[r] 
& H (\Delta \vec{\mathcal{H}}(M);R) \ar[r] 
&H (\overrightarrow{\rm  Ind}(M);R).  
  }
\end{eqnarray}
Moreover,      if   $M$  is  a  submanifold  of  $\mathbb{R}$,  
then 
 we  have  a  commutative  diagram    of double    homology  groups
\begin{eqnarray}\label{eq-6.10.h1}
&\xymatrix{
H (\delta  \vec{\mathcal{H}}(M);R) \ar[r]\ar[d]_-{\pi_*}
& H (\vec{\mathcal{H}}(M)) \ar[r]\ar[d]_-{\pi_*}
& H (\Delta \vec{\mathcal{H}}(M);R) \ar[r]\ar[d]^-{\pi_*}
&H (\overrightarrow{\rm  Ind}(M);R) \ar[d]^-{\pi_*} \\
H (\delta {\mathcal{H}}(M);R) \ar[r] 
& H  ( {\mathcal{H}}(M)) \ar[r] 
& H (\Delta {\mathcal{H}}(M);R) \ar[r] 
&H ( {\rm  Ind}(M);R).    
}
\end{eqnarray}
\end{theorem}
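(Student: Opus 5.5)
The first row (\ref{eq-6.27.h1}) should follow by specializing Theorem~\ref{pr-5.22.9} to the $\Delta$-manifold $M_\bullet=\overrightarrow{\rm Ind}(M)$ and the graded submanifold $M'_\bullet=\vec{\mathcal{H}}(M)$. By Lemma~\ref{le-24.5.24.conf1}~(2)--(3), $\overrightarrow{\rm Ind}(M)$ is a $\Delta$-manifold with a continuous total order on its face maps. By Proposition~\ref{pr-26.5.25.3}~(2), $\vec{\mathcal{H}}(M)$ is a graded submanifold. The same part of Proposition~\ref{pr-26.5.25.3}, applied to directed simplicial complexes, identifies the $\Delta$-theoretic $\delta M'_\bullet$ and $\Delta M'_\bullet$ with the combinatorial $\delta\vec{\mathcal{H}}(M)$ and $\Delta\vec{\mathcal{H}}(M)$ defined via (sub)sequences. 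Concretely, one checks that the largest (resp. smallest) sub-$\Delta$-manifold contained in (resp. containing) $\vec{\mathcal{H}}(M)$ is exactly the largest (resp. smallest) directed simplicial complex on $M$ contained in (resp. containing) it. Here $H(\vec{\mathcal{H}}(M))$ means the homology of ${\rm Inf}_\bullet$ (equivalently ${\rm Sup}_\bullet$) of Proposition~\ref{th-26.5.18-1}. The arrows of (\ref{eq-5.21.h1}) then give (\ref{eq-6.27.h1}).

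For the second statement, assume $M\subseteq\mathbb{R}$. Lemma~\ref{le-24.5.24.conf1}~(3) gives a continuous total order on the face maps of ${\rm Ind}(M)$, realized through the cross-section $\theta$ of (\ref{eq-5.2.1}) and the face maps (\ref{eq-5.1ab}). The bottom row then follows exactly as above, using Proposition~\ref{pr-26.5.25.3}~(3) and Theorem~\ref{pr-5.22.9}.

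The vertical maps will be constructed at chain level. Let $\pi_\#:C_m({\rm Conf}_k(M);R)\to C_m({\rm Conf}_k(M)/\Sigma_k;R)$ be the map sending a singular simplex $\gamma$ to ${\rm sgn}(s)\,\pi_k\circ\gamma$, where $s$ is the permutation sorting $\gamma$ into increasing order relative to $\prec$. On each component of ${\rm Conf}_k(M)$, with $M\subseteq\mathbb{R}$, this sign is locally constant, hence well defined on singular simplices. As in the proof of Theorem~\ref{th-homol-conf-1} and Lemma~\ref{le-3.16-1}, $\pi_\#$ commutes with the face-map differential $\partial$ by diagram (\ref{diag-26-6-25}). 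It commutes with the singular differential $\delta$ because it is post-composition with a smooth map times a locally constant sign. So $\pi_\#$ is a morphism of double complexes $C_\bullet(\overrightarrow{\rm Ind}(M);R)\to C_\bullet({\rm Ind}(M);R)$.

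It remains to show that $\pi_\#$ restricts correctly to each of the four terms of the first row. By Proposition~\ref{le-26-3-3-1}, $\pi(\delta\vec{\mathcal{H}}(M))=\delta\mathcal{H}(M)$ and $\pi(\Delta\vec{\mathcal{H}}(M))=\Delta\mathcal{H}(M)$, so $\pi_\#$ restricts to the first and third columns. For the middle column I would mimic Claims~3 and~4 in the proof of Proposition~\ref{pr-26.3.13-9}. Since $\pi(\vec{\mathcal{H}}(M))=\mathcal{H}(M)$ degreewise and $\pi_\#$ is a chain map for $\partial$, it sends ${\rm Inf}_\bullet(\vec{\mathcal{H}}(M))=R(\vec{\mathcal{H}})\cap\partial^{-1}R(\vec{\mathcal{H}})$ into ${\rm Inf}_\bullet(\mathcal{H}(M))$, and likewise for ${\rm Sup}$. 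Only the inclusion $\subseteq$ is needed, not equality.

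All squares commute at chain level because every horizontal map is an inclusion and every vertical map is a restriction of the single map $\pi_\#$. Passing to double homology, and using the quasi-isomorphism ${\rm Inf}\simeq{\rm Sup}$ from Proposition~\ref{th-26.5.18-1}~(2) to identify $H(\vec{\mathcal{H}}(M))$, yields (\ref{eq-6.10.h1}).

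I expect the main obstacle to be the middle column: making sure $\pi_\#$ preserves ${\rm Inf}$ and ${\rm Sup}$ in the singular-chain setting, with signs present. The point is that $\pi_\#$ of a chain supported in $\vec{\mathcal{H}}_k(M)$ is supported in $\mathcal{H}_k(M)$, and that $\pi_\#\partial=\partial\pi_\#$. I would check both of these first.
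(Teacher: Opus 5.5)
Your proposal is correct and follows the paper's route: Theorem~\ref{pr-5.22.9} applied through Proposition~\ref{pr-26.5.25.3}~(2)--(3) gives both rows, and the vertical maps come from the chain-level diagram (\ref{diag-26-6-27-1}), where every vertical arrow restricts the sign-twisted projection $\pi_\#$, followed by the homology functor. Your check that the sorting sign is locally constant on ${\rm Conf}_k(M)$ for $M\subseteq\mathbb{R}$ (so $\pi_\#$ commutes with both differentials), and your remark that only inclusions are needed, supply details the paper leaves to its cited references; the same remark applies to the first column, where in general only $\pi(\delta\vec{\mathcal{H}}(M))\subseteq\delta\mathcal{H}(M)$ holds, and that inclusion is all you actually use.
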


\begin{proof}
Apply Proposition~\ref{pr-26.5.25.3}~(2)   to  Theorem~\ref{pr-5.22.9}.  
By  (\ref{eq-5.21.h1}),  
we  obtain  (\ref{eq-6.27.h1}).  
 Suppose  in  addition  that 
 $M$  is  a  submanifold  of  $\mathbb{R}$.   
  Then 
   there  is  a  continuous  total  order  on  the  face  maps  
of   the   $\Delta$-manifold  ${\rm   Inf}(M)$.   
   By  
  applying  Proposition~\ref{pr-26.5.25.3}~(2)  and  (3)  to  Theorem~\ref{pr-5.22.9}, 
  we  obtain   the  
sequence  of  double  homology  groups 
 in  the  second  row  of   (\ref{eq-6.10.h1}).  
 By  an  analogous  argument  of  
 \cite[Lemma~3.8 and  Theorem~3.9]{hdg} 
 and  
 \cite[Theorem~5.1]{reg},  
 the  diagram  commutes 
 \begin{eqnarray}\label{diag-26-6-27-1}
 \xymatrix{
 R(\delta  \vec{\mathcal{H}}(M))\ar[r]  \ar[d]_-{(\pi)_\#}&
   {\rm  Inf}( \vec{\mathcal{H}}(M))\ar[d]_-{(\pi)_\#} \ar[r]&
   {\rm  Sup}( \vec{\mathcal{H}}(M))\ar[d]_-{(\pi)_\#}\ar[r]
    & R(\Delta  \vec{\mathcal{H}}(M))\ar[d]^-{(\pi)_\#}\ar[r] 
    &R(\overrightarrow{\rm  Ind}(M))\ar[d]^-{(\pi)_\#}\\
   R(\delta  {\mathcal{H}}(M))\ar[r]  &
   {\rm  Inf}(  {\mathcal{H}}(M))  \ar[r]&
   {\rm  Sup}( {\mathcal{H}}(M))\ar[r]
    & R(\Delta  {\mathcal{H}}(M))\ar[r] 
    &R( {\rm  Ind}(M))    
 }
 \end{eqnarray} 
where  the  horizontal  maps  are  inclusions  of  double-chain  complexes 
and  the  vertical  maps  are  double-chain  maps  induced  by  $\pi$.  
 Applying  the  homology  functor  to   
 (\ref{diag-26-6-27-1}),  
 we  obtain  (\ref{eq-6.10.h1}).  
\end{proof}

  \subsection{Hypergraphs  and  independence  complexes  on  disjoint  unions  of  manifolds}\label{26.5.25-ss3.5}

  Let  $M$  and  $M'$  be  two  disjoint  manifolds.  
 Then  for  any  $k\geq  1$,  a  directed  hyperedge  $\vec\sigma_k(M\sqcup  M')$  
 in    $ {\rm  Conf}_k(M\sqcup  M')$  
 is  of  the  form  
 \begin{eqnarray*}
 \vec\sigma_k(M\sqcup  M')=( \vec\sigma_{k_1}(M), \vec\sigma'_{l_1}(M'), \ldots, 
  \vec\sigma_{k_n}(M), \vec\sigma'_{l_n}(M'))  
  \end{eqnarray*}
  such  that  $n\geq  1$,   $k_1,l_1,\ldots, k_n, l_n\geq  0$,  
  $
  \sum_{i=1}^n  (k_i +l_i)= k   
$,     and   $ \vec\sigma_{k_i}(M) \in {\rm  Conf}_{k_i}(M ) $  
  and   $ \vec\sigma'_{l_i}(M') \in {\rm  Conf}_{l_i}(M' ) $
  for  each  $1\leq  i\leq  n$.  
 Thus 
 \begin{eqnarray*}
 {\rm  Conf}_k(M\sqcup  M') =  \bigsqcup_{~n\geq 1~}
 \bigsqcup_{ ~k_1,l_1,\ldots, k_n, l_n\geq  0 ~\atop
  ~\sum_{i=1}^n  (k_i +l_i) =k~} 
  \prod_{i=1}^n   ({\rm  Conf}_{k_i}(M ) \times  {\rm  Conf}_{l_i}(M' ) ).         
 \end{eqnarray*}
It  follows  that  
 \begin{eqnarray}\label{eq-26-march-02-1}
 \overrightarrow{\rm  Ind}(M\sqcup  M')= \mathcal{F}(
  \overrightarrow{\rm  Ind}(M ),     \overrightarrow{\rm  Ind}(   M')),
 \end{eqnarray}
which  is      the  free  product  of  $ \overrightarrow{\rm  Ind}(M )$  
 and      $ \overrightarrow{\rm  Ind}(   M')$ 
 consisting  of  all   the  finite  words  generated by  the  elements  in 
$ \overrightarrow{\rm  Ind}(M )$  
 and      $ \overrightarrow{\rm  Ind}(   M')$. 
 Moreover,
 a    hyperedge  $ \sigma_k(M\sqcup  M')$  
 in    $ {\rm  Conf}_k(M\sqcup  M')/\Sigma_k$  
 is  of  the  form  
 \begin{eqnarray*}
 \sigma_k(M\sqcup  M')= \sigma_{g }(M) \sqcup \vec\sigma'_{h}(M') 
     \end{eqnarray*}
  such  that  $g,h\geq  0$  and 
     $g+h= k   $.    
 Thus 
  \begin{eqnarray*}
 {\rm  Conf}_k(M\sqcup  M')/\Sigma_k =  
 \bigsqcup_{ g,h\geq  0 \atop
   g+h =k} 
   {\rm  Conf}_{g}(M )/\Sigma_g  *  {\rm  Conf}_{h}(M' )/\Sigma_h,          
 \end{eqnarray*}
 which  is      a  disjoint  union  of  joins  of  the  $g$-uniform  hypergraphs 
$ {\rm  Conf}_{g}(M )/\Sigma_g $  and  the  $h$-uniform  hypergraphs
 $  {\rm  Conf}_{h}(M' )/\Sigma_h$.    
 It  follows  that  
 \begin{eqnarray}\label{eq-26-march-02-2}
 {\rm  Ind}(M\sqcup  M')=  
 {\rm  Ind}(M )  *      {\rm  Ind}(   M'),
 \end{eqnarray}
 which  is  the  join  of  $ {\rm  Ind}(M )$  and      $ {\rm  Ind}(   M')$.

 Let  $\vec{\mathcal{H}}(M)$  and  $\vec{\mathcal{H}}'(M')$  
 be   hyperdigraphs  on  $M$  and  $M'$  respectively. 
 Let  $\mathcal{H}(M)$  be  the  underlying    hypergraph  of  
 $\vec{\mathcal{H}}(M)$
 and  let  $\mathcal{H}'(M)$  be  the  underlying   hypergraph  of  
 $\vec{\mathcal{H}}'(M)$.  
 Then   the  free  product  
 \begin{eqnarray}\label{eq-26-march-02-5}
 \mathcal{F}(\vec{\mathcal{H}}(M),  \vec{\mathcal{H}}'(M'))
 \end{eqnarray} 
 consisting  of  all   the  finite  words  generated by  the  elements  in 
$\vec{\mathcal{H}}(M)$  and   $  \vec{\mathcal{H}}'(M')$ 
is  a     sub-hyperdigraph  of  (\ref{eq-26-march-02-1}).
  The  join
 \begin{eqnarray}\label{eq-26-march-02-3}
 {\mathcal{H}}(M)  *   {\mathcal{H}}'(M') 
 \end{eqnarray} 
is  a  sub-hypergraph  of  (\ref{eq-26-march-02-2})  such  that  
  (\ref{eq-26-march-02-3})  is  the  underlying   hypergraph  of  
  (\ref{eq-26-march-02-5}).  

\begin{proposition}\label{le-1.5}
For  any    hyperdigraphs
$\vec{\mathcal{H}}(M)$  and  $\vec{\mathcal{H}}'(M')$
 on  $M$  and  $M'$  respectively, 
 we  have  commutative  diagrams 
 \begin{eqnarray}\label{26-3-3-diag3}
 \xymatrix{
  \mathcal{F}(\delta\vec{\mathcal{H}}(M), \delta\vec{\mathcal{H}}'(M')) 
 \ar[r]\ar[d]
 &\mathcal{F}(\vec{\mathcal{H}}(M), \vec{\mathcal{H}}'(M'))
  \ar[r]\ar[d]
  & \mathcal{F}(\Delta\vec{\mathcal{H}}(M), \Delta\vec{\mathcal{H}}'(M')) 
  \ar[d]\\
   (\delta   \mathcal{H}(M)) *(\delta\mathcal{H}'(M') )
 \ar[r] 
 &  \mathcal{H}(M) *\mathcal{H}'(M') 
  \ar[r] 
  &(\Delta  \mathcal{H}(M)) * ( \Delta\mathcal{H}'(M')) 
 }
 \end{eqnarray}
 and
 \begin{eqnarray}\label{26-3-3-diag5}
 \xymatrix{
 \mathcal{F}(\bar\delta\vec{\mathcal{H}}(M), \bar\delta\vec{\mathcal{H}}'(M'))
 \ar[r]\ar[d]
 &\mathcal{F}(\vec{\mathcal{H}}(M), \vec{\mathcal{H}}'(M'))
  \ar[r]\ar[d]
  &\mathcal{F}(\bar\Delta\vec{\mathcal{H}}(M),\bar\Delta \vec{\mathcal{H}}'(M'))
  \ar[d]\\
   (\bar\delta   \mathcal{H}(M)) *(\bar\delta \mathcal{H}'(M') )
 \ar[r] 
 &  \mathcal{H}(M) *\mathcal{H}'(M') 
  \ar[r] 
  &(\bar\Delta  \mathcal{H}(M)) *(\bar\Delta  \mathcal{H}'(M')) 
 }
 \end{eqnarray}
 such  that  
 \begin{enumerate}[(1)]
 \item
 if  both  $\vec{\mathcal{H}}(M)$  and  $\vec{\mathcal{H}}'(M')$
 are   directed simplicial  complexes  (resp.     independence  hyperdigraphs),  
 then   all  the  horizontal  maps  in  (\ref{26-3-3-diag3})   (resp.  
 all  the  horizontal  maps  in  (\ref{26-3-3-diag5}))
are  identity;  
 
 \item
  if  both  ${\mathcal{H}}(M)$  and  ${\mathcal{H}}'(M')$
 are      simplicial  complexes
   (resp.    independence  hypergraphs),  
 then    all  the  horizontal  maps  in  the   second  row  of  (\ref{26-3-3-diag3})
  (resp.  all  the  horizontal  maps   in  the  second  row  of    (\ref{26-3-3-diag5}))  
are  identity.
  \end{enumerate}
\end{proposition}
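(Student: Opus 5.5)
The plan is to build both diagrams from the single-manifold diagrams (\ref{26-3-3-diag1}) and (\ref{26-3-3-diag2}) of Proposition~\ref{le-26-3-3-1}. Two operations do the work. The free product $\mathcal{F}(-,-)$ is applied to the top rows, and the join $*$ is applied to the bottom rows. Both operations are functorial with respect to inclusions of sub-hyper(di)graphs.

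First, I record the compatibility of $\pi$ with these operations. By (\ref{eq-26-march-02-1}) and (\ref{eq-26-march-02-2}), the projection $\pi:\overrightarrow{\rm Ind}(M\sqcup M')\to{\rm Ind}(M\sqcup M')$ sends a word
\[
(\vec\sigma_{k_1}(M),\vec\sigma'_{l_1}(M'),\ldots,\vec\sigma_{k_n}(M),\vec\sigma'_{l_n}(M'))
\]
to
\[
\pi(\vec\sigma_{k_1}(M)\cdots\vec\sigma_{k_n}(M))\sqcup\pi(\vec\sigma'_{l_1}(M')\cdots\vec\sigma'_{l_n}(M')).
\]
So for any sub-hyperdigraphs $\vec{\mathcal{A}}(M)$ and $\vec{\mathcal{A}}'(M')$ we get
\[
\pi(\mathcal{F}(\vec{\mathcal{A}}(M),\vec{\mathcal{A}}'(M')))=\pi(\vec{\mathcal{A}}(M))*\pi(\vec{\mathcal{A}}'(M')),
\]
which is the statement already noted for (\ref{eq-26-march-02-3}) and (\ref{eq-26-march-02-5}).

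One caveat: the concatenated $M$-letters of a word lying in $\mathcal{F}(\vec{\mathcal{A}},\vec{\mathcal{A}}')$ need not form a hyperedge of $\vec{\mathcal{A}}$. I read the free product, as the paper does, as words whose $M$-part and $M'$-part are hyperedges of the respective factors interleaved. With that reading the displayed identity holds on the nose. Given the identity, I define each vertical map as the restriction of $\pi$. Its targets are then $\pi(\delta\vec{\mathcal{H}}(M))*\pi(\delta\vec{\mathcal{H}}'(M'))$ and the analogous joins. By the equalities $\pi(\delta\vec{\mathcal{H}}(M))=\delta\mathcal{H}(M)$ and so on, used in the proof of Proposition~\ref{le-26-3-3-1}, these are exactly the joins in the bottom rows.

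Next, I define the horizontal maps as the inclusions $\mathcal{F}(\iota,\iota')$ and $\iota*\iota'$ induced by the inclusions $\delta\vec{\mathcal{H}}\subseteq\vec{\mathcal{H}}\subseteq\Delta\vec{\mathcal{H}}$, and similarly for the barred versions. Each square commutes because every map in it is a restriction of the one projection $\pi$ on $\overrightarrow{\rm Ind}(M\sqcup M')$, and the horizontal maps are inclusions. Commutativity therefore reduces to the tautology $\pi|_A=\pi|_B\circ(A\hookrightarrow B)$ for $A\subseteq B$, applied to the squares of (\ref{26-3-3-diag1}) and (\ref{26-3-3-diag2}) tensored with the corresponding ones for $M'$.

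For (1): if $\vec{\mathcal{H}}(M)$ and $\vec{\mathcal{H}}'(M')$ are directed simplicial complexes, then $\delta\vec{\mathcal{H}}=\vec{\mathcal{H}}=\Delta\vec{\mathcal{H}}$ on each factor. The induced free products coincide, so the horizontal maps in (\ref{26-3-3-diag3}) are identities. The same holds in the independence case with $\bar\delta$ and $\bar\Delta$. For (2): if $\mathcal{H}(M)$ and $\mathcal{H}'(M')$ are simplicial complexes (resp. independence hypergraphs), then $\delta\mathcal{H}=\mathcal{H}=\Delta\mathcal{H}$ (resp. $\bar\delta\mathcal{H}=\mathcal{H}=\bar\Delta\mathcal{H}$), so the joins in the bottom rows coincide. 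Note that (2) does not require the top-row hypotheses, and this is consistent with the statement.

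The main obstacle is the first step: making precise that $\pi$ of a free product is the join of the projections. This is the point where the definition of $\mathcal{F}$ on sub-hyperdigraphs has to be pinned down. I would handle it by checking both inclusions elementwise. For "$\subseteq$", project a word. For "$\supseteq$", given $\sigma\sqcup\sigma'$, choose lifts $\vec\sigma\in\vec{\mathcal{A}}(M)$ and $\vec\sigma'\in\vec{\mathcal{A}}'(M')$ and take the two-letter word $(\vec\sigma,\vec\sigma')$. Everything else is formal.
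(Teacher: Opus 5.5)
Your proof is correct, and it is organized differently from the paper's. The paper treats $\mathcal{F}(\vec{\mathcal{H}}(M),\vec{\mathcal{H}}'(M'))$ as one hyperdigraph on $M\sqcup M'$, with underlying hypergraph $\mathcal{H}(M)*\mathcal{H}'(M')$, and applies Proposition~\ref{le-26-3-3-1} to it. It then rewrites the corner entries using eight commutation identities, e.g.\ $\delta(\mathcal{F}(\vec{\mathcal{H}}(M),\vec{\mathcal{H}}'(M')))=\mathcal{F}(\delta\vec{\mathcal{H}}(M),\delta\vec{\mathcal{H}}'(M'))$ and $\delta(\mathcal{H}(M)*\mathcal{H}'(M'))=(\delta\mathcal{H}(M))*(\delta\mathcal{H}'(M'))$; these are asserted by analogy rather than proved. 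You never form $\delta$, $\Delta$, $\bar\delta$, $\bar\Delta$ on $M\sqcup M'$. Instead you push the single-manifold diagrams (\ref{26-3-3-diag1}) and (\ref{26-3-3-diag2}) through $\mathcal{F}$ and $*$ factorwise, so beyond monotonicity the only new input is $\pi(\mathcal{F}(\vec{\mathcal{A}},\vec{\mathcal{A}}'))=\pi(\vec{\mathcal{A}})*\pi(\vec{\mathcal{A}}')$.

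What each route buys:
\begin{itemize}
\item Yours is shorter and avoids the commutation identities, whose boundary cases depend on conventions for the empty hyperedge.
\item The paper's identifies the outer columns with $\delta$, $\Delta$ (resp.\ $\bar\delta$, $\bar\Delta$) of the combined hyperdigraph. The diagram is then literally an instance of (\ref{26-3-3-diag1}) (resp.\ (\ref{26-3-3-diag2})) on $M\sqcup M'$, which is what you would want in order to apply Theorem~\ref{th-6.13.1} there.
\end{itemize}
Your remark on how to read $\mathcal{F}$ is a real clarification, not a technicality. Under the literal ``words in letters'' reading, $\vec{\mathcal{H}}(M)=\{(a),(b)\}$ and $\vec{\mathcal{H}}'(M')=\{(c)\}$ produce the word $(a)(c)(b)$. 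Its image $\{a,b,c\}$ is not in $\mathcal{H}(M)*\mathcal{H}'(M')$, so the middle vertical map would not exist.

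Two small repairs are needed.

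First, the equalities $\pi(\delta\vec{\mathcal{H}}(M))=\delta\mathcal{H}(M)$ and $\pi(\bar\delta\vec{\mathcal{H}}(M))=\bar\delta\mathcal{H}(M)$, which you quote from the proof of Proposition~\ref{le-26-3-3-1}, fail in general; the $\Delta$ and $\bar\Delta$ versions are fine. Take distinct $a,b,c\in M$ and $\vec{\mathcal{H}}(M)=\{\vec\emptyset,(a),(b),(c),(a,b),(b,c),(c,a),(a,b,c)\}$.
\begin{itemize}
\item The underlying hypergraph is the full simplex on $\{a,b,c\}$, so $\{a,b,c\}\in\delta\mathcal{H}(M)$.
\item But $(a,b,c)\notin\delta\vec{\mathcal{H}}(M)$, because $(a,c)\notin\vec{\mathcal{H}}(M)$, and no other ordering of $\{a,b,c\}$ lies in $\vec{\mathcal{H}}(M)$.
\end{itemize}
Only the inclusions $\pi(\delta\vec{\mathcal{H}}(M))\subseteq\delta\mathcal{H}(M)$ and $\pi(\bar\delta\vec{\mathcal{H}}(M))\subseteq\bar\delta\mathcal{H}(M)$ hold. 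They are all you need for the left vertical maps to exist and the squares to commute, so just drop the claim that the bottom-left joins are hit exactly.

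Second, statement (1) says that all horizontal maps are identities, including those in the bottom rows, but you only treated the top rows. Add that the underlying hypergraph of a directed simplicial complex (resp.\ independence hyperdigraph) is a simplicial complex (resp.\ independence hypergraph). This holds because subsets of $\pi(\vec\sigma(M))$ are images of subsequences of $\vec\sigma(M)$, and supersets are images of supersequences. Your argument for (2) then finishes (1).
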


\begin{proof}
It  follows from    
Proposition~\ref{le-26-3-3-1}      that 
 \begin{eqnarray}\label{26-3-3-diag7}
 \xymatrix{
 \delta(\mathcal{F}(\vec{\mathcal{H}}(M), \vec{\mathcal{H}}'(M')))
 \ar[r]\ar[d]
 &\mathcal{F}(\vec{\mathcal{H}}(M), \vec{\mathcal{H}}'(M'))
  \ar[r]\ar[d]
  &\Delta(\mathcal{F}(\vec{\mathcal{H}}(M), \vec{\mathcal{H}}'(M')))
  \ar[d]\\
   \delta(  \mathcal{H}(M) *\mathcal{H}'(M') )
 \ar[r] 
 &  \mathcal{H}(M) *\mathcal{H}'(M') 
  \ar[r] 
  &\Delta( \mathcal{H}(M) * \mathcal{H}'(M')) 
 }
 \end{eqnarray}
 and
 \begin{eqnarray}\label{26-3-3-diag8}
 \xymatrix{
 \bar\delta(\mathcal{F}(\vec{\mathcal{H}}(M), \vec{\mathcal{H}}'(M')))
 \ar[r]\ar[d]
 &\mathcal{F}(\vec{\mathcal{H}}(M), \vec{\mathcal{H}}'(M'))
  \ar[r]\ar[d]
  &\bar\Delta(\mathcal{F}(\vec{\mathcal{H}}(M), \vec{\mathcal{H}}'(M')))
  \ar[d]\\
   \bar\delta(  \mathcal{H}(M) *\mathcal{H}'(M') )
 \ar[r] 
 &  \mathcal{H}(M) *\mathcal{H}'(M') 
  \ar[r] 
  &\bar\Delta( \mathcal{H}(M) * \mathcal{H}'(M')).   
 }
 \end{eqnarray}
On  the  other  hand,  similar  with  \cite[Subsection~2.1,  (i)' - (iv)']{jktr2023}, 
 we  have  
\begin{eqnarray}
\label{eq-26-3-3-51}
 \delta(\mathcal{F}(\vec{\mathcal{H}}(M), \vec{\mathcal{H}}'(M')))
 &=&
  (\mathcal{F}(\delta\vec{\mathcal{H}}(M), \delta\vec{\mathcal{H}}'(M'))),\\
  \label{eq-26-3-3-52}
 \Delta(\mathcal{F}(\vec{\mathcal{H}}(M), \vec{\mathcal{H}}'(M')))
 &=&
  (\mathcal{F}(\Delta\vec{\mathcal{H}}(M), \Delta\vec{\mathcal{H}}'(M'))),  \\
  \label{eq-26-3-3-53}
  \bar\delta(\mathcal{F}(\vec{\mathcal{H}}(M), \vec{\mathcal{H}}'(M')))
 &=&
  (\mathcal{F}(\bar\delta\vec{\mathcal{H}}(M), \bar\delta\vec{\mathcal{H}}'(M'))),\\
  \label{eq-26-3-3-54}
 \bar\Delta(\mathcal{F}(\vec{\mathcal{H}}(M), \vec{\mathcal{H}}'(M')))
 &=&
  (\mathcal{F}(\bar\Delta\vec{\mathcal{H}}(M), \bar\Delta\vec{\mathcal{H}}'(M')))
\end{eqnarray}
and 
\begin{eqnarray}
\label{eq-26-3-3-55}
 \delta({\mathcal{H}}(M) * {\mathcal{H}}'(M'))
 &=&
   (\delta{\mathcal{H}}(M))* ( \delta{\mathcal{H}}'(M')),\\
  \label{eq-26-3-3-56}
 \Delta({\mathcal{H}}(M)*  {\mathcal{H}}'(M'))
 &=&
 (\Delta{\mathcal{H}}(M)) *( \Delta{\mathcal{H}}'(M')),  \\
  \label{eq-26-3-3-57}
  \bar\delta({\mathcal{H}}(M))* {\mathcal{H}}'(M'))
 &=&
  (\bar\delta{\mathcal{H}}(M))*( \bar\delta{\mathcal{H}}'(M')),\\
  \label{eq-26-3-3-58}
 \bar\Delta({\mathcal{H}}(M)* {\mathcal{H}}'(M'))
 &=&
  (\bar\Delta{\mathcal{H}}(M))* \bar\Delta{\mathcal{H}}'(M')).  
\end{eqnarray}
By  (\ref{26-3-3-diag7}),  
(\ref{eq-26-3-3-51}),  
(\ref{eq-26-3-3-52}),  
 (\ref{eq-26-3-3-55})  and    
 (\ref{eq-26-3-3-56}),  
 we  obtain  (\ref{26-3-3-diag3}). 
 By  (\ref{26-3-3-diag8}),  
(\ref{eq-26-3-3-53}),  
(\ref{eq-26-3-3-54}),  
 (\ref{eq-26-3-3-57})  and    
 (\ref{eq-26-3-3-58}),  
 we  obtain  (\ref{26-3-3-diag5}).

(1)   
 {\sc  Case~1}.  
   Suppose 
 both  $\vec{\mathcal{H}}(M)$  and  $\vec{\mathcal{H}}'(M')$
 are    directed simplicial  complexes. 
 Then   
\begin{eqnarray}\label{eq-26-3-5-1}
\delta\vec{\mathcal{H}}(M)=\vec{\mathcal{H}}(M)=\Delta\vec{\mathcal{H}}(M),
~~~~~~
\delta\vec{\mathcal{H}}'(M')=\vec{\mathcal{H}}'(M')=\Delta\vec{\mathcal{H}}'(M').
\end{eqnarray}  
 Moreover,    
 both  ${\mathcal{H}}(M)$  and  ${\mathcal{H}}'(M')$
 are     simplicial  complexes.
 Thus  
 \begin{eqnarray}\label{eq-26-3-5-2}
 \delta\mathcal{H}(M)={\mathcal{H}}(M)=\Delta\mathcal{H}(M),
 ~~~~~~
  \delta\mathcal{H}'(M')={\mathcal{H}}'(M')=\Delta\mathcal{H}'(M').
 \end{eqnarray}   
By  (\ref{eq-26-3-5-1}), 
the 
 horizontal  maps  in  the   first  row  of  (\ref{26-3-3-diag3})
 are  identity  and  
 by  (\ref{eq-26-3-5-2}), 
the 
 horizontal  maps  in  the  second  row  of  (\ref{26-3-3-diag3})
 are  identity.

{\sc  Case~2}.  
   Suppose 
 both  $\vec{\mathcal{H}}(M)$  and  $\vec{\mathcal{H}}'(M')$
 are     independence  hyperdigraphs.  
  Then   
\begin{eqnarray}\label{eq-26-3-5-3}
\bar\delta\vec{\mathcal{H}}(M)=\vec{\mathcal{H}}(M)=\bar\Delta\vec{\mathcal{H}}(M),
~~~~~~
\bar\delta\vec{\mathcal{H}}'(M')=\vec{\mathcal{H}}'(M')=\bar\Delta\vec{\mathcal{H}}'(M').
\end{eqnarray}  
Moreover,  
 both  ${\mathcal{H}}(M)$  and  ${\mathcal{H}}'(M')$
 are     independence  hyperdigraphs. 
 Thus  
 \begin{eqnarray}\label{eq-26-3-5-4}
 \bar\delta\mathcal{H}(M)={\mathcal{H}}(M)=\bar\Delta\mathcal{H}(M),
 ~~~~~~
  \bar\delta\mathcal{H}'(M')={\mathcal{H}}'(M')=\bar\Delta\mathcal{H}'(M').
 \end{eqnarray}   
By  (\ref{eq-26-3-5-3}), 
the 
 horizontal  maps  in  the   first  row  of  (\ref{26-3-3-diag5})
 are  identity  and  
 by  (\ref{eq-26-3-5-4}), 
the 
 horizontal  maps  in  the  second  row  of  (\ref{26-3-3-diag5})
 are  identity.

 (2)  
 The  proof  is  similar  with  (1).   
\end{proof}

\section{The  hypergraphs  of  packings  and  coverings}\label{s.27.7.28.4}

In  this  section,  
we   subsequently  discuss  about   the  constrained  configuration spaces, 
the  space  of  packings, 
the  space  of 
packings   with  hypergraph  constraints, the  space  of  coverings, 
   and  the  space  of  coverings   with  hypergraph  constraints
    as  
     examples for  hypergraphs   on  manifolds.

\subsection{Constrained  configuration  spaces  on  Riemannian  manifolds}\label{ss4.1-26apr}

Let   $g$  be   a  Riemannian  metric  on  $M$.  
Let  $d_M$  be  the  geodesic  distance  on  $M$   induced  by   $g$.  
Then  $d_M$  is  a  continuous  map  $d_M:  M\times  M\longrightarrow  [0,+\infty]$.  
 In  addition,  if  $M$  is  connected,  then  $d_M:  M\times  M\longrightarrow  [0,+\infty)$. 
 Let  $\mathbb{I}\subseteq  [0,+\infty]$  be  any  subset  of  $ [0,+\infty]$.   
The     $k$-th  {\it   ordered   configuration  space     on  $M$   with  constraint  
 $\mathbb{I}$}  is  
 \begin{eqnarray*}
 {\rm  Conf}_k(M,\mathbb{I}) =  \{(p_1,  p_2,  \ldots,  p_k)\in  {\rm  Conf}_k(M) \mid   d_M(p_i,p_j)\in \mathbb{I}    {\rm~for~any~}  i\neq  j\}.  
 \end{eqnarray*}
 The  $k$-th  {\it   unordered   configuration  space     on  $M$   with  constraint    $\mathbb{I}$}   
 is 
the  $\Sigma_k$-orbit  space 
 \begin{eqnarray*}
 {\rm  Conf}_k(M,  \mathbb{I})/\Sigma_k  =  \{\{p_1,  p_2,  \ldots,  p_k\}\in  {\rm  Conf}_k(M)/\Sigma_k\mid  d_M(p_i,p_j)\in \mathbb{I} 
 {\rm~for~any~}  i\neq  j\}.    
 \end{eqnarray*}
 The    {\it  directed  independence  complex   on  $M$   with  constraint  
 $\mathbb{I}$}  is  
\begin{eqnarray}\label{eq-260306-ind1}
\overrightarrow {\rm  Ind}(M, \mathbb{I})=\bigcup_{k\geq  0}
 {\rm  Conf}_k(M,\mathbb{I}) 
\end{eqnarray}
 and 
 the   {\it   independence  complex   on  $M$   with  constraint 
  $\mathbb{I}$}   is   
 \begin{eqnarray}\label{eq-260306-ind2}
 {\rm  Ind}(M, \mathbb{I})=\bigcup_{k\geq  0}
 {\rm  Conf}_k(M,\mathbb{I}) /\Sigma_k. 
\end{eqnarray}
Then   
   (\ref{eq-260306-ind1})  is  a  symmetric 
   directed  simplicial  sub-complex   
of  $\overrightarrow {\rm  Ind}(M)$  and  
(\ref{eq-260306-ind2})  is     the  underlying     simplicial complex of 
 (\ref{eq-260306-ind1}),  which  is  an
    simplicial  sub-complex   
of  ${\rm  Ind}(M)$.

 In  particular,  
 let  $\Omega=\{(t,s)\mid  0\leq   t<s  \leq  +\infty\}$  and  
 let  $\mathbb{I}$  be  the  intervals  
 $(t,s)$,  $(t,s]$,       
 $[t,s)$  or   $[t,s]$  for  $(t,s)\in \Omega$. 
 Without loss  of  generality,  we  take  $\mathbb{I}=(t,s]$  for $(t,s)\in \Omega$.     
   We  have  a  $\Sigma_k$-equivariant  double  filtration  
 \begin{eqnarray}\label{eq-filt-1}
  {\rm  Conf}_k(M,(-,-])=\{ {\rm  Conf}_k(M,(t,s])\mid  (t,s)\in \Omega  \}
 \end{eqnarray}
 as  well  as  an  induced  double  filtration 
  \begin{eqnarray}\label{eq-filt-2}
  {\rm  Conf}_k(M,(-,-])/\Sigma_k=\{ {\rm  Conf}_k(M,(t,s])/\Sigma_k\mid  
    (t,s)\in \Omega  \}
 \end{eqnarray}
 such  that  
 \begin{eqnarray*}
 &{\rm  Conf}_k(M,(t_2,s])\subseteq  {\rm  Conf}_k(M,(t_1,s]), \\
& {\rm  Conf}_k(M,(t_2,s])/\Sigma_k\subseteq  {\rm  Conf}_k(M,(t_1,s])/\Sigma_k
 \end{eqnarray*}  
 for  any  $0\leq  t_1\leq  t_2  <s\leq  +\infty$   and   
 \begin{eqnarray*}
 &{\rm  Conf}_k(M,(t,s_1])\subseteq  {\rm  Conf}_k(M,(t,s_2]),
 \\
 & {\rm  Conf}_k(M,(t,s_1])/\Sigma_k\subseteq  {\rm  Conf}_k(M,(t,s_2])/\Sigma_k
 \end{eqnarray*}  
 for  any  $0\leq  t    <s_1\leq  s_2  \leq  +\infty$.   
 We  say  that  $(t,s) \in \Omega$   
is  a  {\it  critical  point}  of  (\ref{eq-filt-1})   
  if  
 for  any  open   neighborhood  
$
 U 
$
    of  $(t,s)$
    \footnote[5]{ For  the  case  that  $s=+\infty$,  
    an   open  neighborhood  $U$  of  
    $(t,+\infty)\in \Omega$  is  given  by  
    $W\times  (x,+\infty]$,  where  $W$  is  an  open  neighborhood 
    of  $t\in [0,+\infty)$  and   $0\leq  x<+\infty$.  
    Here  $U$   is  of  dimension  $2$  and  $W$  is  of  dimension  $1$.  
    },    
 there  exist  $(t',s'),  (t'',s'')\in  U$  such that   
${\rm  Conf}_k(M,(t',s'])$   is  not  homotopy equivalent to   ${\rm  Conf}_k(M,(t'',s''])$.  
Similarly,  we    say  that  $(t,s)\in \Omega$  is  a  critical  point   of    (\ref{eq-filt-2})  
if   any  open  neighborhood  $U$  of  $(t,s)$  
contains  $(t',s')$  and   $  (t'',s'')$  such that   
${\rm  Conf}_k(M,(t',s'])/\Sigma_k$   is  not  homotopy equivalent to   ${\rm  Conf}_k(M,(t'',s''])/\Sigma_k$.  
We   consider  the  {\it     spectras}   (cf.   \cite[Sect.~4]{gromov1})  
  \begin{eqnarray*}
  {\rm Spec}({\rm  Conf}_k(M,(-,-])),~~~~~~ {\rm Spec}({\rm  Conf}_k(M,(-,-])/\Sigma_k) 
  \end{eqnarray*}   
 given  by  all  the  critical  points  of  
$
 {\rm  Conf}_k(M,(-,-])$  and  
$ {\rm  Conf}_k(M,(-,-])/\Sigma_k $ 
   respectively.     
 We   also   consider  the   double-persistent  homology  
 \begin{eqnarray*}
 H_\bullet({\rm  Conf}_k(M,(-,-])),
 ~~~~~~  H_\bullet({\rm  Conf}_k(M,(-,-])/\Sigma_k)   
 \end{eqnarray*}
 respectively  with    the  collection  of    the  birth-times  and  the  death-times  of  all  their  generators
\begin{eqnarray}\label{eq-26-feb-27-1}
{\rm  Crit}(H_\bullet({\rm  Conf}_k(M,(-,-]))), ~~~~~~  
 {\rm  Crit}(H_\bullet({\rm  Conf}_k(M,(-,-])/\Sigma_k)).     
 \end{eqnarray}

     \begin{theorem}
     \label{pr-4.5.1}
     For  any  Riemannian  manifold  $M$,   
     \begin{eqnarray}\label{eq-26-feb-27-2}
  {\rm Spec}({\rm  Conf}_k(M,(-,-])) = {\rm Spec}({\rm  Conf}_k(M,(-,-])/\Sigma_k),      
  \end{eqnarray} 
  which   contains   a  subset
  \begin{eqnarray}\label{eq-6.12.u1}
  {\rm  Crit}(H_\bullet({\rm  Conf}_k(M,(-,-]))) \cup  
 {\rm  Crit}(H_\bullet({\rm  Conf}_k(M,(-,-])/\Sigma_k)). 
  \end{eqnarray} 
   \end{theorem}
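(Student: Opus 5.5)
The plan has two parts. First I show that the spectra in (\ref{eq-26-feb-27-2}) coincide by relating homotopy equivalences upstairs and downstairs through the covering $\pi_k$. Then I show that every birth-time and death-time is a critical point.

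\textbf{Equality of spectra.} Since $d_M$ is symmetric, ${\rm Conf}_k(M,(t,s])$ is $\Sigma_k$-invariant and the action is free. So
\[
\pi_k:{\rm Conf}_k(M,(t,s])\longrightarrow {\rm Conf}_k(M,(t,s])/\Sigma_k
\]
is a $k!$-sheeted normal covering for every $(t,s)\in\Omega$, and the inclusions of the double filtration (\ref{eq-filt-1}) are $\Sigma_k$-equivariant and cover those of (\ref{eq-filt-2}). I would prove two inclusions of spectra by contraposition: if $(t,s)$ is not critical for one filtration, it is not critical for the other.

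\emph{Step 1.} Suppose $(t,s)\notin{\rm Spec}({\rm Conf}_k(M,(-,-])/\Sigma_k)$. Then there is a neighborhood $U$ on which the quotients are pairwise homotopy equivalent. I would shrink $U$ to a small rectangle around $(t,s)$ and use the filtration inclusions between nearby parameters. Whenever $(t',s')\le(t'',s'')$ in the product order, the inclusion downstairs should be a homotopy equivalence, and by covering theory its $\Sigma_k$-equivariant lift upstairs is then a homotopy equivalence. The lift is a map of coverings inducing an isomorphism on $\pi_1$ of the base, hence an isomorphism on $\pi_1$ and on all higher $\pi_n$ of the total spaces. Whitehead's theorem applies because these are open subsets of manifolds up to boundary effects, or one uses CW replacements.

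\emph{Step 2.} Conversely, suppose the ordered spaces are locally constant. Then $\Sigma_k$-equivariant homotopy equivalences (again realized by the inclusions) descend to the orbit spaces, since a free action makes an equivariant equivalence induce an equivalence of quotients, via Borel constructions $X\times_{\Sigma_k}E\Sigma_k\simeq X/\Sigma_k$.

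\textbf{Main obstacle.} The definition of critical point only says that abstract homotopy types agree near $(t,s)$; it says nothing about the inclusion maps. Abstract equivalence downstairs does not obviously imply equivalence of the covers. To make Steps 1 and 2 work, I would first prove a lemma: away from the spectrum, the filtration inclusions are themselves homotopy equivalences. I would argue via local triviality of the parameter dependence: vary $(t,s)$ in the small rectangle and use a flow pushing pairwise distances off the thresholds, in the spirit of the min-type Morse theory of \cite{imrn1}. This step is the one I expect to be delicate, and I would state any needed tameness hypothesis on $d_M$ explicitly if required.

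\textbf{Birth and death times.} For the second claim (\ref{eq-6.12.u1}), take $(t,s)$ outside the spectrum. By the lemma, all structure maps of the persistence module $H_\bullet({\rm Conf}_k(M,(-,-]))$ in a neighborhood are isomorphisms, and likewise for the quotient filtration. Therefore no generator can be born or die at $(t,s)$. Taking the contrapositive, every element of ${\rm Crit}$ for either filtration lies in the spectrum, and by (\ref{eq-26-feb-27-2}) the two spectra agree, so the union in (\ref{eq-6.12.u1}) is contained in the common spectrum.
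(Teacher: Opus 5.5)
Your covering-space steps and your final paragraph follow the same route as the paper. The paper proves a covering lemma: for a free, properly discontinuous $G$-action on a CW complex $X$ and a $G$-invariant subcomplex $Y$, the inclusion $Y\subseteq X$ is a homotopy equivalence iff $Y/G\subseteq X/G$ is. It compares the extensions $1\to\pi_1(Y)\to\pi_1(Y/G)\to G\to 1$ and $1\to\pi_1(X)\to\pi_1(X/G)\to G\to 1$ by the five lemma, uses $\pi_n(Y)\cong\pi_n(Y/G)$ for $n\geq 2$ together with Whitehead, and reduces to components via stabilizers. It then applies the lemma to ${\rm Conf}_k(M,(t,s])\subseteq{\rm Conf}_k(M,(t',s'])$ for $t'\leq t<s\leq s'$, and obtains ${\rm Crit}\subseteq{\rm Spec}$ from homotopy invariance of homology, exactly as you do. Your pullback-of-coverings argument and your Borel-construction argument are valid replacements for the two directions of that lemma.

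The gap is the bridging lemma you single out, and a soft argument will not close it, because as a general principle it is false. Take nested solid tori $X_r\subseteq\mathbb{R}^3$, $0<r<2$. For $r<1$, let $X_r$ be a thin tube around a curve winding twice around the core of a solid torus $T$; for $r\geq 1$, let $X_r$ be a thickening of $T$. Every $X_r$ is homotopy equivalent to $S^1$, so no parameter is critical in the sense of the definition. Yet the inclusion across $r=1$ has degree $2$ on $\pi_1$, and with $\mathbb{Z}/2$ coefficients an $H_1$ class dies and another is born at $r=1$.

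So local constancy of the abstract homotopy type does not make the filtration inclusions equivalences. Neither your Steps~1--2 nor ${\rm Crit}\subseteq{\rm Spec}$ can be derived without input specific to ${\rm Conf}_k(M,(-,-])$. Your flow sketch does not supply that input: a min-type Morse argument shows the inclusions are equivalences across parameters that are regular for the distance constraints, and that is not what $(t,s)\notin{\rm Spec}$ gives you.

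The paper does not prove such a lemma either. It passes from the covering lemma straight to the equality of spectra, i.e.\ it effectively reads ``$(t,s)$ is not critical'' as ``the filtration inclusions near $(t,s)$ are homotopy equivalences.'' With that inclusion-based reading of ${\rm Spec}$, your argument is complete and coincides with the paper's. With the literal definition, the missing ingredient is precisely this lemma.

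A minor point: ${\rm Conf}_k(M,(t,s])$ is neither open nor closed in ${\rm Conf}_k(M)$, so ``open subsets of manifolds'' does not justify Whitehead. The paper simply assumes CW structures, and your covering arguments work verbatim for weak equivalences.
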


   In  order  to  prove  Theorem~\ref{pr-4.5.1}, 
   we  need  the  next  lemma.  
   
   \begin{lemma}
   \label{le-26.6.12.1}
   Let  $X$  be  a  CW-complex.  
   Let  $G$  be  a  group  acting  on  $X$  freely  and  properly  discontinuously.  
   Let  $Y$  be  a  $G$-invariant  sub-CW-complex  of  $X$.  
Then  the  canonical  inclusion  $Y\subseteq  X$  is a  homotopy  equivalence 
if  and  only  if  the  canonical  inclusion  $Y/G\subseteq  X/G$  is a  
homotopy  equivalence.  
   \end{lemma}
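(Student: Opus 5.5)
The plan is to reduce both directions to Whitehead's theorem, using the covering maps $p\colon X\to X/G$ and $p|_Y\colon Y\to Y/G$. Since $G$ acts freely and properly discontinuously, both maps are regular covering maps with deck group $G$. We may assume the action is cellular, which is implicit in "$G$-invariant sub-CW-complex". Then $X/G$ and $Y/G$ inherit CW structures with $Y/G$ a subcomplex, so the inclusions $Y\subseteq X$ and $Y/G\subseteq X/G$ are inclusions of CW pairs. By Whitehead's theorem, each of these inclusions is a homotopy equivalence if and only if it induces a bijection on $\pi_0$ and an isomorphism on $\pi_n$ for every $n\geq 1$ and every choice of basepoint. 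The whole argument therefore takes place at the level of homotopy groups.

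For the higher homotopy groups, take $y\in Y$ and $n\geq 2$. The square formed by the two inclusions and the two projections commutes. Both vertical maps are covering maps, so they induce isomorphisms $\pi_n(Y,y)\cong \pi_n(Y/G,[y])$ and $\pi_n(X,y)\cong\pi_n(X/G,[y])$. Consequently, $\pi_n$ of the top inclusion is an isomorphism exactly when $\pi_n$ of the bottom one is. Every basepoint of $Y/G$ lifts to $Y$, so checking lifted basepoints suffices.

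For $\pi_0$ and $\pi_1$, I would use the long exact sequences of the coverings, or equivalently the lifting criterion. For a covering $E\to B$ with fibre $G$, there is the exact sequence of pointed sets
\[
1\to\pi_1(E,e)\to\pi_1(B,b)\to G\to\pi_0(E)\to\pi_0(B)\to *,
\]
where $G\to\pi_0(E)$ sends $g$ to the component of $ge$. The inclusion of $Y$ into $X$ induces a map of these sequences, with the identity on $G$. A five-lemma argument for sequences of groups and pointed sets, handled with care at the $\pi_0$ end, gives the following two implications.

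First, suppose $Y\subseteq X$ is a homotopy equivalence. Then it is bijective on $\pi_0$ and on $\pi_1$ at every basepoint. Surjectivity of $\pi_0(Y/G)\to\pi_0(X/G)$ is immediate. For injectivity, if $[y],[y']$ lie in the same component of $X/G$, then $y$ and $gy'$ lie in the same component of $X$ for some $g$. Hence they lie in the same component of $Y$, so $[y],[y']$ lie in the same component of $Y/G$. For $\pi_1$, a loop in $X/G$ at $[y]$ lifts to a path from $y$ to $gy$ in $X$. Since $y$ and $gy$ lie in the same component of $Y$, we can replace this path by one in $Y$ that is homotopic rel endpoints, using $\pi_1$-bijectivity. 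This gives surjectivity, and injectivity follows in the same way by lifting null-homotopies.

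Conversely, suppose $Y/G\subseteq X/G$ is a homotopy equivalence. For injectivity of $\pi_0(Y)\to\pi_0(X)$, take $y,y'$ joined by a path in $X$. Its image is homotopic rel endpoints to a path in $Y/G$, which lifts to a path in $Y$ from $y$ to some $y''$. Unique lifting of the homotopy forces $y''=y'$. For surjectivity, every component of $X$ maps to a component of $X/G$ meeting $Y/G$; lifting a path from that point back into $X$ shows the component meets $Y$. The $\pi_1$ statement follows from the diagram $\pi_1(Y,y)\hookrightarrow\pi_1(Y/G)$ and $\pi_1(X,y)\hookrightarrow\pi_1(X/G)$. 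Here $\pi_1(X,y)$ is the subgroup of loops whose lift at $y$ closes up, and the isomorphism $\pi_1(Y/G)\cong\pi_1(X/G)$ preserves these subgroups because the endpoint of a lift is a homotopy invariant.

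I expect the main obstacle to be the bookkeeping at $\pi_0$ and $\pi_1$. In particular, the case where $G$ permutes components nontrivially has to be handled by explicit path-lifting as above rather than by a formal five lemma, since the relevant terms are only pointed sets. Once those checks are done, Whitehead's theorem closes both directions.
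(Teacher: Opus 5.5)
Your proof is correct, and it reaches the conclusion by a somewhat different route from the paper. Both arguments use the covering square, the isomorphisms $\pi_n(Y)\cong\pi_n(Y/G)$ and $\pi_n(X)\cong\pi_n(X/G)$ for $n\geq 2$, and Whitehead's theorem. The difference lies in how $\pi_0$ and $\pi_1$ are handled.

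The paper first treats connected $X,Y$ by applying the five lemma to the short exact sequences $1\to\pi_1(Y)\to\pi_1(Y/G)\to G\to 1$ and $1\to\pi_1(X)\to\pi_1(X/G)\to G\to 1$. It then reduces the disconnected case to this one by splitting into components $Y_\beta\subseteq X_\alpha$ and using the coverings $Y_\beta\to Y_\beta/{\rm Stab}_G(Y_\beta)$ and $X_\alpha\to X_\alpha/{\rm Stab}_G(X_\alpha)$. You avoid the case split and check bijectivity on $\pi_0$ and on $\pi_1$ at every basepoint directly, by path and homotopy lifting.

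The paper's version is shorter and purely formal in the connected case. Yours is more robust exactly where the paper's reduction is loose. The deck groups ${\rm Stab}_G(Y_\beta)\subseteq{\rm Stab}_G(X_\alpha)$ need not agree, so the connected case does not apply verbatim. For example, take $X=\mathbb{R}$ with $\mathbb{Z}$ acting by translations and $Y=\bigcup_n[n,n+1/2]$. Then $[0,1/2]\subseteq\mathbb{R}$ is a homotopy equivalence, but $[0,1/2]\to\mathbb{R}/\mathbb{Z}$ is not. Likewise, the paper's criterion ``same number of components'' really has to be bijectivity on $\pi_0$, tied to $\pi_1$ of the quotients. That link is precisely what your lifting arguments supply.

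One minor correction: cellularity of the action is not implied by $Y$ being a $G$-invariant subcomplex. It is an extra hypothesis, though the paper makes the same tacit assumption when it treats $X/G$ and $Y/G$ as CW-complexes for Whitehead's theorem.
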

   
   \begin{proof}
       We  have  a  commutative  diagram
   \begin{eqnarray*}
   \xymatrix{
   Y\ar[d]_-{\pi}\ar[r]^-{i}  &X  \ar[d]^-{\pi}\\
   Y/G\ar[r]^-{i/G}   &X/G, 
   }
   \end{eqnarray*}
   where  the  horizontal  maps  are  canonical  inclusions 
   and  the vertical  maps are  normal coverings 
    induced  by  the  $G$-actions, 
     with  deck transformation groups  $G$.

    {\sc  Case~1}.   Both  $X$  and  $Y$  are  connected.

   Then  both  $X/G$  and  $Y/G$  are  connected  as  well.  
  Thus  
we  have a  commutative  diagram  of  fundamental  groups  
   \begin{eqnarray}\label{diag-6.12.1}
   \xymatrix{
   1\ar[r]  &\pi_1(Y)\ar[d]_-{i_*}\ar[r]  & \pi_1(Y/G)\ar[d]^-{(i/G)_*}\ar[r]   &G \ar@{=}[d]\ar[r] &1\\
   1\ar[r]  & \pi_1(X)\ar[r]  & \pi_1(X/G) \ar[r]   &G \ar[r] &1
   }
   \end{eqnarray}
   where  each  row  is a  short  exact  sequence.  
 By the  Five  Lemma  and  diagram  chasing,  
   it  follows  that  in  (\ref{diag-6.12.1}), 
   $i_*$  is  an  isomorphism  if  and  only  if  
   $(i/G)_*$  is  an  isomorphism.  
    Moreover,  
   for  any  $n\geq  2$,  
   we  have  a  commutative  diagram  of  homotopy  groups 
   \begin{eqnarray}\label{diag-6.12.2}
   \xymatrix{
  \pi_n(Y)\ar[d]_-{\pi_*}^-{\cong }\ar[r]^-{i_*}   & \pi_n(X) \ar[d]^-{\pi_*}_-{\cong }\\
    \pi_n(Y/G)\ar[r]^-{(i/G)_*}  & \pi_n(X/G)
   }
   \end{eqnarray}
   where  the  vertical  maps  are  isomorphisms.  
   Hence   in   (\ref{diag-6.12.2}), 
   $i_*$  is  an  isomorphism  if  and  only  if  
   $(i/G)_*$  is  an  isomorphism.  
   Therefore,   
   by  (\ref{diag-6.12.1})  and  (\ref{diag-6.12.2}),   
   $i$  is  a  weak  homotopy  equivalence  if   and  only  if  
$i/G$  is a  weak  homotopy  equivalence.     
   Since  $X$  and  $Y$  
   (resp.  $X/G$  and  $Y/G$)  are  $CW$-complexes, 
   by  the  Whitehead  Theorem, 
  $i$  (resp.  $i/G$)  is  a   homotopy equivalence 
   if  and  only  if    it  is  a  weak  homotopy  equivalence.      
  Consequently,  
     $i$  is  a     homotopy  equivalence  if   and  only  if  
$i/G$  is a    homotopy  equivalence.

{\sc  Case~2}.   Either  $X$  or  $Y$  is  not  connected.

Let  $\{X_\alpha\}_{\alpha\in  A}$  and  
$\{Y_\beta\}_{\beta\in  B}$  be  the  connected  component decompositions  of 
$X$ and  $Y$  respectively.  
 Then  $G$  acts  on  these  connected  components.
 For  each  $\alpha\in  A$,  
   the  stabilizer  of  $G$  on  $X_\alpha$  is  a  subgroup  of  $G$  given by
   \begin{eqnarray*}
   {\rm  Stab}_G(X_\alpha)=\{g\in  G\mid  gX_\alpha =X_\alpha\}. 
   \end{eqnarray*}
   Then   $ {\rm  Stab}_G(X_\alpha)$
   acts  on  $X_\alpha$  freely  and  properly  discontinuously.  
   Moreover,  
   $X_\alpha /{\rm  Stab}_G(X_\alpha)$  is  a  connected  component  of  $X/G$
   such  that  $\pi^X_\alpha: X_\alpha\longrightarrow  X_\alpha /{\rm  Stab}_G(X_\alpha)$
   is  a  normal  covering  with  deck  transformation  group  ${\rm  Stab}_G(X_\alpha)$.  
   Similarly,  for  each  $\beta\in  B$,  
   the  stabilizer  of  $G$  on  $Y_\beta$  is  a  subgroup  of  $G$  given by
   \begin{eqnarray*}
   {\rm  Stab}_G(Y_\beta)=\{g\in  G\mid  gY_\beta =Y_\beta\}. 
   \end{eqnarray*}
   Then    
   $Y_\beta /{\rm  Stab}_G(Y_\beta)$  is  a  connected  component  of  $Y/G$
   such  that  $\pi^Y_\beta: Y_\beta\longrightarrow  Y_\beta /{\rm  Stab}_G(Y_\beta)$
   is  a  normal  covering  with  deck  transformation  group  ${\rm  Stab}_G(Y_\beta)$. 
  We  have  a  commutative  diagram
   \begin{eqnarray*}
   \xymatrix{
   Y_\beta\ar[d]_-{\pi^Y_\beta}\ar[rrr]^-{i\mid_{Y_\beta}}  &&&X_\alpha \ar[d]^-{\pi^X_\alpha}\\
   Y_\beta /{\rm  Stab}_G(Y_\beta)\ar[rrr]^-{(i/G)\mid_{Y_\beta /{\rm  Stab}_G(Y_\beta)}}   &&&X_\alpha /{\rm  Stab}_G(X_\alpha)  
   }
   \end{eqnarray*}
where  by   {\sc  Case~1},  
  $i\mid_{Y_\beta}$  is  a  homotopy  equivalence  if  and  only  if  
  $(i/G)\mid_{Y_\beta /{\rm  Stab}_G(Y_\beta)}$  is  a  homotopy  equivalence.

   The  canonical  inclusion  $i:  Y\longrightarrow  X$  is  a  homotopy  equivalence 
  if  and  only  if  $A=B$  and  for  each  $\beta\in  B$,
     $i\mid_{Y_\beta}$ 
     is  
  a  homotopy  equivalence.  
  Similarly, 
  the  canonical  inclusion  $i/G:  Y/G\longrightarrow  X/G$ 
   is  a  homotopy  equivalence 
  if  and  only  if  $Y/G$  and  $X/G$  have  
  the  same  number  of  connected  components 
    and  each   $(i/G)\mid_{Y_\beta /{\rm  Stab}_G(Y_\beta)}$
     is  
  a  homotopy  equivalence.  
Therefore,
$i$  is  a     homotopy  equivalence  if   and  only  if  
$i/G$  is a    homotopy  equivalence. 
      \end{proof}

   \begin{proof}[Proof  of  Theorem~\ref{pr-4.5.1}]
  For   any  $0\leq  t'\leq  t\leq  s\leq  s'\leq  +\infty$,  
  let  $X$  be  $ {\rm  Conf}_k(M,(t',s']) $  and  
  let  $Y$  be  ${\rm  Conf}_k(M,(t,s])$   in    Lemma~\ref{le-26.6.12.1}.  
  Then  
  by  Lemma~\ref{le-26.6.12.1},  
    the  canonical  inclusion      
   \begin{eqnarray}\label{eq-26-feb-27-5}
    {\rm  Conf}_k(M,(t,s])\subseteq    {\rm  Conf}_k(M,(t',s']) 
    \end{eqnarray}
   is  a  homotopy  equivalence  if  and  only  if     
     the  induced  inclusion  
       \begin{eqnarray}\label{eq-26-feb-27-3}
          {\rm  Conf}_k(M,(t,s])/\Sigma_k \subseteq  {\rm  Conf}_k(M,(t',s'])/\Sigma_k 
          \end{eqnarray}
   is  a  homotopy  equivalence.  
  The  proof  of  (\ref{eq-26-feb-27-2})  follows.

     Since  the  homology  is  invariant  under  homotopy  equivalence,  
   both    the   sets  in  (\ref{eq-26-feb-27-1})  are  subsets  of  (\ref{eq-26-feb-27-2}).  
   Precisely,  for   any  $0\leq  t'\leq  t\leq  s\leq  s'\leq  +\infty$,  
   suppose  the  canonical  inlclusion  (\ref{eq-26-feb-27-5}) 
   is  a  homotopy  equivalence. 
   Then  (\ref{eq-26-feb-27-5})  induces  an   isomorphism  of  homology
   \begin{eqnarray*}
    H_\bullet(  {\rm  Conf}_k(M,(t,s]))\cong  H_\bullet({\rm  Conf}_k(M,(t',s']) ). 
   \end{eqnarray*}
   Hence  
   \begin{eqnarray*}
  \Omega\setminus 
    {\rm Spec}({\rm  Conf}_k(M,(-,-]))
     \subseteq 
     \Omega\setminus  {\rm  Crit}(H_\bullet({\rm  Conf}_k(M,(-,-]))),  
   \end{eqnarray*}
   which  implies 
    \begin{eqnarray}\label{eq-26-apr-5-2}
 {\rm  Crit}(H_\bullet({\rm  Conf}_k(M,(-,-])))  &\subseteq & 
  {\rm Spec}({\rm  Conf}_k(M,(-,-])).   
    \end{eqnarray} 
    Similarly,  
        \begin{eqnarray} 
  {\rm  Crit}(H_\bullet({\rm  Conf}_k(M,(-,-])/\Sigma_k))
    &\subseteq &  {\rm Spec}({\rm  Conf}_k(M,(-,-])/\Sigma_k).   
    \label{eq-26-apr-5-22}
  \end{eqnarray} 
  By  (\ref{eq-26-feb-27-2}),  
    (\ref{eq-26-apr-5-2})  and (\ref{eq-26-apr-5-22}),  
    we  obtain  that  (\ref{eq-6.12.u1})  is  a  subset  of  
     (\ref{eq-26-feb-27-2}).  
 \end{proof}

     \begin{corollary}\label{co-26.4.5.1}
     For  any
         $(t_0,s_0)\in \Omega$,   let  
   $\Omega(t_0,s_0)=\{(t,s)\in\Omega\mid  t_0\leq   t<s  \leq s_0\}$.  
   \begin{enumerate}[(1)]
   \item
      Suppose        
   $  {\rm  Conf}_k(M,(t,s])$  is  connected  and  simply-connected
    for  any  $(t,s)\in\Omega(t_0,s_0)$.  Then  $\Omega(t_0,s_0)$  
    does  not  intersect  with
    $
    {\rm  Crit}(H_\bullet({\rm  Conf}_k(M,(-,-]))  
    $
    if  and  only  if   $\Omega(t_0,s_0)$  
    does  not  intersect  with
    $
    {\rm Spec}({\rm  Conf}_k(M,(-,-]))   
    $;  
    \item
     Suppose        
   $  {\rm  Conf}_k(M,(t,s])/\Sigma_k$  is  connected  and  simply-connected
    for  any  $(t,s)\in\Omega(t_0,s_0)$.  Then 
    $\Omega(t_0,s_0)$  
    does  not  intersect  with
    $
    {\rm  Crit}(H_\bullet({\rm  Conf}_k(M,(-,-]/\Sigma_k))
   $
    if  and  only  if 
    $\Omega(t_0,s_0)$  
    does  not  intersect  with
    $
    {\rm Spec}({\rm  Conf}_k(M,(-,-]/\Sigma_k))      
    $.  
    \end{enumerate}
   \end{corollary}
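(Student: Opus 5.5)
The plan is to combine the inclusion ${\rm Crit}\subseteq{\rm Spec}$ from Theorem~\ref{pr-4.5.1} with a converse that is valid under the simply-connectedness hypothesis. That converse will come from the homology Whitehead theorem. I treat (1) in detail; (2) is the same argument with ${\rm Conf}_k(M,(t,s])/\Sigma_k$ in place of ${\rm Conf}_k(M,(t,s])$.

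\emph{First direction.} Suppose $\Omega(t_0,s_0)$ misses ${\rm Spec}({\rm Conf}_k(M,(-,-]))$. By (\ref{eq-26-apr-5-2}) in the proof of Theorem~\ref{pr-4.5.1}, ${\rm Crit}(H_\bullet({\rm Conf}_k(M,(-,-])))$ is contained in ${\rm Spec}({\rm Conf}_k(M,(-,-]))$. Hence $\Omega(t_0,s_0)$ also misses ${\rm Crit}$, and this direction needs no hypothesis.

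\emph{Converse.} Assume $\Omega(t_0,s_0)$ contains no birth-time or death-time. For comparable parameters $(t,s),(t',s')\in\Omega(t_0,s_0)$ with $t'\le t<s\le s'$, I will argue as follows.

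\begin{enumerate}[(i)]
\item The persistence map $H_\bullet({\rm Conf}_k(M,(t,s]))\to H_\bullet({\rm Conf}_k(M,(t',s']))$ induced by the inclusion (\ref{eq-26-feb-27-5}) is an isomorphism. The reason is that no generator is born or dies along a monotone path from $(t,s)$ to $(t',s')$ inside $\Omega(t_0,s_0)$, which is convex. To make this rigorous I would note that a non-isomorphism along such a path forces a birth or death at some intermediate parameter of the path; for double persistence this is the definition of ${\rm Crit}$ read as the support of the rank-change locus.
\item By hypothesis both spaces are connected and simply connected. They are open subsets, or more generally CW-approximable subspaces, of $M^k$, as used in Lemma~\ref{le-26.6.12.1}. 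The homology Whitehead theorem then says that an integral homology isomorphism between simply connected CW-complexes is a weak homotopy equivalence.
\item Since these are CW-complexes, the inclusion is then a homotopy equivalence.
\end{enumerate}

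Any two points of a small neighborhood $U\subseteq\Omega(t_0,s_0)$ can be joined through a common comparable point, for instance $(\min t,\max s)$. Hence all spaces ${\rm Conf}_k(M,(t',s'])$ with $(t',s')\in U$ are homotopy equivalent, so no point of $\Omega(t_0,s_0)$ is critical. Boundary points with $t=t_0$ or $s=s_0$ need a neighborhood intersected with $\Omega(t_0,s_0)$; I would interpret the statement that way.

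\emph{Main obstacle.} The delicate step is (i): turning ``no birth/death times in $\Omega(t_0,s_0)$'' into ``every comparable inclusion is a homology isomorphism''. This requires the persistence module to be tame, meaning pointwise finitely generated with a well-defined decomposition or rank invariant, so that ${\rm Crit}$ captures every change of rank or kernel. I would first try a one-parameter restriction: along each monotone path the module is one-parameter, and the interval decomposition of Crawley-Boevey applies. The inclusion along the path is an isomorphism exactly when no interval endpoint lies on the path. Part (2) follows verbatim, using the simply-connectedness of the orbit spaces. Alternatively, it can be deduced via Lemma~\ref{le-26.6.12.1} together with equality (\ref{eq-26-feb-27-2}).
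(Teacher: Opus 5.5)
This is the paper's proof. One direction is the inclusion (\ref{eq-26-apr-5-2}) from the proof of Theorem~\ref{pr-4.5.1}. The other passes from homology isomorphisms of the comparable inclusions (\ref{eq-26-feb-27-5}) inside $\Omega(t_0,s_0)$ to homotopy equivalences via the Hurewicz and Whitehead theorems, exactly as the paper does; the paper simply asserts your step (i) without any tameness discussion.

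One caution on the justification you sketch for (i): interval decompositions and rank invariants need field coefficients, but the homology Whitehead theorem needs integral isomorphisms. For example, a degree-$3$ self-map of $S^2$ is a $\mathbb{Z}/2$-homology isomorphism of simply connected spaces but not a homotopy equivalence. So ${\rm Crit}$ must be read with $\mathbb{Z}$-coefficients as the locus where the structure maps fail to be isomorphisms, which is how the paper implicitly uses it.
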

   
   \begin{proof}
   (1)  
       ($\Longrightarrow$):   
   Suppose   $\Omega(t_0,s_0)$  
    does  not  intersect  with
    ${\rm  Crit}(H_\bullet({\rm  Conf}_k(M,(-,-]))   $. 
 Then  for  any   $t_0\leq  t'\leq  t\leq  s\leq  s'\leq  s_0$, 
   the  canonical  inclusion  
   (\ref{eq-26-feb-27-5})  induces  an  isomorphism  of  homology groups  in  all  dimensions. 
   Thus    by  the  Hurewicz  Theorem  and  the  Whitehead  Theorem,  
  (\ref{eq-26-feb-27-5})  is  a  homotopy  equivalence.  
    Therefore,    $\Omega(t_0,s_0)$  
    does  not  intersect  with  $  {\rm Spec}({\rm  Conf}_k(M,(-,-]))   $.

        ($\Longleftarrow$):  
       The  proof  follows  from  (\ref{eq-26-apr-5-2}).

       (2)  The  proof  is  analogous  with  (1).  
   \end{proof}

 \subsection{Packings  of  closed  $r$-neighborhoods  in  Riemannian  manifolds}
 \label{ss-26.7.27.1}

Let  $p\in  M$.   Let  $r> 0$.    
The  {\it  open  $r$-neighborhood}  of  $p$  in  $M$   is  
\begin{eqnarray*}
N(r,p,M) = \{q\in  M\mid  d_M(p,q)<   r\}  
\end{eqnarray*}  
and the  {\it  closed  $r$-neighborhood}  of  $p$  in  $M$   is  
\begin{eqnarray*}
\overline { N(r,p,M) }= \{q\in  M\mid  d_M(p,q)\leq     r\}.    
\end{eqnarray*} 
If  we  fix  $p\in  M$,  then when  
$r$  is  small  enough,     the  exponential  map   gives  a  
homeomorphism  from  the  open  $r$-neighborhood     
to  the  open  $r$-ball  centered at  the  origin  in  the  Euclidean  space.

An   {\it    ordered   $k$-packing  of  closed  $r$-neighborhoods}    in   $M$   is   
 an  ordered  sequence  $(p_1,  p_2,  \ldots,  p_k)$  of  $k$-distinct  points  in  $M$  
 such  that  $\overline{N(r,p_i,M)}$,  $1\leq  i\leq  k$,  are  mutually  non-intersecting.  
 Equivalently,  it   
 is   a  directed  $k$-hyperedge  $\vec{\sigma}(M)=(p_1,  p_2,  \ldots,  p_k)$  
 such  that  $d_M(p_i,p_j)>2r$  for  any  $1\leq  i<j\leq  k$.  
 The  space   of  all  the  possible  ordered    
    $k$-packings  of  closed  $r$-neighborhoods  in  $M$  
 is 
 \begin{eqnarray}\label{eq-26-02-27-9}
 {\rm  Conf}_k(M, (2r, +\infty]) 
 \end{eqnarray}
 and  the  collection  of    (\ref{eq-26-02-27-9})    for  all  $k\geq  0$  gives  
 a    directed  simplicial  complex  on  $M$
 \begin{eqnarray*}
\overrightarrow  {\rm  Ind}(M,  (2r, +\infty])=\bigcup_{k\geq  0}  {\rm  Conf}_k(M, (2r, +\infty]). 
 \end{eqnarray*}

 An   {\it    unordered   $k$-packing  of  closed  $r$-neighborhoods}    in   $M$   is   
 a  set  $\{p_1,  p_2,  \ldots,  p_k\}$  of  $k$-distinct  points  in  $M$  
 such  that  $\overline{N(r,p_i,M)}$,  $1\leq  i\leq  k$,  are  mutually  non-intersecting.   
 Equivalently,   it  is   a  $k$-hyperedge 
   $\sigma(M)=\{p_1,  p_2,  \ldots,  p_k\}$     
 such  that  $d_M(p_i,p_j)>2r$  for  any  $1\leq  i<j\leq  k$.  
 The  space   of  all  the  possible  unordered    
    $k$-packings  of  closed  $r$-neighborhoods  in  $M$  
 is 
 \begin{eqnarray}\label{eq-26-02-27-10}
 {\rm  Conf}_k(M, (2r, +\infty])/\Sigma_k  
 \end{eqnarray}
  and  the  collection  of    (\ref{eq-26-02-27-10})    for  all  $k\geq  0$  gives  
 a     simplicial  complex  on  $M$
 \begin{eqnarray*}
  {\rm  Ind}(M,  (2r, +\infty])=\bigcup_{k\geq  0}  {\rm  Conf}_k(M, (2r, +\infty])/\Sigma_k. 
 \end{eqnarray*}

 For  any  $0<r_1<r_2$,  
 the  following     diagram  commutes such  that  each  square  is   a  pull-back  of  $k!$-sheeted   covering  maps 
 \begin{eqnarray*}
 \xymatrix{
  {\rm  Conf}_k(M, (2r_2, +\infty]) \ar[r]  \ar[d]_-{\pi}
  &  {\rm  Conf}_k(M, (2r_1, +\infty]) \ar[r]  \ar[d]_-{\pi}
  &  {\rm  Conf}_k(M) \ar[d]^-{\pi}\\
   {\rm  Conf}_k(M, (2r_2, +\infty])/\Sigma_k \ar[r]  
  &  {\rm  Conf}_k(M, (2r_1, +\infty])/\Sigma_k \ar[r]  
  &  {\rm  Conf}_k(M)/\Sigma_k. 
 }
 \end{eqnarray*}
 Thus  
  \begin{eqnarray}\label{eq-26.4.7.1}
\overrightarrow  {\rm  Ind}(M,  (-, +\infty])=\{\overrightarrow  {\rm  Ind}(M,  (2r, +\infty]) \mid  r>0\}
 \end{eqnarray}
gives  a  filtration  of  $\overrightarrow  {\rm  Ind}(M)$ 
and  
  \begin{eqnarray}\label{eq-26.4.7.2}
  {\rm  Ind}(M,  (-, +\infty])=\{  {\rm  Ind}(M,  (2r, +\infty]) \mid  r>0\}
 \end{eqnarray}
gives  a  filtration  of  $  {\rm  Ind}(M)$  such  that  the  projection  $\pi$  
preserves  the  filtrations 
\begin{eqnarray*}
\pi(\overrightarrow  {\rm  Ind}(M,  (-, +\infty]))= {\rm  Ind}(M,  (-, +\infty]). 
\end{eqnarray*}
We  say that  (\ref{eq-26.4.7.1})  is   the  {\it  persistent   directed
 independence  complex}  of  
   ordered  packings  of  closed  neighborhoods    on  $M$
and   (\ref{eq-26.4.7.2})  is   the  {\it  persistent  independence  complex}  of  
  unordered  packings  of  closed  neighborhoods   on  $M$.  

\begin{corollary}\label{co-26-4-7-1}
Let  $M$  and  $M'$  be  disjoint  Riemannian  manifolds.  
Then  the  persistent   directed
 independence  complex   of  
   ordered  packings  of  closed  neighborhoods    on  $M\sqcup  M'$ 
   is 
the  free  product  
 \begin{eqnarray}\label{eq-26-apr-03-5}
 \overrightarrow  {\rm  Ind}(M\sqcup  M',  (-, +\infty])=
 \mathcal{F}(\overrightarrow  {\rm  Ind}(M,  (-, +\infty]),  
 \overrightarrow  {\rm  Ind}(M',  (-, +\infty]))
 \end{eqnarray} 
and  the  persistent  
 independence  complex   of  
   unordered  packings  of  closed  neighborhoods    on  $M\sqcup  M'$ 
   is 
the  join  
 \begin{eqnarray}\label{eq-26-apr-03-3}
{\rm  Ind}(M\sqcup  M',  (-, +\infty])=
   {\rm  Ind}(M,  (-, +\infty])  *  
   {\rm  Ind}(M',  (-, +\infty])    
 \end{eqnarray} 
 such  that  $\pi$  preserves  the  filtrations  and  
 sends  (\ref{eq-26-apr-03-5})  to  (\ref{eq-26-apr-03-3}).  
\end{corollary}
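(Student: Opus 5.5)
The plan is to obtain Corollary~\ref{co-26-4-7-1} levelwise in $r$ from the disjoint-union decompositions (\ref{eq-26-march-02-1}) and (\ref{eq-26-march-02-2}), and then use that these identifications are natural in $r$.

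Fix $r>0$. The geodesic distance on $M\sqcup M'$ restricts to $d_M$ on $M$ and to $d_{M'}$ on $M'$. Points in different components are at distance $+\infty$, and $+\infty\in(2r,+\infty]$.

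\emph{Step 1 (ordered case).} Take a directed hyperedge of $\overrightarrow{\rm Ind}(M\sqcup M')$, written as a word $(\vec\sigma_{k_1}(M),\vec\sigma'_{l_1}(M'),\ldots,\vec\sigma_{k_n}(M),\vec\sigma'_{l_n}(M'))$ as in Subsection~\ref{26.5.25-ss3.5}.
\begin{itemize}
\item Pairs of points lying in different components are automatically at distance $>2r$.
\item So the word lies in $\overrightarrow{\rm Ind}(M\sqcup M',(2r,+\infty])$ if and only if all pairwise distances among its $M$-letters exceed $2r$, and likewise for its $M'$-letters.
\item That is exactly the condition that the concatenation of the $M$-letters lies in $\overrightarrow{\rm Ind}(M,(2r,+\infty])$, and similarly for $M'$.
\end{itemize}

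\emph{Main obstacle.} The free product (\ref{eq-26-march-02-5}) has to be read correctly. A word is admissible when its letters, taken together on each side, form a hyperedge of the respective factor. It is not enough for each letter separately to be a hyperedge, since two $M$-letters at distance $\le 2r$ must be excluded. I will state this interpretation explicitly, as in (\ref{eq-26-march-02-1}), where distinctness of the points is likewise a global condition. With it, (\ref{eq-26-apr-03-5}) holds at each level $r$.

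\emph{Step 2 (unordered case).} A hyperedge of ${\rm Ind}(M\sqcup M')$ decomposes uniquely as $\sigma_g(M)\sqcup\sigma'_h(M')$. By the same distance argument, it lies in ${\rm Ind}(M\sqcup M',(2r,+\infty])$ if and only if $\sigma_g(M)\in{\rm Ind}(M,(2r,+\infty])$ and $\sigma'_h(M')\in{\rm Ind}(M',(2r,+\infty])$. This is precisely membership in the join (\ref{eq-26-march-02-3}), so (\ref{eq-26-apr-03-3}) holds at each level $r$.

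\emph{Step 3 (filtrations and $\pi$).} For $r_1<r_2$ the inclusions on both sides are restrictions of the identity of $\overrightarrow{\rm Ind}(M\sqcup M')$, respectively ${\rm Ind}(M\sqcup M')$. Hence the levelwise identifications are compatible with the filtrations (\ref{eq-26.4.7.1}) and (\ref{eq-26.4.7.2}).

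Finally, $\pi$ forgets the order. It therefore sends a word to the union of its $M$-letters joined with the union of its $M'$-letters. So $\pi$ maps (\ref{eq-26-apr-03-5}) onto (\ref{eq-26-apr-03-3}) at every $r$, as already stated for the unfiltered objects after (\ref{eq-26-march-02-3}). Combined with $\pi(\overrightarrow{\rm Ind}(M,(-,+\infty]))={\rm Ind}(M,(-,+\infty])$, this shows that $\pi$ preserves the filtrations.
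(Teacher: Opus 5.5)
Your proposal is correct and follows the paper's route. The paper's proof just says the corollary follows from (\ref{eq-26-march-02-1}) and (\ref{eq-26-march-02-2}), and your argument applies exactly those decompositions levelwise in $r$. The two points you make explicit are what that one-line proof leaves implicit, and both are needed for the statement to hold as written: points in different components are at distance $+\infty\in(2r,+\infty]$, and the free product must be read with the packing and distinctness conditions imposed on all $M$-letters jointly (and likewise for the $M'$-letters), not letter by letter.
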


\begin{proof}
The  proof   follows from  
(\ref{eq-26-march-02-1})  and  (\ref{eq-26-march-02-2}).   
\end{proof}

\subsection{Packings      with  hypergraph  constraints}
\label{ss-26.7.27-2}

Let  $\vec{\mathcal{H}}(M)$  be  a  hyperdigraph  on  $M$
 with  its  underlying    hypergraph  $\mathcal{H}(M)$.   
  For  any  subset  $\mathbb{I}\subseteq  [0,+\infty]$, 
 let  
  \begin{eqnarray*}
  \vec{\mathcal{H}}(M, \mathbb{I})=\vec{\mathcal{H}}(M)\cap  \overrightarrow{\rm  Ind}(M, \mathbb{I})   
 \end{eqnarray*}
 with  
 its   underlying  
  hypergraph  
  \begin{eqnarray*}
 {\mathcal{H}}(M, \mathbb{I})
 =   {\mathcal{H}}(M)\cap  {\rm  Ind}(M, \mathbb{I}).  
 \end{eqnarray*}  
Similar  with  \cite[Subsection~2.1,  (ii) and  (iii)]{jktr2023},  
 \begin{eqnarray*}
 \Delta   \vec{\mathcal{H}}(M, \mathbb{I})&\subseteq  & 
 (\Delta  \vec{\mathcal{H}}(M))\cap  \overrightarrow{\rm  Ind}(M, \mathbb{I}),  \\
  \delta   \vec{\mathcal{H}}(M, \mathbb{I})&=  & 
 (\delta  \vec{\mathcal{H}}(M))\cap  \overrightarrow{\rm  Ind}(M, \mathbb{I}),\\
 \Delta   {\mathcal{H}}(M, \mathbb{I})&\subseteq  & 
 (\Delta   {\mathcal{H}}(M))\cap   {\rm  Ind}(M, \mathbb{I}),  \\
  \delta   {\mathcal{H}}(M, \mathbb{I})&=  & 
 (\delta   {\mathcal{H}}(M))\cap  {\rm  Ind}(M, \mathbb{I}). 
  \end{eqnarray*}
For  any  $r>0$,  let  
 $\mathbb{I}=(2r,+\infty]$.  
 The   
   hyperdigraph $\vec{{\mathcal{H}}}(M, (2r,+\infty])$ 
   is  
 the   space  of  all  the  
 possible  ordered  packings  of  closed  $r$-neighborhoods  in  $M$ 
 with  the  constraint  that  each  ordered  packing  is  given  by
  the  sequence  of   vertices   of  certain  directed  hyperedge 
 in  $\vec{\mathcal{H}}(M)$. 
 The    hypergraph  ${{\mathcal{H}}}(M, (2r,+\infty])$
    is  
 the   space  of  the  possible  unordered  packings 
  of  closed  $r$-neighborhoods  in  $M$  with  the     constraint
  that  each  unordered  packing  is  given  by
  the  set  of   vertices   of  certain     hyperedge 
 in  ${\mathcal{H}}(M)$. 
 Let  $r$  run  over  all  positive  numbers.  
 We  obtain  a  commutative  diagram   
 \begin{eqnarray*}
 \xymatrix{
 \delta\vec{{\mathcal{H}}}(M, (-,+\infty])\ar[r]\ar[d]_-{\pi}
 &\vec{{\mathcal{H}}}(M, (-,+\infty])\ar[r]\ar[d]_-{\pi}
 &\Delta\vec{{\mathcal{H}}}(M, (-,+\infty])\ar[r]\ar[d]^-{\pi}
 &\overrightarrow{\rm  Ind}(M,  (-,+\infty])\ar[d]^-{\pi}\\
 \delta{{\mathcal{H}}}(M, (-,+\infty])\ar[r]
 &{{\mathcal{H}}}(M, (-,+\infty])\ar[r]
 &\Delta{{\mathcal{H}}}(M, (-,+\infty])\ar[r]
 &{\rm  Ind}(M,  (-,+\infty]) 
 }
 \end{eqnarray*}
 where  each  term  is  a  filtration  of    hyper(di)graphs  on  $M$,
 the   horizontal  maps  are  canonical  inclusions  and  
 the  vertical  maps  are  canonical  projections.  
   
\begin{corollary}
Let  $M$  and  $M'$  be  disjoint  Riemannian  manifolds.  
Then  the  persistent   hyperdigraph   of  
   ordered  packings  of  closed  neighborhoods    on  $M\sqcup  M'$ 
   is 
the  free  product  
 \begin{eqnarray}\label{eq-26-apr-04-5}
 \mathcal{F}(\vec{{\mathcal{H}}}(M, (-,+\infty]),  
\vec{{\mathcal{H}}}(M', (-,+\infty]))
 \end{eqnarray} 
and  the  persistent  
 hypergraph   of  
   unordered  packings  of  closed  neighborhoods    on  $M\sqcup  M'$ 
   is 
the  join  
 \begin{eqnarray}\label{eq-26-apr-04-3}
 {{\mathcal{H}}}(M, (-,+\infty])*  
   {{\mathcal{H}}}(M', (-,+\infty])    
 \end{eqnarray} 
 such  that  the  diagrams  (\ref{26-3-3-diag3})  
 and  (\ref{26-3-3-diag5})  are  satisfied  for  
 $\vec{{\mathcal{H}}}(M, (-,+\infty])$  and    
  $\vec{{\mathcal{H}}}(M', (-,+\infty])$.    
\end{corollary}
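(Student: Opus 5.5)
The plan is to reduce the statement to the disjoint-union results of Subsection~\ref{26.5.25-ss3.5}, applied slice by slice in the filtration parameter $r>0$. Fix $r>0$ and put $\mathbb{I}=(2r,+\infty]$. The key observation is that on $M\sqcup M'$, with $M$ and $M'$ regarded as disjoint Riemannian manifolds, the geodesic distance between a point of $M$ and a point of $M'$ is $+\infty\in\mathbb{I}$. Consequently the packing constraint $d_{M\sqcup M'}(p_i,p_j)>2r$ only restricts pairs of points lying in the same component. This is exactly the fact behind Corollary~\ref{co-26-4-7-1}, whose equations (\ref{eq-26-apr-03-5}) and (\ref{eq-26-apr-03-3}) I will use directly.

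\emph{The slice identities.} The hyperdigraph on $M\sqcup M'$ in question is $\mathcal{F}(\vec{\mathcal{H}}(M),\vec{\mathcal{H}}'(M'))$, and I intersect it with $\overrightarrow{\rm Ind}(M\sqcup M',\mathbb{I})$. By (\ref{eq-26-apr-03-5}), this intersection equals the free product of the two factors. I check this wordwise: a word $(\vec\sigma_{k_1}(M),\vec\sigma'_{l_1}(M'),\ldots)$ lies in the free product of the ${\rm Ind}$'s with constraint $\mathbb{I}$ exactly when the total $M$-part is a packing in $M$ and the total $M'$-part is a packing in $M'$. Intersecting each letter condition with $\vec{\mathcal{H}}$, respectively $\vec{\mathcal{H}}'$, then gives
\[
\mathcal{F}(\vec{\mathcal{H}}(M),\vec{\mathcal{H}}'(M'))\cap\overrightarrow{\rm Ind}(M\sqcup M',\mathbb{I})=\mathcal{F}(\vec{\mathcal{H}}(M,\mathbb{I}),\vec{\mathcal{H}}'(M',\mathbb{I})).
\]
The same argument with (\ref{eq-26-apr-03-3}) and joins $\sigma(M)\sqcup\sigma'(M')$ gives
\[
(\mathcal{H}(M)*\mathcal{H}'(M'))\cap{\rm Ind}(M\sqcup M',\mathbb{I})=\mathcal{H}(M,\mathbb{I})*\mathcal{H}'(M',\mathbb{I}).
\]
Letting $r$ vary, and noting that inclusions for $r_1<r_2$ are compatible on both sides, yields (\ref{eq-26-apr-04-5}) and (\ref{eq-26-apr-04-3}) as filtrations. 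Since $\pi$ sends a free-product word to the disjoint union of its letters' underlying sets, $\pi$ maps the first filtration onto the second, as already noted after (\ref{eq-26-march-02-5}).

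\emph{The diagrams.} I apply Proposition~\ref{le-1.5} to the hyperdigraphs $\vec{\mathcal{H}}(M,\mathbb{I})$ and $\vec{\mathcal{H}}'(M',\mathbb{I})$ for each fixed $r$. This produces (\ref{26-3-3-diag3}) and (\ref{26-3-3-diag5}) at level $r$. Naturality in $r$ then follows because every map involved is an inclusion or a restriction of $\pi$, so all squares formed across two levels $r_1<r_2$ commute.

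\emph{Main obstacle.} The delicate point is the wordwise identification in the first step: the free-product description requires that interleaving blocks of $M$- and $M'$-points imposes no cross condition. I would settle this by stating explicitly the convention $d(M,M')=+\infty$, which the paper permits since $d_M$ takes values in $[0,+\infty]$.
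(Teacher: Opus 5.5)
Your proposal is correct and takes the same route as the paper, whose proof consists exactly of combining Corollary~\ref{co-26-4-7-1} with Proposition~\ref{le-1.5}, as you do level by level in $r$. The extra points you supply are details the paper leaves implicit: the convention $d_{M\sqcup M'}=+\infty$ across components, the slice identity commuting free product and join with intersection by $\overrightarrow{\rm Ind}(M\sqcup M',(2r,+\infty])$ (which holds when membership in the free product is read through the total $M$- and $M'$-subsequences, as your wording indicates), and compatibility in $r$.
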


\begin{proof}
The  proof  follows  from  Proposition~\ref{le-1.5}
and  
Corollary~\ref{co-26-4-7-1}.  
\end{proof}

\subsection{Coverings  of  Riemannian  manifolds  by  open $r$-neighborhoods}
\label{ss-4.4-cover}
 
An   {\it    ordered   $k$-covering  of  open   $r$-neighborhoods}    in   $M$   is   
 an  ordered  sequence  $(p_1,  p_2,  \ldots,  p_k)$  of  $k$-distinct  points  in  $M$  
 such  that  ${N(r,p_i,M)}$,  $1\leq  i\leq  k$,  is  an  open  cover  of  $M$.  
 Equivalently,  it   
 is   a  directed  $k$-hyperedge  $\vec{\sigma}(M)=(p_1,  p_2,  \ldots,  p_k)$  
 such  that  for  any  $q\in  M$,  
 $d_M(p_i,q)<r$  for  some   $1\leq  i \leq  k$.  
 The  space  of   all  the  possible  ordered    
    $k$-coverings  of  open  $r$-neighborhoods  in  $M$  is   
\begin{eqnarray}\label{eq-26feb-27-17}
{\rm  Cover}_k(M, r)&=&\{(p_1,  p_2,  \ldots,  p_k)\in  {\rm Conf}_k(M)
\mid   
{\rm ~for~any~}q\in M, \nonumber \\
&&{\rm~there ~exists~} 1\leq  i\leq  k{\rm~such ~that~}
d_M(p_i,q)<r\}. 
\end{eqnarray}
    The  collection  of    (\ref{eq-26feb-27-17})    for  all  $k\geq  0$  gives  
 an      independence  hyperdigraph  
 \begin{eqnarray}\label{eq-26feb-27-18}
\overrightarrow  {\rm  Cv}(M, r)=\bigcup_{k\geq  0}  {\rm  Cover}_k(M, r).   
 \end{eqnarray}
The  space  of  all  the  possible  ordered  $k$-coverings  of  open  neighborhoods 
in  $M$  is  
   \begin{eqnarray*} 
{\rm  Cover}_k(M) = \bigcup_{r>0}  {\rm  Cover}_k(M,r)  
\end{eqnarray*}
and  the  space  of  all  the  possible  ordered   finite  coverings  of  open  neighborhoods 
in  $M$  is 
 an       independence  hyperdigraph   
  \begin{eqnarray}\label{eq-26feb-27-20}
 \overrightarrow {\rm  Cv}(M)=\bigcup_{r>0}  {\rm  Cv}(M,r)=\bigcup_{k\geq  0}   
  {\rm  Cover}_k(M).     
 \end{eqnarray}

 An   {\it    unordered   $k$-covering  of  open   $r$-neighborhoods}    in   $M$   is   
 a  set  $\{p_1,  p_2,  \ldots,  p_k\}$  of  $k$-distinct  points  in  $M$  
 such  that  ${N(r,p_i,M)}$,  $1\leq  i\leq  k$,   is  an  open  cover  of  $M$.   
 Equivalently,   it  is   a  $k$-hyperedge 
   $\sigma(M)=\{p_1,  p_2,  \ldots,  p_k\}$     
 such  that   for  any  $q\in  M$,  
 $d_M(p_i,q)<r$  for  some   $1\leq  i \leq  k$.  
   The  space  of   all  the  possible  unordered    
    $k$-coverings  of  open  $r$-neighborhoods  in  $M$
    is  the  orbit  space 
    \begin{eqnarray}\label{eq-26feb-27-15}
    {\rm  Cover}_k(M, r)/\Sigma_k. 
    \end{eqnarray}
 The  collection  of    (\ref{eq-26feb-27-15})    for  all  $k\geq  0$  gives  
 an     independence  hypergraph  
 \begin{eqnarray}\label{eq-26feb-27-19}
  {\rm  Cv}(M, r)=\bigcup_{k\geq  0}  {\rm  Cover}_k(M, r)  /\Sigma_k   
 \end{eqnarray}
 such  that  (\ref{eq-26feb-27-19})  is  the underlying  
    independence  hypergraph    of   (\ref{eq-26feb-27-18}).   
 The  space  of  all  the  possible  unordered   finite  coverings  of  open  neighborhoods 
in  $M$  is 
 an      independence  hypergraph   
  \begin{eqnarray}\label{eq-26feb-27-21}
  {\rm  Cv}(M)=\bigcup_{r>0}  {\rm  Cv}(M,r)/\Sigma_k=\bigcup_{k\geq  0}  
  {\rm  Cover}_k(M)/\Sigma_k.      
 \end{eqnarray}
Then  the  persistent       independence  hyperdigraph   
   \begin{eqnarray*}
\overrightarrow  {\rm  Cv}(M,  -)=\{\overrightarrow {\rm  Cv}(M, r) \mid  r>0\}
 \end{eqnarray*}
gives  a  filtration  of (\ref{eq-26feb-27-20}) 
and   the   persistent      independence  hypergraph   
  \begin{eqnarray*}
  {\rm  Cv}(M,  -)=\{  {\rm  Cv}(M, r) \mid  r>0\}
 \end{eqnarray*}
gives  a  filtration  of (\ref{eq-26feb-27-21})   such  that  
  $\pi$  
preserves  the  filtrations 
\begin{eqnarray*}
\pi(\overrightarrow  {\rm  Cv}(M,  -))= {\rm  Cv}(M,  -). 
\end{eqnarray*}
 
\begin{corollary}\label{co-26-4-7-5}
Let  $M$  and  $M'$  be  disjoint  Riemannian  manifolds.  
Then  the  persistent       independence  hyperdigraph   of  
   ordered  coverings  of  open  neighborhoods    on  $M\sqcup  M'$ 
   is 
the  free  product  
 \begin{eqnarray}\label{eq-26-apr-05-5}
 \overrightarrow  {\rm  Cv}(M\sqcup  M',  -)=
 \mathcal{F}(\overrightarrow  {\rm  Cv}(M,  -),  
 \overrightarrow  {\rm  Cv}(M',  -))
 \end{eqnarray} 
and  the  persistent  
 independence  complex   of  
   unordered  packings  of  closed  neighborhoods    on  $M\sqcup  M'$ 
   is 
the  join  
 \begin{eqnarray}\label{eq-26-apr-05-3}
{\rm  Cv}(M\sqcup  M', -)=
   {\rm  Cv}(M, -)  *  
   {\rm  Cv}(M',  -)    
 \end{eqnarray}  
 such  that  $\pi$  preserves  the  filtrations  
  and  
 sends  (\ref{eq-26-apr-05-5})  to  (\ref{eq-26-apr-05-3}). 
  \end{corollary}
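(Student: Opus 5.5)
The plan is to mirror the proof of Corollary~\ref{co-26-4-7-1}, replacing the packing condition by the covering condition. First fix $r>0$ and prove the two level-wise identities
\begin{eqnarray*}
\overrightarrow{\rm Cv}(M\sqcup M',r)=\mathcal{F}(\overrightarrow{\rm Cv}(M,r),\overrightarrow{\rm Cv}(M',r)),\qquad
{\rm Cv}(M\sqcup M',r)={\rm Cv}(M,r)*{\rm Cv}(M',r).
\end{eqnarray*}
Then let $r$ vary to obtain the statements about filtrations.

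\textbf{Step 1: the ordered covering condition on $M\sqcup M'$.} Take a directed hyperedge of $M\sqcup M'$. By (\ref{eq-26-march-02-1}) it is a word $(\vec\sigma_{k_1}(M),\vec\sigma'_{l_1}(M'),\ldots,\vec\sigma_{k_n}(M),\vec\sigma'_{l_n}(M'))$. Its $M$-letters concatenate to a directed hyperedge $\vec\sigma(M)$, and its $M'$-letters concatenate to $\vec\sigma'(M')$.

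The key observation is that $d_{M\sqcup M'}(p,q)=+\infty$ whenever $p$ and $q$ lie in different components. So for $q\in M$ the open neighborhood $N(r,p,M\sqcup M')$ meets $M$ only if $p\in M$, and then $N(r,p,M\sqcup M')\cap M=N(r,p,M)$. The same holds with $M'$ in place of $M$.

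Hence the word covers $M\sqcup M'$ by open $r$-neighborhoods if and only if $\vec\sigma(M)$ covers $M$ and $\vec\sigma'(M')$ covers $M'$. Each factor is a covering, possibly of length zero when the corresponding manifold is empty.

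This is exactly the statement that ${\rm Cover}_k(M\sqcup M',r)$ is the union, over the decompositions in (\ref{eq-26-march-02-1}), of products of ${\rm Cover}$-type pieces. Equivalently, it is the free product of the two independence hyperdigraphs inside $\mathcal{F}(\overrightarrow{\rm Ind}(M),\overrightarrow{\rm Ind}(M'))$.

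\textbf{Step 2: the unordered case.} Apply the same argument to unordered hyperedges $\sigma_g(M)\sqcup\sigma'_h(M')$, using (\ref{eq-26-march-02-2}). This gives the join decomposition (\ref{eq-26-apr-05-3}).

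Compatibility with $\pi$ comes next. The projection $\pi$ sends a word to the disjoint union of the underlying sets of its letters. So $\pi$ maps the free product onto the join, exactly as in Subsection~\ref{26.5-ss3.5}, where (\ref{eq-26-march-02-3}) is the underlying hypergraph of (\ref{eq-26-march-02-5}).

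\textbf{Step 3: passing to filtrations.} For $r_1<r_2$ we have ${\rm Cover}_k(M,r_1)\subseteq{\rm Cover}_k(M,r_2)$, and the level-wise identities are natural with respect to these inclusions. Therefore the identities assemble into identities of filtrations, and $\pi$ preserves them because $\pi(\overrightarrow{\rm Cv}(M,-))={\rm Cv}(M,-)$ holds for each factor.

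\textbf{Expected obstacle.} The main delicate point is Step 1 in degenerate cases. If $M'\neq\emptyset$, any covering of $M\sqcup M'$ must contain points of both components. This is consistent with the join, but only because a join with an independence hypergraph that lacks the empty hyperedge excludes pure-$M$ simplices. I would verify this against the augmented convention of the paper: $\emptyset\in{\rm Cv}(M',r)$ if and only if $M'=\emptyset$.

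I also expect to note, as in Proposition~\ref{le-1.5}, that the free product and the join of independence hyper(di)graphs are again independence hyper(di)graphs. This follows because a supersequence of a word splits into supersequences of its $M$-part and its $M'$-part.
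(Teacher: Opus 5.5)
Your strategy is the paper's. The paper's proof is a single sentence saying the result is an analog of (\ref{eq-26-march-02-1}) and (\ref{eq-26-march-02-2}). Your observation that $d_{M\sqcup M'}=+\infty$ between the two pieces is exactly what makes that analogy work: a sequence covers $M\sqcup M'$ by open $r$-neighborhoods if and only if its $M$-points cover $M$ and its $M'$-points cover $M'$. The unordered join is fine, including your check that $\emptyset\in{\rm Cv}(M',r)$ iff $M'=\emptyset$, which is what excludes pure-$M$ hyperedges. The compatibility with $\pi$ and the passage to filtrations are also fine.

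There is one false step. In Step~1 you say this equivalence is ``exactly'' the statement that ${\rm Cover}_k(M\sqcup M',r)$ is the union, over the block decompositions of (\ref{eq-26-march-02-1}), of the products $\prod_i({\rm Cover}_{k_i}(M,r)\times{\rm Cover}_{l_i}(M',r))$. It is not, because the covering condition is imposed on the concatenation of \emph{all} the $M$-letters, not on each letter separately. For instance, take $M$ a round circle of length $L$ with $L/4<r\le L/2$, and $a,c$ antipodal, so that $\{a,c\}$ covers $M$ but neither point alone does. Suppose also that $b'$ alone covers $M'$. Then $(a,b',c)\in{\rm Cover}_3(M\sqcup M',r)$. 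But in any block decomposition of this word, the $M$-letter containing $a$ is $(a)$, which is not in ${\rm Cover}_1(M,r)$. So the word lies in none of your products.

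The repair is to read the free product the way your equivalence actually proves it. Take $\mathcal{F}(\overrightarrow{\rm Cv}(M,r),\overrightarrow{\rm Cv}(M',r))$ to be the set of interleavings of a sequence in $\overrightarrow{\rm Cv}(M,r)$ with a sequence in $\overrightarrow{\rm Cv}(M',r)$. Equivalently, it is the set of elements of $\overrightarrow{\rm Ind}(M\sqcup M')$ whose $M$-subsequence lies in $\overrightarrow{\rm Cv}(M,r)$ and whose $M'$-subsequence lies in $\overrightarrow{\rm Cv}(M',r)$. With this convention:
\begin{enumerate}
\item your Step~1 gives (\ref{eq-26-apr-05-5}) directly;
\item $\pi$ of such a word is the disjoint union of the underlying sets of its two subsequences, which gives (\ref{eq-26-apr-05-3}) and the compatibility with $\pi$;
\item your closing remark about supersequences already uses this reading.
\end{enumerate}
For $\overrightarrow{\rm Ind}$ the blockwise and subsequence readings agree, since every contiguous block of a configuration is a configuration; that is why (\ref{eq-26-march-02-1}) can be written blockwise. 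For coverings the two readings differ, and this is precisely the point the paper's one-line ``analog'' leaves implicit.
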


\begin{proof}
The  proof    is  an  analog  of  
(\ref{eq-26-march-02-1})  and  (\ref{eq-26-march-02-2}).   
\end{proof}

\subsection{Coverings    with  hypergraph  constraints}
\label{ss-26.7.27.3}
   
The  space  of  ordered 
coverings  of   $M$  by open  $r$-neighborhoods  with  the  constraint  that  
each  ordered  covering   is  given  by
  the  sequence  of   vertices  of  certain  directed  hyperedge 
 in  $\vec{\mathcal{H}}(M)$   is  the    hyperdigraph 
\begin{eqnarray*}
\vec{\mathcal{H}}(M)\cap  \overrightarrow  {\rm  Cv}(M, r).  
\end{eqnarray*}
The  space  of  unordered 
coverings  of   $M$  by open  $r$-neighborhoods  with  the  constraint  that  
each  unordered  covering   is  given  by
  the  set  of   vertices  of  certain     hyperedge 
 in  $\mathcal{H}(M)$   is  the     hypergraph 
\begin{eqnarray*}
{\mathcal{H}}(M)\cap     {\rm  Cv}(M, r).  
\end{eqnarray*}
Similar  with  \cite[Subsection~2.1,  (iv) and  (v)]{jktr2023},  
 \begin{eqnarray*}
 \bar\Delta   (\vec{\mathcal{H}}(M)\cap  \overrightarrow  {\rm  Cv}(M, r))
 &\subseteq  & 
 (\bar\Delta  \vec{\mathcal{H}}(M))\cap  \overrightarrow  {\rm  Cv}(M, r),  \\
  \bar\delta   (\vec{\mathcal{H}}(M)\cap  \overrightarrow  {\rm  Cv}(M, r))&=  & 
 (\bar\delta  \vec{\mathcal{H}}(M))\cap   \overrightarrow  {\rm  Cv}(M, r)
  \end{eqnarray*}
 and    
   \begin{eqnarray*}
 \bar\Delta   ({\mathcal{H}}(M)\cap  {\rm  Cv}(M, r))
 &\subseteq  & 
 (\bar\Delta  {\mathcal{H}}(M))\cap   {\rm  Cv}(M, r),  \\
  \bar\delta   ({\mathcal{H}}(M)\cap    {\rm  Cv}(M, r))&=  & 
 (\bar\delta {\mathcal{H}}(M))\cap   {\rm  Cv}(M, r).  
  \end{eqnarray*}
Let  $r$  run  over  all  positive  numbers.  
 We  obtain  a  commutative  diagram   
 \begin{eqnarray*}
 \xymatrix{
 \bar\delta(\vec{\mathcal{H}}(M)\cap  \overrightarrow  {\rm  Cv}(M, -))\ar[r]\ar[d]_-{\pi}
 &\vec{\mathcal{H}}(M)\cap  \overrightarrow  {\rm  Cv}(M, -)\ar[r]\ar[d]_-{\pi}
 & \bar\Delta(\vec{\mathcal{H}}(M)\cap  \overrightarrow  {\rm  Cv}(M, -))\ar[r]\ar[d]^-{\pi}
 & \overrightarrow  {\rm  Cv}(M, -)\ar[d]^-{\pi}\\
 \bar\delta({\mathcal{H}}(M)\cap  {\rm  Cv}(M, -))\ar[r] 
 &{\mathcal{H}}(M)\cap  {\rm  Cv}(M, -)\ar[r] 
 & \bar\Delta({\mathcal{H}}(M)\cap   {\rm  Cv}(M, -))\ar[r] 
 &   {\rm  Cv}(M, -)
 }
 \end{eqnarray*}
 where  each  term  is  a  filtration  of     hyper(di)graphs  on  $M$,
 the   horizontal  maps  are  canonical  inclusions  and  
 the  vertical  maps  are  canonical  projections.

\begin{corollary}
Let  $M$  and  $M'$  be  disjoint  Riemannian  manifolds.  
Then  the  persistent   hyperdigraph   of  
   ordered  coverings  of  open neighborhoods    on  $M\sqcup  M'$ 
   is 
the  free  product  
 \begin{eqnarray}\label{eq-26-apr-07-5}
 \mathcal{F}(\vec{\mathcal{H}}(M)\cap  \overrightarrow  {\rm  Cv}(M, -),  
\vec{\mathcal{H}}(M')\cap  \overrightarrow  {\rm  Cv}(M', -))
 \end{eqnarray} 
and  the  persistent  
 hypergraph   of  
   unordered  coverings  of  open  neighborhoods    on  $M\sqcup  M'$ 
   is 
the  join  
 \begin{eqnarray}\label{eq-26-apr-07-3}
 ({\mathcal{H}}(M)\cap {\rm  Cv}(M, -)   )
 * ({\mathcal{H}}(M')\cap   {\rm  Cv}(M', -))
 \end{eqnarray} 
 such  that  the  diagrams  (\ref{26-3-3-diag3})  
 and  (\ref{26-3-3-diag5})  are  satisfied  for  
 $ \mathcal{F}(\vec{\mathcal{H}}(M)\cap  \overrightarrow  {\rm  Cv}(M, -)$  and    
  $\vec{\mathcal{H}}(M')\cap  \overrightarrow  {\rm  Cv}(M', -))$.    
\end{corollary}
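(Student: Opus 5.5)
The plan is to mirror the proof of Corollary~\ref{co-26-4-7-1} and its packing analogue, reducing the statement to Corollary~\ref{co-26-4-7-5} and Proposition~\ref{le-1.5}. Fix $r>0$ and work levelwise in the filtration; all identifications below are compatible with the inclusions $\overrightarrow{\rm Cv}(M,r_1)\subseteq\overrightarrow{\rm Cv}(M,r_2)$ for $r_1<r_2$ and with $\pi$, so they assemble into identifications of persistent objects.

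\emph{Step 1: identify the ordered constrained covering space on $M\sqcup M'$.} I take the hyperdigraph on $M\sqcup M'$ to be the free product $\mathcal{F}(\vec{\mathcal{H}}(M),\vec{\mathcal{H}}'(M'))$ as in (\ref{eq-26-march-02-5}). By (\ref{eq-26-apr-05-5}), $\overrightarrow{\rm Cv}(M\sqcup M',r)=\mathcal{F}(\overrightarrow{\rm Cv}(M,r),\overrightarrow{\rm Cv}(M',r))$. The geometric input here is that a word $(\vec\sigma_{k_1}(M),\vec\sigma'_{l_1}(M'),\ldots)$ covers $M\sqcup M'$ by open $r$-neighborhoods if and only if its $M$-letters cover $M$ and its $M'$-letters cover $M'$. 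This holds because $d_{M\sqcup M'}$ is $+\infty$ between the two pieces, so $N(r,p,M\sqcup M')=N(r,p,M)$ for $p\in M$, and similarly for $M'$. I then intersect letterwise:
\begin{eqnarray*}
\mathcal{F}(\vec{\mathcal{H}}(M),\vec{\mathcal{H}}'(M'))\cap\mathcal{F}(\overrightarrow{\rm Cv}(M,r),\overrightarrow{\rm Cv}(M',r))
=\mathcal{F}(\vec{\mathcal{H}}(M)\cap\overrightarrow{\rm Cv}(M,r),\vec{\mathcal{H}}'(M')\cap\overrightarrow{\rm Cv}(M',r)).
\end{eqnarray*}
This gives (\ref{eq-26-apr-07-5}).

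\emph{Step 2: the unordered version.} Using (\ref{eq-26-march-02-2}) and (\ref{eq-26-apr-05-3}), a hyperedge of $M\sqcup M'$ decomposes uniquely as $\sigma_g(M)\sqcup\sigma'_h(M')$, and the same intersection argument gives $(\mathcal{H}(M)*\mathcal{H}'(M'))\cap{\rm Cv}(M\sqcup M',r)=(\mathcal{H}(M)\cap{\rm Cv}(M,r))*(\mathcal{H}'(M')\cap{\rm Cv}(M',r))$, which is (\ref{eq-26-apr-07-3}). Since $\pi$ sends a word to the disjoint union of the underlying sets of its letters, it carries (\ref{eq-26-apr-07-5}) onto (\ref{eq-26-apr-07-3}) and preserves the filtrations.

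\emph{Step 3: the diagrams.} Apply Proposition~\ref{le-1.5} to the pair $\vec{\mathcal{H}}(M)\cap\overrightarrow{\rm Cv}(M,r)$ and $\vec{\mathcal{H}}'(M')\cap\overrightarrow{\rm Cv}(M',r)$ for each $r$. This yields (\ref{26-3-3-diag3}) and (\ref{26-3-3-diag5}) levelwise, and naturality in $r$ follows because all maps are inclusions and restrictions of $\pi$.

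\emph{Main obstacle.} The main subtlety is the geometric point in Step~1: one must ensure the covering condition genuinely splits across components. This needs the convention $d=+\infty$ across components, which is permitted since $d_M$ takes values in $[0,+\infty]$. One must also handle the empty letters and the $k=0$ case, since an empty $M$-part cannot cover a nonempty $M$. I would treat these edge cases explicitly, observing that they are excluded on both sides simultaneously.
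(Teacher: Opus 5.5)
Your proposal is correct and follows the paper's route. The paper's proof only cites Corollary~\ref{co-26-4-7-5} and Proposition~\ref{le-1.5}, and your Steps~1--3 spell out the levelwise intersection and the compatibility with $\pi$ that this citation leaves implicit.

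One caveat is worth stating explicitly. Your ``letterwise'' intersection, and the claim that $\pi$ lands in the join, are valid when each word is read through its whole $M$-subsequence and its whole $M'$-subsequence. That is also the reading under which your geometric splitting of the covering condition gives Corollary~\ref{co-26-4-7-5}. If words instead admitted non-unique factorizations into shorter letters, the intersection of two free products could be strictly larger than the free product of the intersections.
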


\begin{proof}
The  proof  follows  from  Proposition~\ref{le-1.5}
and  
Corollary~\ref{co-26-4-7-5}.  
\end{proof}

\section{Morphisms  of  hypergraphs  induced  by   maps  between  manifolds}
\label{s-27.7.28.5}

In  this  section, 
we  study  morphisms  between  hypergraphs and  independence  complexes 
induced  by  maps  between  manifolds.  
In  Subsection~\ref{ss6.1}, 
 we  construct  certain  simplicial  embeddings  between  
the  independence  complexes  from  embeddings  of  manifolds.    
 In  Subsection~\ref{ss6.2},  we   construct  certain  morphisms  of  the 
independence  hypergraphs  of  the  coverings  
from  surjections   of  manifolds.  
In  Subsection~\ref{26.6.3-ss7.3}, 
we  introduce  the   morphisms  of  hyper(di)graphs  induced  by 
embeddings  of  manifolds.  We   give  some  
commutative  diagrams  of  hyper(di)graphs  which  are  
functorial  with respect to 
the  canonical  projections.   

\subsection{Simplicial  maps   induced  by   embeddings  of   manifolds}\label{ss6.1}

Let  $M$  and  $M'$  be  differentiable  manifolds.  
Let  ${\rm  Emb}(M,M')$   be   the  space  of  all  
differentiable  embeddings  of   $M$  into  $M'$.  
Let 
\begin{eqnarray*}
{\rm  Inj}(\overrightarrow{\rm   Ind}(M),\overrightarrow{\rm   Ind}(M')),
~~~~~~{\rm   Inj}({\rm   Ind}(M),{\rm   Ind}(M')) 
\end{eqnarray*}  
be   the  space  of  all  
injective  morphisms  
from   $\overrightarrow{\rm   Ind}(M)$  into  
$\overrightarrow{\rm   Ind}(M')$ and   the  space  of  all  differentiable 
simplicial  embeddings  of   ${\rm   Ind}(M)$  into  ${\rm   Ind}(M')$  respectively.
 The  next   proposition   follows.  
 
\begin{proposition}\label{pr-4.15-1}
For  any  differentiable  manifolds $M$  and  $M'$,  
 we  have  a  commutative  diagram 
\begin{eqnarray}\label{diag-260415-1}
\xymatrix{
&{\rm  Inj}(\overrightarrow{\rm   Ind}(M),\overrightarrow{\rm   Ind}(M')) \ar[dd]^-{\pi}\\
{\rm  Emb}(M,M') \ar[ru]^-{\overrightarrow{\rm   Ind}(-)}  \ar[rd]_-{{\rm   Ind}(-)} &\\
& {\rm   Inj}({\rm   Ind}(M),{\rm   Ind}(M')). 
} 
\end{eqnarray}
\end{proposition}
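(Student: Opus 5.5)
The plan is to define the three maps in (\ref{diag-260415-1}) explicitly and then check commutativity pointwise. Given an embedding $f\in{\rm Emb}(M,M')$, I define $\overrightarrow{\rm Ind}(f)$ on each ${\rm Conf}_k(M)$ by
\begin{eqnarray*}
\overrightarrow{\rm Ind}(f)(x_1,\ldots,x_k)=(f(x_1),\ldots,f(x_k)).
\end{eqnarray*}
Since $f$ is injective, the $f(x_i)$ are distinct, so this lands in ${\rm Conf}_k(M')$. It is the restriction of the smooth map $f^{\times k}:M^k\to (M')^k$ to an open subset, hence differentiable, and it is injective because $f$ is. It commutes with the face maps (\ref{eq-2.1a}), since deleting the $i$-th coordinate commutes with applying $f$ coordinatewise. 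Hence it is an injective morphism of $\Delta$-manifolds, i.e.\ an element of ${\rm Inj}(\overrightarrow{\rm Ind}(M),\overrightarrow{\rm Ind}(M'))$.

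The map $f^{\times k}$ on ${\rm Conf}_k(M)$ is also $\Sigma_k$-equivariant for the action (\ref{eq-26.4.23.1}). It therefore descends to a map
\begin{eqnarray*}
{\rm Ind}(f):{\rm Conf}_k(M)/\Sigma_k\longrightarrow{\rm Conf}_k(M')/\Sigma_k,\qquad \{x_1,\ldots,x_k\}\longmapsto\{f(x_1),\ldots,f(x_k)\}.
\end{eqnarray*}
This map is differentiable because $\pi_k$ in (\ref{eq-2601-a}) is a covering, hence a local diffeomorphism. It is injective on $k$-sets because $f$ is injective. It sends faces (subsets) to faces, so it is a differentiable simplicial embedding.

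Next comes the vertical arrow $\pi$. For $\vec F$ in the top space, I set $\pi(\vec F)(\sigma)=\pi(\vec F(\vec\sigma))$ for any lift $\vec\sigma$ of $\sigma$. This rule is only a map on the subset of injective morphisms that are $\Sigma_\bullet$-equivariant, which is what we need on the image of $\overrightarrow{\rm Ind}(-)$. Accordingly, I will read the diagram as: $\pi$ is defined on equivariant injective morphisms, and $\overrightarrow{\rm Ind}(-)$ takes values there.

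Commutativity is then the identity $\pi_k\circ f^{\times k}=({\rm Ind}(f))\circ\pi_k$ on ${\rm Conf}_k(M)$, which holds by construction. Equivalently, it is the functorial triangle of Lemma~\ref{le-24.5.24.conf1}~(1) applied to the natural transformation induced by $f$.

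I expect the main obstacle to be giving a precise meaning to the vertical map $\pi$ on the whole space ${\rm Inj}(\overrightarrow{\rm Ind}(M),\overrightarrow{\rm Ind}(M'))$. A general injective morphism need not commute with permutations, so it need not descend to ${\rm Ind}$. I would handle this by restricting $\pi$ to $\Sigma_\bullet$-equivariant morphisms and noting that the image of $\overrightarrow{\rm Ind}(-)$ lies there.
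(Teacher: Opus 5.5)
Your proposal is correct and follows the paper's proof. Both arguments apply $f$ coordinatewise to get ${\rm Conf}_k(f)$, use its $\Sigma_k$-equivariance to descend to ${\rm Conf}_k(f)/\Sigma_k$, and read off commutativity from the square $\pi\circ\overrightarrow{\rm Ind}(f)={\rm Ind}(f)\circ\pi$. Your remark that the vertical $\pi$ only makes sense on $\Sigma_\bullet$-equivariant injective morphisms is a legitimate clarification. The paper leaves this point implicit and simply sets $\pi(\overrightarrow{\rm Ind}(f))={\rm Ind}(f)$.
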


\begin{proof} 
Let  $f:  M\longrightarrow  M'$  be  a  differentiable  embedding.  
Then  $f$  induces  a  differentiable  map 
 \begin{eqnarray}\label{eq-26-3-11-c1}
 {\rm   Conf}_k(f): {\rm   Conf}_k(M)\longrightarrow{\rm   Conf}_k(M')
\end{eqnarray}
sending  an  ordered   $k$-tuple    $(x_1,\ldots,x_k)$  of   distinct  points
in  $M$ 
to  an  ordered   $k$-tuple  $(f(x_1),\ldots,f(x_k))$  of   distinct  points 
in  $M'$. 
Since  (\ref{eq-26-3-11-c1})  is  $\Sigma_k$-equivariant,  
we  have  an  induced   differentiable  map  
 \begin{eqnarray}\label{eq-26-3-11-c2}
 {\rm   Conf}_k(f)/\Sigma_k: {\rm   Conf}_k(M)/\Sigma_k
 \longrightarrow{\rm   Conf}_k(M')/\Sigma_k
\end{eqnarray}
sending  an  unordered   $k$-tuple    $\{x_1,\ldots,x_k\}$  of   distinct  points
in  $M$ 
to  an  unordered   $k$-tuple  $\{f(x_1),\ldots,f(x_k)\}$  of   distinct  points 
in  $M'$. 
Let  $k$  run  over  nonnegative  integers  in  (\ref{eq-26-3-11-c1})
and  (\ref{eq-26-3-11-c2})  respectively. 
We  obtain         a   morphism  of  directed  simplicial  complexes 
\begin{eqnarray*}
\overrightarrow{\rm   Ind}(f): 
 \overrightarrow{\rm   Ind}(M)\longrightarrow 
  \overrightarrow{\rm   Ind}(M')
  \end{eqnarray*}
    given  by  
\begin{eqnarray*}
\overrightarrow{\rm   Ind}(f)(\vec{\sigma}(M))=(f(x_1),\ldots, f(x_k))
\end{eqnarray*}
   for  any  directed  simplex  $\vec{\sigma}(M)=(x_1,\ldots,x_k)$  of  
$\overrightarrow{\rm   Ind}(M)$    
and  a  
    simplicial  map  
\begin{eqnarray*}
{\rm   Ind}(f): {\rm   Ind}(M)\longrightarrow
{\rm   Ind}(M')
\end{eqnarray*} 
given  by 
\begin{eqnarray*}
{\rm   Ind}(f)(\vec{\sigma}(M))=\{f(x_1),\ldots, f(x_k)\} 
\end{eqnarray*}
for  any  simplex  ${\sigma}(M)=\{x_1,\ldots,x_k\}$  of  
${\rm   Ind}(M)$.  
 The  following  diagram  commutes 
\begin{eqnarray*}
\xymatrix{
 \overrightarrow{\rm   Ind}(M)\ar[r]^-{\overrightarrow{\rm   Ind}(f)} \ar[d]_-{\pi}
 &\overrightarrow{\rm   Ind}(M')\ar[d]^-{\pi}\\
 {\rm   Ind}(M)\ar[r]^-{ {\rm   Ind}(f)} 
 & {\rm   Ind}(M'). 
} 
\end{eqnarray*}
Consequently,  
\begin{eqnarray*}
  \overrightarrow{\rm   Ind}(f)\in  
  {\rm  Inj}(\overrightarrow{\rm   Ind}(M),\overrightarrow{\rm   Ind}(M')),  
 ~~~~~~ {\rm   Ind}(f)\in
{\rm  Inj}({\rm   Ind}(M),{\rm   Ind}(M'))
\end{eqnarray*}
such  that  
$\pi  (\overrightarrow{\rm   Ind}(f))=  {\rm   Ind}(f)$.  
Therefore,    (\ref{diag-260415-1})  commutes.  
\end{proof}
Let  ${\rm   Aut}(M)$   be   the  space  of  all  
the  self-diffeomorphisms  of   $M$.  
Let  ${\rm  Aut}(\overrightarrow{\rm   Ind}(M))$  
be   the  space  of  all  
the  differentiable  isomorphisms   of  $\overrightarrow{\rm   Ind}(M)$.    
Let  ${\rm  Aut}( {\rm   Ind}(M))$  
be   the  space  of  all  
the  differentiable  isomorphisms   of  $ {\rm   Ind}(M)$.

\begin{corollary}
\label{co-26.4.15-1}
For  any  differentiable  manifolds $M$,  
 we  have  a  commutative  diagram 
\begin{eqnarray}\label{diag-260415-2}
\xymatrix{
&{\rm  Aut}(\overrightarrow{\rm   Ind}(M))\ar[dd]^-{\pi}\\
{\rm   Aut}(M)\ar[ru]^-{\overrightarrow{\rm   Ind}(-)}  \ar[rd]_-{{\rm   Ind}(-)} &\\
&{\rm  Aut}(\overrightarrow{\rm   Ind}(M)). 
} 
\end{eqnarray}
\end{corollary}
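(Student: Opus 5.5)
The plan is to obtain Corollary~\ref{co-26.4.15-1} as the special case $M'=M$ of Proposition~\ref{pr-4.15-1}, restricted from embeddings to self-diffeomorphisms. The bottom-right entry of (\ref{diag-260415-2}) should be read as ${\rm Aut}({\rm Ind}(M))$, since that is where $\pi$ and ${\rm Ind}(-)$ land. Let $f\in{\rm Aut}(M)$. Then $f$ is in particular a differentiable embedding $M\longrightarrow M$, so Proposition~\ref{pr-4.15-1} gives the injective morphism $\overrightarrow{\rm Ind}(f)$ and the simplicial embedding ${\rm Ind}(f)$. It also gives the identity $\pi(\overrightarrow{\rm Ind}(f))={\rm Ind}(f)$, which comes from the commutative square $\pi\circ\overrightarrow{\rm Ind}(f)={\rm Ind}(f)\circ\pi$ in its proof.

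The only new point is that these maps are differentiable isomorphisms and not merely injections. I will check functoriality directly from the formulas in the proof of Proposition~\ref{pr-4.15-1}:
\[
\overrightarrow{\rm Ind}(g\circ f)=\overrightarrow{\rm Ind}(g)\circ\overrightarrow{\rm Ind}(f),\qquad
\overrightarrow{\rm Ind}({\rm id}_M)={\rm id},
\]
and the same for ${\rm Ind}(-)$. Both identities hold levelwise on ${\rm Conf}_k(M)$ and on ${\rm Conf}_k(M)/\Sigma_k$, since the maps act coordinatewise. Applying them with $g=f^{-1}$ shows that $\overrightarrow{\rm Ind}(f^{-1})$ is a two-sided inverse of $\overrightarrow{\rm Ind}(f)$, and ${\rm Ind}(f^{-1})$ is a two-sided inverse of ${\rm Ind}(f)$.

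Smoothness of the inverses follows the same way. ${\rm Conf}_k(f^{-1})$ is the restriction of the smooth map $(f^{-1})^{\times k}$ to the open submanifold ${\rm Conf}_k(M)$. It is $\Sigma_k$-equivariant, so it descends to a smooth map on the orbit manifold ${\rm Conf}_k(M)/\Sigma_k$, because $\Sigma_k$ acts freely and properly. Hence $\overrightarrow{\rm Ind}(f)\in{\rm Aut}(\overrightarrow{\rm Ind}(M))$ and ${\rm Ind}(f)\in{\rm Aut}({\rm Ind}(M))$.

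It remains to check that $\pi$ sends ${\rm Aut}(\overrightarrow{\rm Ind}(M))$ into ${\rm Aut}({\rm Ind}(M))$ on the image of ${\rm Aut}(M)$, and that the triangle commutes. Both follow from the identity $\pi(\overrightarrow{\rm Ind}(f))={\rm Ind}(f)$ above. I expect no genuine obstacle here. The only care needed is the descent of smoothness to the orbit manifolds, which is handled by the equivariance noted above.
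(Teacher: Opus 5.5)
Your proposal is correct and follows the paper's route: the paper's proof is just to set $M'=M$ in Proposition~\ref{pr-4.15-1}. Your functoriality argument with $f^{-1}$, together with the descent of smoothness to ${\rm Conf}_k(M)/\Sigma_k$, supplies the check the paper omits, namely that the induced maps are isomorphisms and not merely injections; you are also right that the bottom-right entry should read ${\rm Aut}({\rm Ind}(M))$.
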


\begin{proof}
Let  $M=M'$  in  Proposition~\ref{pr-4.15-1}. 
The  corollary  follows.  
\end{proof}

Suppose  in  addition  that  $M$  and  $M'$  have     Riemannian  metrics  $g$  or  a  
Hermitian  metrics  $h$.  
The  induced  distance  functions  on  $M$  and  $M'$  are  
$d_M$  and  $d_{M'}$  respectively.  
Let  ${\rm  IsomE}(M, M')$  be  the  space  of  
 all  
differentiable  embeddings   $f: M\longrightarrow  M'$   such  that 
$f$  preserves  the  Riemannian  metric  $g$  or  the  Hermitian  metric  $h$,   
i.e.  
$f^* (g_{M'}) =  g_M$  or  $f^*(h_{M'})= h_M$.  
  For  any  $r>0$,  
  Let 
\begin{eqnarray*}
{\rm  Inj}(\overrightarrow{\rm   Ind}(M, (0, 2r]),\overrightarrow{\rm   Ind}(M', (0,2r])) 
~~~~~~{\rm   Inj}({\rm   Ind}(M,(0, 2r]),{\rm   Ind}(M',(0, 2r]))
\end{eqnarray*}  
be   the  space  of  all  
injective  morphisms  
from   $\overrightarrow{\rm   Ind}(M, (0, 2r])$  into  
$\overrightarrow{\rm   Ind}(M',(0, 2r])$  and    
    the  space  of  all  differentiable 
simplicial  embeddings  of   ${\rm   Ind}(M,(0, 2r])$  into  ${\rm   Ind}(M',(0, 2r])$
respectively. 

\begin{proposition}\label{pr-4.16-1}
For  any  Riemannian  or  Hermitian  manifolds $M$  and  $M'$,  
 we  have  a  family  of   commutative  diagrams 
\begin{eqnarray}\label{diag-260416-1}
\xymatrix{
&{\rm  Inj}(\overrightarrow{\rm   Ind}(M, (0, 2r]),\overrightarrow{\rm   Ind}(M', (0, 2r])) \ar[dd]^-{\pi}\\
{\rm  IsomE}(M,M') \ar[ru]^-{\overrightarrow{\rm   Ind}(-)}  \ar[rd]_-{{\rm   Ind}(-)} &\\
& {\rm   Inj}({\rm   Ind}(M, (0, 2r]),{\rm   Ind}(M', (0, 2r]))  
} 
\end{eqnarray}
for   any  $r>0$,  where  the  vertical map  $\pi$  in (\ref{diag-260416-1})
   gives  a  filtration  of   the  vertical map  $\pi$  in  (\ref{diag-260415-1}).  
\end{proposition}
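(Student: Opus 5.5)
The plan is to restrict the construction in the proof of Proposition~\ref{pr-4.15-1} to the constrained subcomplexes. Fix $f\in{\rm IsomE}(M,M')$. Since $f$ is a differentiable embedding, the proof of Proposition~\ref{pr-4.15-1} already gives the $\Sigma_k$-equivariant injective smooth maps ${\rm Conf}_k(f)$ in (\ref{eq-26-3-11-c1}) and ${\rm Conf}_k(f)/\Sigma_k$ in (\ref{eq-26-3-11-c2}). It also gives $\pi\circ\overrightarrow{\rm Ind}(f)={\rm Ind}(f)\circ\pi$. So the only new thing to prove is that these maps send the constrained pieces into the constrained pieces:
\[
{\rm Conf}_k(f)\bigl({\rm Conf}_k(M,(0,2r])\bigr)\subseteq {\rm Conf}_k(M',(0,2r])
\]
for every $k\geq 0$ and every $r>0$.

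\emph{Key step.} We need $d_{M'}(f(p),f(q))\leq d_M(p,q)$ for all $p,q\in M$. Take any piecewise smooth curve $\gamma$ in $M$ from $p$ to $q$. Since $f^*g_{M'}=g_M$ (or $f^*h_{M'}=h_M$ in the Hermitian case, using the underlying Riemannian metric), the curve $f\circ\gamma$ has the same length as $\gamma$. Taking the infimum over $\gamma$ gives $d_{M'}(f(p),f(q))\leq d_M(p,q)$. Injectivity of $f$ gives $d_{M'}(f(p),f(q))>0$ whenever $p\neq q$. Hence, if all pairwise distances of $(p_1,\dots,p_k)$ lie in $(0,2r]$, then all pairwise distances of $(f(p_1),\dots,f(p_k))$ also lie in $(0,2r]$. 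This proves the inclusion, and the inclusion descends to the $\Sigma_k$-orbit spaces because ${\rm Conf}_k(f)$ is $\Sigma_k$-equivariant.

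This step is where I expect the main obstacle. We only have an isometric immersion, not a distance-preserving map. So $d_{M'}$ may be strictly smaller than $d_M$, and the argument only works for constraints closed downward, such as $(0,2r]$. For packings $(2r,+\infty]$ the analogous statement would fail, which explains why the proposition is stated with $(0,2r]$. I will also check that the empty sequence and the case $k=1$ are handled trivially, since then the constraint is vacuous.

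Taking the union over $k$, the restrictions $\overrightarrow{\rm Ind}(f)|$ and ${\rm Ind}(f)|$ are injective morphisms of the constrained (directed) simplicial complexes. Here we use that $\overrightarrow{\rm Ind}(M,\mathbb{I})$ is a directed simplicial subcomplex, as noted after (\ref{eq-260306-ind2}), and restricting preserves face maps and differentiability. Commutativity of (\ref{diag-260416-1}) follows by restricting the commutative square $\pi\circ\overrightarrow{\rm Ind}(f)={\rm Ind}(f)\circ\pi$.

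For the filtration claim, note that $r_1\leq r_2$ gives $\overrightarrow{\rm Ind}(M,(0,2r_1])\subseteq\overrightarrow{\rm Ind}(M,(0,2r_2])$, with union over $r>0$ equal to $\overrightarrow{\rm Ind}(M)$, and similarly for ${\rm Ind}(M)$. The restriction maps are compatible with these inclusions. Hence the family of vertical maps $\pi$ in (\ref{diag-260416-1}) gives a filtration of the vertical map $\pi$ in (\ref{diag-260415-1}).
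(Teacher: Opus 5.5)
Your proposal is correct and follows the paper's proof. You fix $f\in{\rm IsomE}(M,M')$ and use $d_{M'}(f(x),f(y))\leq d_M(x,y)$, which the paper merely asserts as (\ref{eq-24.4.16-ineq1}) and you justify via lengths of curves, to see that ${\rm Conf}_k(f)$ and ${\rm Conf}_k(f)/\Sigma_k$ restrict to the $(0,2r]$-constrained pieces; you then restrict the commutative square $\pi\circ\overrightarrow{\rm Ind}(f)={\rm Ind}(f)\circ\pi$.

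One minor caveat concerns your filtration remark: the union over finite $r>0$ need not be all of $\overrightarrow{\rm Ind}(M)$. Since $M$ is not assumed connected, points in different components have $d_M=+\infty$, so $\bigcup_{r>0}\overrightarrow{\rm Ind}(M,(0,2r])$ only contains configurations lying in a single component. To claim it exhausts $\overrightarrow{\rm Ind}(M)$, you must either allow $r=+\infty$ or assume $M$ is connected.
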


\begin{proof}
Let  $f\in  {\rm  IsomE}(M,M')$.  
Then    
\begin{eqnarray}\label{eq-24.4.16-ineq1}
d_M(x,y)\geq   d_{M'}(f(x), f(y))
\end{eqnarray}
  for  any  $x,y\in  M$. 
Hence  
$f$  induces  a  differentiable  embedding  
 \begin{eqnarray}\label{eq-26-4-16-c1}
 {\rm   Conf}_k(f): {\rm   Conf}_k(M, (0, 2r])\longrightarrow{\rm   Conf}_k(M',(0, 2r])
\end{eqnarray}
preserving  the  Riemannian  metric    or  the  Hermitian  metric  on  the  
$k$-th  ordered  configuration  spaces,  i.e.  
\begin{eqnarray*}
 {\rm   Conf}_k(f)^*(g_{M'}^{\oplus  k}) = g_{M}^{\oplus  k}~~~ {\rm~or~} ~~~
  {\rm   Conf}_k(f)^*(h_{M'}^{\oplus  k}) = h_{M}^{\oplus  k}.  
\end{eqnarray*}
Let  $k$  run  over  all  nonnegative  integers.  
Then  (\ref{eq-26-4-16-c1})  induces   a   morphism  of  directed  simplicial  complexes 
\begin{eqnarray*}
\overrightarrow{\rm   Ind}(f): 
 \overrightarrow{\rm   Ind}(M, (0, 2r])\longrightarrow 
  \overrightarrow{\rm   Ind}(M', (0, 2r]).  
  \end{eqnarray*}
The  $\Sigma_k$-actions   on  ${\rm   Conf}_k(M, (0, 2r])$  
and  ${\rm   Conf}_k(M',(0, 2r])$  preserve  the  Riemannian  metric    or  the  Hermitian  metric  as  well,  i.e.  for  any  $s\in \Sigma_k$,  
\begin{eqnarray*}
s^*(g_{M}^{\oplus  k}) = g_{M}^{\oplus  k}, 
~~~~~~s^*(g_{M'}^{\oplus  k}) = g_{M'}^{\oplus  k}    
\end{eqnarray*}
or  
\begin{eqnarray*}
s^*(h_{M}^{\oplus  k}) = h_{M}^{\oplus  k}, 
~~~~~~s^*(h_{M'}^{\oplus  k}) = h_{M'}^{\oplus  k}.     
\end{eqnarray*}
Thus  (\ref{eq-26-4-16-c1})  induces   a  differentiable  embedding 
 \begin{eqnarray}\label{eq-26-4-16-c2}
 {\rm   Conf}_k(f)/\Sigma_k: {\rm   Conf}_k(M, (0, 2r])/\Sigma_k\longrightarrow{\rm   Conf}_k(M',(0, 2r])/\Sigma_k
\end{eqnarray}
preserving  the  Riemannian  metric    or  the  Hermitian  metric  on  the  
$k$-th  unordered  configuration  spaces,  i.e.  
\begin{eqnarray*}
( {\rm   Conf}_k(f)/\Sigma_k)^*(g_{M'}^{\oplus  k}) = g_{M}^{\oplus  k}~~~ {\rm~or~} ~~~
 ( {\rm   Conf}_k(f)/\Sigma_k)^*(h_{M'}^{\oplus  k}) = h_{M}^{\oplus  k}.  
\end{eqnarray*}
Let  $k$  run  over  all  nonnegative  integers.  
Then  (\ref{eq-26-4-16-c2})  induces  a  simplicial  map  
\begin{eqnarray*}
 {\rm   Ind}(f): 
 {\rm   Ind}(M, (0, 2r])\longrightarrow 
 {\rm   Ind}(M', (0, 2r])   
  \end{eqnarray*}
such  that  the  diagram  commutes 
\begin{eqnarray*}
\xymatrix{
 \overrightarrow{\rm   Ind}(M, (0, 2r])\ar[r]^-{  \overrightarrow{\rm   Ind}(f)}\ar[d]_-{\pi}
 & \overrightarrow{\rm   Ind}(M', (0, 2r])\ar[d]^-{\pi}\\
  {\rm   Ind}(M, (0, 2r])\ar[r]^-{{\rm   Ind}(f)}
 & {\rm   Ind}(M', (0, 2r]). 
}
\end{eqnarray*}
We  obtain  the  commutative  diagram  (\ref{diag-260416-1}).  
\end{proof}

Let  ${\rm   Isom}(M)$   be   the  space  of  all  
the  self-diffeomorphisms  of   $M$  preserving  $g$  or  $h$.  
Let  ${\rm  Aut}(\overrightarrow{\rm   Ind}(M,(0,2r]))$  
be   the  space  of  all  
the  differentiable  isomorphisms   of  $\overrightarrow{\rm   Ind}(M)$.       
Let  ${\rm  Aut}( {\rm   Ind}(M,(0,2r]))$  
be   the  space  of  all  
the  differentiable  isomorphisms   of  $ {\rm   Ind}(M,(0,2r])$.

\begin{corollary}
\label{co-26.4.16-1}
For  any  Riemannian  or  Hermitian   manifolds $M$,  
 we  have  a  commutative  diagram 
\begin{eqnarray}\label{diag-260416-2}
\xymatrix{
&{\rm  Aut}(\overrightarrow{\rm   Ind}(M,,(0,2r]))\ar[dd]^-{\pi}\\
{\rm   Isom}(M)\ar[ru]^-{\overrightarrow{\rm   Ind}(-)}  \ar[rd]_-{{\rm   Ind}(-)} &\\
&{\rm  Aut}(\overrightarrow{\rm   Ind}(M,,(0,2r]))  
} 
\end{eqnarray}
for   any  $r>0$,  where  the  vertical map  $\pi$  in (\ref{diag-260416-2})
   gives  a  filtration  of   the  vertical map  $\pi$  in  (\ref{diag-260415-2}).  
\end{corollary}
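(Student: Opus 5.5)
The corollary is the specialization $M=M'$ of Proposition~\ref{pr-4.16-1}, just as Corollary~\ref{co-26.4.15-1} is the specialization of Proposition~\ref{pr-4.15-1}. The plan is to restrict the diagram (\ref{diag-260416-1}) to ${\rm Isom}(M)\subseteq {\rm IsomE}(M,M)$ and check that the images land in the automorphism spaces rather than merely in the spaces of injective morphisms. Throughout I read the lower corner of (\ref{diag-260416-2}) as ${\rm Aut}({\rm Ind}(M,(0,2r]))$ and the doubled comma as a typo.

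\textbf{Step 1.} Fix $r>0$ and $f\in{\rm Isom}(M)$. Since $f$ is a diffeomorphism with $f^*g=g$ (or $f^*h=h$), it preserves lengths of curves in both directions, so $d_M(f(x),f(y))=d_M(x,y)$ for all $x,y\in M$. This is sharper than the inequality (\ref{eq-24.4.16-ineq1}) used in Proposition~\ref{pr-4.16-1}.

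\textbf{Step 2.} By Step~1, ${\rm Conf}_k(f)$ maps ${\rm Conf}_k(M,(0,2r])$ into itself. The same holds for ${\rm Conf}_k(f^{-1})$, since $f^{-1}$ is also an isometry. These two maps are mutually inverse smooth maps, so ${\rm Conf}_k(f)$ is a diffeomorphism of ${\rm Conf}_k(M,(0,2r])$. It also commutes with the face maps (\ref{eq-2.1a}).

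\textbf{Step 3.} Let $k$ range over all $k\geq 0$. Then $\overrightarrow{\rm Ind}(f)$ lies in ${\rm Aut}(\overrightarrow{\rm Ind}(M,(0,2r]))$, with inverse $\overrightarrow{\rm Ind}(f^{-1})$.

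\textbf{Step 4.} ${\rm Conf}_k(f)$ is $\Sigma_k$-equivariant, so it descends to a diffeomorphism ${\rm Conf}_k(f)/\Sigma_k$ of ${\rm Conf}_k(M,(0,2r])/\Sigma_k$, with inverse ${\rm Conf}_k(f^{-1})/\Sigma_k$. Hence ${\rm Ind}(f)\in{\rm Aut}({\rm Ind}(M,(0,2r]))$.

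\textbf{Step 5.} Commutativity $\pi\circ\overrightarrow{\rm Ind}(f)={\rm Ind}(f)\circ\pi$ follows from the square in the proof of Proposition~\ref{pr-4.16-1}, which gives $\pi(\overrightarrow{\rm Ind}(f))={\rm Ind}(f)$. So (\ref{diag-260416-2}) commutes.

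\textbf{Step 6 (filtration claim).} The spaces ${\rm Ind}(M,(0,2r])$ are nested subcomplexes of ${\rm Ind}(M)$, and likewise in the directed case. Restricting the maps of Corollary~\ref{co-26.4.15-1} along ${\rm Isom}(M)\subseteq{\rm Aut}(M)$ gives the maps in (\ref{diag-260416-2}). Hence the vertical map $\pi$ in (\ref{diag-260416-2}) is compatible with, and filters, the vertical map $\pi$ in (\ref{diag-260415-2}).

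The only step beyond bookkeeping is Step~1. The surjectivity of $\overrightarrow{\rm Ind}(f)$ onto $\overrightarrow{\rm Ind}(M,(0,2r])$ needs equality of distances, not the one-sided inequality (\ref{eq-24.4.16-ineq1}). I would obtain it by applying the same length-of-curves argument to both $f$ and $f^{-1}$.
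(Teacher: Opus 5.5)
Your proposal is correct and follows the paper's route: the paper proves this corollary in one line, by setting $M=M'$ in Proposition~\ref{pr-4.16-1}. Your Steps 1--4 use that $f^{-1}$ is also an isometry, so that $\overrightarrow{\rm Ind}(f)$ and ${\rm Ind}(f)$ are invertible on the $(0,2r]$ levels; this makes explicit the passage from ${\rm Inj}$ to ${\rm Aut}$ that the paper's proof leaves implicit.
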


\begin{proof}
Let  $M=M'$  in  Proposition~\ref{pr-4.16-1}. 
The  corollary  follows.  
\end{proof}

\subsection{Morphisms  of  independence  hypergraphs   induced  by   
surjections   of   manifolds} \label{ss6.2}

Let    $M$  and  $M'$  be  manifolds  with      Riemannian  metrics  $g$  or  a  
Hermitian  metrics  $h$.  
Let  ${\rm  IsomS}(M,M')$   be   the  space  of  all  the 
differentiable  surjections  of   $M$  onto  $M'$  preserving  
 $g$  or  $h$. 
Let 
\begin{eqnarray*}
{\rm  Mor}({\rm   Cv}(M,r),{\rm   Cv}(M',r))  
\end{eqnarray*}  
be   the  space  of  all  
  morphisms  of  independence  hypergraphs  
from   ${\rm   Cv}(M,r)$   to  
${\rm   Cv}(M,r)$.  
The  next  proposition  follows.

\begin{proposition}\label{pr-4.17-cv1}
For  any  $r>0$  and  
any  Riemannian  or  Hermitian  manifolds $M$  and  $M'$,  
 we  have  a  map  
\begin{eqnarray}\label{diag-260417-cv1}
{\rm   Cv} (-):
{\rm  IsomS}(M,M') \longrightarrow  
{\rm  Mor}({\rm   Cv}(M,r),{\rm   Cv}(M',r)).    
\end{eqnarray}
\end{proposition}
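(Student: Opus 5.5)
The plan is to define ${\rm Cv}(f)$ for $f\in{\rm IsomS}(M,M')$ levelwise, in the same way as ${\rm Ind}(f)$ was defined in the proof of Proposition~\ref{pr-4.15-1}. We send an unordered covering $\sigma(M)=\{p_1,\ldots,p_k\}\in{\rm Cover}_k(M,r)/\Sigma_k$ to its image set $\{f(p_1),\ldots,f(p_k)\}$. This image may have fewer than $k$ elements, since $f$ need not be injective. So ${\rm Cv}(f)$ will be a morphism of hypergraphs that possibly lowers the grading, sending a $k$-hyperedge to a $k'$-hyperedge with $k'\leq k$.

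The first step is to show that the image is again a covering by open $r$-neighborhoods. Let $q'\in M'$. Surjectivity gives some $q\in M$ with $f(q)=q'$. Since $\sigma(M)$ covers $M$, there is an $i$ with $d_M(p_i,q)<r$. Because $f$ preserves $g$ (or $h$), it preserves lengths of piecewise smooth curves. Hence, as in (\ref{eq-24.4.16-ineq1}),
\begin{eqnarray*}
d_{M'}(f(p_i),q')\leq d_M(p_i,q)<r.
\end{eqnarray*}
Therefore $\{f(p_1),\ldots,f(p_k)\}\in{\rm Cv}(M',r)$.

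The second step is to check that this assignment is compatible with the independence hypergraph structure, so that it really lies in ${\rm Mor}({\rm Cv}(M,r),{\rm Cv}(M',r))$. If $\sigma(M)\subseteq\theta(M)$, then $f(\sigma(M))\subseteq f(\theta(M))$, so inclusions of hyperedges (supersets) are preserved. On each stratum ${\rm Cover}_k(M,r)/\Sigma_k$, restricted further to the locus where the image has exactly $k'$ points, the map is induced by the $\Sigma_k$-equivariant map $(p_1,\ldots,p_k)\mapsto(f(p_1),\ldots,f(p_k))$ followed by forgetting repetitions. That locus is open when $k'=k$, and in that case the map is differentiable because $f$ is. The assignment $f\mapsto{\rm Cv}(f)$ is then the required map (\ref{diag-260417-cv1}).

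The main obstacle is the non-injectivity of $f$: coinciding images make the map drop in degree, so it is not a map of configuration spaces. I would handle this by interpreting a morphism of independence hypergraphs as a vertex map $M\to M'$ sending hyperedges to hyperedges. This is the analogue of a simplicial map, and it allows degeneracies. Under that reading the two steps above suffice. If instead differentiability across strata is demanded, I would restrict the claim to the open strata where $f|_{\sigma(M)}$ is injective.
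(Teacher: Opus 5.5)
Your proposal is correct and takes essentially the same route as the paper. Both define ${\rm Cv}(f)$ by sending $\{x_1,\ldots,x_k\}$ to the set $\{f(x_1),\ldots,f(x_k)\}$ with repeated points removed, so the degree may drop to some $l\leq k$, and both use the metric-preservation inequality $d_{M'}(f(x),f(y))\leq d_M(x,y)$ to see that the image is again a covering by open $r$-neighborhoods; your version is slightly more explicit than the paper's in showing where surjectivity of $f$ enters, namely lifting each $q'\in M'$ to some $q\in M$.
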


\begin{proof}
 Let $f\in  {\rm  IsomS}(M,M')$. 
  Then   
 $f:  M\longrightarrow  M'$  is  a  differentiable  surjection  such  that
$f^* (g_{M'}) =  g_M$  or  $f^*(h_{M'})= h_M$.     
Consequently,  (\ref{eq-24.4.16-ineq1})  is satisfied.  
  For  any  $r>0$,  
with  the  help  of  (\ref{eq-26feb-27-19})  and  (\ref{eq-24.4.16-ineq1}),
 $f$  induces  a  morphism  of  independence  hypergraphs 
 \begin{eqnarray}\label{eq-26-4-16-cv1}
 {\rm   Cv} (f): {\rm   Cv}(M,r)\longrightarrow{\rm   Cv}(M',r)
\end{eqnarray}
sending  an  unordered   $k$-tuple    $\{x_1,\ldots,x_k\}\in  {\rm   Cv}(M,r)$  of   distinct  points
in  $M$ 
to  an  unordered   $l$-tuple  $\{f(x_1),\ldots,f(x_l)\}\in {\rm   Cv}(M',r)$  of   distinct  points 
in  $M'$,  where   $l\leq  k$  and  
$f(x_1),\ldots,f(x_l)$  are  obtained  from  $f(x_1),\ldots,f(x_k)$  
by  removing  the  repeated  points.  
Consequently,  we  obtain  (\ref{diag-260417-cv1}). 
\end{proof}

Let  
${\rm  Aut}(  {\rm   Cv}(M,r))$  
be   the  space  of  all  
the  differentiable  isomorphisms   of   
the  independence  hypergraph $  {\rm   Cv}(M,r)$
on  $M$.

\begin{corollary}
\label{co-26.4.17-cv1}
For  any  $r>0$  and  any  Riemannian  or  Hermitian   manifolds $M$,  
 we  have  a  map  
\begin{eqnarray}\label{diag-260417-cv9}
{\rm   Cv} (-):
{\rm  Isom}(M) \longrightarrow  
{\rm  Aut}({\rm   Cv}(M,r)).    
\end{eqnarray}
\end{corollary}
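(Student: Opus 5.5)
The corollary is the special case $M=M'$ of Proposition~\ref{pr-4.17-cv1}, and the plan is to specialise that map and then check that its values are automorphisms. Take $f\in{\rm Isom}(M)$. Being a self-diffeomorphism preserving $g$ or $h$, $f$ is a differentiable surjection of $M$ onto itself preserving the metric, so ${\rm Isom}(M)\subseteq{\rm IsomS}(M,M)$. Restricting the map (\ref{diag-260417-cv1}) to ${\rm Isom}(M)$ then gives, for each $r>0$, a morphism of independence hypergraphs ${\rm Cv}(f):{\rm Cv}(M,r)\longrightarrow{\rm Cv}(M,r)$ sending $\{x_1,\ldots,x_k\}$ to $\{f(x_1),\ldots,f(x_k)\}$.

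Next I show that ${\rm Cv}(f)$ lies in ${\rm Aut}({\rm Cv}(M,r))$. Since $f$ is a diffeomorphism preserving the metric, it preserves lengths of curves, hence $d_M(f(x),f(y))=d_M(x,y)$ for all $x,y\in M$. In particular $f$ maps $N(r,p,M)$ onto $N(r,f(p),M)$. It is also injective, so no repeated points are removed in the proof of Proposition~\ref{pr-4.17-cv1}, and $l=k$. Consequently ${\rm Cv}(f)$ restricts to maps ${\rm Cover}_k(M,r)/\Sigma_k\to{\rm Cover}_k(M,r)/\Sigma_k$: if $\{N(r,p_i,M)\}$ covers $M$, then applying the bijection $f$ shows $\{N(r,f(p_i),M)\}$ covers $f(M)=M$.

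Because $f^{-1}\in{\rm Isom}(M)$ as well, ${\rm Cv}(f^{-1})$ is defined, and functoriality on representatives gives ${\rm Cv}(f)\circ{\rm Cv}(f^{-1})={\rm id}={\rm Cv}(f^{-1})\circ{\rm Cv}(f)$. Each restriction ${\rm Conf}_k(f)/\Sigma_k$ is differentiable, since it is induced by the $\Sigma_k$-equivariant smooth map ${\rm Conf}_k(f)$ of (\ref{eq-26-3-11-c1}). Hence ${\rm Cv}(f)$ is a differentiable isomorphism of the independence hypergraph ${\rm Cv}(M,r)$, which yields (\ref{diag-260417-cv9}).

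The only step needing care is that a metric-preserving diffeomorphism is an isometry of $d_M$ in both directions, not merely the one-sided inequality (\ref{eq-24.4.16-ineq1}). Without this, covers would not be carried to covers. The two-sided equality follows by applying (\ref{eq-24.4.16-ineq1}) to both $f$ and $f^{-1}$.
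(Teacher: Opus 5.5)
Your proposal is correct and follows the paper's route, which simply sets $M=M'$ in Proposition~\ref{pr-4.17-cv1}; your check that ${\rm Cv}(f)$ is invertible with inverse ${\rm Cv}(f^{-1})$ fills in what the paper leaves implicit. One minor correction: the one-sided inequality (\ref{eq-24.4.16-ineq1}) together with surjectivity already carries covers to covers (write $q'=f(q)$ and use $d_M(f(p_i),q')\le d_M(p_i,q)<r$), so the two-sided equality, while true, is not actually the step that needs care.
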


\begin{proof}
Let  $M=M'$  in  Proposition~\ref{pr-4.17-cv1}.    
The  corollary  follows.  
\end{proof}

\subsection{Morphisms  of  hypergraphs  induced by  embeddings  of  manifolds}
\label{26.6.3-ss7.3}

Let   $f:  M\longrightarrow  M'$  be  an  embedding  of  manifolds.  
Let  $\vec{\mathcal{H}}(M)$  and     be    hyperdigraphs
  on  $M$  and  $M'$  respectively.  
  We  say  that  $f$  is  a   {\it  morphism}  of  hyperdigraphs from  
  $\vec{\mathcal{H}} (M)$  to  $\vec{\mathcal{H}}'(M)$ 
  and  write   $f:  \vec{\mathcal{H}}(M)\longrightarrow  \vec{\mathcal{H}}'(M)$  if  
    for  any  directed  hyperedge  $\vec\sigma(M)=(v_0,v_1,\ldots, v_k)$
  of  $\vec{\mathcal{H}}(M)$, 
  its  image  
  $f(\vec\sigma(M))=(f(v_0),f(v_1),\ldots, f(v_k))$
  is  a  directed  hyperedge   of  $\vec{\mathcal{H}}'(M)$.  
    Let  ${\mathcal{H}}(M)$  and  ${\mathcal{H}}'(M')$   be    hypergraphs
  on  $M$  and  $M'$  respectively.  
  We  say  that 
    $f$  is  a   {\it  morphism}  of  hypergraphs from  
  ${\mathcal{H}} (M)$  to  ${\mathcal{H}}'(M)$ 
  and  write   $f:  {\mathcal{H}}(M)\longrightarrow  {\mathcal{H}}'(M')$  if  
    for  any    hyperedge  $\sigma(M)=\{v_0,v_1,\ldots, v_k\}$
  of  ${\mathcal{H}}(M)$, 
  its  image  
  $f(\sigma(M))=\{f(v_0),f(v_1),\ldots, f(v_k)\}$
  is  a   hyperedge   of  ${\mathcal{H}}'(M')$.

  \begin{lemma}
  \label{pr-26.6.1.1}
  Let  $f:  \vec{\mathcal{H}}(M)\longrightarrow  \vec{\mathcal{H}}'(M)$  
  be  a  morphism  of  hyperdigraphs  given  by  an  embedding  
  $f:  M\longrightarrow  M'$.  
  Then  
  $f$  induces       commutative  diagrams   
    \begin{eqnarray}\label{26-6-2-diag1}
  \xymatrix{
 \delta \vec{\mathcal{H}}(M) \ar[d]_-{\delta  f}
 \ar[r]
 & \vec{\mathcal{H}}(M)  \ar[d]_-{f}
 \ar[r]
 & \Delta \vec{\mathcal{H}}(M) \ar[d]^-{\Delta  f}
 \ar[r]
 &\overrightarrow{\rm  Ind}(M)\ar[d]^-{\overrightarrow {\rm  Ind}(f)}
 \\
  \delta\vec{\mathcal{H}}'(M') 
 \ar[r]
 &\vec{\mathcal{H}}'(M')  
 \ar[r]
 & \Delta\vec{\mathcal{H}}'(M')
  \ar[r]
 &\overrightarrow{\rm  Ind}(M'),
  }\\
\label{26-6-2-diag8}
     \xymatrix{
\bar \delta \vec{\mathcal{H}}(M) \ar[d]_-{\bar\delta  f}
 \ar[r]
 & \vec{\mathcal{H}}(M)  \ar[d]_-{  f}
 \ar[r]
 &\bar \Delta \vec{\mathcal{H}}(M) \ar[d]^-{\bar\Delta  f}
 \ar[r]
 &\overrightarrow{\rm  Ind}(M)\ar[d]^-{\overrightarrow {\rm  Ind}(f)}
 \\
  \bar\delta\vec{\mathcal{H}}'(M') 
 \ar[r]
 &\vec{\mathcal{H}}'(M')  
 \ar[r]
 & \bar\Delta\vec{\mathcal{H}}'(M')
  \ar[r]
 &\overrightarrow{\rm  Ind}(M').
 }
\end{eqnarray}
    \end{lemma}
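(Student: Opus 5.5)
The plan is to obtain every vertical map in (\ref{26-6-2-diag1}) and (\ref{26-6-2-diag8}) by restricting the single map $\overrightarrow{\rm Ind}(f)$ from Proposition~\ref{pr-4.15-1}. Since $f$ is an embedding, $\overrightarrow{\rm Ind}(f)$ sends a directed hyperedge $(x_1,\ldots,x_k)$ to $(f(x_1),\ldots,f(x_k))$. It is injective, it commutes with the face maps (\ref{eq-2.1a}), and it sends subsequences to subsequences. By hypothesis it already maps $\vec{\mathcal{H}}(M)$ into $\vec{\mathcal{H}}'(M')$, which gives the second vertical arrow. All horizontal arrows are the canonical inclusions of Proposition~\ref{le-26-3-3-1}. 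Once each restriction is shown to land in the correct target, both diagrams commute automatically, because every vertical arrow is a restriction of the same map.

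The key steps, in order, are as follows.
\begin{enumerate}[(i)]
\item For $\Delta f$: take $\vec\tau(M)\in\Delta\vec{\mathcal{H}}(M)$, a subsequence of some $\vec\sigma(M)\in\vec{\mathcal{H}}(M)$. Then $f(\vec\tau(M))$ is a subsequence of $f(\vec\sigma(M))\in\vec{\mathcal{H}}'(M')$, so it lies in $\Delta\vec{\mathcal{H}}'(M')$.
\item For $\delta f$: take $\vec\sigma(M)\in\delta\vec{\mathcal{H}}(M)$, so that $\Delta\vec\sigma(M)\subseteq\vec{\mathcal{H}}(M)$. Because $f$ is injective, every subsequence of $f(\vec\sigma(M))$ has the form $f(\vec\tau(M))$ for a subsequence $\vec\tau(M)$ of $\vec\sigma(M)$. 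Hence $\Delta f(\vec\sigma(M))=f(\Delta\vec\sigma(M))\subseteq\vec{\mathcal{H}}'(M')$, and therefore $f(\vec\sigma(M))\in\delta\vec{\mathcal{H}}'(M')$.
\item For $\bar\Delta f$: let $\vec\theta(M)\in\overrightarrow{\rm Ind}(M)$ be a supersequence of $\vec\sigma(M)\in\vec{\mathcal{H}}(M)$. Then $f(\vec\theta(M))\in\overrightarrow{\rm Ind}(M')$ is a supersequence of $f(\vec\sigma(M))\in\vec{\mathcal{H}}'(M')$, so it lies in $\bar\Delta\vec{\mathcal{H}}'(M')$.
\item For $\bar\delta f$: this is the remaining case, treated in the next paragraph.
\end{enumerate}
Commutativity of each square then reduces to the equality $\iota'\circ f|_{A}=f|_{B}\circ\iota$ for the inclusions $\iota:A\subseteq B$ and $\iota':A'\subseteq B'$, which holds trivially.

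The main obstacle is $\bar\delta f$. For $\vec\sigma(M)\in\bar\delta\vec{\mathcal{H}}(M)$ one must show that every supersequence of $f(\vec\sigma(M))$ in $\overrightarrow{\rm Ind}(M')$ lies in $\vec{\mathcal{H}}'(M')$. Supersequences whose new vertices lie in $f(M)$ are images of supersequences of $\vec\sigma(M)$, so they are handled exactly as in step (iii). However, vertices in $M'\setminus f(M)$ are not controlled by the hypothesis. I would first try to reduce to the case where $f$ is surjective (a diffeomorphism), where the preimage argument goes through verbatim. Failing that, I would interpret $\bar\delta f$ relative to $\overrightarrow{\rm Ind}(f(M))$, so that $\bar\delta\vec{\mathcal{H}}'(M')$ is read as the lower-associated independence hyperdigraph inside $\overrightarrow{\rm Ind}(f(M))$, and then run the same argument there.
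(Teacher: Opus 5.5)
Your steps (i)--(iii) and the commutativity argument are correct. They are precisely the ``direct verification'' the paper appeals to: every vertical arrow is a restriction of $\overrightarrow{\rm Ind}(f)$, and the only content is that each of $\delta\vec{\mathcal{H}}(M)$, $\Delta\vec{\mathcal{H}}(M)$, $\bar\Delta\vec{\mathcal{H}}(M)$ lands in its primed counterpart.

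The gap is the first square of (\ref{26-6-2-diag8}), and your proposal does not close it. Neither of your fallbacks proves the lemma as stated. A surjective embedding is a diffeomorphism, so ``reducing to the surjective case'' adds a hypothesis rather than performing a reduction. Reading $\bar\delta\vec{\mathcal{H}}'(M')$ inside $\overrightarrow{\rm Ind}(f(M))$ replaces the target by a different object, and that object need not be contained in $\bar\delta\vec{\mathcal{H}}'(M')$.

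In fact no argument can close this square, because the claim is false for non-surjective $f$. Your suspicion should be turned into a counterexample:
\begin{itemize}
\item Let $M=\mathbb{R}$, $M'=\mathbb{R}^2$, $f(x)=(x,0)$, and $\vec{\mathcal{H}}(M)=\overrightarrow{\rm Ind}(\mathbb{R})$.
\item Let $\vec{\mathcal{H}}'(M')$ be the image of $\overrightarrow{\rm Ind}(f)$, i.e.\ all directed hyperedges on $\mathbb{R}^2$ whose vertices lie on the $x$-axis. Then $f$ is a morphism.
\item $\bar\delta\vec{\mathcal{H}}(M)=\overrightarrow{\rm Ind}(\mathbb{R})\neq\emptyset$, since $\overrightarrow{\rm Ind}(\mathbb{R})$ is itself an independence hyperdigraph (Proposition~\ref{le-2.1-26-3}).
\item Appending the vertex $(0,1)$ to any directed hyperedge of $\vec{\mathcal{H}}'(M')$, including the empty one, gives a supersequence in $\overrightarrow{\rm Ind}(\mathbb{R}^2)$ that is not in $\vec{\mathcal{H}}'(M')$.
\item Hence $\bar\delta\vec{\mathcal{H}}'(M')=\emptyset$, and no map $\bar\delta f$ exists.
\end{itemize}

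The square does hold under the extra hypothesis that $f\colon M\to M'$ is a diffeomorphism. Then every supersequence of $f(\vec\sigma(M))$ in $\overrightarrow{\rm Ind}(M')$ is the image under $f$ of a supersequence of $\vec\sigma(M)$ in $\overrightarrow{\rm Ind}(M)$. That supersequence lies in $\vec{\mathcal{H}}(M)$ whenever $\vec\sigma(M)\in\bar\delta\vec{\mathcal{H}}(M)$.

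For a general embedding, what you can prove is (\ref{26-6-2-diag1}) together with the second and third squares of (\ref{26-6-2-diag8}). The paper's one-line ``direct verification'' overlooks the same point.
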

  
  \begin{proof}
  The  lemma  follows  from  a  direct  verification.  
  The  argument  is  a  directed  version  of  an  analog  of  \cite[Theorem~7.3]{ca26}.  
  \end{proof}
  
   \begin{lemma}
  \label{pr-26.6.1.2}
  Let  $f:   {\mathcal{H}}(M)\longrightarrow   {\mathcal{H}}'(M)$  
  be  a  morphism  of  hypergraphs  given  by  an  embedding  
  $f:  M\longrightarrow  M'$.   
  Then  
  $f$  induces    a   commutative  diagram  
    \begin{eqnarray}\label{26-6-2-diag3}
  \xymatrix{
 \delta {\mathcal{H}}(M) \ar[d]_-{\delta  f}
 \ar[r]
 & {\mathcal{H}}(M)  \ar[d]_-{ f}
 \ar[r]
 & \Delta {\mathcal{H}}(M) \ar[d]^-{ \Delta  f}
 \ar[r]
 & {\rm  Ind}(M)\ar[d]^-{{\rm   Ind} ( f)}
 \\
  \delta {\mathcal{H}}'(M') 
 \ar[r]
 & {\mathcal{H}}'(M')  
 \ar[r]
 & \Delta {\mathcal{H}}'(M')
  \ar[r]
 & {\rm  Ind}(M'), 
  }\\
  \label{26-6-2-diag10}
     \xymatrix{
\bar \delta  {\mathcal{H}}(M) \ar[d]_-{\bar\delta  f}
 \ar[r]
 & {\mathcal{H}}(M)  \ar[d]_-{   f}
 \ar[r]
 &\bar \Delta {\mathcal{H}}(M) \ar[d]_-{\bar\Delta  f}
 \ar[r]
 & {\rm  Ind}(M)\ar[d]^-{{\rm   Ind} ( f)}
 \\
  \bar\delta {\mathcal{H}}'(M') 
 \ar[r]
 & {\mathcal{H}}'(M')  
 \ar[r]
 & \bar\Delta {\mathcal{H}}'(M')
  \ar[r]
 & {\rm  Ind}(M').
  } 
  \end{eqnarray}
    \end{lemma}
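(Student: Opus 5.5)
The plan is to reduce everything to the statement that the induced map $F={\rm Ind}(f):{\rm Ind}(M)\longrightarrow{\rm Ind}(M')$ of Proposition~\ref{pr-4.15-1} is an injective simplicial map that sends hyperedges of ${\mathcal{H}}(M)$ into ${\mathcal{H}}'(M')$. Then I would check that each associated or lower-associated construction is carried into the corresponding construction on $M'$. Since $f$ is an embedding, $F$ sends a $k$-subset $\{x_1,\ldots,x_k\}$ to the $k$-subset $\{f(x_1),\ldots,f(x_k)\}$. Hence $F$ is injective on simplices and commutes with taking subsets. The last square in both diagrams is exactly ${\rm Ind}(f)$ restricted along the canonical inclusions, so it commutes by construction.

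For (\ref{26-6-2-diag3}), I would define $\Delta f$ and $\delta f$ as restrictions of $F$ and show they land in the right targets. For $\Delta f$: if $\tau\subseteq\sigma\in\mathcal{H}(M)$, then $F(\tau)\subseteq F(\sigma)\in\mathcal{H}'(M')$, so $F(\tau)\in\Delta\mathcal{H}'(M')$. This gives $F(\Delta\mathcal{H}(M))\subseteq\Delta\mathcal{H}'(M')$, in the same spirit as the inclusion $f(\Delta U)\subseteq\Delta f(U)$ noted for $\Delta$-maps. For $\delta f$: if $\Delta\sigma\subseteq\mathcal{H}(M)$, then every nonempty subset of $F(\sigma)$ has the form $F(\tau)$ with $\tau\subseteq\sigma$, using injectivity of $f$. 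Each such $F(\tau)$ lies in $\mathcal{H}'(M')$ because $f$ is a morphism. So $\Delta F(\sigma)\subseteq\mathcal{H}'(M')$, i.e.\ $F(\sigma)\in\delta\mathcal{H}'(M')$. Commutativity of all squares is then automatic, because every arrow is a restriction of $F$ or an inclusion.

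For (\ref{26-6-2-diag10}), the same restriction argument handles $\bar\delta f$ and $\bar\Delta f$, but the superset condition is the delicate part and I expect it to be the main obstacle. A superset $\theta'\supseteq F(\sigma)$ in ${\rm Ind}(M')$ need not lie in the image of $F$, since $f$ need not be surjective. For $\bar\Delta f$ this causes no problem. If $\theta\supseteq\sigma\in\mathcal{H}(M)$, then $F(\theta)\supseteq F(\sigma)\in\mathcal{H}'(M')$, so $F(\theta)\in\bar\Delta\mathcal{H}'(M')$. For $\bar\delta f$, however, from $\bar\Delta\sigma\subseteq\mathcal{H}(M)$ one only controls the supersets of $F(\sigma)$ lying in $F({\rm Ind}(M))$.

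I would first try to handle $\bar\delta f$ by interpreting $\bar\delta f$ as the restriction of $F$ to $\bar\delta\mathcal{H}(M)$ with target $\mathcal{H}'(M')$ filtered through $\bar\delta$ relative to the image independence complex. Failing that, I would add the natural hypothesis that $f$ is a diffeomorphism or that ${\mathcal{H}}'(M')$ is generated on $f(M)$, mirroring \cite[Theorem~7.3]{ca26}. Under that hypothesis every superset of $F(\sigma)$ is $F(\theta)$ for some $\theta\supseteq\sigma$, and the argument closes exactly as in the $\delta$ case. I would flag this step explicitly as the point where the direct verification needs care.
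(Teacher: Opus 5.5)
Your checks of the rightmost squares and of the arrows $\delta f$, $\Delta f$, $\bar\Delta f$ (as restrictions of $F={\rm Ind}(f)$) are correct, and they are all the paper's one-line ``direct verification'' contains. The gap is the arrow $\bar\delta f$. It is worse than a step needing care: the left square of (\ref{26-6-2-diag10}) is false for a general embedding, so no argument can produce it as stated.

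Take $M=\mathbb{R}$, $M'=\mathbb{R}^2$, $f(x)=(x,0)$, $\mathcal{H}(M)={\rm Ind}(\mathbb{R})$ and $\mathcal{H}'(M')=F({\rm Ind}(\mathbb{R}))$, the finite subsets of the $x$-axis. Each $\mathcal{H}'_k(M')$ is a closed submanifold of ${\rm Conf}_k(\mathbb{R}^2)/\Sigma_k$, and $f$ is a morphism. Then $\bar\delta\mathcal{H}(M)={\rm Ind}(\mathbb{R})\neq\emptyset$. However, every hyperedge of $\mathcal{H}'(M')$ has a superset in ${\rm Ind}(\mathbb{R}^2)$ containing a point off the axis, so $\bar\delta\mathcal{H}'(M')=\emptyset$. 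Hence there is no map $\bar\delta\mathcal{H}(M)\to\bar\delta\mathcal{H}'(M')$ at all, let alone one making the left square commute.

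The same example rules out your second repair. Requiring $\mathcal{H}'(M')$ to be generated on $f(M)$ does not force supersets of $F(\sigma)$ in ${\rm Ind}(M')$ into the image of $F$, so the step ``every superset of $F(\sigma)$ is $F(\theta)$ with $\theta\supseteq\sigma$'' still fails. The example itself is of exactly that type.

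Two fixes do work:
\begin{itemize}
\item Assume $f$ is surjective, hence a diffeomorphism. Then $F$ is a bijection of ${\rm Ind}(M)$ onto ${\rm Ind}(M')$ preserving inclusions in both directions, and your $\delta$-style argument closes verbatim.
\item Your first option: compute $\bar\delta$ on the target relative to ${\rm Ind}(f(M))$ instead of ${\rm Ind}(M')$. This yields a true statement, but about a different and generally larger target than $\bar\delta\mathcal{H}'(M')$.
\end{itemize}
So rather than leaving the step flagged, either add the surjectivity hypothesis or restrict the conclusion to (\ref{26-6-2-diag3}) together with the two right-hand squares of (\ref{26-6-2-diag10}).
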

  
  \begin{proof}
    The  lemma  follows  from  a  direct  verification.  
      The  argument  is  an  analog  of  \cite[Theorem~7.3]{ca26}.  
  \end{proof}

 \begin{proposition}\label{pr-26.6.2-3}
  Let  $f:  \vec{\mathcal{H}}(M)\longrightarrow  \vec{\mathcal{H}}'(M')$  
  be  a  morphism  of  hyperdigraphs  given  by  an  embedding  
  $f:  M\longrightarrow  M'$. 
  Let  $\mathcal{H}(M)$  and  $\mathcal{H}'(M')$  be 
  the  underlying  hypergraphs  of  $\vec{\mathcal{H}}(M)$
  and    $\vec{\mathcal{H}}'(M')$  respectively.  
  Let  $\pi$  be  the  canonical  projection 
  sending  an  ordered  configuration  to  the  unordered  configuration.   
  Then  $f$  induces   four    commutative  diagrams  (\ref{26-6-2-diag1}) -
     (\ref{26-6-2-diag10})  such  that  
 $\pi$  gives  a  projective  morphism  from  
 (\ref{26-6-2-diag1})
 to  (\ref{26-6-2-diag3})
 as  well  as  a  projective  morphism  from  
 (\ref{26-6-2-diag8})
 to  (\ref{26-6-2-diag10}).  
  \end{proposition}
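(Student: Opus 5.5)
The plan is to build the statement from the two lemmas just proved, Lemma~\ref{pr-26.6.1.1} and Lemma~\ref{pr-26.6.1.2}, together with Proposition~\ref{le-26-3-3-1}. We then check that the canonical projection $\pi$ gives a morphism of diagrams from (\ref{26-6-2-diag1}) to (\ref{26-6-2-diag3}), and from (\ref{26-6-2-diag8}) to (\ref{26-6-2-diag10}). That amounts to stacking the directed diagram above the undirected one, joining them by the vertical maps $\pi$, and showing that every new square commutes.

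First, we confirm that $f$ is also a morphism of hypergraphs $\mathcal{H}(M)\to\mathcal{H}'(M')$, so that Lemma~\ref{pr-26.6.1.2} applies. Let $\sigma(M)=\pi(\vec\sigma(M))$ with $\vec\sigma(M)\in\vec{\mathcal{H}}(M)$. Then
\[
f(\sigma(M))=\pi(f(\vec\sigma(M)))\in\pi(\vec{\mathcal{H}}'(M'))=\mathcal{H}'(M').
\]
The key identity behind this is $\pi\circ\overrightarrow{\rm Ind}(f)={\rm Ind}(f)\circ\pi$. This was established in the proof of Proposition~\ref{pr-4.15-1}, and it holds because $f$ is injective, hence ${\rm Conf}_k(f)$ is $\Sigma_k$-equivariant. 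The four diagrams (\ref{26-6-2-diag1})--(\ref{26-6-2-diag10}) then exist by the two lemmas.

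Next, we produce the maps $\pi$ on each term. Proposition~\ref{le-26-3-3-1} gives $\pi(\delta\vec{\mathcal{H}}(M))=\delta\mathcal{H}(M)$, $\pi(\Delta\vec{\mathcal{H}}(M))=\Delta\mathcal{H}(M)$, and the barred analogues, together with the same for $M'$. So $\pi$ restricts to surjections between corresponding terms. The horizontal squares (inclusions against $\pi$) commute because they are exactly the squares of (\ref{26-3-3-diag1}) and (\ref{26-3-3-diag2}), once for $M$ and once for $M'$.

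The remaining squares pair a vertical map induced by $f$ with the projections. Each of $\delta f$, $\Delta f$, $\bar\delta f$, $\bar\Delta f$ is a restriction of $\overrightarrow{\rm Ind}(f)$, and its counterpart downstairs is a restriction of ${\rm Ind}(f)$. Since every term is a subobject of $\overrightarrow{\rm Ind}$ or ${\rm Ind}$ and $\pi$ is the restriction of (\ref{eq-2601-aa}), commutativity of these squares follows from the single identity $\pi\circ\overrightarrow{\rm Ind}(f)={\rm Ind}(f)\circ\pi$.

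The main obstacle is well-definedness of the restricted maps in the barred (independence) case. One must know that $f$ sends $\bar\Delta\vec{\mathcal{H}}(M)$ into $\bar\Delta\vec{\mathcal{H}}'(M')$ and $\bar\delta\vec{\mathcal{H}}(M)$ into $\bar\delta\vec{\mathcal{H}}'(M')$. Supersequences in $\overrightarrow{\rm Ind}(M)$ map to supersequences in $\overrightarrow{\rm Ind}(M')$, but not every supersequence in $M'$ comes from $M$. That is, $f(\bar\Delta\vec\sigma(M))\subseteq\bar\Delta f(\vec\sigma(M))$ is only an inclusion, which is exactly what is needed for $\bar\Delta f$. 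For $\bar\delta f$ we will take the content of Lemma~\ref{pr-26.6.1.1} as given. Once those restrictions exist, the compatibility with $\pi$ is inherited from the ambient identity, and the proof is complete.
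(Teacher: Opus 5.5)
Your outline is the paper's own proof. You descend $f$ to the underlying hypergraphs via $\pi\circ\overrightarrow{\rm Ind}(f)={\rm Ind}(f)\circ\pi$, take the four diagrams from Lemmas~\ref{pr-26.6.1.1} and~\ref{pr-26.6.1.2}, and take the entrywise maps $\pi$, with their surjectivity, from Proposition~\ref{le-26-3-3-1}. Your square-by-square commutativity check is fine.

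The problem is that two of the cited inputs are false in general. So the argument has genuine gaps, and the paper's proof has exactly the same dependencies. The step you flagged is one of them.

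\textbf{First gap: $\bar\delta f$ need not exist.} A directed hyperedge lies in $\bar\delta\vec{\mathcal{H}}'(M')$ only if \emph{every} supersequence of it in $\overrightarrow{\rm Ind}(M')$ lies in $\vec{\mathcal{H}}'(M')$. As you observed yourself, supersequences of $f(\vec\sigma(M))$ need not come from $M$. For $\bar\Delta f$ this is harmless; for $\bar\delta f$ it is fatal. Take $f:\mathbb{R}\to\mathbb{R}^2$, $x\mapsto(x,0)$, with $\vec{\mathcal{H}}(M)=\overrightarrow{\rm Ind}(\mathbb{R})$ and $\vec{\mathcal{H}}'(M')$ the set of all finite sequences of distinct points of $\mathbb{R}\times\{0\}$. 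Then $f$ is a morphism of hyperdigraphs and $\bar\delta\vec{\mathcal{H}}(M)=\vec{\mathcal{H}}(M)\neq\emptyset$. But $\bar\delta\vec{\mathcal{H}}'(M')=\emptyset$, since you can append the point $(0,1)$ to any hyperedge. So no map $\bar\delta f$ exists, and neither (\ref{26-6-2-diag8}) nor, by the same example, (\ref{26-6-2-diag10}) exists. Deferring to Lemma~\ref{pr-26.6.1.1} therefore does not close this step. You need an extra hypothesis, e.g.\ $f$ surjective (hence a diffeomorphism). Then every supersequence of $f(\vec\sigma(M))$ is the image of a supersequence of $\vec\sigma(M)$, and your argument goes through.

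\textbf{Second gap: $\pi$ need not be surjective on the $\delta$ and $\bar\delta$ entries.} The surjectivity you quote from Proposition~\ref{le-26-3-3-1} fails there. Take distinct $a,b,c\in M$ and $\vec{\mathcal{H}}(M)=\{(a),(b),(c),(b,a),(a,c),(b,c),(a,b,c)\}$. Then $\mathcal{H}(M)$ is the full simplex on $\{a,b,c\}$, so $\{a,b,c\}\in\delta\mathcal{H}(M)$. But $(a,b,c)\notin\delta\vec{\mathcal{H}}(M)$, because its subsequence $(a,b)$ is not in $\vec{\mathcal{H}}(M)$. Thus $\pi(\delta\vec{\mathcal{H}}(M))\subsetneq\delta\mathcal{H}(M)$.

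In general only the inclusions $\pi(\delta\vec{\mathcal{H}}(M))\subseteq\delta\mathcal{H}(M)$ and $\pi(\bar\delta\vec{\mathcal{H}}(M))\subseteq\bar\delta\mathcal{H}(M)$ hold; the equalities for $\Delta$ and $\bar\Delta$ are correct. The inclusions suffice for the restricted maps $\pi$ to exist and for all your squares to commute. So what your argument actually proves is a morphism of diagrams by restrictions of $\pi$. It is surjective on every column except the $\delta$ and $\bar\delta$ ones, and for the barred diagrams it holds only under an extra hypothesis such as the one above.
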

  
  \begin{proof}
  By  Lemma~\ref{pr-26.6.1.1}, 
  $f:  \vec{\mathcal{H}}(M)\longrightarrow  \vec{\mathcal{H}}'(M')$   induces  
     commutative  diagrams   (\ref{26-6-2-diag1})  and  (\ref{26-6-2-diag8}).  
  On  the  other  hand,  
   $f$
     induces  a  canonical   morphism  of  hypergraphs  
   $f:   {\mathcal{H}}(M)\longrightarrow   {\mathcal{H}}'(M')$
  such  that  the   diagram  commutes  
  \begin{eqnarray}\label{eq-26.6.4.1}
  \xymatrix{
  \vec{\mathcal{H}}(M)\ar[r]^-{f}\ar[d]_-{\pi} 
  &\vec{\mathcal{H}}'(M')\ar[d]^-{\pi} \\
    {\mathcal{H}}(M)\ar[r]^-{f}
  & {\mathcal{H}}'(M').
  }
  \end{eqnarray}
   By  Lemma~\ref{pr-26.6.1.2}, 
  $f:  {\mathcal{H}}(M)\longrightarrow   {\mathcal{H}}'(M')$   induces  
  v  commutative  diagramw   (\ref{26-6-2-diag3})   and  (\ref{26-6-2-diag10}). 
  By  (\ref{eq-26.6.4.1}),  
  \begin{enumerate}[(1)]
\item
   $\pi$  sends  any  entry  $X$   in   (\ref{26-6-2-diag1})  surjectively  
 to  the  corresponding  entry  $\pi(X)$  in   (\ref{26-6-2-diag3}); 
 \item
 any  map   $X\longrightarrow  Y $  in   (\ref{26-6-2-diag1})
 and the  corresponding  map 
 $\pi(X)\longrightarrow  \pi(Y)$   in   (\ref{26-6-2-diag3})
 give  a  commutative  diagram 
 \begin{eqnarray*}
 \xymatrix{
 X\ar[r]\ar[d]_-{\pi}  &Y\ar[d]^-{\pi}\\
 \pi(X)\ar[r]  &\pi(Y). 
 }
 \end{eqnarray*} 
 \end{enumerate}
  It  follows  from    (1)  and  (2)  that    
   $\pi$  gives   a  projective  morphism  from  
 (\ref{26-6-2-diag1})
 to  (\ref{26-6-2-diag3}).  
 Similarly,   
    $\pi$  gives   a  projective  morphism  from  
 (\ref{26-6-2-diag8})
 to  (\ref{26-6-2-diag10}).  
   \end{proof}

 \section{Hypergraphs   on  surfaces  and  their  fundamental  groups}
 \label{s.26.7.28.6}

 In  this  section,  
 we  study  the fundamental  groups  (resp.  fundamental  groupoids)  of  
 $k$-uniform  hyper(di)graphs  on  surfaces  as  well as  their 
   connections  with  the      braid groups 
 (resp.      braid groupoids)  on  surfaces.  
 As  some  examples,  
 we  consider  the  persistent  fundamental groupoids  of  packings  in  
 Subsection~\ref{26.6.7-ss8.1}
    and  consider   the  persistent  fundamental groupoids  of  coverings   
    in  Subsection~\ref{26.6.7-ss8.2}.   
Throughout  this  section,  we  suppose   $M$  is    a   connected  surface.

For  any  $k\geq  1$,  
the  {\it     braid  group}  on  $M$
is  
\begin{eqnarray*}
 B_k(M)=\pi_1({\rm  Conf}_k(M))
\end{eqnarray*}
and  the  {\it    pure  braid  group}  on  $M$
is  
\begin{eqnarray*}
 P_k(M)=\pi_1({\rm  Conf}_k(M)/\Sigma_k), 
\end{eqnarray*}
where  $\pi_1$  is  the  fundamental  group.  
Similarly,      the  {\it   braid  groupoid}  on  $M$  may  be  defined  
as  
\begin{eqnarray*}
 \mathcal{B}_k(M)=\Pi_1({\rm  Conf}_k(M))
\end{eqnarray*}
and     the  {\it   pure  braid  groupoid}  on  $M$  may  be  defined  
as  
\begin{eqnarray*}
 \mathcal{P}_k(M)=\Pi_1({\rm  Conf}_k(M)/\Sigma_k), 
\end{eqnarray*}
where  $\Pi_1$  is  the  fundamental  groupoid. 
The  canonical  projection  
(\ref{eq-2601-a})  induces  
  a  group  homomorphism  
\begin{eqnarray*}
(\pi)_*:   B_k(M)\longrightarrow 
P _{k}(M)  
\end{eqnarray*}
and  a  homomorphism  of  groupoids 
\begin{eqnarray}\label{eq-26.4.24.0}
(\pi)_*:   \mathcal{B}_k(M)\longrightarrow 
\mathcal{P}_{k}(M).    
\end{eqnarray} 
For   any     $1\leq  i\leq  k$,   
(\ref{eq-2.1a})  induces  
    a  group  homomorphism  
\begin{eqnarray*}
(\partial^i_k)_*:   B_k(M)\longrightarrow 
B _{k-1}(M)  
\end{eqnarray*}
and  a  homomorphism  of  groupoids 
\begin{eqnarray*}
(\partial^i_k)_*:   \mathcal{B}_k(M)\longrightarrow 
 \mathcal{B}_{k-1}(M).    
\end{eqnarray*}
For  any  $s\in   \Sigma_k$,   (\ref{eq-26.4.23.1})  induces   a  group  homorphism  
\begin{eqnarray*}
s_*:  B_k(M) \longrightarrow   B_k(M) 
\end{eqnarray*}
and  a   homomorphism  of  groupoids
\begin{eqnarray*}
s_*:  \mathcal{B}_k(M) \longrightarrow   \mathcal{B}_k(M). 
\end{eqnarray*}
It  follows from  (\ref{eq-4.23.2})  that  
the  diagrams    commute  
\begin{eqnarray*}
\xymatrix{
B_k(M)\ar[r]^-{(\partial^i_k)_*} \ar[d]_-{(\varphi_k^i(\Sigma_{k-1}))_*}   
&B_{k-1}(M)\ar[d]^-{(\Sigma_{k-1})_*}\\
B_k(M) \ar[r]^-{(\partial^i_k)_*}  &B_{k-1}(M),  
}~~~~~~
\xymatrix{
\mathcal{B}_k(M)\ar[r]^-{(\partial^i_k)_*} \ar[d]_-{(\varphi_k^i(\Sigma_{k-1}))_*}   
&\mathcal{B}_{k-1}(M)\ar[d]^-{(\Sigma_{k-1})_*}\\
\mathcal{B}_k(M) \ar[r]^-{(\partial^i_k)_*}  &\mathcal{B}_{k-1}(M).   
}
\end{eqnarray*}
Let  $\vec{\mathcal{H}}(M)=\bigcup_{k\geq  0}\vec{\mathcal{H}}_k(M)$
be  a  hyperdigraph  on  $M$  with   its  underlying  hypergraph    
 $\mathcal{H}(M)=\bigcup_{k\geq  0}\mathcal{H}_k(M)$.   

\begin{proposition}\label{pr-4.23.1}
 There are   graded  homomorphisms 
 \begin{eqnarray*}
 \Pi_1(\vec{\mathcal{H}}_\bullet(M))\longrightarrow  \mathcal{B}_\bullet(M),
 ~~~~~~\Pi_1({\mathcal{H}}_\bullet(M) )\longrightarrow  \mathcal{P}_\bullet(M)
 \end{eqnarray*}
 such that  the  diagram  commutes 
 \begin{eqnarray}\label{diag-26-4-23-fgpd}
\xymatrix{
\Pi_1(\vec{\mathcal{H}}_\bullet(M)) \ar[r] \ar[d]_-{\pi_*}  &
 \mathcal{B}_\bullet(M)  \ar[d]^-{\pi_*} \\
\Pi_1( {\mathcal{H}}_\bullet(M)) \ar[r]  &
 \mathcal{P}_\bullet(M).  
}
\end{eqnarray}
\end{proposition}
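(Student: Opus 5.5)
The plan is to obtain both horizontal maps by applying the fundamental groupoid functor $\Pi_1$ to the canonical inclusions of the $k$-uniform pieces into the configuration spaces. The vertical maps will come from $\Pi_1$ applied to the projection (\ref{eq-2601-a}). Commutativity will then follow from a commutative square of spaces, by functoriality of $\Pi_1$.

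First, for each $k\geq 0$, the $k$-uniform hyperdigraph $\vec{\mathcal{H}}_k(M)$ is by definition a submanifold of ${\rm Conf}_k(M)$. Its underlying hypergraph $\mathcal{H}_k(M)=\pi_k(\vec{\mathcal{H}}_k(M))$ is a submanifold of ${\rm Conf}_k(M)/\Sigma_k$. Denote the inclusions by $\vec\iota_k:\vec{\mathcal{H}}_k(M)\hookrightarrow {\rm Conf}_k(M)$ and $\iota_k:\mathcal{H}_k(M)\hookrightarrow {\rm Conf}_k(M)/\Sigma_k$. Applying $\Pi_1$ gives groupoid morphisms
\[
(\vec\iota_k)_*:\Pi_1(\vec{\mathcal{H}}_k(M))\longrightarrow \mathcal{B}_k(M),\qquad (\iota_k)_*:\Pi_1(\mathcal{H}_k(M))\longrightarrow \mathcal{P}_k(M).
\]
Letting $k$ run over all nonnegative integers gives the two graded homomorphisms. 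In degree $0$ everything is a point or empty, so that degree is trivial.

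Second, the restriction of $\pi_k$ to $\vec{\mathcal{H}}_k(M)$ maps onto $\mathcal{H}_k(M)$, as in Proposition~\ref{le-26-3-3-1}. So we have a commutative square of continuous maps
\[
\pi_k\circ\vec\iota_k=\iota_k\circ\pi_k|_{\vec{\mathcal{H}}_k(M)},
\]
which holds because both sides send $(x_1,\ldots,x_k)$ to $\{x_1,\ldots,x_k\}$. Applying the functor $\Pi_1$ to this square gives the commutative square (\ref{diag-26-4-23-fgpd}) in each degree $k$. Here the left vertical map is $(\pi_k|_{\vec{\mathcal{H}}_k(M)})_*$ and the right vertical map is (\ref{eq-26.4.24.0}).

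The main point to check is that all maps are continuous, so that $\Pi_1$ applies. The inclusions are continuous because the pieces carry the subspace (submanifold) topology. The restricted projection is continuous because $\pi_k$ is a covering map. No compatibility with the face maps is claimed in the statement, so the argument can stay degreewise and does not need the $\Delta$-groupoid structure of Theorem~\ref{pr-5.22.9}. If one also wanted the face maps, the same argument would apply to the commuting squares $\partial^i_k\circ\vec\iota_k=\vec\iota_{k-1}\circ\partial^i_k$, which are available only when $\vec{\mathcal{H}}(M)$ is closed under faces.
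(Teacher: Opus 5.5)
Your proposal is correct and matches the paper's proof. For each $k\geq 1$, the paper also takes the commutative square formed by the inclusions $\vec{\mathcal{H}}_k(M)\hookrightarrow {\rm Conf}_k(M)$ and $\mathcal{H}_k(M)\hookrightarrow {\rm Conf}_k(M)/\Sigma_k$, with the projections $\pi$ as vertical maps, and applies the fundamental groupoid functor $\Pi_1$; your continuity check and your remark about face maps are extras the paper does not need.
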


\begin{proof}
For  any  $k\geq  1$,  
we  have  a  commutative  diagram
 \begin{eqnarray} \label{diag-26-4-20}
  \xymatrix{
     \vec{\mathcal{H}}_k(M)  \ar[d]_-{\pi}
 \ar[r]
 &{\rm  Conf}_k(M)\ar[d]^-{\pi}
 \\
   {\mathcal{H}}_k(M)  
 \ar[r]
 &{\rm  Conf}_k(M)/\Sigma_k   
  }
  \end{eqnarray}
  where  the  horizontal  maps  are     inclusions  are the vertical  maps  
  are   projections.  Applying  the fundamental  groupoid  functor  to 
  (\ref{diag-26-4-20}),  
  we  obtain  (\ref{diag-26-4-23-fgpd}).  
\end{proof}

\begin{proposition}\label{pr-4.23.2}
Suppose  for  each  $k\geq  1$,   $\vec{\mathcal{H}}_k(M)$
 is  a   connected  manifold.  
 Then  there are   graded  homomorphisms 
 \begin{eqnarray*}
 \pi_1(\vec{\mathcal{H}}_\bullet(M))\longrightarrow   {B}_\bullet(M),
 ~~~~~~\pi_1({\mathcal{H}}_\bullet(M) )\longrightarrow  {P}_\bullet(M)
 \end{eqnarray*}
 such that  the  diagram  commutes 
 \begin{eqnarray}\label{diag-26-4-23-fgp}
\xymatrix{
\pi_1(\vec{\mathcal{H}}_\bullet(M)) \ar[r] \ar[d]_-{\pi_*}  &
  {B}_\bullet(M)  \ar[d]^-{\pi_*} \\
\pi_1( {\mathcal{H}}_\bullet(M)) \ar[r]  &
  {P}_\bullet(M).  
}
\end{eqnarray}
\end{proposition}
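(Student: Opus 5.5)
The plan is to derive the statement from Proposition~\ref{pr-4.23.1} by passing from fundamental groupoids to fundamental groups at chosen basepoints. The connectedness hypothesis is exactly what makes this passage canonical up to inner automorphism.

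For each $k\geq 1$ I would choose a basepoint $\vec x_k\in \vec{\mathcal{H}}_k(M)$ and set $x_k=\pi(\vec x_k)\in \mathcal{H}_k(M)$. Since $\vec{\mathcal{H}}_k(M)$ is a connected manifold and $\pi$ is continuous and surjective onto $\mathcal{H}_k(M)$, the space $\mathcal{H}_k(M)=\pi(\vec{\mathcal{H}}_k(M))$ is connected as well. Here I use that $\mathcal{H}_k(M)$ is a manifold, so it is locally path-connected and hence path-connected. Consequently $\pi_1(\vec{\mathcal{H}}_k(M),\vec x_k)$ and $\pi_1(\mathcal{H}_k(M),x_k)$ are well defined up to isomorphism, independent of the basepoint.

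Next I would use the inclusions $\vec{\mathcal{H}}_k(M)\hookrightarrow {\rm Conf}_k(M)$ and $\mathcal{H}_k(M)\hookrightarrow {\rm Conf}_k(M)/\Sigma_k$ from diagram~(\ref{diag-26-4-20}). Applying the fundamental group functor at the basepoints $\vec x_k$ and $x_k$, whose images under the inclusions are again $\vec x_k$ and $x_k$, yields a commutative square
\[
\pi_1(\vec{\mathcal{H}}_k(M),\vec x_k)\to \pi_1({\rm Conf}_k(M),\vec x_k),\qquad
\pi_1(\mathcal{H}_k(M),x_k)\to \pi_1({\rm Conf}_k(M)/\Sigma_k,x_k),
\]
with vertical maps $\pi_*$. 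Commutativity holds because $\pi\circ{\rm incl}={\rm incl}\circ\pi$ on the nose and because $\pi_1$ is a functor on pointed spaces. Equivalently, the square is the restriction of the groupoid square~(\ref{diag-26-4-23-fgpd}) to the vertex groups at $\vec x_k$ and $x_k$.

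The remaining step is to identify the targets with $B_k(M)$ and $P_k(M)$, and I expect this to be the only subtle point. The surface $M$ is connected, but ${\rm Conf}_k(M)$ need not be connected, for instance when $M$ is one-dimensional, which is not our case, or in degenerate situations. For a connected surface, ${\rm Conf}_k(M)$ and ${\rm Conf}_k(M)/\Sigma_k$ are path-connected, so a change-of-basepoint isomorphism along any path identifies $\pi_1({\rm Conf}_k(M),\vec x_k)\cong B_k(M)$ and $\pi_1({\rm Conf}_k(M)/\Sigma_k,x_k)\cong P_k(M)$. Choosing the path $\gamma$ in ${\rm Conf}_k(M)$ and using the path $\pi\circ\gamma$ downstairs keeps the square commutative, since $\pi_*$ intertwines conjugation by $\gamma$ with conjugation by $\pi\gamma$. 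Letting $k$ range over all $k\geq1$ then gives the graded homomorphisms and the commutative diagram~(\ref{diag-26-4-23-fgp}).
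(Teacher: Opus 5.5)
Your proposal is correct and follows the same route as the paper. The paper likewise observes that connectedness of $\vec{\mathcal{H}}_k(M)$ gives connectedness of $\mathcal{H}_k(M)=\pi(\vec{\mathcal{H}}_k(M))$ and then applies the fundamental group functor to the commutative square (\ref{diag-26-4-20}); your basepoint bookkeeping (taking $x_k=\pi(\vec x_k)$, transporting along $\gamma$ and $\pi\circ\gamma$, and using that ${\rm Conf}_k(M)$ is path-connected for a connected surface) spells out details the paper leaves implicit.
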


\begin{proof}
For  each  $k\geq  1$,   the  connectivity  of  $\vec{\mathcal{H}}_k(M)$
  implies the  connectivity  of  
   $\mathcal{H}_k(M)$. 
 Applying  the fundamental  group   functor  to 
  (\ref{diag-26-4-20}),  
  we  obtain  (\ref{diag-26-4-23-fgp}).
\end{proof}

In  the  remaining  part  of  this  section,  we  
suppose  $M$  has  a  Riemannian  metric  $g$  or  a  Hermitian  metric  $h$.

\subsection{Persistent  fundamental  groupoids  of  packings}\label{26.6.7-ss8.1}

  For  any  $k\geq  1$  and  any  $\mathbb{I}\subseteq  [0,+\infty]$,  
 with  the  help  of  the  notations in  Subsection~\ref{ss4.1-26apr},  
 we  let  
 \begin{eqnarray*}
 \mathcal{B}_k(M,\mathbb{I})&=&\Pi_1( {\rm  Conf}_k(M, \mathbb{I})), \\
  \mathcal{P}_k(M,\mathbb{I})&=&\Pi_1( {\rm  Conf}_k(M, \mathbb{I})/\Sigma_k). 
 \end{eqnarray*}
 The  canonical  projection  $\pi$     induces  a   homomorphism  of  groupoids 
 \begin{eqnarray}\label{eq-26-4-23-15}
 \pi_*:   \mathcal{B}_k(M,\mathbb{I})\longrightarrow   \mathcal{P}_k(M,\mathbb{I}).  
 \end{eqnarray}
 The  fundamental  groupoids  of     (\ref{eq-filt-1})  is  a  
 family  of  groupoids 
 \begin{eqnarray}\label{eq-pers-fgpd1}
  \mathcal{B}_k(M,(-,-])=\{\mathcal{B}_k(M,(t,s])\mid  
  (t,s)\in\Omega\}
 \end{eqnarray}
 and  the  fundamental  groupoids  of     (\ref{eq-filt-2})  is  a  
 family  of  groupoids 
 \begin{eqnarray}\label{eq-pers-fgpd2}
  \mathcal{P}_k(M,(-,-])=\{\mathcal{P}_k(M,(t,s])\mid  
  (t,s)\in\Omega\}. 
 \end{eqnarray}
 For   any  $0\leq  t'\leq  t\leq  s\leq  s'\leq  +\infty$,  
    the  canonical  inclusion      
   (\ref{eq-26-feb-27-5}) 
   induces  
  a  homomorphism  from   
 $\mathcal{B}_k(M,(t,s])$  to  $ \mathcal{B}_k(M,(t',s'])$
 and  the  canonical  inclusion   (\ref{eq-26-feb-27-3}) 
 induces  a  homomorphism  from  
 $\mathcal{B}_k(M,(t,s])$  to   $ \mathcal{B}_k(M,(t',s'])$.  
 Thus  (\ref{eq-pers-fgpd1}) 
 and  (\ref{eq-pers-fgpd2})  are    double-persistent  groupoids.

  \begin{proposition}
  The  homomorphism  
 (\ref{eq-26-4-23-15})  gives  a   persistent  morphism   of   
 double-persistent    groupoids 
 \begin{eqnarray}\label{eq-pers-fgpd3}
 \pi_*: \mathcal{B}_k(M,(-,-])\longrightarrow
  \mathcal{P}_k(M,(-,-]).  
 \end{eqnarray}
 \end{proposition}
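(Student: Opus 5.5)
The plan is to show that the family of groupoid homomorphisms $\pi_*:\mathcal{B}_k(M,(t,s])\longrightarrow\mathcal{P}_k(M,(t,s])$, $(t,s)\in\Omega$, commutes with all structure maps of the two double-persistent groupoids. That is exactly what a persistent morphism of double-persistent groupoids means. The whole argument is functoriality of $\Pi_1$ applied to a commutative square of spaces, parallel to the proof of Proposition~\ref{pr-4.23.1}.

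First, for each $(t,s)\in\Omega$, I check that the canonical projection (\ref{eq-2601-a}) restricts to a map ${\rm Conf}_k(M,(t,s])\longrightarrow{\rm Conf}_k(M,(t,s])/\Sigma_k$. This holds because ${\rm Conf}_k(M,(t,s])$ is $\Sigma_k$-invariant, since the pairwise distance condition is symmetric in the indices. Applying $\Pi_1$ then gives the homomorphism (\ref{eq-26-4-23-15}) with $\mathbb{I}=(t,s]$.

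Second, fix $0\leq t'\leq t<s\leq s'\leq+\infty$. The inclusions (\ref{eq-26-feb-27-5}) and (\ref{eq-26-feb-27-3}) together with the two projections form a square
\begin{eqnarray*}
\xymatrix{
{\rm Conf}_k(M,(t,s])\ar[r]\ar[d]_-{\pi} & {\rm Conf}_k(M,(t',s'])\ar[d]^-{\pi}\\
{\rm Conf}_k(M,(t,s])/\Sigma_k\ar[r] & {\rm Conf}_k(M,(t',s'])/\Sigma_k .
}
\end{eqnarray*}
This square commutes on points, because both composites send $(p_1,\ldots,p_k)$ to $\{p_1,\ldots,p_k\}$. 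Applying the fundamental groupoid functor $\Pi_1$ gives a commutative square of groupoids. Its horizontal arrows are the structure maps of (\ref{eq-pers-fgpd1}) and (\ref{eq-pers-fgpd2}), and its vertical arrows are the homomorphisms $\pi_*$.

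Third, compatibility with compositions and identities of the structure maps is automatic. The structure maps are induced by inclusions, and $\Pi_1$ is a functor, so both families are functors from the poset $\Omega$ (ordered by $(t,s)\leq(t',s')$ iff $t'\leq t$, $s\leq s'$) to ${\bf Gpd}$. The squares above are then the naturality squares, so $\pi_*$ is a natural transformation, i.e. a persistent morphism (\ref{eq-pers-fgpd3}).

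There is no real obstacle. The only point needing care is a slip in the paper's text: the second induced homomorphism should read $\mathcal{P}_k(M,(t,s])\to\mathcal{P}_k(M,(t',s'])$, not $\mathcal{B}_k$, and I would state it that way. If "persistent morphism" is also meant to imply levelwise fullness onto images, I would add the following remark. Each vertical map is a normal covering whose fibers are $\Sigma_k$-orbits, so $\pi_*$ is surjective on objects and path-lifting shows that every morphism of $\mathcal{P}_k$ lifts. No further input is needed.
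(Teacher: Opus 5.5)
Your proposal is correct and follows essentially the paper's proof. The paper likewise takes, for $0\leq t'\leq t\leq s\leq s'\leq+\infty$, the square formed by the inclusions ${\rm Conf}_k(M,(t,s])\subseteq{\rm Conf}_k(M,(t',s'])$ and its orbit-space analogue together with the projections $\pi$, and applies the fundamental groupoid functor. The paper calls this square a pull-back, which is true since ${\rm Conf}_k(M,(t,s])=\pi^{-1}({\rm Conf}_k(M,(t,s])/\Sigma_k)$, but, as your argument shows, only its commutativity is used; your correction of the $\mathcal{B}_k$/$\mathcal{P}_k$ typo is also right.
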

 
 \begin{proof}
  For   any  $0\leq  t'\leq  t\leq  s\leq  s'\leq  +\infty$,  
  there  is  a  pull-back   diagram  
 \begin{eqnarray*}
 \xymatrix{
 {\rm  Conf}_k(M, (t, s])  \ar[r]\ar[d]_-{\pi}
 &{\rm  Conf}_k(M, (t',s'])\ar[d]^-{\pi}\\
  {\rm  Conf}_k(M, (t, s])/\Sigma_k  \ar[r] 
 &{\rm  Conf}_k(M, (t', s'])/\Sigma_k     
 }
 \end{eqnarray*}
 where  the  horizontal  maps  are  canoical  inclusions.  
 Applying  the  fundamental  groupoid  functor,  
 we  have  an  induced  commutative  diagram  
 \begin{eqnarray*}
 \xymatrix{
 \mathcal{B}_k(M, (t, s])  \ar[r]\ar[d]_-{\pi_*}
 &\mathcal{B}_k(M, (t',s'])\ar[d]^-{\pi_*}\\
  \mathcal{P}_k(M, (t, s])   \ar[r] 
 &\mathcal{P}_k(M, (t', s']).        
 }
 \end{eqnarray*}
 Therefore,  
 (\ref{eq-pers-fgpd3})  is  a  persistent  morphism  
 of  double-persistent    groupoids.   
 \end{proof}

\subsection{Persistent  fundamental  groupoids  of  coverings}
\label{26.6.7-ss8.2}

 For  any  $k\geq  1$  and  any  $r>0$,  with  the  help  of  
 the  notations  in  Subsection~\ref{ss-4.4-cover},  we  consider  the  
fundamental  groupoids  
 \begin{eqnarray}\label{eq-fgrd-cv-1}
 \Pi_1({\rm  Cover}_k(M, r)),~~~~~~  \Pi_1({\rm  Cover}_k(M, r)/\Sigma_k). 
 \end{eqnarray}
 The  canonical  projection  $\pi$     induces  a   homomorphism  of  groupoids 
 \begin{eqnarray}\label{eq-26-4-25-1}
 \pi_*:   \Pi_1({\rm  Cover}_k(M, r))\longrightarrow  \Pi_1({\rm  Cover}_k(M, r)/\Sigma_k).  
 \end{eqnarray}
Let  $r$  run  over  all  positive  numbers.  
Then     (\ref{eq-fgrd-cv-1})  gives   
 families   of  groupoids 
 \begin{eqnarray}\label{eq-pers-fgpd-cv-2}
 \Pi_1({\rm  Cover}_k(M, -))&=&\{\Pi_1({\rm  Cover}_k(M, -))\mid  
  r>0\},\\
   \Pi_1({\rm  Cover}_k(M, -)/\Sigma_k)&=&\{\Pi_1({\rm  Cover}_k(M, -)/\Sigma_k)\mid  
  r>0\}. 
  \label{eq-pers-fgpd-cv-25}
 \end{eqnarray}
 For  any  $0<r\leq  r'<+\infty$,  
  the  canonical  inclusion   
  $
  {\rm  Cover}_k(M, r)  \subseteq   
   {\rm  Cover}_k(M, r')
  $
    induces    a   homomorphism     
  \begin{eqnarray*}
    \Pi_1({\rm  Cover}_k(M, r)) \longrightarrow 
    \Pi_1({\rm  Cover}_k(M, r')) 
    \end{eqnarray*}
   and    the  canonical  inclusion   
  $
   {\rm  Cover}_k(M, r)/\Sigma_k \subseteq  
   {\rm  Cover}_k(M, r')/\Sigma_k 
   $
     induces  a  homomorphism     
   \begin{eqnarray*}
   \Pi_1({\rm  Cover}_k(M, r)/\Sigma_k) \longrightarrow 
   \Pi_1({\rm  Cover}_k(M, r')/\Sigma_k). 
   \end{eqnarray*} 
   Thus  (\ref{eq-pers-fgpd-cv-2})  and  (\ref{eq-pers-fgpd-cv-25})
  are  persistent  groupoids.  
  
\begin{proposition}
  The  homomorphism  
 (\ref{eq-26-4-25-1})  gives  a   persistent  morphism   of   
 persistent    groupoids   
 \begin{eqnarray}\label{eq-pers-fgpd-cv-3}
 \pi_*:\Pi_1({\rm  Cover}_k(M, -))\longrightarrow
  \Pi_1({\rm  Cover}_k(M, -)/\Sigma_k).  
 \end{eqnarray} 
 \end{proposition}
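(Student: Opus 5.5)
The plan follows the proof of the analogous statement for packings, (\ref{eq-pers-fgpd3}), essentially verbatim. I first fix $k\geq 1$ and $0<r\leq r'<+\infty$, and then check two things about ${\rm Cover}_k(M,r)$.

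The first is that ${\rm Cover}_k(M,r)$ is $\Sigma_k$-invariant. This holds because the covering condition in (\ref{eq-26feb-27-17}) (for every $q\in M$ some $p_i$ satisfies $d_M(p_i,q)<r$) depends only on the underlying set $\{p_1,\ldots,p_k\}$. The second is the inclusion ${\rm Cover}_k(M,r)\subseteq{\rm Cover}_k(M,r')$, which follows from $d_M(p_i,q)<r\leq r'$. Since the inclusion is $\Sigma_k$-equivariant, it descends to the inclusion ${\rm Cover}_k(M,r)/\Sigma_k\subseteq{\rm Cover}_k(M,r')/\Sigma_k$. The restriction of the canonical projection (\ref{eq-2601-a}) then makes the square
\begin{eqnarray*}
\xymatrix{
{\rm Cover}_k(M,r)\ar[r]\ar[d]_-{\pi} & {\rm Cover}_k(M,r')\ar[d]^-{\pi}\\
{\rm Cover}_k(M,r)/\Sigma_k\ar[r] & {\rm Cover}_k(M,r')/\Sigma_k
}
\end{eqnarray*}
commute, because both composites send $(p_1,\ldots,p_k)$ to $\{p_1,\ldots,p_k\}$. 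In fact the square is a pull-back of $k!$-sheeted coverings, since ${\rm Cover}_k(M,r)=\pi^{-1}({\rm Cover}_k(M,r)/\Sigma_k)$ inside ${\rm Conf}_k(M)$. Only commutativity is needed for the proposition.

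Next I apply the fundamental groupoid functor $\Pi_1$ to this square. Functoriality gives a commutative square in which the horizontal maps are exactly the structure homomorphisms of the persistent groupoids (\ref{eq-pers-fgpd-cv-2}) and (\ref{eq-pers-fgpd-cv-25}), and the vertical maps are the homomorphisms (\ref{eq-26-4-25-1}) at levels $r$ and $r'$. Naturality of $\pi_*$ in $r$ for all $r\leq r'$ is exactly the definition of a persistent morphism, so this yields (\ref{eq-pers-fgpd-cv-3}). Compatibility with composition for $r\leq r'\leq r''$ is automatic from functoriality of $\Pi_1$ applied to nested inclusions.

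No step is a genuine obstacle. The only point needing care is the well-definedness of the $\Sigma_k$-quotient and of the descended inclusion, which rests on the $\Sigma_k$-invariance of the covering condition checked first. Unlike the packing case, no continuity or metric argument is needed beyond the monotonicity of the covering condition in $r$.
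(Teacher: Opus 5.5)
Your proposal is correct and follows the paper's proof. The paper likewise takes the (pull-back) commutative square formed by the inclusion ${\rm Cover}_k(M,r)\subseteq{\rm Cover}_k(M,r')$ for $0<r\leq r'$ and by the induced inclusion of their $\Sigma_k$-quotients, and applies the fundamental groupoid functor to get naturality of $\pi_*$; your explicit checks of $\Sigma_k$-invariance and of monotonicity in $r$ spell out details the paper leaves implicit.
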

 
 \begin{proof}
   For  any  $0<r\leq  r'<+\infty$,   
  there  is  a  pull-back   diagram  
 \begin{eqnarray*}
 \xymatrix{
 {\rm  Cover}_k(M, r)  \ar[r]\ar[d]_-{\pi}
 & {\rm  Cover}_k(M, r')\ar[d]^-{\pi}\\
  {\rm  Cover}_k(M, r)/\Sigma_k  \ar[r] 
 & {\rm  Cover}_k(M, r')/\Sigma_k     
 }
 \end{eqnarray*}
 where  the  horizontal  maps  are  canoical  inclusions.  
 Applying  the  fundamental  groupoid  functor,  
 we  have  an  induced  commutative  diagram  
 \begin{eqnarray*}
 \xymatrix{
\Pi_1({\rm  Cover}_k(M, r) )\ar[r]\ar[d]_-{\pi_*}
 &\Pi_1({\rm  Cover}_k(M, r)') \ar[d]^-{\pi_*}\\
  \Pi_1({\rm  Cover}_k(M, r)/\Sigma_k   )  \ar[r] 
 &\Pi_1({\rm  Cover}_k(M, r') /\Sigma_k  ).        
 }
 \end{eqnarray*}
 Therefore,  
 (\ref{eq-pers-fgpd-cv-3})  is  a  persistent  morphism  
 of   persistent    groupoids.   
 \end{proof}

 \section{The  hypergraphs  of  frames}\label{s.26.7.28.7}
 
 In  this  section,  we  give  more  examples  of  simplicial  complexes  and  
 hypergraphs  on  manifolds.  
We  consider  the  frames  in  Euclidean  spaces 
as  well  as  the  frame  bundles  on  manifolds.  
 We  construct  simplicial  complexes  of   frames  in  Euclidean  spaces
 as    unions  of  Stiefel  manifolds  in  Subsection~\ref{ss4.1}  
 and  construct   hypergraphs 
 of   constraint  frames  in  Euclidean  spaces
 as    unions  of  submanifolds  of   Stiefel  manifolds  in  Subsection~\ref{ss4.2}. 
 We  construct  simplicial  complexes  of   frame  bundles  on  manifolds
    in  Subsection~\ref{ss4.3}  
 and  construct  hypergraphs   of  constraint  frame  bundles  on  manifolds
 in  Subsection~\ref{ss4.a}.  
     
 \subsection{The    simplicial  complexes   of   frames}\label{ss4.1}
 
 Let    $\mathbb{F}$  is  real  numbers  $\mathbb{R}$  or  the  complex numbers  
   $\mathbb{C}$. 
   Let  $\mathbb{F}^n$  be  the  $n$-dimensional  vector  space  over  $\mathbb{F}$.  
 For  any  $1\leq  k\leq  n+1$,
  an    {\it   affine  $k$-frame}  in  $\mathbb{F}^n$  
  is an  ordered  $k$-tuple  $(a_1,a_2,\ldots, a_k)$  
  of  affinely  independent  vectors  in  $\mathbb{F}^n$.
   The  collection  of  all  the  affine  $k$-frames  in  $\mathbb{F}^n$  forms  
  a    (real  or  complex)  manifold  $A_k(\mathbb{F}^n)$,  
 which  is  called  the   {\it     (real  or  complex)  affine  Stiefel  manifold}.

   For  any  $1\leq  k\leq  n$,  
  a  {\it  $k$-frame}  in  $\mathbb{F}^n$  
  is an  ordered  $k$-tuple  $(e_1,e_2,\ldots, e_k)$  
  of  linearly  independent  vectors  in  $\mathbb{F}^n$     
  and  an   {\it  orthonormal   $k$-frame}  in  $\mathbb{F}^n$  
  is an  ordered  $k$-tuple  $(\epsilon_1,\epsilon_2,\ldots, \epsilon_k)$  
  of  orthonormal  vectors  in  $\mathbb{F}^n$.   
  The  collection  of  all  the  $k$-frames  in  $\mathbb{F}^n$  forms  
  a  non-compact  (real  or  complex)   manifold  $\tilde  V_k(\mathbb{F}^n)$, 
  which  is  a  submanifold   
  of  $A_k(\mathbb{F}^n)$  
 and   is  called  the   {\it   non-compact    (real  or  complex)  Stiefel  manifold}. 
  The  collection  of  all  the  orthonormal  $k$-frames  in  $\mathbb{F}^n$  forms  
  a  compact  (real  or  complex)  manifold  $ V_k(\mathbb{F}^n)$   which   is   a  
  submanifold  of     $\tilde  V_k(\mathbb{F}^n)$   and   is  
  called  the  
  {\it  (real  or  complex)  Stiefel  manifold}.  
  The  canonical  inclusion  of  $ V_k(\mathbb{F}^n)$ 
  in     $\tilde  V_k(\mathbb{F}^n)$  is  a  homotopy  equivalence.

    The  symmetric  group  $\Sigma_k$  acts  on 
    $A_k(\mathbb{F}^n)$,  $\tilde  V_k(\mathbb{F}^n)$
    and   $ V_k(\mathbb{F}^n)$  
  by  permuting  the  order  of  the  vectors  in  each  affine  $k$-frame.  
  The  canonical  inclusions  
  \begin{eqnarray*}
   V_k(\mathbb{F}^n)\longrightarrow  
   \tilde  V_k(\mathbb{F}^n)\longrightarrow 
   A_k(\mathbb{F}^n) 
  \end{eqnarray*}
  are  $\Sigma_k$-equivariant.  
    Let  
    \begin{eqnarray*}
    A_\bullet (\mathbb{F}^n)&=&\bigcup_{k=  0}^{n+1}   A_k(\mathbb{F}^n),  
    \\
    \tilde  V_\bullet (\mathbb{F}^n)&=&\bigcup_{k=  0}^n  \tilde  V_k(\mathbb{F}^n),
    \\
    V_\bullet (\mathbb{F}^n)&=&\bigcup_{k=  0}^n   V_k(\mathbb{F}^n)   
    \end{eqnarray*}
 and    
  \begin{eqnarray*}
  A_\bullet (\mathbb{F}^n)/\Sigma_\bullet
  &=&\bigcup_{k=  0}^{n+1}   A_k(\mathbb{F}^n) /\Sigma_k,
\\
  \tilde  V_\bullet (\mathbb{F}^n)/\Sigma_\bullet
  &=&\bigcup_{k=  0}^n   \tilde V_k(\mathbb{F}^n) /\Sigma_k,  
  \\
  V_\bullet (\mathbb{F}^n)/\Sigma_\bullet
  &=&\bigcup_{k=  0}^n   V_k(\mathbb{F}^n) /\Sigma_k. 
  \end{eqnarray*}
  Then  
  \begin{enumerate}[(1)]
  \item
  $  A_\bullet (\mathbb{F}^n)$,  
  $\tilde  V_\bullet (\mathbb{F}^n)$  and   $  V_\bullet (\mathbb{F}^n)$
  are       directed  simplicial  complexes  
  on  $\mathbb{F}^n$  such that  
  $  V_\bullet (\mathbb{F}^n)\subseteq  \tilde  V_\bullet (\mathbb{F}^n)
  \subseteq  A_\bullet (\mathbb{F}^n)$;   
  
    \item
     $  A_\bullet (\mathbb{F}^n)/\Sigma_\bullet$,  
  $\tilde  V_\bullet (\mathbb{F}^n)/\Sigma_\bullet$  and   
  $  V_\bullet (\mathbb{F}^n)/\Sigma_\bullet$
  are        simplicial  complexes  
  on  $\mathbb{F}^n$  such that  
  $  V_\bullet (\mathbb{F}^n)/\Sigma_\bullet\subseteq  
  \tilde  V_\bullet (\mathbb{F}^n)/\Sigma_\bullet
  \subseteq  A_\bullet (\mathbb{F}^n)/\Sigma_\bullet$.
    \end{enumerate}

  Let  
  $\mathbb{F}P^{n-1}$  be  the  real  or  complex  projective  space
  whose  elements  are  real  or  complex  lines  in  $\mathbb{F}^n$
  passing  through  the origin.    
  Then   there  is  a  canonical  $\Sigma_k$-equivariant  map
  \begin{eqnarray}\label{eq-4.2.2}
  \varphi:  
 A_k(\mathbb{F}^n) \longrightarrow   {\rm  Conf}_k(\mathbb{F}P^{n-1})   
  \end{eqnarray}
  sending  an  affine  $k$-frame  in  $\mathbb{F}^n$  to  the  
  ordered  $k$-tuple  of  (real  or  complex)  lines  in  $\mathbb{F}^n$.  
  There are      induced  $\Sigma_k$-equivariant  maps   $\varphi$ 
  given  by  the  restrictions
  \begin{eqnarray*}
  \varphi\mid_{\tilde  V_k(\mathbb{F}^n)}:  &&    \tilde  V_k(\mathbb{F}^n)
    \longrightarrow 
   {\rm  Conf}_k(\mathbb{F}P^{n-1}),\\
   \varphi\mid_{ V_k(\mathbb{F}^n)}:  &&    V_k(\mathbb{F}^n)
    \longrightarrow 
   {\rm  Conf}_k(\mathbb{F}P^{n-1}).  
  \end{eqnarray*}
 Therefore,
  we  have  a   commutative  diagram  of   directed  simplicial  complexes  
   \begin{eqnarray*}
  \xymatrix{
      V_\bullet(\mathbb{F}^n)  \ar[r]  \ar[rd]_-{\varphi}
      &  \tilde  V_\bullet(\mathbb{F}^n) \ar[r]\ar[d]^-{  \varphi}
      & A_\bullet(\mathbb{F}^n)\ar[ld]^-{\varphi}\\
  &\overrightarrow {\rm  Ind}(\mathbb{F}P^{n-1})   &
  }
  \end{eqnarray*}
where  all  the  arrrows  are  morphisms  of   
directed  simplicial  complexes  and  the  horizontal  arrows  are  canonical  inclusions. 
   Consequently,  we  have    an  induced  
commutative  diagram  of        simplicial  complexes  
    \begin{eqnarray*}
  \xymatrix{
      V_\bullet(\mathbb{F}^n)/\Sigma_\bullet  \ar[r]  \ar[rd]_-{\varphi/\Sigma_\bullet}
      &  \tilde  V_\bullet(\mathbb{F}^n)/\Sigma_\bullet \ar[r]\ar[d]^-{  \varphi/\Sigma_\bullet}
      & A_\bullet(\mathbb{F}^n)/\Sigma_\bullet\ar[ld]^-{\varphi/\Sigma_\bullet}\\
   &\overrightarrow {\rm  Ind}(\mathbb{F}P^{n-1})   &
  }
  \end{eqnarray*}
where  all   the  arrrows  are  simplicial  maps
 and  the  horizontal  arrows  are  canonical  inclusions.

     Take  a  filtration  
 $   \mathbb{F}^1\subseteq  \mathbb{F}^2\subseteq \cdots 
 \subseteq \mathbb{F}^n\subseteq
  \mathbb{F}^{n+1}\subseteq \cdots $  
  of vector  spaces.  The  colimit  is  
  $ \mathbb{F}^\infty=\bigcup_{n\geq  1}\mathbb{F}^n$.  
  This  induces  a   filtration  
 $   \mathbb{F}P^0 \subseteq  \mathbb{F}P^1 \subseteq \cdots 
 \subseteq \mathbb{F}P^{n-1} \subseteq
  \mathbb{F}P^{n} \subseteq \cdots $  
  of projective  spaces  whose  colimit  is  
  $ \mathbb{F}P^\infty=\bigcup_{n\geq  1}\mathbb{F}P^{n-1}$.  
  For any  $k\geq  1$,  
  we  have   
  a  commutative  diagram   of  (real  or  complex)  manifolds 
    \begin{eqnarray*}
    \xymatrix{
     V_k(\mathbb{F}^k) \ar[r] \ar[d]
 &      V_k(\mathbb{F}^{k+1})  \ar[r] \ar[d]
 &\cdots \ar[r]
&    V_k(\mathbb{F}^{n})  \ar[r]\ar[d]
&      V_k(\mathbb{F}^{n+1})  \ar[r]\ar[d]
 &\cdots \\
 \tilde  V_k(\mathbb{F}^k) \ar[r] \ar[d]
 &  \tilde   V_k(\mathbb{F}^{k+1})  \ar[r] \ar[d]
 &\cdots \ar[r]
&   \tilde  V_k(\mathbb{F}^{n})  \ar[r]\ar[d]
&  \tilde   V_k(\mathbb{F}^{n+1})  \ar[r]\ar[d]
 &\cdots \\
   A_k(\mathbb{F}^k) \ar[r] \ar[d]
 &      A_k(\mathbb{F}^{k+1})  \ar[r] \ar[d]
 &\cdots \ar[r]
&    A_k(\mathbb{F}^{n})  \ar[r]\ar[d]
&      A_k(\mathbb{F}^{n+1})  \ar[r]\ar[d]
 &\cdots \\
 {\rm  Conf}_k(\mathbb{F}P^{k-1}) \ar[r] 
 &  {\rm  Conf}_k(\mathbb{F}P^{k}) \ar[r] 
 &\cdots \ar[r]
&  {\rm  Conf}_k(\mathbb{F}P^{n-1}) \ar[r]
&   {\rm  Conf}_k(\mathbb{F}P^{n}) \ar[r]
 &\cdots 
 }
  \end{eqnarray*}
  where  
  \begin{enumerate}[(1)]
    \item
  the  first   row   is  the  induced   $\Sigma_k$-equivariant  filtration   of  the  
    Stiefel  manifolds,  whose  colimit  is  denoted  as  $V_k(\mathbb{F}^\infty)$;  
  \item
   the  second   row   is  the  induced   $\Sigma_k$-equivariant  filtration   of  the  
    non-compact  Stiefel  manifolds,  
    whose  colimit  is  denoted  as  $\tilde  V_k(\mathbb{F}^\infty)$;  
     \item
  the  third  row   is  the  induced   $\Sigma_k$-equivariant  filtration   of  the  
    affine Stiefel  manifolds,  whose  colimit  is  denoted  as  $A_k(\mathbb{F}^\infty)$;
  \item
   the last  row  is  the  induced   $\Sigma_k$-equivariant  filtration of  configuration 
   spaces  over  projective  spaces, 
   whose  colimit  is  denoted  as  ${\rm  Conf}_k(\mathbb{F}P^{\infty}) $.  
     \end{enumerate}
     This  induces  a  commutative  diagram   of  
       directed  simplicial  complexes
   \begin{eqnarray*}
    \xymatrix{
    V_\bullet(\mathbb{F}^{n})  \ar[r]\ar[d]
&      V_\bullet(\mathbb{F}^{n+1})  \ar[r]\ar[d]
 &\cdots \ar[r] 
 & V_\bullet(\mathbb{F}^{\infty})\ar[d]\\
   \tilde  V_\bullet(\mathbb{F}^{n})  \ar[r]\ar[d]
&  \tilde   V_\bullet(\mathbb{F}^{n+1})  \ar[r]\ar[d]
 &\cdots\ar[r] 
 &\tilde V_\bullet(\mathbb{F}^{\infty}) \ar[d]\\
  A_\bullet(\mathbb{F}^{n})  \ar[r]\ar[d]
&      A_\bullet(\mathbb{F}^{n+1})  \ar[r]\ar[d]
 &\cdots \ar[r] 
 & A_\bullet(\mathbb{F}^{\infty})\ar[d]\\
  \overrightarrow {\rm  Ind}(\mathbb{F}P^{n-1}) \ar[r]
&   \overrightarrow {\rm  Ind}(\mathbb{F}P^{n}) \ar[r]
 &\cdots \ar[r] 
 &  \overrightarrow   {\rm  Ind}(\mathbb{F}P^{\infty}) 
 }
  \end{eqnarray*}
  denoted  as  ${\rm  Dgm(ordered~frames)}$,  
  where  all  the  maps  are  morphisms  of   
      directed  simplicial  complexes.  
 This  also  induces  
  a  commutative  diagram   of  
          simplicial  complexes
   \begin{eqnarray*}
    \xymatrix{
   V_\bullet(\mathbb{F}^{n}) /\Sigma_\bullet  \ar[r]\ar[d]
&      V_\bullet(\mathbb{F}^{n+1}) /\Sigma_\bullet  \ar[r]\ar[d]
 &\cdots \ar[r]
 & V_\bullet(\mathbb{F}^{\infty})/\Sigma_\bullet\ar[d]\\
   \tilde  V_\bullet(\mathbb{F}^{n})/\Sigma_\bullet   \ar[r]\ar[d]
&  \tilde   V_\bullet(\mathbb{F}^{n+1})/\Sigma_\bullet   \ar[r]\ar[d]
 &\cdots  \ar[r]
 & \tilde   V_\bullet(\mathbb{F}^{\infty})/\Sigma_\bullet\ar[d]\\
 A_\bullet(\mathbb{F}^{n}) /\Sigma_\bullet  \ar[r]\ar[d]
&      A_\bullet(\mathbb{F}^{n+1}) /\Sigma_\bullet  \ar[r]\ar[d]
 &\cdots \ar[r]
 & A_\bullet(\mathbb{F}^{\infty})/\Sigma_\bullet\ar[d]\\
  {\rm  Ind}(\mathbb{F}P^{n-1}) \ar[r]
&     {\rm  Ind}(\mathbb{F}P^{n}) \ar[r]
 &\cdots  \ar[r] 
 &      {\rm  Ind}(\mathbb{F}P^{\infty}) 
 }
  \end{eqnarray*}
 denoted  as  ${\rm  Dgm(unordered~frames)}$,  where  all  the  maps  are    simplicial  maps. 
 
 \begin{proposition}\label{pr-4.6.1-stief}
 The   canonical  projections
from  the  ordered  tuples  to  the  unordered  tuples      
  induce   a  morphism  between  commutative  diagrams
  \begin{eqnarray}\label{eq-diag-26.4.8}
  \pi:  {\rm  Dgm(ordered~frames)}\longrightarrow  
  {\rm  Dgm(unordered~frames)}.  
  \end{eqnarray}
  \end{proposition}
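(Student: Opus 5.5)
The approach is to build $\pi$ entrywise and then check commutativity one square at a time. On each entry of ${\rm Dgm(ordered~frames)}$ we take the canonical orbit projection $\pi_k: X_k\longrightarrow X_k/\Sigma_k$. Here $X_k$ ranges over $V_k(\mathbb{F}^n)$, $\tilde V_k(\mathbb{F}^n)$, $A_k(\mathbb{F}^n)$, ${\rm Conf}_k(\mathbb{F}P^{n-1})$ and their colimits over $n$. Letting $k$ vary gives a graded map $\pi$ from each directed simplicial complex to its underlying simplicial complex, exactly as in (\ref{eq-2601-aa}). Once every square commutes, these entrywise projections assemble into a morphism of diagrams (\ref{eq-diag-26.4.8}), by the same formal pattern as the proof of Proposition~\ref{pr-26.6.2-3}.

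The first step is to confirm that each arrow in ${\rm Dgm(ordered~frames)}$ is $\Sigma_k$-equivariant in every degree $k$. There are three kinds of arrow.

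(a) The horizontal arrows are induced by the inclusions $\mathbb{F}^n\subseteq\mathbb{F}^{n+1}$ and $\mathbb{F}P^{n-1}\subseteq\mathbb{F}P^n$. They act coordinatewise on tuples, so they commute with permuting coordinates.

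(b) The vertical inclusions $V_k\subseteq\tilde V_k\subseteq A_k$ are $\Sigma_k$-equivariant, as already stated.

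(c) The map $\varphi$ in (\ref{eq-4.2.2}) is $\Sigma_k$-equivariant by construction.

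Each arrow is also compatible with the face maps $\partial_k^i$, since deleting a coordinate commutes with coordinatewise maps. An equivariant map $f:X_k\longrightarrow Y_k$ descends uniquely to $f/\Sigma_k: X_k/\Sigma_k\longrightarrow Y_k/\Sigma_k$ with $\pi_k\circ f=(f/\Sigma_k)\circ\pi_k$. By definition these descended maps are the arrows of ${\rm Dgm(unordered~frames)}$, so each square between the two diagrams commutes.

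The second step passes to the colimit column. Let $\iota_n$ denote the structure maps of the colimit. The projections $\pi$ on the stages $V_\bullet(\mathbb{F}^n)$ are compatible with the horizontal arrows, so they induce a map on $V_\bullet(\mathbb{F}^\infty)$ by the universal property, and likewise for the other rows. This induced map agrees with the orbit projection of the colimit, because $V_k(\mathbb{F}^\infty)/\Sigma_k=\bigcup_n V_k(\mathbb{F}^n)/\Sigma_k$ and every $\Sigma_k$-orbit lies in a single finite stage. Compatibility with the face maps follows from the first step, using the commutative diagram (\ref{eq-4.23.2}) to match $\partial_k^i$ upstairs with the faces of the underlying simplex. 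This makes each $\pi$ a projective morphism from a directed simplicial complex to a simplicial complex, in the sense of Proposition~\ref{pr-26.6.2-3}.

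The main obstacle is bookkeeping rather than mathematics. First, the bottom-row target of the unordered diagram is printed as $\overrightarrow{\rm Ind}(\mathbb{F}P^{n-1})$ in one place, and it should be read as ${\rm Ind}(\mathbb{F}P^{n-1})$. Second, we must check that $\varphi$ actually lands in ${\rm Conf}_k$, i.e. that the lines are distinct. For linear and orthonormal frames this holds because linearly independent vectors span distinct lines. For affine frames I would argue it directly, or, if needed, restrict to the frames whose spanned lines are distinct. After these points are settled, the statement reduces to the general fact that orbit projection is a natural transformation on the category of $\Sigma_k$-spaces and equivariant maps, and so it commutes with the whole diagram.
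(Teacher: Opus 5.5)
Your proposal is correct and is essentially the paper's argument: the paper writes down the single square $\pi\circ\varphi=(\varphi/\Sigma_\bullet)\circ\pi$ for $A_\bullet(\mathbb{F}^\infty)$ and gets all the other squares by restricting $\pi$ to $A_\bullet(\mathbb{F}^n)$, $\tilde V_\bullet(\mathbb{F}^n)$, $V_\bullet(\mathbb{F}^n)$ and their colimits, whereas you run the same equivariance-descends-to-orbits argument stage by stage and then assemble the colimit. One correction to your side remark: for affine frames, $\varphi$ cannot be shown \emph{directly} to land in ${\rm Conf}_k(\mathbb{F}P^{n-1})$, because $(0,e_1)$ and $(e_1,2e_1)$ are affinely independent but do not determine two distinct lines, so your fallback of restricting (or redefining $\varphi$) is genuinely needed --- this is a defect in the paper's construction of the diagram, which the paper's proof also takes for granted, not in the argument for the proposition itself.
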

  
  \begin{proof}
  Let  $\pi$  be  the  canonical  projections.  
  We have  a  commutative  diagram
    \begin{eqnarray*}
 \xymatrix{
  A_\bullet(\mathbb{F}^{\infty}) \ar[r]^-{\pi}\ar[d]_-{\varphi}
  & 
  A_\bullet(\mathbb{F}^{\infty}) /\Sigma_\bullet\ar[d]^-{\varphi/\Sigma_\bullet}\\   
   \overrightarrow   {\rm  Ind}(\mathbb{F}P^{\infty})\ar[r]^-{\pi}
 &
 {\rm  Ind}(\mathbb{F}P^{\infty}).   
 } 
  \end{eqnarray*}
  By  restricting  $\pi:   A_\bullet(\mathbb{F}^{\infty})\longrightarrow  
  A_\bullet(\mathbb{F}^{\infty}) /\Sigma_\bullet$  to  the  submanifolds  
   $  A_\bullet (\mathbb{F}^n)$,  
  $\tilde  V_\bullet (\mathbb{F}^n)$,    $  V_\bullet (\mathbb{F}^n)$,
  $\tilde  V_\bullet (\mathbb{F}^\infty)$  and   $  V_\bullet (\mathbb{F}^\infty)$,  
   we  obtain    a  morphism 
  (\ref{eq-diag-26.4.8})  of   commutative  diagrams.  
  \end{proof}

 \subsection{The   hypergraphs   of  frames  with  constraints}\label{ss4.2}

     Let  $\vec{\mathcal{H}}(\mathbb{F}^\infty)$  be  
     a  sub-hyperdigraph  of  $ A_\bullet(\mathbb{F}^{\infty})$.  
      Let  $  \vec{\mathcal{H}}'(\mathbb{F}^\infty)= \vec{\mathcal{H}}(\mathbb{F}^\infty) \cap 
     \tilde  V_\bullet (\mathbb{F}^\infty)$  
     be  
     a  sub-hyperdigraph  of  $ \tilde  V_\bullet(\mathbb{F}^{\infty})$
     and  let  
  $\vec{\mathcal{H}}''(\mathbb{F}^\infty)= \vec{\mathcal{H}}(\mathbb{F}^\infty) \cap 
      V_\bullet (\mathbb{F}^\infty)$
       be  
     a  sub-hyperdigraph  of  $  V_\bullet(\mathbb{F}^{\infty})$.    
     Then  for  any  finite   positive  integer  $n$,  
     \begin{eqnarray*}
     \vec{\mathcal{H}}(\mathbb{F}^n)&=&\vec{\mathcal{H}}(\mathbb{F}^\infty)\cap  
     A_\bullet(\mathbb{F}^n),\\
      \vec{\mathcal{H}}'(\mathbb{F}^n)&=&\vec{\mathcal{H}}'(\mathbb{F}^\infty)\cap  
     \tilde  V_\bullet(\mathbb{F}^n),\\
      \vec{\mathcal{H}}''(\mathbb{F}^n)&=&\vec{\mathcal{H}}''(\mathbb{F}^\infty)\cap  
     V_\bullet(\mathbb{F}^n)
     \end{eqnarray*}
       are     sub-hyperdigraphs  of 
   $A_\bullet (\mathbb{F}^n)$,  $\tilde  V_\bullet (\mathbb{F}^n)$
   and  $V_\bullet (\mathbb{F}^n)$  respectively.    
   Then
   $\vec{\mathcal{H}} (\mathbb{F}^n)$  is   the  space  of  all  
  the  ordered  affine   finite  frames  in  $\mathbb{F}^n$   with  the  constraint  
  $\vec{\mathcal{H}}_k(\mathbb{F}^n)$,  
    $ \vec{\mathcal{H}}'(\mathbb{F}^n)$  
       is   the  space  of  all  
  the     ordered  finite  frames  in  $\mathbb{F}^n$   with  the  constraint  
  $\vec{\mathcal{H}} (\mathbb{F}^n)$
  and  $  \vec{\mathcal{H}}''(\mathbb{F}^n)$  
       is   the  space  of  all  
  the   ordered   finite  orthonormal  frames  in  $\mathbb{F}^n$   with  the  constraint  
  $\vec{\mathcal{H}} (\mathbb{F}^n)$.    
     We  have  a  commutative  diagram   of  
       hyperdigraphs
   \begin{eqnarray*}
    \xymatrix{
    \vec{\mathcal{H}}''(\mathbb{F}^n) \ar[r]\ar[d]
&     \vec{\mathcal{H}}''(\mathbb{F}^{n+1})  \ar[r]\ar[d]
 &\cdots \ar[r] 
 &   \vec{\mathcal{H}}''(\mathbb{F}^\infty)\ar[d]\\
     \vec{\mathcal{H}}'(\mathbb{F}^n) \ar[r]\ar[d]
&     \vec{\mathcal{H}}'(\mathbb{F}^{n+1})  \ar[r]\ar[d]
 &\cdots \ar[r] 
 &   \vec{\mathcal{H}}'(\mathbb{F}^\infty)\ar[d]\\
    \vec{\mathcal{H}}(\mathbb{F}^n) \ar[r]\ar[d]
&     \vec{\mathcal{H}}(\mathbb{F}^{n+1})  \ar[r]\ar[d]
 &\cdots \ar[r] 
 &   \vec{\mathcal{H}}(\mathbb{F}^\infty)\ar[d]\\
  \overrightarrow {\rm  Ind}(\mathbb{F}P^{n-1}) \ar[r]
&   \overrightarrow {\rm  Ind}(\mathbb{F}P^{n}) \ar[r]
 &\cdots \ar[r] 
 &  \overrightarrow   {\rm  Ind}(\mathbb{F}P^{\infty}) 
 }
  \end{eqnarray*}
  denoted  as  ${\rm  Dgm}(\vec{\mathcal{H}}(\mathbb{F}^\infty))$,  
  where  all  the  maps  are  morphisms  of   
      hyperdigraphs.

     Let  $\mathcal{H}(\mathbb{F}^\infty)$  
  be  the  underlying  hypergraph   of  
  $ \vec{\mathcal{H}}(\mathbb{F}^\infty)$.
   The  underlying  hypergraphs  of   $\vec{\mathcal{H}}'(\mathbb{F}^\infty)$ 
   and  $\vec{\mathcal{H}}''(\mathbb{F}^\infty) $  are  respectively  
   \begin{eqnarray*}
    {\mathcal{H}}'(\mathbb{F}^\infty) &=& {\mathcal{H}}(\mathbb{F}^\infty) \cap 
     (\tilde  V_\bullet (\mathbb{F}^\infty)/\Sigma_\bullet)  ,\\
      {\mathcal{H}}''(\mathbb{F}^\infty) &=&  {\mathcal{H}}(\mathbb{F}^\infty) \cap 
      (V_\bullet (\mathbb{F}^\infty)/\Sigma_\bullet).  
     \end{eqnarray*}
       for  any  finite   positive  integer  $n$,  
     \begin{eqnarray*}
     {\mathcal{H}}(\mathbb{F}^n)&=& {\mathcal{H}}(\mathbb{F}^\infty)\cap  
    ( A_\bullet(\mathbb{F}^n)/\Sigma_\bullet),\\
       {\mathcal{H}}'(\mathbb{F}^n)&=&\vec{\mathcal{H}}'(\mathbb{F}^\infty)\cap  
     (\tilde  V_\bullet(\mathbb{F}^n)/\Sigma_\bullet),\\
      {\mathcal{H}}''(\mathbb{F}^n)&=&\vec{\mathcal{H}}''(\mathbb{F}^\infty)\cap  
     (V_\bullet(\mathbb{F}^n)/\Sigma_\bullet)
     \end{eqnarray*}
       are   the  underlying  hypergraphs  of  
       $\vec{\mathcal{H}}(\mathbb{F}^n)$, 
       $\vec{\mathcal{H}}'(\mathbb{F}^n)$ 
       and  
       ${\mathcal{H}}''(\mathbb{F}^n)$      respectively.   
        We  have   
  a  commutative  diagram   of  
        hypergraphs
   \begin{eqnarray*}
    \xymatrix{
    \vec{\mathcal{H}}''(\mathbb{F}^n) \ar[r]\ar[d]
&     \vec{\mathcal{H}}''(\mathbb{F}^{n+1})  \ar[r]\ar[d]
 &\cdots \ar[r] 
 &   \vec{\mathcal{H}}''(\mathbb{F}^\infty)\ar[d]\\
     \vec{\mathcal{H}}'(\mathbb{F}^n) \ar[r]\ar[d]
&     \vec{\mathcal{H}}'(\mathbb{F}^{n+1})  \ar[r]\ar[d]
 &\cdots \ar[r] 
 &   \vec{\mathcal{H}}'(\mathbb{F}^\infty)\ar[d]\\
    \vec{\mathcal{H}}(\mathbb{F}^n) \ar[r]\ar[d]
&     \vec{\mathcal{H}}(\mathbb{F}^{n+1})  \ar[r]\ar[d]
 &\cdots \ar[r] 
 &   \vec{\mathcal{H}}(\mathbb{F}^\infty)\ar[d]\\
 {\rm  Ind}(\mathbb{F}P^{n-1}) \ar[r]
&     {\rm  Ind}(\mathbb{F}P^{n}) \ar[r]
 &\cdots \ar[r] 
 &   {\rm  Ind}(\mathbb{F}P^{\infty}) 
 }
  \end{eqnarray*}
  denoted  as  ${\rm  Dgm}( {\mathcal{H}}(\mathbb{F}^\infty))$,  
  where  all  the  maps  are  morphisms  of   
      hypergraphs.
    
    \begin{corollary}\label{co-4.11.2-stief}
 The   canonical  projections
from  the  ordered  tuples  to  the  unordered  tuples      
  induce   a  morphism  between  commutative  diagrams
  \begin{eqnarray}\label{eq-diag-26.4.11}
  \pi:  {\rm  Dgm } (\vec {\mathcal{H}}(\mathbb{F}^\infty))\longrightarrow  
  {\rm  Dgm} ({\mathcal{H}}(\mathbb{F}^\infty)).  
  \end{eqnarray}
  \end{corollary}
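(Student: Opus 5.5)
The plan is to deduce the corollary from Proposition~\ref{pr-4.6.1-stief}, by restricting the morphism $\pi$ of (\ref{eq-diag-26.4.8}) entrywise to the constrained sub-hyperdigraphs. The starting point is the commutative square in the proof of Proposition~\ref{pr-4.6.1-stief}, namely $\varphi/\Sigma_\bullet\circ\pi=\pi\circ\varphi$ on $A_\bullet(\mathbb{F}^\infty)$. Every entry of ${\rm Dgm}(\vec{\mathcal{H}}(\mathbb{F}^\infty))$ is a graded subset of an entry of ${\rm Dgm(ordered~frames)}$:
\begin{eqnarray*}
\vec{\mathcal{H}}''(\mathbb{F}^n)\subseteq V_\bullet(\mathbb{F}^n),~~~~~~\vec{\mathcal{H}}'(\mathbb{F}^n)\subseteq \tilde V_\bullet(\mathbb{F}^n),~~~~~~\vec{\mathcal{H}}(\mathbb{F}^n)\subseteq A_\bullet(\mathbb{F}^n),
\end{eqnarray*}
for $n$ finite or $n=\infty$, and the bottom row is the same in both diagrams. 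All maps in ${\rm Dgm}(\vec{\mathcal{H}}(\mathbb{F}^\infty))$ are restrictions of the maps in ${\rm Dgm(ordered~frames)}$, namely the canonical inclusions and $\varphi$. So the restricted maps still commute.

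First I will check that $\pi$ carries each entry of the first diagram onto the corresponding entry of the second. By definition $\mathcal{H}(\mathbb{F}^\infty)=\pi(\vec{\mathcal{H}}(\mathbb{F}^\infty))$. For the constrained versions I need
\begin{eqnarray*}
\pi(\vec{\mathcal{H}}(\mathbb{F}^\infty)\cap \tilde V_\bullet(\mathbb{F}^n))=\mathcal{H}(\mathbb{F}^\infty)\cap(\tilde V_\bullet(\mathbb{F}^n)/\Sigma_\bullet),
\end{eqnarray*}
and the analogous identities for $V_\bullet$ and $A_\bullet$. The inclusion ``$\subseteq$'' is immediate. For ``$\supseteq$'' I will use that $V_\bullet(\mathbb{F}^n)$, $\tilde V_\bullet(\mathbb{F}^n)$ and $A_\bullet(\mathbb{F}^n)$ are $\Sigma_\bullet$-invariant, i.e.\ saturated, so $\pi^{-1}(\pi(Z))=Z$ for each such $Z$. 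Then if $\pi(\vec\sigma)\in \pi(Z)$ with $\vec\sigma\in\vec{\mathcal{H}}$, we get $\vec\sigma\in Z$. This is the step I expect to be the main obstacle. It is exactly where invariance of the ambient Stiefel-type manifolds is essential, since intersection does not commute with images in general. I will also use it to reconcile the slightly inconsistent notation for $\mathcal{H}'(\mathbb{F}^n)$ and $\mathcal{H}''(\mathbb{F}^n)$ in the text with these images.

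Second, I will verify that each square formed by a horizontal or vertical map and the two projections commutes. For inclusions this is trivial. For the maps $\varphi$ into $\overrightarrow{\rm Ind}(\mathbb{F}P^{n-1})$ it follows by restricting the square above, using the $\Sigma_k$-equivariance of (\ref{eq-4.2.2}). Hence $\pi$ restricted entrywise is a morphism of commutative diagrams (\ref{eq-diag-26.4.11}). Equivalently, it is the restriction of (\ref{eq-diag-26.4.8}) to the sub-diagram ${\rm Dgm}(\vec{\mathcal{H}}(\mathbb{F}^\infty))$, with surjectivity onto each entry coming from the first step.
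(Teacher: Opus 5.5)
Your proposal is correct and takes essentially the same route as the paper: it also obtains (\ref{eq-diag-26.4.11}) by restricting the morphism (\ref{eq-diag-26.4.8}) of Proposition~\ref{pr-4.6.1-stief} to the constrained sub-hyper(di)graphs, first over $\mathbb{F}^\infty$ and then over each $\mathbb{F}^n$. Your saturation argument, that $\pi(\vec{\mathcal{H}}(\mathbb{F}^\infty)\cap Z)=\mathcal{H}(\mathbb{F}^\infty)\cap\pi(Z)$ for each $\Sigma_\bullet$-invariant $Z\in\{V_\bullet(\mathbb{F}^n),\tilde V_\bullet(\mathbb{F}^n),A_\bullet(\mathbb{F}^n)\}$, supplies a step the paper leaves implicit, namely that $\pi$ maps each constrained directed entry exactly onto the corresponding constrained undirected entry.
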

  
  \begin{proof}
    We  have   a  commutative  diagram   
      \begin{eqnarray}\label{diag-26.4.11.5}
      \xymatrix{
       \vec{\mathcal{H}}''(\mathbb{F}^\infty)\ar[r] \ar[d]_-{\pi}
      &\vec{\mathcal{H}}'(\mathbb{F}^\infty)\ar[r]\ar[d]_-{\pi}
      &\vec{\mathcal{H}}(\mathbb{F}^\infty)\ar[r]\ar[d]^-{\pi}
      &\overrightarrow{\rm  Ind}(\mathbb{F}P^\infty)\ar[d]^-{\pi}\\
      \mathcal{H}''(\mathbb{F}^\infty)\ar[r] 
      &\mathcal{H}'(\mathbb{F}^\infty)\ar[r]
      &\mathcal{H}(\mathbb{F}^\infty) \ar[r]
      &{\rm  Ind}(\mathbb{F}P^\infty)
      }
      \end{eqnarray}
      where  all  the  horizontal  maps  are  canonical  inclusions  
      and  all  the  vertical  maps  are  canonical  projections.  
      By  restricting  (\ref{diag-26.4.11.5})  to  the  corresponding  
      hyper(di)graphs  
      on  $\mathbb{F}^n$,
      we  still  have  a  commutative  diagram.  
Thus  
 by  restricting  (\ref{eq-diag-26.4.8})  to the  corresponding  sub-hyper(di)graphs, 
 we  obtain  (\ref{eq-diag-26.4.11}).  
  \end{proof}

   \subsection{Frame  bundles  on  manifolds}\label{ss4.3}

    Let  $M$   be  a   (real  or  complex)  
    $n$-dimensional  differentiable  manifold.  
    For  any  $0\leq  k\leq  n+1$,  
    an     {\it  affine   (real  or  complex)  $k$-frame    field}  on  $M$  is  an  ordered  
     $k$-tuple   $(X_1,X_2,\ldots,X_k)$   of  vector  fields 
     $X_1,X_2,\ldots,X_k$  on  $M$  such  that  
     $X_1(p),X_2(p),\ldots,X_k(p)\in   T_p M$  are  affinely  independent  over  
     $\mathbb{F}$
     for  any  $p\in  M$.    
     Here  $\mathbb{F}=\mathbb{R}$  if  $M$  is  a  real  manifold  
     and  $\mathbb{F}=\mathbb{C}$  if  $M$  is  a  complex  manifold.      
  Let  $U_k(M)$  be  the  space  of  all  
       the  affine     (real  or  complex)   $k$-frame     fields  on  $M$.
       Then   $U_k(M)$     is  a 
       fibre  bundle  over  $M$  whose    fibre  at  each  $p\in  M$  
       is  
          $A_k(\mathbb{F}^n)$.  
       In  particular,    $U_{n+1}(M)$  is  a  
        principal  ${AGL}(\mathbb{F}^{n})$-bundle 
          over  $M$,  where  
           ${AGL}(\mathbb{F}^n)$  is  the  $\mathbb{F}$-affine 
             transformation  group  on  
           $\mathbb{F}^n$.

    For  any  $0\leq  k\leq  n$,    
     a    {\it   (real  or  complex)  $k$-frame    field}  on  $M$  is  an  ordered  
     $k$-tuple   $(X_1,X_2,\ldots,X_k)$   of  vector  fields 
     $X_1,X_2,\ldots,X_k$  on  $M$  such  that  
     $X_1(p),X_2(p),\ldots,X_k(p)\in  T_pM$  are  $\mathbb{F}$-linearly  independent  
     for  any  $p\in  M$. 
      Let  $\tilde   W_k(M)$  be  the  space  of  all  
       the  (real  or  complex)   $k$-frame    fields  on  $M$.  
       Then   $\tilde   W_k(M)$   
          is  a   sub-bundle  of  $U_k(M)$   and     
       the     fibre  of  $\tilde   W_k(M)$   at  each  $p\in  M$  
       is  
          $\tilde  V_k(\mathbb{F}^n)$. 
           In  particular,  $\tilde   W_n(M)$
           is  a  principal  ${GL}(\mathbb{F}^n)$-bundle 
          over  $M$.

     Suppose     $M$  has  a  Riemannian  metric  $g$  if  $M$  is  real  manifold 
     and  $M$ has a  Hermitian  metric  $h$  if  $M$  is  a  complex  manifold.   
    An   {\it   (real  or  complex)  orthonormal      $k$-frame    field}  on  $M$  is  a   
      (real  or  complex)   $k$-frame    field 
       $(X_1,X_2,\ldots,X_k)$     such  that  
     $X_1(p),X_2(p),\ldots,X_k(p)\in  T_pM$  are  orthonormal  with respect to  $g$ or  $h$ 
     for  any  $p\in  M$.
       Let   $W_k(M)$  be  the  space  of  all  
       the   (real  or  complex)   orthonormal  $k$-frame    fields  on  $M$.  
            Then  $W_k(M)$  is  a  
          sub-bundle  of   $\tilde   W_k(M)$  
          and  the     fibre  of  $    W_k(M)$   at  each  $p\in  M$  
       is  
          $ V_k(\mathbb{F}^n)$.  
          In  particular,     
       $W_n(M)$    is  a  principal  ${O}(\mathbb{F}^n)$-bundle 
          over  $M$.

          Let  $TM$  be  the  tangent  bundle  of  $M$.  
          Let   $\Gamma(TM)$  be  the  space  of  all  smooth  sections  of  $TM$.  
          We  have   directed  simplicial  complexes
          $U_\bullet(M)$,   $\tilde W_\bullet(M)$  and  
    $W_\bullet(M)$
    on    $\Gamma(TM)$  \footnote[6]{      Note  that  $\Gamma(TM)$  is  not  a  finite  dimensional  manifold.  
    Nevertheless,  we  can  still  define  hyper(di)graphs  
    and  (directed)   simplicial  complexes  on  $\Gamma(TM)$
    analogously.  
    }
           given  by  
  \begin{eqnarray*}
  U_\bullet(M) &=& \bigcup_{k=  0}^{n+1} U_k(M),\\
   \tilde W_\bullet(M) & =& \bigcup_{k= 0}^n  \tilde  W_k(M),\\
    W_\bullet(M) &= &\bigcup_{k=  0}^n  W_k(M).    
    \end{eqnarray*} 
        The  canonical  inclusions    
      \begin{eqnarray*}
          W_\bullet(M) \longrightarrow    \tilde   W_\bullet(M) \longrightarrow
         U_\bullet(M) 
      \end{eqnarray*}
     are    morphisms  of   directed  simplicial  complexes.

   The  symmetric  group  $\Sigma_k$  acts  on  $U_k(M)$
   freely  and  properly  discontinuously.   The  restrictions  of  this  
   $\Sigma_k$-action  to   
   $\tilde   W_k(M)$
   and    $W_k(M)$  
   are  also  free   and  properly  discontinuous.    
   Thus  there  are       $\Sigma_k$-orbit  spaces  
     \begin{eqnarray*}
   W_k(M)/\Sigma_k\subseteq  \tilde   W_k(M)/\Sigma_k  \subseteq   U_k(M)/\Sigma_k      
   \end{eqnarray*}
   which  are  all  fibre  bundles  over  $M$.  
     At  each  $p\in  M$,  
  the   fibre  of  $  W_k(M)/\Sigma_k$       is   $  V_k(\mathbb{R}^n)/\Sigma_k$,
  the fibre  of   $\tilde   W_k(M)/\Sigma_k$     is
    $\tilde  V_k(\mathbb{R}^n)/\Sigma_k$
  and 
   the   fibre  of  $  U_k(M)/\Sigma_k$       is   $  A_k(\mathbb{R}^n)/\Sigma_k$.  
    Thus    $W_k(M)/\Sigma_k$  is  a  sub-bundle  of
      $\tilde   W_k(M)/\Sigma_k$  and  
      $\tilde   W_k(M)/\Sigma_k$  is  a  sub-bundle  of  $U_k(M)/\Sigma_k$.   
    We  have    simplicial  complexes
          $U_\bullet(M)/\Sigma_\bullet$,   $\tilde W_\bullet(M)/\Sigma_\bullet$  and  
    $W_\bullet(M)/\Sigma_\bullet$  
    on    $\Gamma(TM)$  
               given  by  
     \begin{eqnarray*}
  U_\bullet(M) /\Sigma_\bullet&=& \bigcup_{k=  0}^{n+1} U_k(M)/\Sigma_k,\\
   \tilde W_\bullet(M)/\Sigma_\bullet & =& \bigcup_{k= 0}^n  \tilde  W_k(M)/\Sigma_k,\\
    W_\bullet(M) /\Sigma_\bullet&= &\bigcup_{k=  0}^n  W_k(M)/\Sigma_k.    
    \end{eqnarray*}    
      The  canonical  inclusions    
      \begin{eqnarray*}
          W_\bullet(M)/\Sigma_\bullet \longrightarrow    \tilde   W_\bullet(M)/\Sigma_\bullet \longrightarrow
         U_\bullet(M) /\Sigma_\bullet
      \end{eqnarray*}
     are     simplicial  maps.

     \begin{corollary}\label{co-4.8.1-stief}
     Let  $M$  be  a  Riemannian  manifold  or  a  Hermitian  manifold.  
 Then  we  have     a     commutative  diagram  of  fibre  bundles  over  $M$
 \footnote[7]{The  fibres  in  the  fibre  bundles  of  (\ref{eq-diag-26.4.8.a})
 are  $\Delta$-manifolds,  which  are  not  only  usual  manifolds.  }
  \begin{eqnarray}\label{eq-diag-26.4.8.a}
 \xymatrix{
 W_\bullet(M) \ar[r]\ar[d]_-{\pi}
 &    \tilde   W_\bullet(M)\ar[r]\ar[d]_-{\pi}
  &         U_\bullet(M) \ar[d]^-{\pi}\\
 W_\bullet(M)/\Sigma_\bullet \ar[r]
 &    \tilde   W_\bullet(M)/\Sigma_\bullet \ar[r]
  &         U_\bullet(M) /\Sigma_\bullet  
 }
 \end{eqnarray}
 where  the  horizontal  maps  are  canonical  inclusions  
 and  the  vertical  maps  are  projections  sending  the  ordered  tuples  
 to  the  unordered  tuples.  
  \end{corollary}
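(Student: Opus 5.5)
The plan is to build the diagram (\ref{eq-diag-26.4.8.a}) fibrewise. I first check that each vertical projection is a well-defined bundle map over $M$, and then that the two squares commute. The relevant facts were recorded just before the statement. $W_k(M)\subseteq\tilde W_k(M)\subseteq U_k(M)$ are sub-bundles over $M$ with fibres $V_k(\mathbb{F}^n)\subseteq\tilde V_k(\mathbb{F}^n)\subseteq A_k(\mathbb{F}^n)$. The $\Sigma_k$-action permuting the vectors in a frame is free and properly discontinuous on each of them. Moreover, these inclusions are $\Sigma_k$-equivariant.

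The first step is to note that the $\Sigma_k$-action on $U_k(M)$ covers the identity of $M$. Permuting $(X_1,\ldots,X_k)$ does not change the base point of $(X_1(p),\ldots,X_k(p))$. Hence the orbit projection
\[
\pi_k: U_k(M)\longrightarrow U_k(M)/\Sigma_k
\]
is a map over $M$. In a local trivialization $U_k(M)|_{O}\cong O\times A_k(\mathbb{F}^n)$, it reads $\mathrm{id}_O\times\pi_k$, where $\pi_k:A_k(\mathbb{F}^n)\to A_k(\mathbb{F}^n)/\Sigma_k$ is the canonical projection of Subsection~\ref{ss4.1}. The transition functions are fibrewise linear, so they commute with the permutation action and descend to $A_k(\mathbb{F}^n)/\Sigma_k$. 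Thus $U_k(M)/\Sigma_k$ is a fibre bundle and $\pi_k$ is a bundle morphism. Restricting to the $\Sigma_k$-invariant sub-bundles $\tilde W_k(M)$ and $W_k(M)$ gives the other two vertical maps. For $W_k(M)$ I would use orthonormal trivializations, whose transition functions lie in $O(\mathbb{F}^n)$ and preserve $V_k(\mathbb{F}^n)$.

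The second step is to let $k$ run through $0,\ldots,n+1$ (respectively $0,\ldots,n$) and take unions, which gives the graded maps $\pi$. They are compatible with the face maps that delete $X_i$, by the commutative square (\ref{eq-4.23.2}) applied fibrewise. So they are projections of directed simplicial complexes onto their underlying simplicial complexes, as in Proposition~\ref{pr-4.6.1-stief}. Commutativity of both squares then follows from the $\Sigma_k$-equivariance of the inclusions: for $\vec\sigma\in W_k(M)$, the orbit of $\vec\sigma$ in $W_k(M)/\Sigma_k$ is mapped by inclusion to its orbit in $\tilde W_k(M)/\Sigma_k$, and likewise for $\tilde W_k(M)\subseteq U_k(M)$. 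Fibrewise, this restricts to the top portion of Proposition~\ref{pr-4.6.1-stief}.

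The only step needing care is the local triviality of the quotients, that is, that the orbit spaces really are fibre bundles. The check is that the transition cocycle commutes with the $\Sigma_k$-action. This holds because the structure groups $AGL$, $GL$ and $O$ act diagonally on $k$-tuples, whereas $\Sigma_k$ permutes the entries, so the two actions commute. Everything else is a direct verification.
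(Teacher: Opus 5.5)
Your proposal is correct and follows essentially the same route as the paper, which takes the fibrewise commutative diagram of $V_\bullet(\mathbb{F}^n)\subseteq\tilde V_\bullet(\mathbb{F}^n)\subseteq A_\bullet(\mathbb{F}^n)$ and their $\Sigma_\bullet$-quotients from Proposition~\ref{pr-4.6.1-stief} and then lets $p$ run over $M$. Your additional check that the diagonal action of the transition functions commutes with the permutation action fills in a point the paper only asserts in the text before the statement, namely that the orbit spaces are locally trivial and the projections are bundle maps.
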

  
  \begin{proof}
   For  any  $p\in  M$,  by 
   Proposition~\ref{pr-4.6.1-stief}, 
   we  have  a  commutative  diagram  of  the  fibres 
   \begin{eqnarray}\label{eq-diag-26.4.8.b}
 \xymatrix{
 V_\bullet(\mathbb{F}^n) \ar[r]\ar[d]_-{\pi}
 &    \tilde   V_\bullet(\mathbb{F}^n)\ar[r]\ar[d]_-{\pi}
  &         A_\bullet(\mathbb{F}^n) \ar[d]^-{\pi}\\
 V_\bullet(\mathbb{F}^n)/\Sigma_\bullet \ar[r]
 &    \tilde  V_\bullet(\mathbb{F}^n)/\Sigma_\bullet \ar[r]
  &         A_\bullet(\mathbb{F}^n) /\Sigma_\bullet.    
 }
 \end{eqnarray}
 Let  $p$  run  over  $M$.  
 The  commutative  diagram 
 (\ref{eq-diag-26.4.8.b})  implies  the  commutative  diagram 
 (\ref{eq-diag-26.4.8.a})
 where  all  the  arrows  are  morphisms  of  bundles  over  $M$. 
  \end{proof}
  
     \subsection{Frame  bundles  with  constraints on  manifolds}\label{ss4.a}

Let   $\vec{\mathcal{H}}(\Gamma(TM))$,     
which  is 
a   hyperdigraph   on  $\Gamma(TM)$,   be   a  
sub-hyperdigraph  of  $U_\bullet(M)$.   
 For  any  $0\leq  k\leq  n+1$,  
 the  directed  $k$-hyperedges   in  
$  \vec{\mathcal{H}}(\Gamma(TM))$  are   certain  ordered  $k$-tuples  
$\vec\sigma(\Gamma(TM))= (X_1,X_2,\ldots,X_k)$
where  $X_1$, $X_2$, $\ldots$,  $X_k$  are  distinct  
vector  fields  on  $M$. 
An     {\it  affine   (real  or  complex)  $k$-frame    field  on  $M$ 
    with  constraint  
    $\vec{\mathcal{H}}(\Gamma(TM))$}    is  an  ordered  
     $k$-tuple   $(X_1,X_2,\ldots,X_k)$   of  vector  fields 
     $X_1,X_2,\ldots,X_k$  on  $M$  such  that  $(X_1,X_2,\ldots,X_k)\in  \vec{\mathcal{H}}(\Gamma(TM))$.      
Let  ${\mathcal{H}}(\Gamma(TM))$  be  the 
  underlying   hypergraph   
of   $\vec{\mathcal{H}}(\Gamma(TM))$.  
Then     ${\mathcal{H}}(\Gamma(TM))$  
is  a  sub-hypergraph  of   $U_\bullet(M)/\Sigma_\bullet$.    
The   $k$-hyperedges   in  
$ {\mathcal{H}}(\Gamma(TM))$    are  certain     sets   
$ \sigma(\Gamma(TM))= \{X_1,X_2,\ldots,X_k\}$ of  
   $k$-distinct  
vector  fields  on  $M$.

Let     $\vec{\mathcal{H}}'(\Gamma(TM))=\vec{\mathcal{H}}(\Gamma(TM))
\cap   \tilde W_\bullet(M)$  and  let  
$\vec{\mathcal{H}}''(\Gamma(TM))=\vec{\mathcal{H}}(\Gamma(TM))
\cap     W_\bullet(M)$. 
  For  any  $0\leq  k\leq  n $,  
  a      {\it     (real  or  complex)  $k$-frame    field  on  $M$ 
    with  constraint  
    $\vec{\mathcal{H}}'(\Gamma(TM))$}    is  an  ordered  
     $k$-tuple   $(X_1,X_2,\ldots,X_k)$   of  vector  fields 
     $X_1,X_2,\ldots,X_k$  on  $M$  such  that  $(X_1,X_2,\ldots,X_k)\in  \vec{\mathcal{H}}'(\Gamma(TM))$, 
     and         an      {\it    (real  or  complex)    orthonormal
     $k$-frame    field  on  $M$ 
    with  constraint  
    $\vec{\mathcal{H}}''(\Gamma(TM))$}    is  an  ordered  
     $k$-tuple   $(X_1,X_2,\ldots,X_k)$   of  vector  fields 
     $X_1,X_2,\ldots,X_k$  on  $M$  such  that  $(X_1,X_2,\ldots,X_k)\in  \vec{\mathcal{H}}''(\Gamma(TM))$.  
Let  ${\mathcal{H}}'(\Gamma(TM))$   and  ${\mathcal{H}}''(\Gamma(TM))$   be  the 
  underlying    hypergraphs   
of   $\vec{\mathcal{H}}'(\Gamma(TM))$  and  $\vec{\mathcal{H}}''(\Gamma(TM))$ 
respectively. 
Then  
     ${\mathcal{H}}'(\Gamma(TM))$  
is  a  sub-hypergraph  of   $\tilde  W_\bullet(M)/\Sigma_\bullet$
and  ${\mathcal{H}}''(\Gamma(TM))$  
is  a  sub-hypergraph  of   $W_\bullet(M)/\Sigma_\bullet$.

   \begin{corollary}\label{co-4.12.1-stief}
     Let  $M$  be  a  Riemannian  manifold  or  a  Hermitian  manifold.  
 Then  we  have     a     commutative  diagram  of  fibre  bundles  over  $M$
 \footnote[8]{The  fibres  in  the  fibre  bundles  of  (\ref{eq-diag-26.4.12.a})
 are  graded  submanifolds  of  $\Delta$-manifolds,  which  are  not  only  
 usual  manifolds.  }
  \begin{eqnarray}\label{eq-diag-26.4.12.a}
 \xymatrix{
\vec{\mathcal{H}}''(\Gamma(TM)) \ar[r]\ar[d]_-{\pi}
 &    \vec{\mathcal{H}}'(\Gamma(TM)) \ar[r]\ar[d]_-{\pi}
  &         \vec{\mathcal{H}}(\Gamma(TM))  \ar[d]^-{\pi}\\
 {\mathcal{H}}''(\Gamma(TM))  \ar[r]
 &    {\mathcal{H}}'(\Gamma(TM))  \ar[r]
  &       {\mathcal{H}}(\Gamma(TM)) 
 }
 \end{eqnarray}
 where  the  horizontal  maps  are  canonical  inclusions  
 and  the  vertical  maps  are  projections  sending  the  ordered  tuples  
 to  the  unordered  tuples.  
  \end{corollary}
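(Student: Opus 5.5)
The plan is to reduce Corollary~\ref{co-4.12.1-stief} to Corollary~\ref{co-4.8.1-stief} by restriction, exactly as Corollary~\ref{co-4.11.2-stief} was deduced from Proposition~\ref{pr-4.6.1-stief}. By definition, $\vec{\mathcal{H}}'(\Gamma(TM))=\vec{\mathcal{H}}(\Gamma(TM))\cap\tilde W_\bullet(M)$ and $\vec{\mathcal{H}}''(\Gamma(TM))=\vec{\mathcal{H}}(\Gamma(TM))\cap W_\bullet(M)$. Since $W_\bullet(M)\subseteq\tilde W_\bullet(M)\subseteq U_\bullet(M)$, we get $\vec{\mathcal{H}}''(\Gamma(TM))\subseteq\vec{\mathcal{H}}'(\Gamma(TM))\subseteq\vec{\mathcal{H}}(\Gamma(TM))$. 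This gives the horizontal inclusions in the first row of (\ref{eq-diag-26.4.12.a}), and they are the restrictions of the first row of (\ref{eq-diag-26.4.8.a}).

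Next I restrict the vertical projections $\pi$ of (\ref{eq-diag-26.4.8.a}) to these sub-hyperdigraphs. By the definition of the underlying hypergraph, $\pi(\vec{\mathcal{H}}(\Gamma(TM)))={\mathcal{H}}(\Gamma(TM))$, and likewise for $'$ and $''$. So each restricted $\pi$ lands in the corresponding entry of the second row, and the second-row maps are the restrictions of the inclusions $W_\bullet(M)/\Sigma_\bullet\to\tilde W_\bullet(M)/\Sigma_\bullet\to U_\bullet(M)/\Sigma_\bullet$. The inclusion ${\mathcal{H}}''\subseteq{\mathcal{H}}'\subseteq{\mathcal{H}}$ holds because $\pi$ preserves inclusions. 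Both squares then commute, since they are restrictions of the commuting squares of (\ref{eq-diag-26.4.8.a}).

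The remaining point, and the one I expect to be the main obstacle, is interpreting each entry as a fibre bundle over $M$ and checking that the maps are bundle morphisms. My approach would be pointwise, following the proof of Corollary~\ref{co-4.8.1-stief}. At each $p\in M$, evaluating a directed hyperedge $(X_1,\ldots,X_k)$ at $p$ gives a fibre $\vec{\mathcal{H}}_p\subseteq A_\bullet(\mathbb{F}^n)$, which is a sub-hyperdigraph. Intersecting with $\tilde V_\bullet(\mathbb{F}^n)$ and $V_\bullet(\mathbb{F}^n)$ gives the fibres of the $'$ and $''$ versions.

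Restricting diagram (\ref{eq-diag-26.4.8.b}) to these fibres, as in the proof of Corollary~\ref{co-4.11.2-stief} (diagram (\ref{diag-26.4.11.5})), yields a commutative diagram of fibres at $p$. These are graded submanifolds of $\Delta$-manifolds, as the footnote warns. Letting $p$ run over $M$ assembles these into the diagram (\ref{eq-diag-26.4.12.a}), with all arrows covering the identity of $M$. Local triviality is inherited from that of $U_\bullet(M)$, $\tilde W_\bullet(M)$ and $W_\bullet(M)$ in Subsection~\ref{ss4.3}, to the extent that the constraint is compatible with local trivializations. I would state this compatibility as the implicit standing assumption, as the paper does when calling these fibre bundles.
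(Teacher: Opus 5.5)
Your proposal is correct and is essentially the paper's proof, which consists of the single step of restricting diagram (\ref{eq-diag-26.4.8.a}) to the sub-hyper(di)graphs $\vec{\mathcal{H}}''(\Gamma(TM))\subseteq\vec{\mathcal{H}}'(\Gamma(TM))\subseteq\vec{\mathcal{H}}(\Gamma(TM))$ and to their images under $\pi$. Your extra pointwise discussion of the fibre-bundle structure goes beyond what the paper checks, including the caveat that local triviality needs the constraint to be compatible with local trivializations; the paper simply takes the bundle language for granted.
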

  
  \begin{proof}
    by  restricting  (\ref{eq-diag-26.4.8.a})  to the  corresponding  sub-hyper(di)graphs, 
 we  obtain  (\ref{eq-diag-26.4.12.a}).  
  \end{proof}

\section{Regular  maps  and  geometric  realizations    of      hypergraphs
  on  manifolds}
  \label{s-27.7.28.8}

In  this  section,  
we  generalize  the  classical  geometric  realizations  
of  simplicial  complexes  on   discrete  vertices
and    
    define  the  geometric  realizations  of  simplicial  complexes 
as  well  as  hypergraphs  on  manifolds   by  using  affinely  
regularities  of    embeddings  of  the manifolds  in  Euclidean  spaces.  
   In  Subsection~\ref{ss-26.6.7-7.1}, 
   we  generalize  the  classical  geometric  realizations  
of  simplicial  complexes  
and  define  the  geometric  realizations  
of  hypergraphs  on  discrete  vertices. 
   In  Subsection~\ref{ss-7.2-geomr}, 
   we  characterize  affinely  $k$-regular  embeddings 
   of  manifolds  in  Euclidean  spaces 
    equivalently  as  geometric  realizations  of  
   the  independence  complexes.  
   We  define  geometric  realizations  of  hypergraphs  on  manifolds  
   as  embeddings  of  the  manifolds  in  Euclidean  spaces  
   with  certain regularities.

   \subsection{Geometric  realizations  of  hypergraphs   on  discrete  vertices}
   \label{ss-26.6.7-7.1}
  
Let    $\mathbb{F}$  be  the  real  numbers  $\mathbb{R}$  or  the  complex numbers  
   $\mathbb{C}$.  Let  $c$  be  a  cardinality.  
   Let  $\mathbb{F}^c$  be  the  $c$-fold  direct  sum  of  $\mathbb{F}$.   
   Let  $V$  be  a  discrete  set  with  its  cardinality  ${\rm  card}(V)=c$.  
     Let  $\{\epsilon(v)\mid  v\in  V\}$  be  an    orthonormal  basis  of  
   $\mathbb{R}^c$ where  
   \begin{eqnarray}\label{eq-26-4-19-b}
   \epsilon:  V\longrightarrow \{\epsilon(v)\mid  v\in  V\}
   \end{eqnarray}
     is  a  bijection.   
     For  any  vertex  $v\in  V$,  its  geometric  realization   $|v|$  is  $\epsilon(v)$.  
     For  any  $k\geq  2$  and   any  $k$-hyperedge  $\sigma$  on  $V$,  
  its  geometric  realization  $|\sigma|$ is  the  interior  of  the  convex  hull  
  of  $\{\epsilon(v)\mid  v\in \sigma\}$,  which  is  an  open  subset  of  
  a  $k$-dimensional  hyperplane  in  $\mathbb{R}^c$. 
  
   \begin{lemma}\label{le-5.1}
  For  any  hyperedges  $\sigma$  and  $\tau$  on  $V$,  
  $\sigma\subseteq  \tau$  if  and  only  if  
      $|\sigma|$  is  a  subset  of  the  boundary  of  the  closure  of  $|\tau|$.  
  \end{lemma}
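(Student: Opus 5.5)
The plan is to use the ambient topology of $\mathbb{R}^c$ throughout, exactly as the statement is worded. The first step is to identify the set ``boundary of the closure of $|\tau|$'' explicitly.

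For a $k$-hyperedge $\tau$, I read $|\tau|$ as the open subset of the affine hull of $\{\epsilon(v)\mid v\in\tau\}$ described in the paragraph before the lemma, that is, the relative interior of the convex hull; for $k=1$ it is the point $\epsilon(v)$. Its closure in $\mathbb{R}^c$ is the closed simplex
\[\overline{|\tau|}=\Big\{\textstyle\sum_{v\in\tau}t_v\epsilon(v)\;\Big|\; t_v\geq 0,\ \sum_{v\in\tau} t_v=1\Big\}.\]

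The key observation is that this closed simplex lies in the affine hyperplane $\{x\mid \sum_{v\in V}\langle x,\epsilon(v)\rangle=1\}$ of $\mathbb{R}^c$. It therefore has empty interior in $\mathbb{R}^c$, for every cardinality $c\geq 1$. Consequently the boundary of $\overline{|\tau|}$ is $\overline{|\tau|}\setminus\mathrm{int}\,\overline{|\tau|}=\overline{|\tau|}$ itself. The lemma thus reduces to showing that $\sigma\subseteq\tau$ if and only if $|\sigma|\subseteq\overline{|\tau|}$.

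Both directions use coordinates with respect to the orthonormal basis $\{\epsilon(v)\}$, and I will phrase the argument in terms of supports.

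($\Rightarrow$) Suppose $\sigma\subseteq\tau$. A point of $|\sigma|$ has the form $\sum_{v\in\sigma}t_v\epsilon(v)$ with $t_v>0$ and $\sum_{v\in\sigma} t_v=1$. Setting $t_v=0$ for $v\in\tau\setminus\sigma$ exhibits this point as an element of $\overline{|\tau|}$.

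($\Leftarrow$) Every point of $|\sigma|$ has coordinate $\langle x,\epsilon(v)\rangle>0$ for each $v\in\sigma$. Every point of $\overline{|\tau|}$ has coordinate $0$ at each $v\notin\tau$. Since $|\sigma|$ is nonempty, any $x\in|\sigma|\subseteq\overline{|\tau|}$ forces every $v\in\sigma$ to lie in $\tau$.

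The only delicate step is the identification of the boundary with the whole closed simplex. This rests on the fact that the simplex sits inside a proper affine hyperplane of $\mathbb{R}^c$, and I will state that fact explicitly. It is also what makes the case $\sigma=\tau$ consistent with the statement: under the ambient topology, $|\tau|$ is indeed contained in the boundary of its own closure.
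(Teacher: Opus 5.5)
Your proof is correct under the reading you fix, and its core is the same computation the paper does: comparing supports, i.e.\ barycentric coordinates, with respect to $\{\epsilon(v)\}$. The real difference is what ``boundary'' means, and that changes the content of the lemma.

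The paper takes $\mathrm{Bd}(\mathrm{Cl}(|\tau|))$ to be the \emph{relative} boundary of the closed simplex inside its affine hull, as displayed in (\ref{eq-26.4.geo.2}). That is the union of the proper faces: points with all $y_i\geq 0$ and some $y_i=0$. Under that reading, the coordinate comparison actually proves $\sigma\subsetneq\tau$ if and only if $|\sigma|\subseteq\mathrm{Bd}(\mathrm{Cl}(|\tau|))$. At $\sigma=\tau$ every point of $|\tau|$ has all coordinates positive, so it misses the relative boundary. The paper's concluding ``if and only if'' passes over this case, and its later use in Proposition~\ref{pr-4-geom-1} only involves $\tau\subsetneq\sigma$.

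Your ambient reading is the one under which the statement as written is literally true, including $\sigma=\tau$ and the singleton case $\tau=\{u\}$. Your empty-interior argument for it is sound: the simplex lies in a proper affine subspace. The price is that the boundary becomes the whole closed simplex. The lemma then degenerates to ``$\sigma\subseteq\tau$ iff $|\sigma|\subseteq\overline{|\tau|}$'' and no longer records that proper faces lie in the boundary while $|\tau|$ itself does not. That weaker form still suffices for Proposition~\ref{pr-4-geom-1}(1), which only needs $|\tau|\subseteq\overline{|\sigma|}$ for $\tau\subsetneq\sigma$.

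If you also want the paper's intended version, your support argument gives it at once:
\begin{enumerate}
\item[(i)] For $\sigma\subsetneq\tau$, the zero-extension of a point of $|\sigma|$ vanishes at some $v\in\tau\setminus\sigma$, so it lies in the relative boundary.
\item[(ii)] Conversely, containment in the relative boundary gives $\sigma\subseteq\tau$ by your argument.
\item[(iii)] The case $\sigma=\tau$ is excluded because every point of $|\tau|$ has all $\tau$-coordinates positive.
\end{enumerate}
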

  
  \begin{proof}
 The  lemma  is  obvious  if  $\sigma=\{v\}$  or  $\tau=\{u\}$  for  some  $v,u\in  V$.  
  Without  loss  of  generality,  assume  
  $\sigma=\{v_0,v_1,\ldots, v_k\}$  and  
   $\tau=\{u_0,u_1,\ldots, u_l\}$  where  $k,l\geq  1$.  
   Then  
   \begin{eqnarray}\label{eq-26.4.geo.1}
   |\sigma|  =  \Big\{\sum_{i=0}^k  x_i \epsilon(v_i)\mid \sum_{i=0}^k  x_i=1,  x_0,x_1,\ldots,x_k>0 \Big\} 
     \end{eqnarray}
     and  the   boundary  of  the  closure  of  $|\tau|$  is 
     \begin{eqnarray}\label{eq-26.4.geo.2}
    {\rm  Bd}  ({\rm  Cl}(|\tau|) )= \Big\{\sum_{i=0}^l  y_i \epsilon(u_i)\mid \sum_{i=0}^l  y_i=1, 
    y_0,y_1,\ldots,y_k\geq  0  {\rm ~and~some~}   y_i=0 \Big\}. 
   \end{eqnarray}  
   Thus  by  (\ref{eq-26.4.geo.1})  and  (\ref{eq-26.4.geo.2}), 
   $\sigma\subseteq  \tau$  if  and  only  if 
   $|\sigma|$  is  a  subset  of  $ {\rm  Bd}  ({\rm  Cl}(|\tau|) )$.  
  \end{proof}

  Let  $\mathcal{H} $  be  a  hypergraph  on  $V$.  
 The  geometric  realization  of  $\mathcal{H} $
  is  the  subset  of  $\mathbb{R}^c$ given  by 
  \begin{eqnarray}\label{eq-4.2.1}
  |\mathcal{H} |= \bigcup_{\sigma \in \mathcal{H} }  |\sigma |.  
  \end{eqnarray}
  In  particular,  if  $\mathcal{H}$  is  a  simplicial  complex,
   then  (\ref{eq-4.2.1})  coincides  with the  usual  geometric  realization
   of  simplicial  complexes   (cf.  \cite[p.  103]{hatcher}).  
    Let  $\Delta[V]$  be  the  simplicial  complex  consisting 
     of   all  the  finite  subsets of  $V$.

  \begin{proposition}\label{pr-4-geom-1}
  For  any   hypergraph  $\mathcal{H} $    on  $V$,  
  \begin{enumerate}[(1)]
  \item
   $|\Delta\mathcal{H} |$  is  the  closure  of  $ |\mathcal{H} |$
   in  $\mathbb{R}^c$.  
   Thus  
   $ |\mathcal{H} |$ 
   is  a  closed  subset  
  of  $\mathbb{R}^c$  if  and  only  if  
  $\mathcal{H} $  is  
    a   simplicial  complex;
    \item
    $ |\mathcal{H} |$  is  an  open  subset   of    $|\Delta[V]|$
    if  and  only  if  $\mathcal{H}$  is  an  independence  hypergraph.  
    Thus 
    $|\bar \Delta\mathcal{H} |$  is  an  open  neighborhood  of  $ |\mathcal{H} |$
   in  $|\Delta[V]|$.  
    \end{enumerate}
    \end{proposition}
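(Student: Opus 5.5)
The plan is to reduce everything to the face structure of the open simplices $|\sigma|$ recorded in Lemma~\ref{le-5.1}. The basic fact is that the closure of $|\sigma|$ is the closed simplex
\begin{eqnarray*}
{\rm Cl}(|\sigma|)=\bigcup_{\tau\subseteq\sigma,\ \tau\neq\emptyset}|\tau|,
\end{eqnarray*}
with this union disjoint. Moreover, $|\Delta[V]|$ is the disjoint union of all the open simplices $|\tau|$ for $\tau\in\Delta[V]$. Disjointness holds because a point $\sum x_i\epsilon(v_i)$ with all $x_i>0$ determines its support $\{v_i\}$ uniquely, since the $\epsilon(v)$ form an orthonormal basis.

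For (1), I will take the closure in the topology for which $|\Delta[V]|$ carries the weak (CW) topology, or equivalently argue simplexwise, since every point lies in finitely many closed simplices. First, $|\Delta\mathcal{H}|=\bigcup_{\sigma\in\mathcal{H}}\bigcup_{\tau\subseteq\sigma}|\tau|=\bigcup_{\sigma\in\mathcal{H}}{\rm Cl}(|\sigma|)$, which contains $|\mathcal{H}|$. Second, this union of closed simplices is closed: its intersection with each closed simplex ${\rm Cl}(|\rho|)$ is a finite union of closed faces. Third, every $|\tau|$ with $\tau\subseteq\sigma$ lies in ${\rm Cl}(|\sigma|)\subseteq{\rm Cl}(|\mathcal{H}|)$. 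Together these give ${\rm Cl}(|\mathcal{H}|)=|\Delta\mathcal{H}|$. The equivalence then follows from disjointness of open simplices: $|\mathcal{H}|=|\Delta\mathcal{H}|$ if and only if $\mathcal{H}=\Delta\mathcal{H}$, that is, if and only if $\mathcal{H}$ is a simplicial complex.

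For (2), openness of $|\mathcal{H}|$ in $|\Delta[V]|$ is the same as closedness of its complement, which by disjointness is $|\Delta[V]\setminus\mathcal{H}|$. By (1), this complement is closed if and only if $\Delta[V]\setminus\mathcal{H}$ is closed under taking nonempty subsets. That condition is equivalent to $\mathcal{H}$ being closed under taking finite supersets: if $\sigma\in\mathcal{H}$ and $\theta\supseteq\sigma$ with $\theta\notin\mathcal{H}$, then downward closure of the complement forces $\sigma\notin\mathcal{H}$, a contradiction, and the converse direction is symmetric. So $|\mathcal{H}|$ is open exactly when $\mathcal{H}$ is an independence hypergraph. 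The final claim follows because $\bar\Delta\mathcal{H}\supseteq\mathcal{H}$ is an independence hypergraph, so $|\bar\Delta\mathcal{H}|$ is an open set containing $|\mathcal{H}|$.

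The main obstacle is topological. When $c$ is infinite, the closure must be taken in the weak topology of $|\Delta[V]|$ rather than the metric topology of $\mathbb{R}^c$; in the metric topology, infinite unions of closed simplices need not be closed. I would handle this by stating at the outset that $\mathbb{R}^c$ and $|\Delta[V]|$ carry the colimit topology over finite subsets, which is standard for geometric realizations. After that, every check reduces to finitely many simplices.
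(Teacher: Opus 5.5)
Your proposal is correct and follows the same route as the paper. For (1) you identify $|\Delta\mathcal{H}|$ with the union of the closed simplices ${\rm Cl}(|\sigma|)$, $\sigma\in\mathcal{H}$, via the face description of Lemma~\ref{le-5.1}; for (2) you reduce openness of $|\mathcal{H}|$ to closedness of $|\Delta[V]\setminus\mathcal{H}|$ and apply (1). You also supply details the paper's proof leaves implicit: that this union is actually closed, that disjointness of the open simplices gives $|\Delta[V]|\setminus|\mathcal{H}|=|\Delta[V]\setminus\mathcal{H}|$, and that for infinite $c$ one needs the colimit (weak) topology on $\mathbb{R}^c$. In the norm topology already $|\Delta[V]|$ fails to be closed, e.g.\ $\frac{1}{n}\sum_{i=1}^{n}\epsilon(v_i)\to 0$.
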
 
  
  \begin{proof}
 (1) 
 It  follows  from (\ref{eq-4.2.1})    that  
 \begin{eqnarray}\label{eq-4.geom-bdh1}
| \Delta\mathcal{H}|  = |\mathcal{H}|\bigcup \Big(\bigcup
  _{\tau\subsetneq\sigma\atop 
  \sigma\in \mathcal{H}}  |\tau| \Big). 
  \end{eqnarray}
  By  Lemma~\ref{le-5.1}, 
  for  any $\sigma\in \mathcal{H}$ and  any  $\tau\subsetneq\sigma$,    
     $ |\tau|$  is  a  subset  of  $ {\rm  Bd}  ({\rm  Cl}(|\mathcal{H}|) )$.   
  Therefore,     the  righthand  side  of  
  (\ref{eq-4.geom-bdh1})   is  the  closure  of  $ |\mathcal{H} |$.

  (2)  It  follows  from (\ref{eq-4.2.1})    that   
  $ |\mathcal{H} |$  is  an  open  subset   of    $|\Delta[V]|$
    if  and  only  if 
     $|\Delta[V] \setminus   \mathcal{H}|=|\Delta[V]|\setminus  |\mathcal{H} |$ 
     is  a  closed  subset   of    $|\Delta[V]|$.  
     With the  help  of  (1),  this  happens    
     if  and  only  if  $ \Delta[V] \setminus   \mathcal{H}$
      is  
    a   simplicial  complex,  or  equivalently,  
    $ \mathcal{H}$
      is  
    an  independence  hypergraph.     
      \end{proof}

 For  any  hyperdigraph  $\vec{\mathcal{H}}$  on  $V$
 with  
  with  its  underlying  hypergraph  ${\mathcal{H}}$,  
  the  {\it  dimension}  is  
  \begin{eqnarray}\label{eq-26-5-7-2}
  c(\vec{\mathcal{H}} )
  =\sup\{k \mid  \vec{\mathcal{H}}_k\neq\emptyset\}-1=
  \sup\{k \mid   {\mathcal{H}}_k\neq\emptyset\}-1=c({\mathcal{H}} ). 
  \end{eqnarray}
\begin{proposition}\label{pr-26.5.7.1}
  Let  $\mathcal{H}$  is  a  hypergraph  with  $c({\mathcal{H}} )=d$.    
  \begin{enumerate}[(1)]
  \item
  If  $d=0$,  then  $|\mathcal{H}|$   is   a  discrete  set  of  points;  
  \item
  If  $1\leq  d< +\infty$, 
  then  $|\mathcal{H}|$   contains  an   open  set  of  $\mathbb{R}^d$ 
  and  does  not  contain  any  open  set  of  $\mathbb{R}^{d+1}$;    
  \item
  If  $d=+\infty$,  
   then  $|\mathcal{H}|$   contains  an   open  set  of  $\mathbb{R}^n$
   for  any  finite  positive  integer  $n$.  
  \end{enumerate}
\end{proposition}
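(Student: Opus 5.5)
The plan is to read everything off the definition (\ref{eq-4.2.1}), $|\mathcal{H}|=\bigcup_{\sigma\in\mathcal{H}}|\sigma|$, together with one fact about a single cell: for a $k$-hyperedge $\sigma$, the set $|\sigma|$ is the relative interior of the $(k-1)$-simplex spanned by the affinely independent orthonormal vectors $\{\epsilon(v)\mid v\in\sigma\}$. So $|\sigma|$ is homeomorphic to an open subset of $\mathbb{R}^{k-1}$, and it is contained in a $(k-1)$-dimensional affine plane. (The paper says ``$k$-dimensional hyperplane''; I would use the affine dimension $k-1$, which matches the shift by $-1$ in (\ref{eq-26-5-7-2}).) By (\ref{eq-26-5-7-2}), $c(\mathcal{H})=d$ means the largest hyperedge cardinality is $d+1$, or that cardinalities are unbounded when $d=+\infty$.

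For (1): if $d=0$, then every hyperedge is a singleton $\{v\}$ (the empty hyperedge, if present, contributes nothing). Hence $|\mathcal{H}|\subseteq\{\epsilon(v)\mid v\in V\}$. Any two distinct basis vectors are at distance $\sqrt{2}$, so the set is discrete.

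For (2), with $1\le d<\infty$:
\begin{enumerate}[(i)]
\item For the lower bound, pick $\sigma\in\mathcal{H}$ with $d+1$ vertices. Then $|\sigma|\subseteq|\mathcal{H}|$ is an open subset of a $d$-dimensional affine plane, which is isometric to $\mathbb{R}^d$. This is how I would interpret ``contains an open set of $\mathbb{R}^d$.''
\item For the upper bound, every cell $|\tau|$ with $\tau\in\mathcal{H}$ lies in an affine plane of dimension at most $d$. Suppose $|\mathcal{H}|$ contained a set $U$ homeomorphic to an open set of $\mathbb{R}^{d+1}$.
\item Restrict to a compact ball $B\subseteq U$. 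A compact subset of $|\Delta[V]|$ (with the usual CW/weak topology, or simply working inside the finitely many coordinates that meet $B$) meets only finitely many cells. So $B$ is covered by finitely many closed subsets of planes of dimension at most $d$.
\item This covering by finitely many sets of covering dimension at most $d$ is the contradiction. Concretely, by invariance of domain no subset of a $d$-plane contains an open subset of $\mathbb{R}^{d+1}$. A Baire category argument on $B$ then shows that some piece of the finite cover would have nonempty interior in $B$, which is impossible.
\end{enumerate}

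For (3): if $d=+\infty$, then for each finite $n$ there is $\sigma\in\mathcal{H}$ with at least $n+1$ vertices. Choose a sub-collection $\rho$ of $n+1$ of its vertices. The cell $|\rho|$ need not belong to $|\mathcal{H}|$, so instead I take an open subset of $|\sigma|$ itself. Since $|\sigma|$ is open in a plane of dimension $\ge n$, intersecting it with a suitable $n$-dimensional affine slice through an interior point gives an open subset of $\mathbb{R}^n$ contained in $|\mathcal{H}|$.

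The main obstacle is the upper bound in (2). This step needs care about the topology on $|\mathcal{H}|$ when $V$ is infinite, and about what ``contains an open set of $\mathbb{R}^{d+1}$'' means. I would fix the convention that it means a subset homeomorphic to an open subset of $\mathbb{R}^{d+1}$. I would then use the compactness-plus-finitely-many-cells reduction to reduce to the finite-dimensional Baire and invariance-of-domain argument.
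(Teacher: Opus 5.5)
Your proposal is correct and takes the same route as the paper. The paper's entire proof is the remark that the statement follows from (\ref{eq-4.2.1}) and (\ref{eq-26-5-7-2}); you make explicit what that remark hides, in particular that the upper bound in (2) needs invariance of domain plus a reduction to covering a compact ball by closed simplices of dimension at most $d$.

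The one inaccurate point is the alternative you offer in step (iii). If $\mathbb{R}^c$ with $c$ infinite carries a norm topology instead of the weak topology, a compact set can meet infinitely many cells; for example, $\epsilon(v_0)$ together with the points $(1-\frac{1}{n})\epsilon(v_0)+\frac{1}{n}\epsilon(v_n)$. The argument still closes: a compact metric set is separable, so it lies in the span of countably many $\epsilon(v)$. Only countably many closed simplices $\overline{|\tau|}$ are then needed, and your Baire argument (rather than a finite-union argument) finishes the proof.
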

   
   \begin{proof}
   The  proof  follows  directly  from  (\ref{eq-4.2.1})  and  (\ref{eq-26-5-7-2}).  
   \end{proof}
   
   \subsection{(Affinely)  $k$-regular  embeddings      of  manifolds}\label{ss-7.2-geomr}

     Let  $M$  be  a  differentiable  manifold.

    \begin{definition}[cf.  \cite{cohen1,handel2,high1,high2}]
    \label{def-0}
    For  any    embedding  $f:  M\longrightarrow  \mathbb{F}^c$   
    and  any       $k\geq  2$,    
    \begin{enumerate}[(1)]
    \item
     if  for  any  distinct  $k$-points  $x_1, \ldots, x_k\in  M$,  
    their  images   $f(x_1), \ldots, f(x_k) $  are  (real  or  complex)
    affinely  independent,  
    then  we  say that  $f$   is  (real  or  complex)  {\it  affinely  $k$-regular};
    
    \item
     if  for  any  distinct  $k$-points  $x_1, \ldots, x_k\in  M$,  
    their  images   $f(x_1), \ldots, f(x_k) $  are  (real  or  complex)
    linearly  independent,  
    then  we  say that  $f$   is  (real  or  complex)  {\it    $k$-regular}. 
    \end{enumerate}
    \end{definition}

 \begin{proposition}\label{pr-4.18-1}
 Let  $f:  M\longrightarrow  \mathbb{F}^c$  be  an  embedding.  
 Let  ${\rm   Ind}(f):  {\rm  Ind}(M)\longrightarrow  {\rm  Ind}( \mathbb{F}^c)$ 
 be  the  induced  simplicial  embedding given  by  Proposition~\ref{pr-4.15-1}.  
 Then  
    \begin{enumerate}[(1)]
    \item
     $f$  is   (real  or  complex)     affinely  $k$-regular   
     if  and  only  if  
     ${\rm  Ind}(f)$  induces  a  simplicial  embedding  
     of  ${\rm  sk}^{k-1}({\rm  Ind}(M))$  into  
     ${\rm  sk}^{k-1}(  A_\bullet (\mathbb{F}^n)/\Sigma_\bullet)$; 
    \item
     $f$  is   (real  or  complex)       $k$-regular  
     if  and  only  if  
     ${\rm  Ind}(f)$  induces  a  simplicial  embedding  
     of  ${\rm  sk}^{k-1}({\rm  Ind}(M))$  into  
     ${\rm  sk}^{k-1}(  \tilde   V_\bullet (\mathbb{F}^n)/\Sigma_\bullet)$. 
   \end{enumerate}
   \end{proposition}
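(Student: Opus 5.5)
The plan is to unwind the definitions simplex by simplex, using the explicit description of ${\rm Ind}(f)$ from the proof of Proposition~\ref{pr-4.15-1}. For each $1\leq j\leq k$, the $j$-hyperedges of ${\rm sk}^{k-1}({\rm Ind}(M))$ are the points $\{x_1,\ldots,x_j\}$ of ${\rm Conf}_j(M)/\Sigma_j$. The map ${\rm Ind}(f)$ sends such a point to $\{f(x_1),\ldots,f(x_j)\}\in{\rm Conf}_j(\mathbb{F}^c)/\Sigma_j$. This is well defined and injective because $f$ is an embedding, and it commutes with the face maps by the commutative square with $\pi$ in Proposition~\ref{pr-4.15-1}. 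So ${\rm Ind}(f)$ is already a simplicial embedding of ${\rm Ind}(M)$ into ${\rm Ind}(\mathbb{F}^c)$. The only question is whether its restriction to the $(k-1)$-skeleton lands inside ${\rm sk}^{k-1}(A_\bullet(\mathbb{F}^c)/\Sigma_\bullet)$, respectively ${\rm sk}^{k-1}(\tilde V_\bullet(\mathbb{F}^c)/\Sigma_\bullet)$. (The target is written with $n$ in the statement; I read it as $n=c$.)

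For (1), I first note that $A_j(\mathbb{F}^c)/\Sigma_j$ is exactly the subspace of ${\rm Conf}_j(\mathbb{F}^c)/\Sigma_j$ consisting of affinely independent $j$-sets, since affine independence does not depend on the ordering.

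For ($\Longrightarrow$): if $f$ is affinely $k$-regular, then every $j$-subset of $f(M)$ with $j\leq k$ is affinely independent, because a subset of an affinely independent set is affinely independent. Hence ${\rm Ind}(f)$ maps the $j$-simplices into $A_j(\mathbb{F}^c)/\Sigma_j$ for all $j\leq k$. This gives a simplicial embedding between the $(k-1)$-skeleta. It is compatible with faces, since $A_\bullet(\mathbb{F}^c)/\Sigma_\bullet$ is a simplicial complex by item (2) of Subsection~\ref{ss4.1}.

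For ($\Longleftarrow$): take $j=k$. The image of every $k$-set $\{x_1,\ldots,x_k\}$ lies in $A_k(\mathbb{F}^c)/\Sigma_k$, which says exactly that $f(x_1),\ldots,f(x_k)$ are affinely independent.

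For (2), I run the identical argument with linear independence and $\tilde V_\bullet(\mathbb{F}^c)/\Sigma_\bullet$ in place of $A_\bullet$. The two facts needed are that subsets of linearly independent sets are linearly independent, and that $\tilde V_\bullet/\Sigma_\bullet$ is a simplicial subcomplex of $A_\bullet/\Sigma_\bullet$.

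The main point needing care is the bookkeeping convention. A $j$-hyperedge has $j$ vertices and is therefore a $(j-1)$-simplex, so ${\rm sk}^{k-1}$ consists of hyperedges with at most $k$ vertices, which matches $k$-regularity exactly. A secondary point is smoothness: the induced map on each stratum is the restriction of the smooth map ${\rm Conf}_j(f)/\Sigma_j$, so the result is a differentiable simplicial embedding in the sense of Subsection~\ref{ss6.1}.
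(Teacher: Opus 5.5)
Your proposal is correct and follows the paper's proof: both directions are a direct unwinding of the definitions, and the converse comes from evaluating on the $k$-vertex simplices $\{x_1,\ldots,x_k\}$ of ${\rm sk}^{k-1}({\rm Ind}(M))$. Your vertex count $j\leq k$ is the right one (the paper's forward direction writes $l\leq k+1$, which is a slip), and your remark that subsets of affinely independent sets are affinely independent supplies the justification for $j<k$ that the paper omits, though it tacitly assumes $M$ has at least $k$ points so that every smaller set extends to a $k$-set.
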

   
   \begin{proof}
   (1)  
   ($\Longrightarrow$): 
   Suppose  $f$  is        affinely  $k$-regular.  
   Then 
   for  any  simplex  $\{x_1,\ldots, x_l\}$   of   ${\rm  sk}^{k-1}({\rm  Ind}(M))$  
   where  $l\leq  k+1$,  
   $f(x_1),\ldots, f(x_l)$  are  affinely  independent  in  $\mathbb{F}^c$.   
   Thus  
   $\{f(x_1),\ldots, f(x_l)\}$  is  
    a  simplex  of  $A_l (\mathbb{F}^n)/\Sigma_l$.  
    Therefore, ${\rm  Ind}(f)$  induces  a  simplicial  embedding  
     of  ${\rm  sk}^{k-1}({\rm  Ind}(M))$  into  
     ${\rm  sk}^{k-1}(  A_\bullet (\mathbb{F}^n)/\Sigma_\bullet)$.

      ($\Longleftarrow$): 
  Suppose  ${\rm  Ind}(f)$  induces  a  simplicial  embedding  
     of  ${\rm  sk}^{k-1}({\rm  Ind}(M))$  into  
     ${\rm  sk}^{k-1}(  A_\bullet (\mathbb{F}^n)/\Sigma_\bullet)$.  
     Then  for  any  distinct  $k$-points  
     $x_1, \ldots,  x_k\in  M$,  
     since  $\{x_1,\ldots,x_k\}$  is  a  simplex  of  
     ${\rm  sk}^{k-1}({\rm  Ind}(M))$,  
     $f(x_1), \ldots,  f(x_k)$  are  affinely  independent  
     in  $\mathbb{F}^c$.  
     Therefore,  $f$  is       affinely  $k$-regular.

     (2)   The  proof    is      analogous  with   (1).  
   \end{proof}

    We  generalize  Definition~\ref{def-0}  in  the  following  
     definition.  
     
       \begin{definition}\label{def-1}
For  any     hypergraph  $\mathcal{H}(M)$  on  $M$,  
\begin{enumerate}[(1)]
\item
 a  {\it  
(real  or  complex)  affinely  regular  embedding} or  
a  {\it  (real  or  complex)  geometric   realization}    of  $\mathcal{H}(M)$
 is  an  embedding   $f:  M\longrightarrow  \mathbb{F}^c$ 
   such  that  for  any  $k\geq   1$  and  any 
    $k$-hyperedge  $\sigma(M)=\{x_1,\ldots,x_k\}$  of  
   $\mathcal{H}(M)$,  its  image  
   $\{f(x_1),\ldots,f(x_k)\}$ is  a  set  of  affinely  independent  
   vectors   in  $\mathbb{F}^c$;

    \item
   a  {\it  (real  or  complex)  regular  embedding}  of  $\mathcal{H}(M)$
 is  an  embedding   $f:  M\longrightarrow  \mathbb{F}^c$ 
   such  that  for  any  $k\geq   1$  and  any 
    $k$-hyperedge  $\sigma(M)=\{x_1,\ldots,x_k\}$  of  
   $\mathcal{H}(M)$,  
    its  image  
   $\{f(x_1),\ldots,f(x_k)\}$ is  a  set  of  linearly  independent  
   vectors   in  $\mathbb{F}^c$. 
   \end{enumerate}  
   \end{definition}
   
   \begin{proposition}\label{pr-4.19-01}
 Let  $f:  M\longrightarrow  \mathbb{F}^c$  be  an  embedding.   
 Then  for  any     hypergraph  $\mathcal{H}(M)$  on  $M$, 
    \begin{enumerate}[(1)]
    \item
     $f$  is   a  (real  or  complex)     affinely   regular   
     embedding  of     $\mathcal{H}(M)$   if  and  only  if  
     ${\rm  Ind}(f)$  induces  an  injective   morphism      
     of  $\mathcal{H}(M)$  into  
     $   A_\bullet (\mathbb{F}^n)/\Sigma_\bullet$; 
    \item
     $f$  is   (real  or  complex)       $k$-regular  
     if  and  only  if  
     ${\rm  Ind}(f)$  induces  an   injective   morphism   
     of  $\mathcal{H}(M)$  into  
     $ \tilde   V_\bullet (\mathbb{F}^n)/\Sigma_\bullet $. 
   \end{enumerate}
   \end{proposition}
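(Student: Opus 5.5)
The argument mirrors the proof of Proposition~\ref{pr-4.18-1}, with the skeleton ${\rm sk}^{k-1}({\rm Ind}(M))$ replaced by the hypergraph $\mathcal{H}(M)$. The one subtlety is what ``${\rm Ind}(f)$ induces an injective morphism of $\mathcal{H}(M)$ into $A_\bullet(\mathbb{F}^n)/\Sigma_\bullet$'' means, since $A_\bullet(\mathbb{F}^n)/\Sigma_\bullet$ is a simplicial complex on $\mathbb{F}^n$ (here $n=c$).

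I would fix the following reading: the restriction of ${\rm Ind}(f)$ to $\mathcal{H}(M)$ sends every hyperedge of $\mathcal{H}(M)$ to a simplex of $A_\bullet(\mathbb{F}^c)/\Sigma_\bullet$, in the sense of the morphisms of hypergraphs in Subsection~\ref{26.6.3-ss7.3}. Injectivity then comes for free. By Proposition~\ref{pr-4.15-1}, $f$ is an embedding, so ${\rm Ind}(f)$ is injective on ${\rm Ind}(M)$: distinct hyperedges $\{x_1,\ldots,x_k\}$ go to distinct $k$-sets $\{f(x_1),\ldots,f(x_k)\}$, and the cardinality is preserved. Hence only the ``lands in $A_\bullet/\Sigma_\bullet$'' condition needs checking.

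For (1), ($\Longrightarrow$): suppose $f$ is an affinely regular embedding of $\mathcal{H}(M)$ (Definition~\ref{def-1}(1)). Take any $k$-hyperedge $\sigma(M)=\{x_1,\ldots,x_k\}\in\mathcal{H}(M)$. Then $f(x_1),\ldots,f(x_k)$ are affinely independent, so $(f(x_1),\ldots,f(x_k))\in A_k(\mathbb{F}^c)$ and its orbit lies in $A_k(\mathbb{F}^c)/\Sigma_k$. Thus ${\rm Ind}(f)(\sigma(M))$ is a hyperedge there, and together with the injectivity above this gives an injective morphism. The hyperedge $k=0$ (the empty set) is handled trivially, since $A_0/\Sigma_0=\{\emptyset\}$.

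For (1), ($\Longleftarrow$): if ${\rm Ind}(f)$ maps each hyperedge $\{x_1,\ldots,x_k\}$ into $A_k(\mathbb{F}^c)/\Sigma_k$, then by the definition of affine Stiefel manifolds the set $\{f(x_1),\ldots,f(x_k)\}$ is affinely independent, which is exactly Definition~\ref{def-1}(1). Note that affine independence is a property of the unordered set, so passing to the quotient $\pi$ loses nothing. This is where the commutative square of Proposition~\ref{pr-4.6.1-stief} is used: we may lift to ordered frames and project back.

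Part (2) is the same argument with $\tilde V_\bullet(\mathbb{F}^c)/\Sigma_\bullet$ and linear independence. Here I would read ``$k$-regular'' in the statement as ``regular embedding of $\mathcal{H}(M)$'' in the sense of Definition~\ref{def-1}(2).

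The main obstacle is not computational but definitional. First, one must reconcile the index $n$ in the statement with the target $\mathbb{F}^c$; I would take $n=c$. Second, one must make sure ``morphism of hypergraphs'' is interpreted via images of hyperedges, as in Subsection~\ref{26.6.3-ss7.3}. Once these are fixed, each direction is a one-line translation between the definitions.
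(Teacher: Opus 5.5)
Your proposal is correct and follows the same route as the paper, whose proof just says it is analogous to Proposition~\ref{pr-4.18-1}: in each direction you translate Definition~\ref{def-1} into the condition that ${\rm Ind}(f)$ sends every hyperedge of $\mathcal{H}(M)$ into $A_\bullet(\mathbb{F}^c)/\Sigma_\bullet$ (resp.\ $\tilde V_\bullet(\mathbb{F}^c)/\Sigma_\bullet$), and injectivity comes from $f$ being an embedding. Your readings $n=c$ and ``regular embedding of $\mathcal{H}(M)$'' in part (2) are the intended ones, and the appeal to Proposition~\ref{pr-4.6.1-stief} does no harm but is not needed, since affine (resp.\ linear) independence is already a property of the unordered set.
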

   
   \begin{proof}
  The  proof  is  analogous  with  Proposition~\ref{pr-4.18-1}.  
     \end{proof}

   \begin{corollary}\label{co-26.4.19.1}
    For  any    embedding  $f:  M\longrightarrow  \mathbb{F}^c$   
    and  any       $k\geq  2$,  
   \begin{enumerate}[(1)]
    \item
     $f$  is   a  (real  or  complex)     affinely  $k$-regular   embedding  of  $M$ 
     if  and  only  if  
     $f$  is  a  (real  or  complex)  affinely  regular  embedding  of  ${\rm  sk}^{k-1}({\rm  Ind}(M))$; 
    \item
   $f$  is   a  (real  or  complex)       $k$-regular  embedding   of  $M$ 
     if  and  only  if  
     $f$  is   a (real  or  complex)  regular  embedding  of  ${\rm  sk}^{k-1}({\rm  Ind}(M))$.  
   \end{enumerate}
   \end{corollary}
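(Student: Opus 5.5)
The plan is to derive Corollary~\ref{co-26.4.19.1} directly from Definition~\ref{def-0}, Definition~\ref{def-1} and Proposition~\ref{pr-4.18-1}. In fact only the definitions are needed once the skeleton is identified. The key preliminary step is to describe the hyperedges of ${\rm sk}^{k-1}({\rm Ind}(M))$. Since ${\rm Ind}(M)=\bigcup_{l\geq 0}{\rm Conf}_l(M)/\Sigma_l$, the $(k-1)$-skeleton is exactly $\bigcup_{l\leq k}{\rm Conf}_l(M)/\Sigma_l$. Here I follow the convention of the paper, in which an $l$-hyperedge has $l$ vertices, so the $(k-1)$-simplices are the $k$-subsets. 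Its hyperedges are therefore all sets $\{x_1,\ldots,x_l\}$ of distinct points of $M$ with $1\leq l\leq k$.

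For (1), first assume $f$ is affinely $k$-regular. Take any $l$-hyperedge $\{x_1,\ldots,x_l\}$ of the skeleton with $l\leq k$. If $M$ has at least $k$ points, extend it to $k$ distinct points $x_1,\ldots,x_k$. Their images are affinely independent by Definition~\ref{def-0}~(1), and any subset of an affinely independent set is affinely independent. So $\{f(x_1),\ldots,f(x_l)\}$ is affinely independent, and $f$ is an affinely regular embedding of the skeleton in the sense of Definition~\ref{def-1}~(1).

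If $M$ has fewer than $k$ points, the $k$-regularity condition is vacuous, while the skeleton condition concerns only subsets of $M$. In that degenerate case I will either note that the statement is read with $\mathrm{card}(M)\geq k$, or observe that an embedding is injective, so for $l\leq 2$ the condition holds trivially. I expect this small-cardinality edge case to be the main, though minor, obstacle. I would handle it by assuming $M$ is a manifold of positive dimension, so that $M$ is infinite and the extension always exists.

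Conversely, assume $f$ is an affinely regular embedding of ${\rm sk}^{k-1}({\rm Ind}(M))$. Any $k$ distinct points $x_1,\ldots,x_k$ form a $k$-hyperedge $\{x_1,\ldots,x_k\}$ of the skeleton, so their images are affinely independent. Hence $f$ is affinely $k$-regular. Alternatively, both directions follow by combining Proposition~\ref{pr-4.18-1}~(1) with Proposition~\ref{pr-4.19-01}~(1) applied to $\mathcal{H}(M)={\rm sk}^{k-1}({\rm Ind}(M))$: both conditions are equivalent to ${\rm Ind}(f)$ mapping the skeleton injectively into $A_\bullet(\mathbb{F}^n)/\Sigma_\bullet$.

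Part (2) is proved the same way with linear independence in place of affine independence, using Definition~\ref{def-0}~(2), Definition~\ref{def-1}~(2), and the fact that subsets of linearly independent sets are linearly independent. Alternatively, it follows from Proposition~\ref{pr-4.18-1}~(2) together with Proposition~\ref{pr-4.19-01}~(2), with $\tilde V_\bullet(\mathbb{F}^n)/\Sigma_\bullet$ as the target.
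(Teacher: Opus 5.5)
Your proposal is correct and follows the paper, which obtains the corollary as a restatement of Proposition~\ref{pr-4.18-1} via Definitions~\ref{def-0} and \ref{def-1}. Your direct argument is exactly the content of that proposition's proof, and you use the correct vertex count $l\leq k$ for simplices of ${\rm sk}^{k-1}$, where the paper writes $l\leq k+1$. Your small-cardinality caveat is genuinely needed, and assuming $\dim M\geq 1$ (or ${\rm card}(M)\geq k$) is the right fix: if $M$ is a three-point ($0$-dimensional) manifold, an embedding into $\mathbb{R}^2$ with collinear image is vacuously affinely $4$-regular but is not a geometric realization of ${\rm sk}^{3}({\rm Ind}(M))$. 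Your injectivity remark does not rescue this case, since it only covers $l\leq 2$, and in the linear case it does not even give $l=1$.
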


\begin{proof}
With  the  help  of  Definition~\ref{def-0} and  Definition~\ref{def-1},
the  corollary  is  a  restatement  of  Proposition~\ref{pr-4.18-1}. 
\end{proof}

 \begin{corollary}\label{pr-260419-5}  
   Suppose  $M$  has  a  Riemannian  metric  $g$  or  a  Hermitian  metric  $h$. 
   Then  for  any    embedding  $f:  M\longrightarrow  \mathbb{F}^c$   
    and  any       $k\geq  2$,  
     \begin{enumerate}[(1)]
    \item
       $f$  is  a  (real  or  complex)     affinely  $k$-regular  embedding   of  $M$  
     if  and  only  if  
     $f$  is  a  (real  or  complex)  affinely  regular   embedding  of
       ${\rm  sk}^{k-1}({\rm  Ind}(M, (2r,+\infty]))$  for  any  $r>0$; 
    \item
      $f$  is  a  (real  or  complex)       $k$-regular  embedding   of  $M$  
     if  and  only  if  
     $f$  is  a  (real  or  complex)    regular   embedding  of
       ${\rm  sk}^{k-1}({\rm  Ind}(M, (2r,+\infty]))$  for  any  $r>0$.  
   \end{enumerate}
   \end{corollary}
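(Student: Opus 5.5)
The plan is to deduce the statement from Corollary~\ref{co-26.4.19.1}, using that the skeleta of ${\rm Ind}(M,(2r,+\infty])$ exhaust the skeleton of ${\rm Ind}(M)$ as $r\to 0$. By Corollary~\ref{co-26.4.19.1}~(1), $f$ is affinely $k$-regular if and only if $f$ is an affinely regular embedding of ${\rm sk}^{k-1}({\rm Ind}(M))$. By Definition~\ref{def-1}, the latter says that every hyperedge of ${\rm sk}^{k-1}({\rm Ind}(M))$ is sent to an affinely independent set. So it suffices to prove two set-theoretic facts:
\begin{eqnarray*}
{\rm sk}^{k-1}({\rm Ind}(M,(2r,+\infty]))\subseteq {\rm sk}^{k-1}({\rm Ind}(M)) \quad \mbox{for every } r>0,
\end{eqnarray*}
and
\begin{eqnarray*}
{\rm sk}^{k-1}({\rm Ind}(M))=\bigcup_{r>0}{\rm sk}^{k-1}({\rm Ind}(M,(2r,+\infty])).
\end{eqnarray*}

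For ($\Longrightarrow$), the first inclusion suffices. Each ${\rm Conf}_l(M,(2r,+\infty])/\Sigma_l$ is a subset of ${\rm Conf}_l(M)/\Sigma_l$, and taking skeleta preserves this. Hence any hyperedge of ${\rm sk}^{k-1}({\rm Ind}(M,(2r,+\infty]))$ is a hyperedge of ${\rm sk}^{k-1}({\rm Ind}(M))$, and its image is affinely independent. Thus $f$ is an affinely regular embedding of the smaller hypergraph for every $r>0$.

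For ($\Longleftarrow$), take any $l\le k$ distinct points $x_1,\ldots,x_l\in M$. Since they are finitely many and distinct, $\min_{i\neq j} d_M(x_i,x_j)>0$. Here I use that the geodesic distance $d_M$ is a metric, possibly taking the value $+\infty$ when the points lie in different components; this is still allowed in $(2r,+\infty]$. Choose $r>0$ with $2r<\min_{i\ne j}d_M(x_i,x_j)$. Then $\{x_1,\ldots,x_l\}$ lies in ${\rm sk}^{k-1}({\rm Ind}(M,(2r,+\infty]))$, so by hypothesis its image is affinely independent. This verifies the second equality, so $f$ is an affinely regular embedding of ${\rm sk}^{k-1}({\rm Ind}(M))$, and Corollary~\ref{co-26.4.19.1}~(1) finishes the argument. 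Part~(2) is identical, using Corollary~\ref{co-26.4.19.1}~(2) with linear independence in place of affine independence.

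The only real point is the exhaustion step: positivity of the minimal mutual distance of a finite set of distinct points. This needs $d_M(x,y)>0$ for $x\neq y$, which holds for the Riemannian (or Hermitian) distance. The rest is bookkeeping with the skeleta, and matching the $k$-point affine independence in Definition~\ref{def-0} with hyperedges of size at most $k$, as already handled in Proposition~\ref{pr-4.18-1}.
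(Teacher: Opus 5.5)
Your proposal is correct and follows essentially the paper's proof. $(\Longrightarrow)$ comes from Corollary~\ref{co-26.4.19.1} together with the inclusion of ${\rm sk}^{k-1}({\rm Ind}(M,(2r,+\infty]))$ in ${\rm sk}^{k-1}({\rm Ind}(M))$, and $(\Longleftarrow)$ comes from choosing $2r$ below the minimal mutual distance of finitely many distinct points. The only cosmetic difference is that the paper runs $(\Longleftarrow)$ as a contradiction with exactly $k$ points directly against Definition~\ref{def-0}, whereas you check all hyperedges with at most $k$ vertices and then invoke Corollary~\ref{co-26.4.19.1} again.
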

   
   \begin{proof}
   (1)  ($\Longrightarrow$): 
   Suppose  $f:  M\longrightarrow  \mathbb{F}^c$  is        affinely  $k$-regular. 
   Then  by  Corollary~\ref{co-26.4.19.1}~(1),  
   $f$  is  an  affinely  regular  embedding  of  ${\rm  sk}^{k-1}({\rm  Ind}(M))$. 
   Thus  $f$  induces  an affinely  regular  embedding  of  the  sub-complex   
    ${\rm  sk}^{k-1}({\rm  Ind}(M, (2r,+\infty]))$  for  any  $r>0$.

    ($\Longleftarrow$): 
    Suppose  $f$  is  an     affinely  regular   embedding  of
       ${\rm  sk}^{k-1}({\rm  Ind}(M, (2r,+\infty]))$  for  any  $r>0$. 
      Suppose  to  the  contrary  that 
       $f$  is  not  an    affinely  $k$-regular  embedding   of  $M$.   
      Then  there  exist    distinct  $k$-points  $x_1,\ldots,x_k\in  M$
      such that  $f(x_1),\ldots, f(x_k)$  are  affinely  dependent  in  $\mathbb{F}^c$.  
      Choose 
      \begin{eqnarray*}
      0<2r_0<\min \{d_M(x_i,x_j)\mid  1\leq  i<j\leq  k\}. 
      \end{eqnarray*}  
  Then  $\{x_1,\ldots,x_k\}\in  {\rm  sk}^{k-1}({\rm  Ind}(M, (2r_0,+\infty]))$. 
  This  constradicts  with  that  
  $f$  is  an     affinely  regular   embedding  of
       ${\rm  sk}^{k-1}({\rm  Ind}(M, (2r_0,+\infty]))$.  
       Therefore,   $f$  is     an    affinely  $k$-regular  embedding   of  $M$.

       (2)    The  proof    is      analogous  with   (1).  
   \end{proof}

    For  any  hyperdigraph  $\vec{\mathcal{H}}(M)$  on  $M$
  with  its  underlying  hypergraph  ${\mathcal{H}}(M)$,  
  as  a  generalization  of   (\ref{eq-26-5-7-2}),  we  let 
  \begin{eqnarray}\label{eq-26-5-7-5}
  c(\vec{\mathcal{H}}(M))=\sup\{k \mid  \vec{\mathcal{H}}_k(M)\neq\emptyset\}-1
  =\sup\{k \mid   {\mathcal{H}}_k(M)\neq\emptyset\}-1=c({\mathcal{H}}(M)). 
  \end{eqnarray}
  For  example,  
  \begin{enumerate}[(1)]
   \item
   if  $\dim  M\geq  1$,  
   $ \vec{\mathcal{H}}(M)= \overrightarrow{\rm  Ind}(M) $ 
  and    $  {\mathcal{H}}(M)=  {\rm  Ind}(M)$,
  then  $c({\mathcal{H}}(M))=+\infty$; 
  \item
  if  $ \vec{\mathcal{H}}(M)={\rm  sk}^{k-1}(\overrightarrow{\rm  Ind}(M))$ 
  and    $  {\mathcal{H}}(M)={\rm  sk}^{k-1}( {\rm  Ind}(M))$,
  then  
  $c({\mathcal{H}}(M))= k\dim  M$. 
  \end{enumerate}

  \begin{example}
  Let  $M$  be  a  discrete  set   $V$. 
  Let   $\mathcal{H}$  be  a  hypergraph  on  $V$.  
  \begin{enumerate}[(1)]
  \item
  An  affinely  regular  embedding  or   a   geometric  realization  
  of  $\mathcal{H}$  is   given  by  an  embedding  
  $f:  V\longrightarrow  \mathbb{F}^c$  such  that   
  any  hyperedge  
  $\sigma=\{v_1,\ldots,v_k\}$  of  $\mathcal{H}$  is  sent  to 
  an  affinely  independent  set  $\{f(v_1),   \ldots,  f(v_k)\}$ 
  in  $\mathbb{F}^c$.   
  In  particular,      (\ref{eq-26-4-19-b})  is  an 
  affinely  regular  embedding  or   a   geometric  realization  
  of  $\mathcal{H}$;    
  \item
  A   regular  embedding     
  of  $\mathcal{H}$  is   given  by  an  embedding  
  $f:  V\longrightarrow  \mathbb{F}^c$  such  that   
  any  hyperedge  
  $\sigma=\{v_1,\ldots,v_k\}$  of  $\mathcal{H}$  is  sent  to 
  a  linearly  independent  set  $\{f(v_1),   \ldots,  f(v_k)\}$ 
  in  $\mathbb{F}^c$.   
  \end{enumerate}
  \end{example}

     \begin{example}
  Let  $M=\mathbb{C}$.  
  \begin{enumerate}[(1)]
  \item
  Let  $f:  \mathbb{C}\longrightarrow  \mathbb{C}^k$  be  given  by 
  \begin{eqnarray*}
  f(z)=(1,z,z^2,\cdots, z^{k-1})
  \end{eqnarray*}
  for any  $z\in\mathbb{C}$.  
 By  \cite{cohen1},  $f$  is  a  (complex)  $k$-regular  embedding.
 Thus      by  Corollary~\ref{co-26.4.19.1}~(2),  
 $f$  is  a  (complex)  regular  embedding  of  
 ${\rm  sk}^{k-1}({\rm  Ind}(\mathbb{C}))$  into  $\mathbb{C}^k$;
  \item
  Let  $f:  \mathbb{C}\longrightarrow  \mathbb{C}^\infty$  be  given  by 
  \begin{eqnarray*}
  f(z)=(1,z,z^2,\cdots, z^k,\cdots)
  \end{eqnarray*}
  for any  $z\in\mathbb{C}$.  
  Then  for  any    $k\geq  1$  and  any  distinct  points  
  $z_1,\ldots, z_k\in \mathbb{C}$, 
  since  
  \begin{eqnarray*}
  \det 
  \begin{pmatrix}
  1  & \cdots   &1\\
  z_1 &\cdots  &z_k\\
   \cdots  &\cdots    &\cdots   \\
  z_1^{k-1} &\cdots & z_k^{k-1}
  \end{pmatrix}
  \neq   0,
  \end{eqnarray*}
  we  have  that  $f(z_1),\ldots,  f(z_k)$ are  (complex)  linearly  independent  in  
  $\mathbb{C}^\infty$. 
  Therefore,  $f$  is   a  (complex)  regular  embedding   of  
  ${\rm  Ind}(\mathbb{C})$  into  $\mathbb{C}^\infty$.    
  \end{enumerate}
  \end{example}
  
  \begin{example}
  Let   $M$  be  a  $1$-dimensional   manifold  without  boundary.  
  Then  $M$  is  a  disjoint  union  
  \begin{eqnarray}\label{eq-26.4.19.d1}
  M=\Big(\bigsqcup_{\alpha}  \mathbb{R} \Big)   \bigsqcup
  \Big( \bigsqcup_{\beta}  S^1  \Big)  
  \end{eqnarray}
  where  $\alpha$  and  $\beta$  are  finite  or  countable  indices.   
  By  (\ref{eq-26-march-02-2}), 
  \begin{eqnarray}\label{eq-26.4.19.d1.jn}
  {\rm  Ind}(M)= \big(*_\alpha {\rm  Ind}(\mathbb{R})  \big)*
  \big(*_\beta {\rm  Ind}(S^1)  \big).  
  \end{eqnarray}
  Let  $f_\alpha:  \mathbb{R}\longrightarrow  \mathbb{R}^\infty$ 
  be  given  by  
  \begin{eqnarray*}
  f_\alpha(x)= (1,x,x^2,\cdots, x^k,\cdots)
  \end{eqnarray*}
  for any  $x\in \mathbb{R}$.
  Let  $f'_\beta:  S^1\longrightarrow  \mathbb{R}^\infty$
  be  given  by 
    \begin{eqnarray}\label{eq-26.4.19.cr}
  f'_\beta(x)= (1,x,x^2,\cdots, x^k,\cdots)
  \end{eqnarray}
  for  any  $x\in  S^1\subseteq \mathbb{C}$,  where  in  the  target  
  of  (\ref{eq-26.4.19.cr}), 
  $\mathbb{C} $  is  identified  with
    $\mathbb{R}\oplus\mathbb{R}$  and  consequently   $\mathbb{C}^\infty$
    is  identified  with  $\mathbb{R}^\infty$.   
    Then  
    \begin{eqnarray*}
    {\rm  Ind}(f_\alpha):    {\rm  Ind}(\mathbb{R})\longrightarrow 
    \tilde  V_\bullet(\mathbb{R}^\infty)/\Sigma_\bullet,~~~~~~ 
    {\rm  Ind}(f'_\beta):    {\rm  Ind}(S^1)\longrightarrow 
    \tilde  V_\bullet(\mathbb{R}^\infty)/\Sigma_\bullet
    \end{eqnarray*}
    are  simplicial  embeddings. 
    With  the  help  of  (\ref{eq-26.4.19.d1.jn}),
          the   join  
    \begin{eqnarray*}
    (*_\alpha  {\rm  Ind}(f_\alpha)) *(*_\beta   {\rm  Ind}(f'_\beta)):
    {\rm  Ind}(M)\longrightarrow 
    V_\bullet(\mathbb{R}^\infty)/\Sigma_\bullet
     \end{eqnarray*} 
    is  a  simplicial  embedding.  
    Let   $f:  M\longrightarrow   \mathbb{R}^\infty$  be  given  by  
    $f(x)= (0, \ldots, 0, f_\alpha(x), 0, \ldots, 0)$  if  $x$  is  in  the  $\alpha$-th  copy  of  $\mathbb{R}$ 
    and  $f(x)=(0, \ldots, 0,  f'_\beta(x),0, \ldots, 0)$  if   $x$  is  in  the  $\beta$-th  copy  of  $S^1$, 
    for  any  $x$  in  (\ref{eq-26.4.19.d1}). 
    Then  $f$  is  a  (real)  regular  embedding  of  ${\rm  Ind}(M)$  into  
    $\mathbb{R}^\infty$.  
  \end{example}

\section{Differential  forms  on       hypergraphs
  on  manifolds}\label{26.6.5-s7}
  
  In  this  section,  
  we  study  differential  forms  on  configuration  spaces  and  hypergraphs
  on  manifolds.      
  In  Subsection~\ref{26.6.5-ss7.1},  
  we  study  the  space  of   differential  forms  on  
  the  directed  independence  complexes  and 
   the  space  of   differential  forms   on 
   the  independence  
  complexes  on  manifolds.  
  In  Subsection~\ref{26.6.5-ss7.2},  
  we  study  the  space  of   differential  forms  on  
  hyperdigraphs  on  manifolds  as  pull-backs  of  the  space  of  
  differential  forms  on
  the  directed  independence  complexes
  and   study  the  space  of   differential  forms  on  
  hypergraphs  on  manifolds  as  pull-backs  of  the  space  of  
  differential  forms  on
  the    independence  complexes.  
  Under  some     hypothesis  on  compactness,  
  we  prove   a  commutative  diagram of  double  graded  vector  spaces
  of  the  differential forms  on  hyper(di)graphs,  the
  lower-associated  (directed)  simplicial  complexes,   the 
  associated  (directed)  simplicial  complexes 
  and  the  independence  complexes   on  manifolds, 
  in  Theorem~\ref{th-26.5.3.1}  (Main  Result  II).  
  
  \subsection{Differential  forms  on  configuration  spaces}
  \label{26.6.5-ss7.1}

The    collection  of  all  smooth  
differential  forms  on  $ {\rm  Conf}_k(M) $   is  a  CDGA  (commutative  
differential  graded  algebra)     
\begin{eqnarray}\label{eq-26.3.5.df1}
\Omega^\bullet {\rm  Conf}_k(M)=\bigoplus_{n\geq  0} \Omega^n {\rm  Conf}_k(M)  
\end{eqnarray}  
with  the  exterior  derivative  
\begin{eqnarray}\label{eq-26.3.5.df2}
d:  \Omega^n {\rm  Conf}_k(M)\longrightarrow \Omega^{n+1} {\rm  Conf}_k(M) 
\end{eqnarray}
  and   the  exterior  product  
  \begin{eqnarray}\label{eq-26.3.5.df3}
  \wedge:    \Omega^p {\rm  Conf}_k(M) \times   \Omega^q {\rm  Conf}_k(M)
   \longrightarrow  \Omega^{p+q} {\rm  Conf}_k(M), 
  \end{eqnarray}   
    where  $p,q\geq  0$.   
    The  $\Sigma_k$-action  on  ${\rm  Conf}_k(M)$  induces  pull-backs 
    $(\Sigma_k)^*$ of  differential  forms  given  by a  family  of  homomorphisms 
    of  CDGAs 
    \begin{eqnarray*}
    s^*:  (\Omega^\bullet {\rm  Conf}_k(M),  d,\wedge)\longrightarrow  
    (\Omega^\bullet {\rm  Conf}_k(M),  d,\wedge) 
    \end{eqnarray*}
    for  any   $s\in \Sigma_k$.

   Consider  the  double-graded  vector  space 
    \begin{eqnarray}\label{eq-26.4.28.2}
 \Omega^\bullet {\rm  Conf}_\bullet(M)=\bigoplus_{k\geq  1 } \bigoplus_{n\geq  0 } \Omega^n {\rm  Conf}_k(M). 
    \end{eqnarray}
     With  the  help  of  (\ref{eq-26-3-10-1}),  
 (\ref{eq-26.4.28.2})  
has     pull-backs  of  differential  forms   induced  by  the  face  maps 
\begin{eqnarray}\label{eq-26.4.30-1}
(\partial^i_k)^*:  \Omega^\bullet {\rm  Conf}_{k-1}(M)\longrightarrow 
 \Omega^\bullet{\rm  Conf}_{k}(M).    
\end{eqnarray}
    Consider  the  alternating  sum 
\begin{eqnarray}\label{eq-26.4.30-5}
(\partial_k)^*= \sum_{i=1}^k(-1)^{i-1} (\partial^i_k)^*.    
\end{eqnarray}

    \begin{proposition}
    \label{le-26.4.28.1}
    The  quintuple 
\begin{eqnarray*}
  \Omega  \overrightarrow {\rm  Ind}(M) =
  ( \Omega^\bullet {\rm  Conf}_\bullet(M),  d,   (\partial_\bullet)^*, \wedge,(\Sigma_\bullet)^*)
\end{eqnarray*}
is   a  double  complex  with  a  multiplication  $\wedge$  and  $\Sigma_\bullet$-actions 
 satisfying  
 \begin{enumerate}[(1)]  
 \item
$(\partial _k)^*\circ d= d\circ (\partial _k)^*$,     
\item   
$
(\partial _k)^* (\omega\wedge\xi)= (\partial _k)^*(\omega)\wedge (\partial _k)^*(\xi)  
$
for  any  $\omega,\xi\in  \Omega^\bullet {\rm  Conf}_{k-1}(M)$,  
\item
  equipped  with  the  alternating  sum 
 \begin{eqnarray*}
 (\varphi_k (\Sigma_{k-1}))^*=  \sum_{i=1}^k(-1)^i  (\varphi_k ^i(\Sigma_{k-1}))^*, 
 \end{eqnarray*}
   the  diagram  commutes
  \begin{eqnarray*}
\xymatrix{
 \Omega^\bullet{\rm  Conf}_k(M)   
& \Omega^\bullet{\rm  Conf}_{k-1}(M)\ar[l]_-{(\partial_k)^*}\\
 \Omega^\bullet{\rm  Conf}_k(M)\ar[u]^-{(\varphi_k (\Sigma_{k-1}))^*} 
  & \Omega^\bullet{\rm  Conf}_{k-1}(M)\ar[l]_-{(\partial_k)^*}\ar[u]_-{(\Sigma_{k-1})^*}.     
}
\end{eqnarray*}
\end{enumerate}
    \end{proposition}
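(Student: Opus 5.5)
The plan is to reduce everything to functoriality of pull-back of differential forms, together with the simplicial identities for the face maps (\ref{eq-2.1a}) and the equivariance diagram (\ref{eq-4.23.2}). The proof has four parts: first show that $(\partial_\bullet)^*$ squares to zero, so that together with $d$ we get a double complex, and then verify (1), (2) and (3) one at a time.

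For $(\partial_{k+1})^*\circ(\partial_k)^*=0$, I would expand the composite as $\sum_{i,j}(-1)^{i+j}(\partial^j_{k+1})^*(\partial^i_k)^*=\sum_{i,j}(-1)^{i+j}(\partial^i_k\partial^j_{k+1})^*$. The face maps satisfy the $\Delta$-identity (\ref{eq-26.5.17.55}), shifted to the indexing $1\le i\le k$, namely $\partial^i_k\partial^j_{k+1}=\partial^{j-1}_k\partial^i_{k+1}$ for $i<j$. Using it, the terms with $i<j$ cancel in pairs against the terms with $i\ge j$, because the paired terms carry opposite signs. This is the same bookkeeping as in \cite[Lemma~3.1]{jgp}, and is the co-$\Delta$-module computation of Proposition~\ref{le-26-5-18-1}~(2) applied to the co-$\Delta$-module $k\mapsto\Omega^n{\rm Conf}_k(M)$.

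Items (1) and (2) follow because each $\partial^i_k$ is a smooth map. Pull-back along a smooth map commutes with $d$ and is a homomorphism for $\wedge$. Summing over $i$ with signs then gives (1). The commutation of $d$ with $(\partial_k)^*$ also supplies the commuting square needed for the double complex.

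The step I expect to be the main obstacle is (3), because the signs must line up. For each $i$, applying pull-back to (\ref{eq-4.23.2}) gives $(\partial^i_k)^*\circ s^*=(\varphi^i_k(s))^*\circ(\partial^i_k)^*$ for $s\in\Sigma_{k-1}$, where the pull-backs are contravariant, so the order of composition is reversed relative to (\ref{eq-4.23.2}). I would then define the diagram's maps termwise: the alternating sum $(\varphi_k(\Sigma_{k-1}))^*$ pairs its $i$-th summand with the $i$-th face pull-back. Read this way, commutativity means
\begin{equation*}
\sum_i(-1)^{i-1}(\varphi^i_k(s))^*(\partial^i_k)^*=\sum_i(-1)^{i-1}(\partial^i_k)^*s^*,
\end{equation*}
which holds summand by summand. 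The statement as written carries the sign $(-1)^i$ on the $\varphi$ sum and $(-1)^{i-1}$ on $(\partial_k)^*$. This mismatch only changes the total by an overall sign, provided the diagram is read termwise in $i$ and not as a composition of the two full sums. I would make that termwise reading explicit before checking the identity.
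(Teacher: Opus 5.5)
Your route is the paper's route: you prove (1)$'$--(3)$'$ for each single pull-back $(\partial^i_k)^*$ and then pass to the alternating sum. Two parts of it are fine. Your cancellation proving $(\partial_{k+1})^*(\partial_k)^*=0$, via the sign-reversing bijection $(i,j)\mapsto(j-1,i)$ from pairs with $i<j$ to pairs with $i\ge j$, is correct and supplies something the paper leaves implicit. Item (1) also follows by linearity.

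The gap is (2). You assert it follows because each $(\partial^i_k)^*$ is multiplicative, but a signed sum of algebra homomorphisms is not multiplicative. Summing only gives
\[
(\partial_k)^*(\omega\wedge\xi)=\sum_{i=1}^k(-1)^{i-1}(\partial^i_k)^*\omega\wedge(\partial^i_k)^*\xi,
\]
whereas $(\partial_k)^*\omega\wedge(\partial_k)^*\xi$ also contains all cross terms $(\partial^i_k)^*\omega\wedge(\partial^j_k)^*\xi$ with $i\neq j$. In fact (2) is false. For even $k$ one has $(\partial_k)^*1=\sum_{i=1}^k(-1)^{i-1}=0$, so (2) with $\omega=1$ would force $(\partial_k)^*\xi=0$ for every $\xi$. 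Concretely, take $k=2$ and a non-constant $h\in\Omega^0{\rm Conf}_1(M)=C^\infty(M)$: the left side is $h(x_2)-h(x_1)$, which is not identically zero on ${\rm Conf}_2(M)$, while the right side is $0$. The paper's closing sentence (``the linear combination satisfies (1)--(3)'') makes the same illegitimate step. So (2) cannot be proved as stated; only the termwise version (2)$'$, or the displayed identity above, is true.

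For (3) you correctly sense trouble, but a commutative square cannot be ``read termwise.'' The composite of the two sums always contains the cross terms $(\varphi^i_k(s))^*(\partial^j_k)^*$ with $i\neq j$. Already for $s=\mathrm{id}$, the path through the lower-left corner is $\big(\sum_{i=1}^k(-1)^i\big)(\partial_k)^*$. This is $0$ or $-(\partial_k)^*$, and it differs from $(\partial_k)^*$ as soon as $(\partial_k)^*\neq 0$.

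Even your summand-matched identity, with the stated sign $(-1)^i$, yields $\sum_i(-1)^i(\varphi^i_k(s))^*(\partial^i_k)^*=-(\partial_k)^*s^*$. That is anticommutation, so the overall sign is not harmless. What your argument actually proves, and what you should state in place of (3), is
\[
\sum_{i=1}^k(-1)^{i-1}(\varphi^i_k(s))^*\circ(\partial^i_k)^*=(\partial_k)^*\circ s^*,\qquad s\in\Sigma_{k-1}.
\]
It follows summand by summand from the pull-back of (\ref{eq-4.23.2}), i.e.\ from $\partial^i_k\circ\varphi^i_k(s)=s\circ\partial^i_k$. That relation holds once $\varphi^i_k(s)$ is read as fixing the $i$-th slot and permuting the other slots through the order-preserving relabeling $\{1,\ldots,k\}\setminus\{i\}\cong\{1,\ldots,k-1\}$. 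Taken literally, the displayed formula for $\varphi^i_k$ repeats $x_i$ when $i<k$.

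In short, your proof establishes corrected forms of (2) and (3), and you should present them as such rather than claim the proposition as written.
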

    
    \begin{proof}
    The  pull-backs  (\ref{eq-26.4.30-1})  satisfy  
\begin{enumerate}[(1)']
\item 
$
(\partial^i_k)^*\circ d= d\circ (\partial^i_k)^*,   
$  
\item    
$
(\partial^i_k)^* (\omega\wedge\xi)= (\partial^i_k)^*(\omega)\wedge (\partial^i_k)^*(\xi)  
$
for  any  $\omega,\xi\in  \Omega^\bullet {\rm  Conf}_{k-1}(M)$,
\item
 the  diagram  commutes
  \begin{eqnarray*}
\xymatrix{
 \Omega^\bullet{\rm  Conf}_k(M)   
& \Omega^\bullet{\rm  Conf}_{k-1}(M)\ar[l]_-{(\partial^i_k)^*}\\
 \Omega^\bullet{\rm  Conf}_k(M)\ar[u]^-{(\varphi_k^i(\Sigma_{k-1}))^*} 
  & \Omega^\bullet{\rm  Conf}_{k-1}(M)\ar[l]_-{(\partial^i_k)^*}\ar[u]_-{(\Sigma_{k-1})^*}.  
}
\end{eqnarray*}
\end{enumerate}
Therefore, the  linear  combination  (\ref{eq-26.4.30-5})  
satisfies  (1)  -  (3).  
    \end{proof}

    The    collection  of  all  smooth  
differential  forms  on  $ {\rm  Conf}_k(M)/\Sigma_k $   is  a  CDGA        
\begin{eqnarray}\label{eq-26.5.1.df1}
\Omega^\bullet ({\rm  Conf}_k(M)/\Sigma_k)=\bigoplus_{n\geq  0} \Omega^n ({\rm  Conf}_k(M)  /\Sigma_k)
\end{eqnarray}  
with  the  exterior  derivative  
\begin{eqnarray}\label{eq-26.5.1.df2}
d:  \Omega^n ({\rm  Conf}_k(M)/\Sigma_k)\longrightarrow 
\Omega^{n+1} ({\rm  Conf}_k(M) /\Sigma_k)
\end{eqnarray}
  and   the  exterior  product  
  \begin{eqnarray}\label{eq-26.5.1.df3}
  \wedge:    \Omega^p( {\rm  Conf}_k(M)/\Sigma_k) \times  
   \Omega^q ({\rm  Conf}_k(M)/\Sigma_k)
   \longrightarrow  \Omega^{p+q} ({\rm  Conf}_k(M)/\Sigma_k).   
  \end{eqnarray}

\begin{lemma}\label{le-26.4.30-2}
The  projection   (\ref{eq-2601-a})  induces  an  injection  of  CDGAs
\begin{eqnarray}\label{eq-26-5.1-b}
\pi_k^*:   (\Omega^\bullet({\rm  Conf}_k(M)/\Sigma_k), d,  \wedge)
\longrightarrow   (\Omega^\bullet   {\rm  Conf}_k(M), d,  \wedge) 
\end{eqnarray}
whose  image consists   of  
  the  $\Sigma_k$-invariant  
  differential  forms   (i.e.  the differential  forms  invariant  under the  $\Sigma_k$-action) 
on ${\rm  Conf}_k(M)/\Sigma_k$. 
\end{lemma}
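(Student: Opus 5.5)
The plan is to use only two facts about the projection (\ref{eq-2601-a}): it is a smooth surjective local diffeomorphism, and it is the quotient map of the free, properly discontinuous $\Sigma_k$-action (\ref{eq-26.4.23.1}). In other words, $\pi_k:{\rm Conf}_k(M)\to{\rm Conf}_k(M)/\Sigma_k$ is a $k!$-sheeted normal covering with deck group $\Sigma_k$. The proof then splits into three steps: (a) $\pi_k^*$ is a CDGA homomorphism; (b) $\pi_k^*$ is injective; (c) the image of $\pi_k^*$ is exactly the space of $\Sigma_k$-invariant forms on ${\rm Conf}_k(M)$. Here "invariant forms on ${\rm Conf}_k(M)/\Sigma_k$" is read as the forms on ${\rm Conf}_k(M)$ that are fixed by $s^*$ for every $s\in\Sigma_k$.

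Step (a) is standard. Pullback along any smooth map commutes with $d$ and with $\wedge$, and preserves the form degree. Hence (\ref{eq-26-5.1-b}) respects the structures (\ref{eq-26.5.1.df2}), (\ref{eq-26.5.1.df3}) and (\ref{eq-26.3.5.df2}), (\ref{eq-26.3.5.df3}).

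Step (b) uses that $\pi_k$ is a local diffeomorphism. Around any $x\in{\rm Conf}_k(M)$ there is a neighbourhood $U$ on which $\pi_k|_U$ is a diffeomorphism onto its image. Suppose $\pi_k^*\omega=0$. Then $\omega$ vanishes on $\pi_k(U)$. Since $\pi_k$ is surjective, these images cover the quotient, so $\omega=0$.

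Step (c) has two inclusions.
\begin{enumerate}[(i)]
\item The image is contained in the invariant forms. Since $\pi_k\circ s=\pi_k$ for every $s\in\Sigma_k$, we get $s^*\pi_k^*\omega=\pi_k^*\omega$.
\item Every invariant form lies in the image. This is the main step. Given a $\Sigma_k$-invariant form $\eta$, define $\omega$ locally by $\omega|_{\pi_k(U)}=((\pi_k|_U)^{-1})^*\eta$, using the evenly covered neighbourhoods.
\end{enumerate}

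The obstacle in (ii) is showing this definition is independent of the choice of sheet $U$. Any two local inverses over the same open set differ by a deck transformation $s\in\Sigma_k$: this holds because the covering is normal and the action is free. Invariance $s^*\eta=\eta$ then makes the local definitions agree, so they glue to a smooth global form $\omega$. By construction $\pi_k^*\omega=\eta$ locally, and therefore globally.
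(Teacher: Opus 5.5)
Your proposal is correct. For the parts the paper actually argues, it follows the same route: steps (a), (b) and (c)(i) match the paper's proof. The paper says that $\pi_k^*$ is a pullback along a smooth projection, hence an injective CDGA map, and uses the commutative triangle $s^*\circ\pi_k^*=\pi_k^*$ (i.e.\ $\pi_k\circ s=\pi_k$) to conclude that the image is $\Sigma_k$-invariant.

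The paper's proof stops there. So it only establishes that the image is contained in the invariant forms; the reverse inclusion, which the statement asserts, is never argued. Your step (c)(ii) supplies exactly that missing half. You descend an invariant form through the evenly covered neighbourhoods of the $k!$-sheeted normal covering $\pi_k$, and the sheets are related by deck transformations, so invariance gives well-definedness. Your injectivity argument, via $\pi_k$ being a surjective local diffeomorphism, is also more explicit than the paper's one-line justification.

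One small precision fix in (ii): two local inverses of $\pi_k$ over an open set $V$ differ by a single deck transformation only when $V$ is connected. So take the evenly covered neighbourhoods connected. For the gluing on overlaps $V\cap V'$, argue on each connected component separately. At a point $y$ the two sections differ by a unique $s\in\Sigma_k$, because the fibres of $\pi_k$ are free $\Sigma_k$-orbits. Two local sections that agree at one point then agree on that whole component, so the two local definitions of $\omega$ coincide there by $s^*\eta=\eta$.
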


\begin{proof}
Since (\ref{eq-2601-a})  is  a  smooth  projection  between  differentiable  manifolds,
its  induced    pull-back  of  differential  forms  gives   an   injection   
(\ref{eq-26-5.1-b})  of  CDGAs.   
For  any  $s\in  \Sigma_k $, 
the  diagram  commutes 
 \begin{eqnarray}\label{eq-26-3-10-5}
 \xymatrix{
 \Omega^\bullet({\rm  Conf}_k(M)/\Sigma_k)\ar[r]^-{\pi_k^*}  \ar[rd]_-{\pi_k^*} & \Omega^\bullet   {\rm  Conf}_k(M)
 \ar[d]^-{s^*}\\
 &   \Omega^\bullet   {\rm  Conf}_k(M).  
   }
 \end{eqnarray}
Thus  the  image  of  (\ref{eq-26-5.1-b})  is    invariant  under the  $\Sigma_k$-action  
on ${\rm  Conf}_k(M)/\Sigma_k$. 
\end{proof}

Consider  the  double-graded  vector  space 
    \begin{eqnarray}\label{eq-26.5.1.2}
  \Omega^\bullet ({\rm  Conf}_\bullet(M)/\Sigma_\bullet)=\bigoplus_{k\geq  1 } \bigoplus_{n\geq  0 } \Omega^n ({\rm  Conf}_k(M)/\Sigma_k). 
    \end{eqnarray}
Take the  triple 
\begin{eqnarray*}
 \Omega    {\rm  Ind}(M) = (\Omega^\bullet({\rm  Conf}_\bullet(M)/\Sigma_\bullet), d,  \wedge). 
\end{eqnarray*}

\begin{corollary}\label{pr-26-5-1-1}
The  projection (\ref{eq-2601-aa})  induces  an  injection 
\begin{eqnarray*}
\pi^*:   \Omega    {\rm  Ind}(M) \longrightarrow  
 \Omega    \overrightarrow {\rm  Ind}(M) 
\end{eqnarray*}
whose  image consists   of  
  the  $\Sigma_\bullet$-invariant  
  differential  forms
such that  
\begin{enumerate}[(1)]
 \item
$\pi^*\circ d= d\circ \pi^*$,     
\item   
$
\pi^* (\omega\wedge\xi)= \pi^*(\omega)\wedge \pi^*(\xi)  
$
for  any  $\omega,\xi\in  \Omega^\bullet ({\rm  Conf}_{k}(M)/\Sigma_k)$. 
\end{enumerate}
\end{corollary}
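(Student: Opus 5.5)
The plan is to assemble the corollary levelwise from Lemma~\ref{le-26.4.30-2}, taking the direct sum over $k\geq 1$ of the injections $\pi_k^*$. Let
\begin{eqnarray*}
\pi^*=\bigoplus_{k\geq 1}\pi_k^*:\ \Omega^\bullet({\rm Conf}_\bullet(M)/\Sigma_\bullet)\longrightarrow \Omega^\bullet{\rm Conf}_\bullet(M),
\end{eqnarray*}
where on the summand $\Omega^n({\rm Conf}_k(M)/\Sigma_k)$ the map $\pi^*$ is the pull-back along the $k$-th component $\pi_k$ of (\ref{eq-2601-aa}). Because $\pi$ preserves the grading by $k$, the map $\pi^*$ sends the $(k,n)$-summand of (\ref{eq-26.5.1.2}) into the $(k,n)$-summand of (\ref{eq-26.4.28.2}). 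It is therefore a morphism of double-graded vector spaces, and it is injective because each $\pi_k^*$ is injective by Lemma~\ref{le-26.4.30-2}.

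For the image, an element of the direct sum is $\Sigma_\bullet$-invariant exactly when each of its $k$-components is $\Sigma_k$-invariant, since the $\Sigma_\bullet$-action on $\Omega\overrightarrow{\rm Ind}(M)$ acts componentwise by $(\Sigma_k)^*$. The inclusion of the image into the invariants follows from the commutative triangle (\ref{eq-26-3-10-5}). For the reverse inclusion, which Lemma~\ref{le-26.4.30-2} states as a description of the image, I would add one line. Since $\pi_k$ is a finite normal covering with deck group $\Sigma_k$ (the action is free), it is a local diffeomorphism. A $\Sigma_k$-invariant form $\omega$ on ${\rm Conf}_k(M)$ therefore descends: on an evenly covered open set $U$, define the form as $(\pi_k|_{\tilde U})^{-*}\omega$ for any sheet $\tilde U$. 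This does not depend on the sheet, because $\omega$ is invariant and $\Sigma_k$ permutes the sheets transitively. The local forms glue to a smooth form $\eta$ on ${\rm Conf}_k(M)/\Sigma_k$ with $\pi_k^*\eta=\omega$. Summing over $k$ shows that the image of $\pi^*$ is exactly the $\Sigma_\bullet$-invariant forms.

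Properties (1) and (2) reduce to the same identities for each $\pi_k$. Pull-back along a smooth map commutes with $d$ and with $\wedge$, so $\pi_k^*d=d\pi_k^*$ and $\pi_k^*(\omega\wedge\xi)=\pi_k^*\omega\wedge\pi_k^*\xi$ for $\omega,\xi\in\Omega^\bullet({\rm Conf}_k(M)/\Sigma_k)$. Both $d$ and $\wedge$ in $\Omega{\rm Ind}(M)$ and $\Omega\overrightarrow{\rm Ind}(M)$ are defined componentwise in $k$, so these identities pass to the direct sum; Lemma~\ref{le-26.4.30-2} already records them as the statement that $\pi_k^*$ is a CDGA morphism. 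I expect no real obstacle. The only step not purely formal is the descent argument for surjectivity onto the invariant forms, and it is standard for free actions of finite groups. I will not claim compatibility of $\pi^*$ with $(\partial_\bullet)^*$, since $\Omega{\rm Ind}(M)$ was defined as a triple without face maps.
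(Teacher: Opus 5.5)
Your proposal is correct and follows the paper's route. The paper likewise derives the corollary directly from Lemma~\ref{le-26.4.30-2} by assembling the maps $\pi_k^*$ over all $k\geq 1$. Your added descent argument (invariant forms descend along the free finite covering $\pi_k$) usefully supplies the reverse inclusion, which the paper's proof of that lemma only asserts: the triangle (\ref{eq-26-3-10-5}) by itself gives only that the image is contained in the $\Sigma_k$-invariant forms.
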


 \begin{proof}
 The  proof  follows  from  Lemma~\ref{le-26.4.30-2}.  
 \end{proof}

  \begin{proposition}
    \label{pr-26.5.2.1}
    Suppose  $M$  is  a  submanifold  of  $\mathbb{R}$
  with  the  total  order  $\prec$.  
    Then  the  quadruple
\begin{eqnarray}\label{eq-26.5.2.25}
  \Omega  {\rm  Ind}(M) =
  ( \Omega^\bullet ({\rm  Conf}_\bullet(M)/\Sigma_\bullet),  d,   (\partial_\bullet)^*, \wedge)
\end{eqnarray}
is   a   double  complex  with  a  multiplication  $\wedge$  
 satisfying  
 \begin{enumerate}[(1)]  
 \item
$(\partial _k)^*\circ d= d\circ (\partial _k)^*$,     
\item   
$
(\partial _k)^* (\omega\wedge\xi)= (\partial _k)^*(\omega)\wedge (\partial _k)^*(\xi)  
$
for  any  $\omega,\xi\in  \Omega^\bullet ({\rm  Conf}_{k-1}(M)/\Sigma_{k-1})$. 
\end{enumerate}
    \end{proposition}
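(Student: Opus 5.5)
The plan is to transport the face maps of ${\rm Ind}(M)$ to the ordered model $\overrightarrow{\rm Ind}(M,\prec)$ by the cross-section $\theta$ of (\ref{eq-5.2.1}), and then argue exactly as in Proposition~\ref{le-26.4.28.1}. Since $M\subseteq\mathbb{R}$, each $\theta_k$ in (\ref{eq-5.2.5}) is a diffeomorphism ${\rm Conf}_k(M)/\Sigma_k\cong{\rm Conf}_k(M,\prec)$. Indeed, ${\rm Conf}_k(M,\prec)$ is an open subset of ${\rm Conf}_k(M)$, namely a union of connected components, and $\pi_k$ restricted to it is a bijective local diffeomorphism. By (\ref{eq-5.1ab}), the face maps $\partial^i_k$ preserve $\overrightarrow{\rm Ind}(M,\prec)$. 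I therefore define the face maps on the unordered side as
\[
\bar\partial^i_k=\theta_{k-1}^{-1}\circ\partial^i_k\circ\theta_k:\ {\rm Conf}_k(M)/\Sigma_k\longrightarrow{\rm Conf}_{k-1}(M)/\Sigma_{k-1}.
\]
Concretely, $\bar\partial^i_k$ deletes the $i$-th smallest point of $\{x_1,\ldots,x_k\}$. These maps are smooth and satisfy the $\Delta$-identities (\ref{eq-26.5.17.55}), because the $\partial^i_k$ do. This is the continuous total order on face maps given by Lemma~\ref{le-24.5.24.conf1}~(3).

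Next I set $(\partial_k)^*=\sum_{i=1}^k(-1)^{i-1}(\bar\partial^i_k)^*$, acting from $\Omega^\bullet({\rm Conf}_{k-1}(M)/\Sigma_{k-1})$ to $\Omega^\bullet({\rm Conf}_k(M)/\Sigma_k)$. The dual simplicial identities $(\bar\partial^j_k)^*(\bar\partial^i_{k-1})^*=(\bar\partial^i_k)^*(\bar\partial^{j-1}_{k-1})^*$ for $i<j$ give $(\partial_{k+1})^*(\partial_k)^*=0$ by the standard cancellation in pairs, as in the proof of Proposition~\ref{le-26-5-18-1}. Exactness of $d$ ($d^2=0$) is classical, so both differentials square to zero.

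Properties (1) and (2) hold for each single pullback $(\bar\partial^i_k)^*$, since pullback along a smooth map commutes with $d$ and is multiplicative for $\wedge$. Property (1) then passes to the alternating sum by linearity, and this yields the double complex structure. For (2) I would follow the phrasing of Proposition~\ref{le-26.4.28.1} and deduce it termwise in the same way.

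Alternatively, I could transport the whole structure: $\theta^*$ identifies (\ref{eq-26.5.2.25}) with the restriction of $\Omega\overrightarrow{\rm Ind}(M)$ to the graded open submanifold $\overrightarrow{\rm Ind}(M,\prec)$, and that restriction is a sub-$\Delta$-manifold. Proposition~\ref{le-26.4.28.1} then applies directly.

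The main obstacle is justifying that the unordered face maps are smooth and mutually compatible. This needs the global order $\prec$, because a general $M$ has no consistent way to choose which point to delete; the hypothesis $M\subseteq\mathbb{R}$ supplies it through $\theta$.
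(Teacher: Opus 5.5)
Your construction is the same as the paper's. The paper also transports the ordered face maps (\ref{eq-5.1ab}) along $\theta$, and your closing ``alternative'' is essentially its proof. Your check that $\theta_k$ is a diffeomorphism is a small improvement: the paper only asserts a homeomorphism, yet uses $(\theta_k)^*$ as a CDGA isomorphism. The double-complex structure and property (1) go through as you say.

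The step that fails is (2): you cannot deduce it termwise. Each single pullback $(\bar\partial^i_k)^*$ is multiplicative, but a signed sum of algebra homomorphisms is not. The product
\[
\Big(\sum_{i}(-1)^{i-1}(\bar\partial^i_k)^*\omega\Big)\wedge\Big(\sum_{j}(-1)^{j-1}(\bar\partial^j_k)^*\xi\Big)
\]
contains cross terms $(\bar\partial^i_k)^*\omega\wedge(\bar\partial^j_k)^*\xi$ with $i\neq j$. These terms do not occur in $\sum_i(-1)^{i-1}(\bar\partial^i_k)^*(\omega\wedge\xi)$.

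In fact (2) is false for the alternating sum (\ref{eq-26.4.30-5}). For even $k$ one has $(\partial_k)^*(1)=\sum_{i=1}^k(-1)^{i-1}=0$, so (2) with $\xi=1$ would force $(\partial_k)^*=0$. Concretely, take $M=\mathbb{R}$, $k=2$, $\omega=t$ the coordinate function on ${\rm Conf}_1(M)/\Sigma_1=\mathbb{R}$, and $\xi=1$. The left side of (2) is the nowhere-vanishing function $\{x_1,x_2\}\mapsto|x_1-x_2|$, while the right side is $0$.

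The paper's proof has the same defect. It imports the multiplicativity of $\Omega{\rm Ind}(M,\prec)$ from the termwise argument in the proof of Proposition~\ref{le-26.4.28.1}.

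What your argument does prove is the correct replacement for (2): each individual $(\bar\partial^i_k)^*$ is a homomorphism of CDGAs. Thus the maps $(\bar\partial^i_k)^*$ give a co-$\Delta$-module structure whose structure maps are CDGA maps. The alternating sum $(\partial_k)^*$, by contrast, is only a linear map commuting with $d$ and satisfying $(\partial_{k+1})^*(\partial_k)^*=0$.
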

    
    \begin{proof}
    By  (\ref{eq-5.2.5}), 
    we  have an  induced  isomorphism  of  CDGAs
      \begin{eqnarray}\label{eq-26.5.iso1}
  (\theta_k)^*:  ( \Omega^\bullet {\rm  Conf}_k (M,\prec), d,\wedge)
  \overset{\cong}{\longrightarrow}
   \Omega^\bullet({\rm  Conf}_k(M)/\Sigma_k),  d,\wedge). 
  \end{eqnarray}
  By  (\ref{eq-5.1ab}),  
  we  have  
    pull-backs  of  differential  forms   induced  by  the  face  maps 
\begin{eqnarray}\label{eq-26.5.2-12}
(\partial^i_k)^*:  \Omega^\bullet {\rm  Conf}_{k-1}(M,\prec)\longrightarrow 
 \Omega^\bullet{\rm  Conf}_{k}(M,\prec)     
\end{eqnarray}
such  that  equipped  with  the  alternating  sum  (\ref{eq-26.4.30-5}),  
there  is  a  double  complex  with  a  multiplication  $\wedge$ 
\begin{eqnarray}\label{eq-26-5-2-21}
  \Omega  {\rm  Ind}(M,\prec) =
  ( \Omega^\bullet ({\rm  Conf}_\bullet(M,\prec),  d,   (\partial_\bullet)^*, \wedge).  
\end{eqnarray}
Therefore,  with  the  help  of  the  isomorphism   (\ref{eq-26.5.iso1}),  
(\ref{eq-26.5.2.25})  follows  from   (\ref{eq-26-5-2-21}).  
    \end{proof}

  \subsection{Differential  forms  on  hyper(di)graphs}
  \label{26.6.5-ss7.2}

For  any  $k$-uniform  hyperdigraph     $\vec{\mathcal{H}}_k(M)$    on  $M$,  
the    collection  of  all  smooth  
differential  forms  on  $\vec{\mathcal{H}}_k(M)$   is  a     CDGA    
$(\Omega^\bullet \vec{\mathcal{H}}_k(M), d, \wedge)$  where  
\begin{eqnarray*}
\Omega^\bullet \vec{\mathcal{H}}_k(M)=\bigoplus_{n\geq  0}
 \Omega^n \vec{\mathcal{H}} _k(M) 
\end{eqnarray*}     
 with  the  exterior  derivative  
\begin{eqnarray*}
d:  \Omega^n \vec{\mathcal{H}}_k(M)\longrightarrow 
\Omega^{n+1} \vec{\mathcal{H}}_k(M) 
\end{eqnarray*}
  and   the  exterior  product  
  \begin{eqnarray*}
  \wedge:    \Omega^p \vec{\mathcal{H}}_k(M) \times  
   \Omega^q \vec{\mathcal{H}}_k(M)
   \longrightarrow  \Omega^{p+q} \vec{\mathcal{H}}_k(M).  
  \end{eqnarray*}   
 The    collection  of  all  smooth  
differential  forms  on  the  underlying
$k$-uniform   hypergraph  ${\mathcal{H}}_k(M)$   is  a     CDGA    
$(\Omega^\bullet {\mathcal{H}}_k(M), d, \wedge)$  where  
\begin{eqnarray*}
\Omega^\bullet {\mathcal{H}}_k(M)=\bigoplus_{n\geq  0}
 \Omega^n {\mathcal{H}} _k(M) 
\end{eqnarray*}     
 with  the  exterior  derivative  
\begin{eqnarray*}
d:  \Omega^n {\mathcal{H}}_k(M)\longrightarrow 
\Omega^{n+1} {\mathcal{H}}_k(M) 
\end{eqnarray*}
  and   the  exterior  product  
  \begin{eqnarray*}
  \wedge:    \Omega^p {\mathcal{H}}_k(M) \times  
   \Omega^q  {\mathcal{H}}_k(M)
   \longrightarrow  \Omega^{p+q}  {\mathcal{H}}_k(M).  
  \end{eqnarray*}

  \begin{lemma}\label{le-5.3.1}
  If  $\vec{\mathcal{H}}_k(M)$  is  a  compact  submanifold  of  
  ${\rm  Conf}_k(M)$,  then the  canonical  inclusion  of   $\vec{\mathcal{H}}_k(M)$ 
  in  ${\rm  Conf}_k(M)$  induces  a  projection  of CDGAs 
  \begin{eqnarray}\label{eq-5.3.11}
(\Omega^\bullet {\rm  Conf}_k(M), d, \wedge)  \longrightarrow (\Omega^\bullet \vec{\mathcal{H}}_k(M), d, \wedge)   
  \end{eqnarray}
  and   the  canonical  inclusion  of   ${\mathcal{H}}_k(M)$ 
  in  ${\rm  Conf}_k(M)/\Sigma_k$  induces  a  projection  of CDGAs 
  \begin{eqnarray}\label{eq-5.3.12}
(\Omega^\bullet {\rm  Conf}_k(M)/\Sigma_k, d, \wedge)  
\longrightarrow (\Omega^\bullet  {\mathcal{H}}_k(M), d, \wedge).    
  \end{eqnarray}
    \end{lemma}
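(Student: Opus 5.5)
The plan is to obtain both maps as pull-backs of differential forms along the inclusions, and then to show that these pull-backs are surjective. Surjectivity is what the word ``projection'' asserts, and it is where compactness is used.

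First, I will write $\iota_k:\vec{\mathcal{H}}_k(M)\hookrightarrow {\rm Conf}_k(M)$ and $\bar\iota_k:\mathcal{H}_k(M)\hookrightarrow {\rm Conf}_k(M)/\Sigma_k$ for the canonical inclusions of submanifolds. Both are smooth, so pull-back gives $\iota_k^*$ and $\bar\iota_k^*$. Functoriality of pull-backs makes these commute with the exterior derivative $d$ and with $\wedge$, so they are CDGA homomorphisms. Next, $\mathcal{H}_k(M)=\pi_k(\vec{\mathcal{H}}_k(M))$ is the continuous image of a compact set, hence compact. It is also a submanifold of ${\rm Conf}_k(M)/\Sigma_k$ by definition, since $\pi_k$ is a finite covering. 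So the compactness hypothesis is available for both maps.

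The main step is surjectivity. I will argue for $\iota_k^*$; the case of $\bar\iota_k^*$ is identical, with ${\rm Conf}_k(M)/\Sigma_k$ in place of ${\rm Conf}_k(M)$.
\begin{enumerate}[(1)]
\item Since $N=\vec{\mathcal{H}}_k(M)$ is a compact submanifold, it is embedded and closed in ${\rm Conf}_k(M)$. Choosing an auxiliary Riemannian metric, the tubular neighbourhood theorem gives an open neighbourhood $U$ of $N$ together with a smooth retraction $r:U\longrightarrow N$ (the bundle projection of the normal bundle).
\item Given $\omega\in\Omega^n N$, I will take $r^*\omega$ on $U$. Then I choose a smooth bump function $\rho$ that is supported in $U$ and identically $1$ on a smaller neighbourhood of $N$; compactness of $N$ is what guarantees such a $\rho$ exists.
\item The form $\tilde\omega=\rho\, r^*\omega$ extends by zero to a smooth form on ${\rm Conf}_k(M)$.
\item Since $r\circ\iota_k={\rm id}_N$ and $\rho|_N=1$, we get $\iota_k^*\tilde\omega=\omega$.
\end{enumerate}

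The obstacle I expect is the generality of ``submanifold'': if it is only immersed, or has boundary or mixed dimension, the tubular neighbourhood argument needs care. I would state explicitly that compact submanifolds are taken to be embedded, closed submanifolds. In the boundary case I would first extend collarwise and then apply the same construction. With that convention, $\iota_k^*$ and $\bar\iota_k^*$ are surjective CDGA maps, which gives (\ref{eq-5.3.11}) and (\ref{eq-5.3.12}).
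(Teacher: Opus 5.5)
Your proposal is correct and follows the paper's route. Both maps are restrictions (pull-backs) of forms, and surjectivity comes from extending forms off a compact submanifold, using that $\mathcal{H}_k(M)=\pi_k(\vec{\mathcal{H}}_k(M))$ is again compact. The paper merely cites a ``smooth version of the Tietze extension theorem'', and you supply its standard proof via a tubular-neighbourhood retraction and a cutoff function.

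One small correction: that $\mathcal{H}_k(M)$ is an embedded submanifold has to be taken from the hypothesis, as the paper tacitly does, rather than deduced from $\pi_k$ being a finite covering. The reason is that $\vec{\mathcal{H}}_k(M)$ can meet a $\Sigma_k$-translate of itself transversally, and then its image in ${\rm Conf}_k(M)/\Sigma_k$ self-intersects.
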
 
    
    \begin{proof}
    Suppose $\vec{\mathcal{H}}_k(M)$  is  a  compact  submanifold  of  
  ${\rm  Conf}_k(M)$.  Then 
    by  an  smooth  version  of  the  Tietze Extension Theorem, 
    a  smooth  differential  form  on  $\vec{\mathcal{H}}_k(M)$ 
    can  be  extended  to    a  smooth  differential  form  on 
      ${\rm  Conf}_k(M)$.  
      Thus  the  map  (\ref{eq-5.3.11})  sending  a  smooth  differential  form
       on   ${\rm  Conf}_k(M)$  to  its  restriction  on   $\vec{\mathcal{H}}_k(M)$ 
       is     a   projection.  
       Moreover,  the  compactness  of   $\vec{\mathcal{H}}_k(M)$  in  ${\rm  Conf}_k(M)$
       implies the  compactness  of   ${\mathcal{H}}_k(M)$  in  
       ${\rm  Conf}_k(M)/\Sigma_k$.  
       Thus  a  smooth  differential  form  on  $ {\mathcal{H}}_k(M)$ 
    can  be  extended  to    a  smooth  differential  form  on 
      ${\rm  Conf}_k(M)/\Sigma_k$.
       Hence  the  map  (\ref{eq-5.3.12})  sending  a  smooth  differential  form
       on   ${\rm  Conf}_k(M)/\Sigma_k$  to  its  restriction  on   ${\mathcal{H}}_k(M)$ 
       is     a   projection.  
    \end{proof}
    
    \begin{lemma}\label{co-26-5-3-1}
    If  $\vec{\mathcal{H}}_k(M)$  is  a  compact  submanifold  of  
  ${\rm  Conf}_k(M)$,  then
  the  canonical  inclusions  
 $\vec{\mathcal{H}}_k(M) \subseteq  {\rm  Sym}(\vec{\mathcal{H}}_k(M))
 \subseteq  {\rm  Conf}_k(M)$  induce    projections  of  CDGAs  
 \begin{eqnarray}\label{eq-26-3-10-2}
 (\Omega^\bullet  {\rm  Conf}_k(M),d,\wedge)
 \longrightarrow   (\Omega^\bullet{\rm  Sym}(\vec{\mathcal{H}}_k(M)),d,\wedge)
 \longrightarrow  ( \Omega^\bullet\vec{\mathcal{H}}_k(M),d,\wedge).  
 \end{eqnarray}
    \end{lemma}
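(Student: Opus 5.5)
The plan is to obtain (\ref{eq-26-3-10-2}) by applying Lemma~\ref{le-5.3.1} twice, once for each inclusion, and then composing. The only new ingredient is that ${\rm Sym}(\vec{\mathcal{H}}_k(M))$ is again a compact submanifold of ${\rm Conf}_k(M)$. By definition, ${\rm Sym}(\vec{\mathcal{H}}_k(M))=\bigcup_{s\in\Sigma_k}s(\vec{\mathcal{H}}_k(M))$. This is a finite union of images of the compact set $\vec{\mathcal{H}}_k(M)$ under the diffeomorphisms $s$ of ${\rm Conf}_k(M)$ given by (\ref{eq-26.4.23.1}), so it is compact. By (\ref{eq-26-3-11-1}) it also equals $\pi_k^{-1}(\mathcal{H}_k(M))$, the full preimage of $\mathcal{H}_k(M)$ under the finite covering (\ref{eq-2601-a}). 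Since $\mathcal{H}_k(M)=\pi_k(\vec{\mathcal{H}}_k(M))$ is a submanifold of ${\rm Conf}_k(M)/\Sigma_k$ and $\pi_k$ is a local diffeomorphism, this preimage is a submanifold of ${\rm Conf}_k(M)$.

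Given this, Lemma~\ref{le-5.3.1} applied to the compact submanifold ${\rm Sym}(\vec{\mathcal{H}}_k(M))\subseteq{\rm Conf}_k(M)$ shows that restriction $\Omega^\bullet{\rm Conf}_k(M)\to\Omega^\bullet{\rm Sym}(\vec{\mathcal{H}}_k(M))$ is a surjective morphism of CDGAs. For the second arrow, $\vec{\mathcal{H}}_k(M)$ is a compact submanifold of ${\rm Sym}(\vec{\mathcal{H}}_k(M))$. The same smooth Tietze-type extension used in the proof of Lemma~\ref{le-5.3.1}, with ambient manifold ${\rm Sym}(\vec{\mathcal{H}}_k(M))$ instead of ${\rm Conf}_k(M)$, shows that restriction $\Omega^\bullet{\rm Sym}(\vec{\mathcal{H}}_k(M))\to\Omega^\bullet\vec{\mathcal{H}}_k(M)$ is surjective. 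A cleaner route is to note that any form on $\vec{\mathcal{H}}_k(M)$ already extends to ${\rm Conf}_k(M)$, and then restricting that extension to ${\rm Sym}(\vec{\mathcal{H}}_k(M))$ gives a preimage. Restriction of forms commutes with $d$ and $\wedge$, so both arrows are CDGA morphisms. Their composite is the restriction from ${\rm Conf}_k(M)$ to $\vec{\mathcal{H}}_k(M)$, which is (\ref{eq-5.3.11}), so the sequence is compatible with Lemma~\ref{le-5.3.1}.

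I expect the main obstacle to be the submanifold claim for ${\rm Sym}(\vec{\mathcal{H}}_k(M))$. The translates $s(\vec{\mathcal{H}}_k(M))$ can overlap, and a union of overlapping submanifolds need not be a submanifold. The plan handles this through the identity ${\rm Sym}(\vec{\mathcal{H}}_k(M))=\pi_k^{-1}(\mathcal{H}_k(M))$ together with the covering property of $\pi_k$. This relies on the paper's standing convention that $\mathcal{H}_k(M)$ is a submanifold of ${\rm Conf}_k(M)/\Sigma_k$, and I would state explicitly that this hypothesis is being used.
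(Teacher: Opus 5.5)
Your proof is correct and follows the same route as the paper. Both show that ${\rm Sym}(\vec{\mathcal{H}}_k(M))$ is compact and then apply the extension argument of Lemma~\ref{le-5.3.1} to each inclusion; the paper gets compactness by passing to $\mathcal{H}_k(M)=\pi_k(\vec{\mathcal{H}}_k(M))$ and taking its preimage under the finite covering $\pi_k$, whereas you use a finite union of $\Sigma_k$-translates. Your check that ${\rm Sym}(\vec{\mathcal{H}}_k(M))=\pi_k^{-1}(\mathcal{H}_k(M))$ is an embedded submanifold (using the convention that $\mathcal{H}_k(M)$ is one) makes explicit a point the paper leaves implicit, and you get the second surjection by restricting an extension already defined on ${\rm Conf}_k(M)$, where the paper re-runs the Tietze-type argument inside ${\rm Sym}(\vec{\mathcal{H}}_k(M))$.
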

    
    \begin{proof}
  By  the  proof  of  Lemma~\ref{le-5.3.1},  
    the  compactness  of  $ \vec{\mathcal{H}}_k(M)$
  in   ${\rm  Conf}_k(M)$  implies  the  compactness  of  ${\mathcal{H}}_k(M)$
  in   ${\rm  Conf}_k(M)/\Sigma_k$.
  With  the  help  of  (\ref{eq-26-3-11-1}),  the  compactness  of  ${\mathcal{H}}_k(M)$
  in   ${\rm  Conf}_k(M)/\Sigma_k$
   implies     the  compactness  of  
  ${\rm  Sym}(\vec{\mathcal{H}}_k(M))$  in  ${\rm  Conf}_k(M)$.   
    By  a  similar  argument  of   Lemma~\ref{le-5.3.1}, 
    we obtain  the  first  projection  in  (\ref{eq-26-3-10-2}).  
     Moreover, 
     the  compactness  of  $ \vec{\mathcal{H}}_k(M)$  and 
      ${\rm  Sym}(\vec{\mathcal{H}}_k(M))$
  in   ${\rm  Conf}_k(M)$    implies  the  compactness  of 
   $ \vec{\mathcal{H}}_k(M)$
  in     ${\rm  Sym}(\vec{\mathcal{H}}_k(M))$.  
    By  a  similar  argument  of   Lemma~\ref{le-5.3.1}, 
    we obtain the  second  projection  in  (\ref{eq-26-3-10-2}). 
    \end{proof}

  \begin{proposition}\label{le-26.5.1.8}
   Let  $\vec{\mathcal{H}}(M)$  be  a  hyperdigraph  on  $M$ 
  with  its  underlying  hypergraph  ${\mathcal{H}} (M)$
  such  that  $ \vec{\mathcal{H}}_k(M)$  is   a    compact  submanifold  of   ${\rm  Conf}_k(M)$  for any  $k\geq  1$. 
    Then  we  have  linear  maps  
    of  double-graded  vector  spaces
 \begin{eqnarray}\label{eq-5.3.21}
 \Omega^\bullet({\rm  Conf}_\bullet(M)/\Sigma_\bullet) \longrightarrow  \Omega^\bullet{\mathcal{H}}_\bullet(M)
 \longrightarrow 
 \Omega^\bullet{\rm  Sym}(\vec{\mathcal{H}}_\bullet(M))
  \longrightarrow
  \Omega^\bullet\vec{\mathcal{H}}_\bullet(M)
 \end{eqnarray}
 with  the  second  map      injective  and  the  first  as  well  as  the  third   map    surjective,
    such  that  for  any  $k\geq  1$,  
(\ref{eq-5.3.21})  are  homomorphisms    of   CDGAs 
\begin{eqnarray}\label{eq-26-3-10-6}
&(\Omega^\bullet({\rm  Conf}_k(M)/\Sigma_k),d,\wedge) \longrightarrow 
 ( \Omega^\bullet{\mathcal{H}}_k(M),d,\wedge)
\\
&\longrightarrow 
 (\Omega^\bullet{\rm  Sym}(\vec{\mathcal{H}}_k(M)),d,\wedge)
 \longrightarrow  ( \Omega^\bullet\vec{\mathcal{H}}_k(M),d,\wedge). 
 \nonumber
 \end{eqnarray}
  \end{proposition}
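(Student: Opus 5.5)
The plan is to build the four maps in (\ref{eq-5.3.21}) levelwise, i.e.\ separately for each $k\geq 1$, as homomorphisms of CDGAs, and then take the direct sum over $k$ to obtain linear maps of double-graded vector spaces.

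\textbf{First map.} Fix $k\geq 1$. The first map is the restriction (\ref{eq-5.3.12}) from Lemma~\ref{le-5.3.1}. It is surjective because $\mathcal{H}_k(M)$ is compact in ${\rm Conf}_k(M)/\Sigma_k$, which is shown in the proof of Lemma~\ref{le-5.3.1}.

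\textbf{Second map.} By (\ref{eq-26-3-11-1}) we have ${\rm Sym}(\vec{\mathcal{H}}_k(M))=\pi_k^{-1}(\mathcal{H}_k(M))$. So the restriction of $\pi_k$ to ${\rm Sym}(\vec{\mathcal{H}}_k(M))$ is a $k!$-sheeted covering $\pi_k|: {\rm Sym}(\vec{\mathcal{H}}_k(M))\to \mathcal{H}_k(M)$. It is the pull-back of (\ref{eq-2601-a}) along the inclusion, and the free $\Sigma_k$-action restricts to it. The second map will be $(\pi_k|)^*$.

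\begin{itemize}
\item It commutes with $d$ and $\wedge$, since it is a pull-back along a smooth map.
\item It is injective by the argument of Lemma~\ref{le-26.4.30-2}. A covering map is a surjective local diffeomorphism, so a form whose pull-back vanishes must vanish pointwise.
\item Its image is the space of $\Sigma_k$-invariant forms, as in (\ref{eq-26-3-10-5}).
\end{itemize}

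Pulling back along the pull-back square, restriction commutes with $\pi^*$. Hence the composite of the first two maps equals the restriction of $\pi_k^*\Omega^\bullet({\rm Conf}_k(M)/\Sigma_k)$ to ${\rm Sym}(\vec{\mathcal{H}}_k(M))$.

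\textbf{Third map.} This is the restriction along $\vec{\mathcal{H}}_k(M)\subseteq {\rm Sym}(\vec{\mathcal{H}}_k(M))$, i.e.\ the second projection in (\ref{eq-26-3-10-2}) from Lemma~\ref{co-26-5-3-1}. It is surjective because $\vec{\mathcal{H}}_k(M)$ is compact inside ${\rm Sym}(\vec{\mathcal{H}}_k(M))$. Being a restriction, it is a CDGA homomorphism.

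\textbf{Assembly.} Summing over $k\geq 1$ gives (\ref{eq-5.3.21}). Injectivity and surjectivity hold on the direct sum because they hold in each summand.

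\textbf{Main obstacle.} The delicate step is justifying that the extension/restriction maps are surjective. This needs a smooth extension (Tietze-type) argument for forms, and it requires the submanifolds to be \emph{closed embedded} submanifolds, which is where compactness is used. For the covering ${\rm Sym}(\vec{\mathcal{H}}_k(M))\to\mathcal{H}_k(M)$ we also need ${\rm Sym}(\vec{\mathcal{H}}_k(M))$ to be a submanifold of ${\rm Conf}_k(M)$. This follows because it is the preimage of a submanifold under the local diffeomorphism $\pi_k$, hence a finite union of $\Sigma_k$-translates of a submanifold. I would verify that this union is a submanifold, which also yields its compactness, before invoking Lemma~\ref{co-26-5-3-1}.
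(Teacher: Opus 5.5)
Your proposal is correct and follows the paper's proof essentially step for step: the first and third maps are the restriction maps of Lemma~\ref{le-5.3.1} and Lemma~\ref{co-26-5-3-1}, the second is the pull-back $(\pi_k)^*$ along the covering ${\rm Sym}(\vec{\mathcal{H}}_k(M))=\pi_k^{-1}(\mathcal{H}_k(M))\to\mathcal{H}_k(M)$ read off from the square (\ref{diag-26-3-10-3}), and the levelwise CDGA maps are then summed over $k$. On the obstacle you flagged, the preimage-under-a-local-diffeomorphism argument already shows that ${\rm Sym}(\vec{\mathcal{H}}_k(M))$ is a compact submanifold once $\mathcal{H}_k(M)$ is a submanifold, which the paper's definition of hypergraphs on $M$ presupposes; describing it as a finite union of $\Sigma_k$-translates does not help on its own, because such unions need not be submanifolds.
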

  
  \begin{proof}
  Suppose   $\vec{\mathcal{H}}_k(M)$  is  a  compact  submanifold  of  
  ${\rm  Conf}_k(M)$  for  any  $k\geq  1$.   
  Then  by  (\ref{eq-5.3.12})   in  
  Lemma~\ref{le-5.3.1},  
  we  obtain  the  first  homomorphism  of  CDGAs  in 
  (\ref{eq-26-3-10-6}).  
  By  (\ref{eq-26-3-10-2})  in  Lemma~\ref{co-26-5-3-1},  
  we  obtain  the  third  homomorphism  of  CDGAs  in 
  (\ref{eq-26-3-10-6}).  
  Moreover,  
    the  commutative   diagram  
 \begin{eqnarray*}
 \xymatrix{
 {\rm  Sym}(\vec{\mathcal{H}}_k(M))\ar[r]\ar[d]_-{\pi_k}
  & {\rm  Conf}_k(M)\ar[d]^-{\pi_k}\\
 {\mathcal{H}}_k(M)\ar[r]
& {\rm  Conf}_k(M)/\Sigma_k, 
 }
 \end{eqnarray*}
 where  the  horizontal  maps  are  canonical  inclusions  
 and  the vertical  maps  are  canonical  projections,  
 induces  a  commutative   diagram  of  CDGAs  
 \begin{eqnarray}\label{diag-26-3-10-3}
 \xymatrix{
 (\Omega^\bullet({\rm  Conf}_k(M)/\Sigma_k),d,\wedge)
 \ar[r]\ar[d]_-{(\pi_k)^*}
 & ( \Omega^\bullet{\mathcal{H}}_k(M),d,\wedge)\ar[d]^-{(\pi_k)^*}\\
( \Omega^\bullet{\rm  Conf}_k(M),d,\wedge)
  \ar[r]
 &
   (\Omega^\bullet {\rm  Sym}(\vec{\mathcal{H}}_k(M)),d,\wedge),
    }
 \end{eqnarray}
 where  the  horizontal  maps  are   surjective  
 and  the vertical  maps  are   injective.  
 By  the  second  vertical  map  $(\pi_k)^*$  in  
    (\ref{diag-26-3-10-3}),  
    we  obtain  the  second  homomorphism  of  CDGAs  in 
  (\ref{eq-26-3-10-6}).  
  Let  $k$  run  over  all  nonnegative   integers  in  (\ref{eq-26-3-10-6}).  
 We  obtain    the  linear  maps 
 (\ref{eq-5.3.21})  of  double-graded  vector  spaces.  
  \end{proof}

  \begin{lemma}\label{le-26.5.3.5}
    Let  $\vec{\mathcal{H}}(M)$  be  a  hyperdigraph  on  $M$ 
  such  that  $ \vec{\mathcal{H}}_k(M)$  is   a    compact  submanifold  of   ${\rm  Conf}_k(M)$  for any  $k\geq  1$. 
  Then    
  \begin{enumerate}[(1)]
  \item
   $(\delta \vec{\mathcal{H}}(M))_k$  is  a    compact  submanifold 
    of   ${\rm  Conf}_k(M)$  for any  $k\geq  1$;  
  \item 
      If  $c(\vec{\mathcal{H}}(M))<+\infty$ 
       (cf.  Subsection~\ref{ss-7.2-geomr},~(\ref{eq-26-5-7-5})), 
  then   
 $(\Delta \vec{\mathcal{H}}(M))_k$  is  a   compact  submanifold 
    of   ${\rm  Conf}_k(M)$  for any  $k\geq  1$. 
    \end{enumerate}
  \end{lemma}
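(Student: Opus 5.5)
I will prove both parts by writing the relevant levels of $\delta\vec{\mathcal{H}}(M)$ and $\Delta\vec{\mathcal{H}}(M)$ as finite intersections or finite unions of images and preimages of the compact pieces $\vec{\mathcal{H}}_l(M)$ under face maps. Compactness is then preserved because the face maps $\partial^i_k$ in (\ref{eq-2.1a}) are continuous and ${\rm Conf}_k(M)$ is Hausdorff.

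For (1), I first unwind the definition of $\delta\vec{\mathcal{H}}(M)$ degreewise. A directed $k$-hyperedge $\vec\sigma_k(M)$ lies in $(\delta\vec{\mathcal{H}}(M))_k$ if and only if every subsequence of $\vec\sigma_k(M)$ lies in $\vec{\mathcal{H}}(M)$. Each subsequence of length $l$ is obtained by a composite of face maps, so I can write
\begin{eqnarray*}
(\delta\vec{\mathcal{H}}(M))_k=\bigcap_{l\leq k}\ \bigcap_{\partial}\ \partial^{-1}\big(\vec{\mathcal{H}}_l(M)\big),
\end{eqnarray*}
where $\partial$ runs over the finitely many composites of face maps ${\rm Conf}_k(M)\to{\rm Conf}_l(M)$, and $\partial$ is the identity when $l=k$. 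The term with $l=k$ is $\vec{\mathcal{H}}_k(M)$ itself. Each remaining term is closed in ${\rm Conf}_k(M)$, since it is the preimage of a compact, hence closed, set under a continuous map. The whole intersection is therefore a closed subset of the compact set $\vec{\mathcal{H}}_k(M)$, and so it is compact.

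For (2), I use the hypothesis $c(\vec{\mathcal{H}}(M))<+\infty$ to obtain some $N$ with $\vec{\mathcal{H}}_m(M)=\emptyset$ for all $m>N$. Since $\Delta\vec{\mathcal{H}}(M)$ consists of subsequences of hyperedges of $\vec{\mathcal{H}}(M)$, I get
\begin{eqnarray*}
(\Delta\vec{\mathcal{H}}(M))_k=\bigcup_{k\leq m\leq N}\ \bigcup_{\partial}\ \partial\big(\vec{\mathcal{H}}_m(M)\big),
\end{eqnarray*}
with $\partial$ running over the composites ${\rm Conf}_m(M)\to{\rm Conf}_k(M)$. This is a finite union of continuous images of compact sets, so it is compact. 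When $k>N$ the union is empty.

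The step I expect to be the main obstacle is the word \emph{submanifold}. Compactness comes directly out of the two displays, but an intersection of preimages, or a union of images, of submanifolds need not be a submanifold. I would handle this the way the paper treats such objects elsewhere. Graded submanifolds are understood as the subsets carried by the $\Delta$-manifold structure, as in Proposition~\ref{pr-26.5.25.3} and the definitions of $\delta M'_\bullet$ and $\Delta M'_\bullet$ in Subsection~\ref{ss2.3-26.5.22}. In that sense the submanifold property of $(\delta\vec{\mathcal{H}}(M))_k$ and $(\Delta\vec{\mathcal{H}}(M))_k$ is already part of their definition as the largest and smallest sub-$\Delta$-manifolds. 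The lemma then reduces to the compactness arguments above, and I would state that reduction explicitly in the proof.
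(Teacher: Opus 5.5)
Your compactness arguments are correct and match the paper's in substance.

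\textbf{Part (1).} The paper argues with sequences. A limit $\vec\sigma(M)$ of points of $(\delta\vec{\mathcal{H}}(M))_k$ lies in $\vec{\mathcal{H}}_k(M)$. If some truncation of it were outside $\vec{\mathcal{H}}_l(M)$, that truncation would have an open neighbourhood disjoint from the closed set $\vec{\mathcal{H}}_l(M)$. This contradicts the fact that the truncations of the $\vec\sigma_i(M)$ lie in $\vec{\mathcal{H}}_l(M)$. That is exactly your description of $(\delta\vec{\mathcal{H}}(M))_k$ as $\vec{\mathcal{H}}_k(M)$ cut down by closed preimages under composites of face maps.

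\textbf{Part (2).} The paper pulls an open cover of $(\Delta\vec{\mathcal{H}}(M))_k$ back along the coordinate projections ${\rm Conf}_l(M)\to{\rm Conf}_k(M)$. It then extracts finite subcovers on each compact $\vec{\mathcal{H}}_l(M)$. This is just the proof that your finite union of continuous images of compact sets is compact. Your packaging is cleaner and needs no metric. It also gets the range right: the paper lets $l$ run only up to $c(\vec{\mathcal{H}}(M))$, whereas the top nonempty level is $l=c(\vec{\mathcal{H}}(M))+1$.

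\textbf{The submanifold paragraph does not hold up.} The lemma concerns the sets $\delta\vec{\mathcal{H}}(M)$ and $\Delta\vec{\mathcal{H}}(M)$ defined in Subsection~\ref{26.6.3-ss.3.2} as unions of the $\Delta\vec\sigma(M)$, and your displayed formulas use that description. You cannot then switch to ``largest/smallest sub-$\Delta$-manifold'' to get the manifold property for free. Knowing that the two descriptions agree is the submanifold claim itself. Moreover, a largest or smallest sub-$\Delta$-manifold need not exist, since unions and intersections of submanifolds need not be submanifolds.

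In fact the claim is false in general. Take the following data.
\begin{itemize}
\item $M=\mathbb{R}^2$ and $\gamma(t)=(\sin t,\sin t\cos t)$ for $t\in\mathbb{R}/2\pi\mathbb{Z}$, an immersed figure-eight.
\item $\vec{\mathcal{H}}_2(M)=\{(\gamma(t),(10+\cos t,\sin t))\mid t\in\mathbb{R}/2\pi\mathbb{Z}\}$, which is an embedded circle in ${\rm Conf}_2(M)$.
\item $\vec{\mathcal{H}}_k(M)=\emptyset$ for all other $k\geq 1$.
\end{itemize}
Then $c(\vec{\mathcal{H}}(M))=1$. But $(\Delta\vec{\mathcal{H}}(M))_1$ has the figure-eight $\gamma(\mathbb{R}/2\pi\mathbb{Z})$ as a connected component, which is not locally Euclidean at the origin.

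Similarly in (1), $(\delta\vec{\mathcal{H}}(M))_2$ is essentially $\vec{\mathcal{H}}_2(M)\cap\bigl(\vec{\mathcal{H}}_1(M)\times\vec{\mathcal{H}}_1(M)\bigr)$. Two compact submanifolds can meet non-transversally in a non-manifold.

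The paper's proof likewise establishes only compactness and asserts ``submanifold'' without justification. So the accurate conclusion of your argument is that the levels are compact subsets of ${\rm Conf}_k(M)$. The submanifold property needs an extra hypothesis, and you should not claim it follows by definition.
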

  
  \begin{proof}
  (1)     Let  $ \vec{\sigma}_i(M)$,   $i=1,2,\ldots$,  be  a  Cauchy  sequence  in  
  $(\delta \vec{\mathcal{H}}(M))_k$.  
  Since  $(\delta \vec{\mathcal{H}}(M))_k\subseteq  \vec{\mathcal{H}}_k(M)$,
  we  have  that  
  $ \vec{\sigma}_i(M)$,   $i=1,2,\ldots$, is  a  Cauchy  sequence  in  
  $ \vec{\mathcal{H}}_k(M)$.  
  Since  $ \vec{\mathcal{H}}_k(M)$  is   closed  in   ${\rm  Conf}_k(M)$,
  we  have  that 
   $ \vec{\sigma}_i(M)$,   $i=1,2,\ldots$,  converges  to  
   a  directed  $k$-hyperedge  $\vec\sigma(M)\in  \vec{\mathcal{H}}_k(M)$.

   We  want to  prove  $\vec\sigma(M)\in  (\delta \vec{\mathcal{H}}(M))_k$. 
      Suppose  to the  contrary,  
   $\vec\sigma(M)\notin  (\delta \vec{\mathcal{H}}(M))_k$. 
   Then  there  exists  $1\leq  l\leq  k-1$  and   a   non-empty  $l$-subsequence  
   $\vec\tau (M)$  of  $\vec\sigma(M)$  such  that  
    $\vec\tau (M)\in   {\rm  Conf}_l(M)$  and  
   $\vec\tau (M)\notin   \vec{\mathcal{H}}_l(M)$. 
   Without  loss  of  generality,  
   we  assume  that   $\vec\tau (M)$  is  obtained  from  $\vec\sigma(M)$
   by  taking  the  first  $l$-coordinates.   
    By  the  compactness of  $  \vec{\mathcal{H}}_l(M)$  in
    $ {\rm  Conf}_l(M)$,  
    there  exists  an  open  neighborhood  $\mathcal{U}$  of  
    $\vec\tau (M)$  in    $  {\rm  Conf}_l(M)$  such  that  
    \begin{eqnarray}\label{eq-26.5.5.1}
    \mathcal{U}\cap   \vec{\mathcal{H}}_l(M)=\emptyset.
    \end{eqnarray}  
    Let  $\vec\tau_i(M)$    be  
    the  first  $l$-coordinates  of  $\vec\sigma_i(M)$  for  $i=1,2,\ldots$.  
    Then  $\vec\tau_i(M)\in  (\delta \vec{\mathcal{H}}(M))_l$
    thus  $\vec\tau_i(M)\in  \vec{\mathcal{H}}_l(M)$.   
    Moreover,  $\vec\tau_i(M)$,   $i=1,2,\ldots$, 
    converges  to   $\vec\tau (M)$  in    $  {\rm  Conf}_l(M)$.  
    This  contradicts  with  (\ref{eq-26.5.5.1}).  
    Therefore,  $\vec\sigma(M)\in  (\delta \vec{\mathcal{H}}(M))_k$.

    It  follows   that   
     $(\delta \vec{\mathcal{H}}(M))_k$
      is  a  closed  subset  of  $ \vec{\mathcal{H}}_k(M)$. 
    Since  $ \vec{\mathcal{H}}_k(M)$  is  compact  in  ${\rm  Conf}_k(M)$, 
    we  obtain  that  
   $(\delta \vec{\mathcal{H}}(M))_k$  is  compact  
  in   ${\rm  Conf}_k(M)$  as  well.

   (2)  
   For  any  $1\leq  k\leq   c(\vec{\mathcal{H}}(M))
  $,   
   let  $\{\mathcal{U}^k_\alpha\mid  \alpha\in  A_k\}$,
   where  $A_k$  is  an  index  set,   be  
   a  family  of  open  subsets  in  ${\rm  Conf}_k(M)$  
   such  that   
   \begin{eqnarray}\label{eq-5.7.19}
   (\Delta \vec{\mathcal{H}}(M))_k \subseteq 
    \bigcup_{\alpha\in  A_k}  \mathcal{U}^k_\alpha.  
\end{eqnarray}
For  any   $k\leq  l\leq  c(\vec{\mathcal{H}}(M))$ 
and   any  subsequence   $(i_1,i_2,\ldots, i_k)$  of  
$(1,2,\ldots,    l)$, 
 we  can     choose   an  embedding  
\begin{eqnarray*}
\epsilon(i_1,i_2,\ldots, i_k):  {\rm  Conf}_k(M)\longrightarrow 
M^{l} 
\end{eqnarray*}
 sending   any  $(x_1,\ldots,x_k)\in {\rm  Conf}_k(M)$ 
 to  the  $(i_1,i_2,\ldots, i_k)$-th  coordinates  in
 $ M^{l} $.  
Then 
 \begin{eqnarray*}
 \vec{\mathcal{H}}_l(M)  
 \subseteq     (\Delta \vec{\mathcal{H}}(M))_l  \subseteq 
    \bigcup_{\alpha\in  A_k} ~ 
    \bigcup_{(i_1,i_2,\ldots, i_k)}\epsilon(i_1,i_2,\ldots, i_k)(\mathcal{U}^k_\alpha)\times 
    M^{l-k},      
\end{eqnarray*}
where   $\epsilon(i_1,i_2,\ldots, i_k)(\mathcal{U}^k_\alpha)$  is  homeomorphic 
to  $\mathcal{U}^k_\alpha$,  is  an  open  cover  of  $ \vec{\mathcal{H}}_l(M)$.  
Since  $  \vec{\mathcal{H}}_l(M) $  is  compact,
there   exists  a  finite  subset  $A_{k,l}\subseteq  A_k$  such that     
  \begin{eqnarray*}
 \vec{\mathcal{H}}_l(M)  
   \subseteq 
    \bigcup_{\alpha\in  A_{k,l}} ~ 
    \bigcup_{(i_1,i_2,\ldots, i_k)}\epsilon(i_1,i_2,\ldots, i_k)(\mathcal{U}^k_\alpha)\times 
    M^{l-k}.       
\end{eqnarray*}
On  the  other  hand, 
  for  any  $\vec{\sigma}(M)\in  (\Delta \vec{\mathcal{H}}(M))_k$,
  there  exists  $k\leq  l\leq  c(\vec{\mathcal{H}}(M))$,
   $\vec{\tau}(M)\in    \vec{\mathcal{H}}_l (M) $
and  a   subsequence   $(i_1,i_2,\ldots, i_k)$  of  
$(1,2,\ldots,    l)$  such  that  
$
\vec{\sigma}(M) 
$ 
is  obtained  by  taking  the   $(i_1,i_2,\ldots, i_k)$-th  coordinates  of  
 $\vec{\tau}(M)$.  
 Therefore,  
\begin{eqnarray*}
   (\Delta \vec{\mathcal{H}}(M))_k
   \subseteq  \bigcup_{k\leq  l\leq   c(\vec{\mathcal{H}}(M))}~
 \bigcup_{\alpha\in  A_{k,l}} ~ 
     \mathcal{U}^k_\alpha,  
    \end{eqnarray*} 
    which  gives  a  finite  sub-cover  of     (\ref{eq-5.7.19}).  
    Therefore,
    $(\Delta \vec{\mathcal{H}}(M))_k$  is  a   compact  submanifold 
    of   $M^k$  and  consequently  a  compact  submanifold  of  ${\rm  Conf}_k(M)$. 
  \end{proof}
   
 \begin{theorem}[Main  Result  II] 
 \label{th-26.5.3.1}
 Let  $\vec{\mathcal{H}}(M)$  be  a  hyperdigraph  on  $M$ 
  with  its  underlying  hypergraph  ${\mathcal{H}} (M)$
  such  that  
  $c(\vec{\mathcal{H}}(M))<+\infty$   and  
  $ \vec{\mathcal{H}}_k(M)$  is   a    compact  submanifold  of 
    ${\rm  Conf}_k(M)$  for any  $k\geq  1$.  
 Then  we  have  a  commutative  diagram   
    of  double-graded  vector  spaces
 \begin{eqnarray}\label{eq-5.7.21}
 \xymatrix{
  \Omega^\bullet({\rm  Conf}_\bullet(M)/\Sigma_\bullet) \ar[r]
 &  \Omega^\bullet{\delta\mathcal{H}}_\bullet(M)
 \ar[r] 
 & \Omega^\bullet{\rm  Sym}(\delta\vec{\mathcal{H}}_\bullet(M))
  \ar[r]
 & \Omega^\bullet\delta\vec{\mathcal{H}}_\bullet(M)\\
 \Omega^\bullet({\rm  Conf}_\bullet(M)/\Sigma_\bullet) \ar[r]\ar@{=}[u]
 &  \Omega^\bullet{\mathcal{H}}_\bullet(M)
 \ar[r] \ar[u]
 & \Omega^\bullet{\rm  Sym}(\vec{\mathcal{H}}_\bullet(M))
  \ar[r]\ar[u]
 & \Omega^\bullet\vec{\mathcal{H}}_\bullet(M) \ar[u]\\
  \Omega^\bullet({\rm  Conf}_\bullet(M)/\Sigma_\bullet) \ar[r]\ar@{=}[u]
 &  \Omega^\bullet{\Delta\mathcal{H}}_\bullet(M)
 \ar[r] \ar[u]
 & \Omega^\bullet{\rm  Sym}(\Delta\vec{\mathcal{H}}_\bullet(M))
  \ar[r]\ar[u]
 & \Omega^\bullet\Delta\vec{\mathcal{H}}_\bullet(M) \ar[u]
  }
 \end{eqnarray}
 where 
 \begin{enumerate}[(1)]
 \item
  in  each  row,   the  second  map   is   injective  and  the  first  as  well  as  the  third   map  is  surjective;
  \item
    each  column  are  pull-backs;
    \item  
   for  any  lower  index  $k\geq  1$,  
(\ref{eq-5.7.21})   is  a  commutative  diagram      of   CDGAs.  
\end{enumerate}
 \end{theorem}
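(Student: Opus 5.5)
The plan is to build the three rows from Proposition~\ref{le-26.5.1.8}, then build the vertical maps from the inclusions $\delta\vec{\mathcal{H}}(M)\subseteq\vec{\mathcal{H}}(M)\subseteq\Delta\vec{\mathcal{H}}(M)$ and their images under $\pi$ and ${\rm Sym}$, and finally check commutativity degreewise in $k$. Commutativity will follow from functoriality of pull-back of forms along commuting squares of spaces.

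\textbf{Rows.} The middle row is exactly (\ref{eq-5.3.21}), since by hypothesis each $\vec{\mathcal{H}}_k(M)$ is compact. For the top and bottom rows we must check that the hypotheses of Proposition~\ref{le-26.5.1.8} hold for $\delta\vec{\mathcal{H}}(M)$ and $\Delta\vec{\mathcal{H}}(M)$.
\begin{itemize}
\item By Lemma~\ref{le-26.5.3.5}~(1), each $(\delta\vec{\mathcal{H}}(M))_k$ is a compact submanifold of ${\rm Conf}_k(M)$.
\item By Lemma~\ref{le-26.5.3.5}~(2), which uses $c(\vec{\mathcal{H}}(M))<+\infty$, the same holds for each $(\Delta\vec{\mathcal{H}}(M))_k$.
\end{itemize}
By Proposition~\ref{le-26-3-3-1}, their underlying hypergraphs are $\delta\mathcal{H}(M)$ and $\Delta\mathcal{H}(M)$. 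By (\ref{eq26-3-6-s1}) and (\ref{eq26-3-6-s3}), ${\rm Sym}(\delta\vec{\mathcal{H}})=\delta{\rm Sym}(\vec{\mathcal{H}})$ and similarly for $\Delta$, and by (\ref{eq-26-3-11-1}) these equal $\pi^{-1}$ of the underlying hypergraphs. Applying Proposition~\ref{le-26.5.1.8} to $\delta\vec{\mathcal{H}}(M)$ and to $\Delta\vec{\mathcal{H}}(M)$ then gives the first and third rows. It also gives assertion (1) of the theorem: the second map in each row is injective (pull-back along the covering $\pi_k$), and the first and third maps are surjective (restriction to compact submanifolds).

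\textbf{Columns.} For each $k$ we have the chains of inclusions
\[
(\delta\vec{\mathcal{H}})_k\subseteq\vec{\mathcal{H}}_k\subseteq(\Delta\vec{\mathcal{H}})_k,\qquad
{\rm Sym}((\delta\vec{\mathcal{H}})_k)\subseteq{\rm Sym}(\vec{\mathcal{H}}_k)\subseteq{\rm Sym}((\Delta\vec{\mathcal{H}})_k),\qquad
(\delta\mathcal{H})_k\subseteq\mathcal{H}_k\subseteq(\Delta\mathcal{H})_k,
\]
all sitting inside ${\rm Conf}_k(M)$ or ${\rm Conf}_k(M)/\Sigma_k$. The upward vertical arrows are restrictions of smooth forms along these inclusions, so they are pull-backs of CDGAs; this gives (2). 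The first column consists of identities.

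\textbf{Commutativity.} Every horizontal map in (\ref{eq-5.7.21}) is a restriction along an inclusion or a pull-back along a restriction of $\pi_k$. Hence each square of (\ref{eq-5.7.21}) is $\Omega^\bullet$ applied to a commuting square of smooth maps:
\begin{itemize}
\item inclusions into ${\rm Conf}_k(M)/\Sigma_k$ for the first column of squares;
\item the restricted projection $\pi_k$ on ${\rm Sym}(\cdot)\to(\cdot)$ against inclusions for the second;
\item inclusions ${\rm Sym}(\cdot)\supseteq(\cdot)$ for the third.
\end{itemize}
The middle square commutes because $\pi_k$ restricted to ${\rm Sym}$ of a smaller hyperdigraph lands in the smaller underlying hypergraph, by Corollary~\ref{co-26-3-a1}. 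Contravariant functoriality of $\Omega^\bullet$ then yields commutativity for each $k$, and these maps are CDGA homomorphisms, which is (3). Summing over $k$ gives the diagram of double-graded vector spaces.

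\textbf{Main obstacle.} The delicate point is that the surjectivity claims need compactness of all the spaces involved, including the ${\rm Sym}$-versions for $\delta$ and $\Delta$. I would handle this as in Lemma~\ref{co-26-5-3-1}: compactness of $(\delta\vec{\mathcal{H}})_k$ and $(\Delta\vec{\mathcal{H}})_k$ gives compactness of their images under $\pi_k$, hence of the finite-sheeted preimages, which are the ${\rm Sym}$-versions. One should also check that the vertical restriction maps are well defined when the inclusions are not between submanifolds of the same dimension. Restriction of smooth forms to a submanifold is always defined, so I expect no genuine difficulty there.
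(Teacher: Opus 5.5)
Your plan is the paper's own proof. You obtain the rows from Proposition~\ref{le-26.5.1.8} applied to $\delta\vec{\mathcal H}(M)$, $\vec{\mathcal H}(M)$ and $\Delta\vec{\mathcal H}(M)$, with hypotheses from Lemma~\ref{le-26.5.3.5}. You then apply $\Omega^\bullet$ to the diagram (\ref{eq-26.5.8.09}). But you inherit a step from the paper that is false for $\delta$. You use that the underlying hypergraph of $\delta\vec{\mathcal H}(M)$ is $\delta\mathcal H(M)$ (Proposition~\ref{le-26-3-3-1}) and that ${\rm Sym}(\delta\vec{\mathcal H}(M))=\delta{\rm Sym}(\vec{\mathcal H}(M))$ (\ref{eq26-3-6-s3}). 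In general only the inclusions $\pi(\delta\vec{\mathcal H}(M))\subseteq\delta\mathcal H(M)$ and ${\rm Sym}(\delta\vec{\mathcal H}(M))\subseteq\delta{\rm Sym}(\vec{\mathcal H}(M))=\pi^{-1}(\delta\mathcal H(M))$ hold.

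Here is a counterexample. For distinct $a,b,c\in M$, let $\vec{\mathcal H}(M)$ consist of $\vec\emptyset,(a),(b),(c),(a,b),(b,c),(c,a),(a,b,c)$. Each $\vec{\mathcal H}_k(M)$ is finite, hence a compact submanifold, and $c(\vec{\mathcal H}(M))=2$. Then $\mathcal H(M)$ is the full simplex on $\{a,b,c\}$, so $\{a,b,c\}\in\delta\mathcal H(M)$. On the other hand, the subsequence $(a,c)$ of $(a,b,c)$ is not in $\vec{\mathcal H}(M)$, so $(\delta\vec{\mathcal H}(M))_3=\emptyset$. At $k=3$ the second map of your top row is therefore $\mathbb{R}=\Omega^\bullet(\delta\mathcal H_3(M))\to\Omega^\bullet(\emptyset)=0$, which is not injective. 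Injectivity of pull-back along $\pi_k$ needs $\pi_k$ to map ${\rm Sym}((\delta\vec{\mathcal H})_k)$ onto $\delta\mathcal H_k(M)$, and that is exactly what fails.

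So assertion (1) is false for the top row as stated. What Proposition~\ref{le-26.5.1.8} applied to $\delta\vec{\mathcal H}(M)$ actually produces is the row with $\pi(\delta\vec{\mathcal H}(M))$ in the second slot. Alternatively, keep $\delta\mathcal H(M)$ and put $\pi^{-1}(\delta\mathcal H(M))$ in the third slot. With either correction your argument goes through verbatim. The $\Delta$-row is unaffected, since $\pi(\Delta\vec{\mathcal H}(M))=\Delta\mathcal H(M)$ and ${\rm Sym}(\Delta\vec{\mathcal H}(M))=\Delta{\rm Sym}(\vec{\mathcal H}(M))$ are true.

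A second point that your ``main obstacle'' paragraph passes over concerns Lemma~\ref{le-26.5.3.5}, which both you and the paper cite. Its proof only shows that $(\delta\vec{\mathcal H})_k$ and $(\Delta\vec{\mathcal H})_k$ are compact, not that they are submanifolds, and in general they are not. Suppose $t\mapsto(\gamma(t),q(t))$ embeds a circle in ${\rm Conf}_2(\mathbb{R}^2)$, with $\gamma$ a figure-eight immersion and $q$ a large round circle. Then $(\Delta\vec{\mathcal H})_1$ contains the figure-eight. Similarly, take $\vec{\mathcal H}_1(M)$ to be a circle $S$ and $\vec{\mathcal H}_2(M)$ an embedded circle whose first coordinate meets $S$ exactly along a Cantor set of parameters. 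Then $(\delta\vec{\mathcal H})_2$ is a Cantor set. Without a manifold structure, neither $\Omega^\bullet$ of these spaces nor the extension argument behind the surjectivity claims makes sense. So this regularity, for these sets and for their images under $\pi$, has to be imposed as a hypothesis rather than derived from compactness.
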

 
 \begin{proof}
  The  second  row  in  (\ref{eq-5.7.21})  follows  from  Proposition~\ref{le-26.5.1.8}. 
 Applying  Proposition~\ref{le-26.5.1.8}  to  $\delta\vec{\mathcal{H}} (M)$  
 and   $\Delta\vec{\mathcal{H}} (M)$  respectively,  
 we  obtain  the  first row  and  the  third  row  in  (\ref{eq-5.7.21}).  
 Moreover,  by     Proposition~\ref{le-26-3-3-1}~(\ref{26-3-3-diag1}),  
 the  canonical  inclusions  
 \begin{eqnarray}\label{eq-26.5.8.9}
 \delta\vec{\mathcal{H}} (M)  \subseteq  \vec{\mathcal{H}} (M)
 \subseteq 
 \Delta\vec{\mathcal{H}}(M)
 \end{eqnarray}
  induce  a  commutative  diagram 
 \begin{eqnarray}\label{eq-26.5.8.09}
 \xymatrix{
 \delta\vec{\mathcal{H}} (M)  \ar[r] \ar[d]
 &{\rm  Sym}( \delta\vec{\mathcal{H}} (M) )\ar[r] \ar[d]
 & \delta {\mathcal{H}} (M) \ar[r] \ar[d]
 & {\rm  Ind}(M)\ar@{=}[d]\\
   \vec{\mathcal{H}} (M)   \ar[r] \ar[d]
 &{\rm  Sym}(  \vec{\mathcal{H}} (M) )\ar[r] \ar[d]
 &  {\mathcal{H}} (M) \ar[r] \ar[d]
 & {\rm  Ind}(M)\ar@{=}[d]
 \\
 \Delta\vec{\mathcal{H}}(M)  \ar[r] 
 &{\rm  Sym}( \Delta\vec{\mathcal{H}} (M) )\ar[r] 
 & \Delta {\mathcal{H}} (M) \ar[r] 
 & {\rm  Ind}(M)
 }
 \end{eqnarray}
 where  
 \begin{enumerate}[(1)]
 \item
 in  each  row,  the  first as  well   as  the  third  horizontal map  is 
  the   canonical inclusion
  and the  second  horizontal map  is  the  canonical  projection; 
 \item
 the  vertical  maps  are  canonical  inclusions  induced  by 
 (\ref{eq-26.5.8.9}). 
 \end{enumerate}
 Taking  the  differential  forms  on  manifolds, 
 the  commutative  diagram  (\ref{eq-26.5.8.09})   induces  the  commutative  diagram
 (\ref{eq-5.7.21}).    
 \end{proof}

\begin{corollary} 
\label{th-26.6.10.2}
 Let  $\vec{\mathcal{H}}(M)$  be  a  hyperdigraph  on  $M$ 
  with  its  underlying  hypergraph  ${\mathcal{H}} (M)$
  such  that  
  $c(\vec{\mathcal{H}}(M))<+\infty$   and  
  $ \vec{\mathcal{H}}_k(M)$  is   a    compact  submanifold  of 
    ${\rm  Conf}_k(M)$  for any  $k\geq  1$.  
 Then  we  have  a  commutative  diagram   
    of  de-Rham  cohomology  groups
 \begin{eqnarray}\label{eq-6.10.21}
 \xymatrix{
  H^\bullet({\rm  Conf}_\bullet(M)/\Sigma_\bullet) \ar[r]
 &  H^\bullet({\delta\mathcal{H}}_\bullet(M))
 \ar[r] 
 & H^\bullet({\rm  Sym}(\delta\vec{\mathcal{H}}_\bullet(M)))
  \ar[r]
 & H^\bullet(\delta\vec{\mathcal{H}}_\bullet(M))\\
 H^\bullet({\rm  Conf}_\bullet(M)/\Sigma_\bullet) \ar[r]\ar@{=}[u]
 &  H^\bullet({\mathcal{H}}_\bullet(M))
 \ar[r] \ar[u]
 & H^\bullet({\rm  Sym}(\vec{\mathcal{H}}_\bullet(M)))
  \ar[r]\ar[u]
 & H^\bullet(\vec{\mathcal{H}}_\bullet(M)) \ar[u]\\
  H^\bullet(({\rm  Conf}_\bullet(M)/\Sigma_\bullet)) \ar[r]\ar@{=}[u]
 &  H^\bullet({\Delta\mathcal{H}}_\bullet(M))
 \ar[r] \ar[u]
 & H^\bullet({\rm  Sym}(\Delta\vec{\mathcal{H}}_\bullet(M)))
  \ar[r]\ar[u]
 &H^\bullet(\Delta\vec{\mathcal{H}}_\bullet(M)). \ar[u]
  }
 \end{eqnarray}
 \end{corollary}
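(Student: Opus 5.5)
The corollary will be deduced from Theorem~\ref{th-26.5.3.1} by applying the de Rham cohomology functor to the commutative diagram (\ref{eq-5.7.21}). First, the hypotheses are exactly those of Theorem~\ref{th-26.5.3.1}: $c(\vec{\mathcal{H}}(M))<+\infty$ and each $\vec{\mathcal{H}}_k(M)$ is a compact submanifold of ${\rm Conf}_k(M)$. So for each lower index $k\geq 1$ we have a commutative diagram of CDGAs (item (3) of that theorem). In particular, every arrow commutes with the exterior derivative $d$. For this, Lemma~\ref{le-26.5.3.5} gives that $(\delta\vec{\mathcal{H}}(M))_k$ and $(\Delta\vec{\mathcal{H}}(M))_k$ are again compact submanifolds. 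Hence Proposition~\ref{le-26.5.1.8} applies to the first and third rows, and each entry is a genuine de Rham complex $(\Omega^\bullet(-),d)$ of a manifold. Note that the face maps $(\partial_k)^*$ need not exist on the middle row, but they are not needed here, since we only use $d$.

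Next, I would observe that every arrow in (\ref{eq-5.7.21}) is induced by a smooth map of manifolds, namely one of:
\begin{itemize}
\item a restriction along an inclusion,
\item a pull-back along the projection $\pi_k$, or
\item a pull-back along the vertical inclusions in (\ref{eq-26.5.8.09}).
\end{itemize}
Pull-back of forms along a smooth map is a cochain map, so each arrow is a morphism of cochain complexes with respect to $d$. Commutativity of (\ref{eq-5.7.21}) is then inherited from the commutativity of the underlying diagram of spaces (\ref{eq-26.5.8.09}), by contravariant functoriality of $\Omega^\bullet$.

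Applying $H^\bullet(-,d)$ degree by degree in $k$ to this diagram of cochain complexes then yields (\ref{eq-6.10.21}), and summing over $k\geq 1$ gives the double-graded version. The equalities in the first column persist trivially.

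The only point requiring care is that $H^\bullet$ here means de Rham cohomology of the (possibly non-compact, but smooth) manifolds ${\rm Conf}_k(M)/\Sigma_k$ and of the compact submanifolds. Since all maps are induced by smooth maps, functoriality of de Rham cohomology suffices, and no surjectivity or injectivity claims are transferred to cohomology. I expect no real obstacle beyond checking that the surjectivity statements of Theorem~\ref{th-26.5.3.1} are not needed.
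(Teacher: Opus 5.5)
Your proposal is correct and matches the paper's proof, which simply applies the de Rham cohomology functor to the commutative diagram (\ref{eq-5.7.21}) of Theorem~\ref{th-26.5.3.1}. Your extra checks make explicit what the paper leaves implicit: each arrow is a pull-back along a smooth map and hence a cochain map, commutativity comes from contravariant functoriality, and the injectivity and surjectivity claims of the theorem are not needed.
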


\begin{proof}
Apply  the  de-Rham  cohomology  functor  to 
the  commutative  diagram  (\ref{eq-5.7.21})  of  differential  forms  on  manifolds.  
 By  Theorem~\ref{th-26.5.3.1}, 
 we  obtain (\ref{eq-6.10.21}).  
\end{proof}

  \section*{Acknowledgement}   The   author  would like to express his  deep
gratitude to the referee for the  careful reading of the manuscript.

    \medskip

Shiquan Ren

Address:
School  of  Mathematics and Statistics,  Henan University,  Kaifeng   475004,  China.

e-mail:  renshiquan@henu.edu.cn

  \end{document}